\title[Statistical convex-cocompactness for non-orientable mapping class groups]{Statistical convex-cocompactness for mapping class groups of non-orientable surfaces}
\author{Sayantan Khan}
\address{Department of Mathematics, University of Michigan, Ann Arbor, MI}
\email{\href{mailto:saykhan@umich.edu}{saykhan@umich.edu}}
\thanks{}
\urladdr{\url{https://www.sayantankhan.io}}
\keywords{mapping class group, Teichm\"uller space, Patterson-Sullivan theory, statistical convex-cocompactness}
\subjclass[2010]{57K20}
\date{\today}
\begin{document}
\begin{abstract}
  We show that a finite volume deformation retract $\systole(\no_g)/\mcg(\no_g)$ of the moduli space $\mo(\no_g)$ of non-orientable surfaces $\no_g$ behaves like the convex core of $\mo(\no_g)$, despite not even being quasi-convex.
  We then show that geodesics in the convex core leave compact regions with exponentially low probabilities, showing that the action of $\mcg(\no_g)$ on $\systole(\no_g)$ is \emph{statistically convex-cocompact}.
  Combined with results of Coulon and Yang, this shows that the growth rate of orbit points under the mapping class group action is purely exponential, pseudo-Anosov elements in mapping class groups of non-orientable surfaces are exponentially generic, and the action of mapping class group on the limit set in the horofunction boundary is ergodic with respect to the Patterson-Sullivan measure.
  A key step of our proof relies on \emph{complexity length}, developed by Dowdall and Masur, which is an alternative notion of distance on Teichmüller space that accounts for geodesics that spend a considerable fraction of their time in the thin part.
\end{abstract}
\maketitle





\section{Introduction}
\label{sec:introduction}



Mapping class groups $\mcg(\no_g)$ of non-orientable surfaces $\no_g$ have many similarities to infinite covolume geometrically finite Fuchsian groups, in a manner similar how mapping class groups of orientable surfaces sometimes behave like lattices in $\mathrm{SL}_2(\mathbb{R})$.

\begin{enumerate}[(i)]
\item The mapping class groups $\mcg(\no_g)$ are finitely presented.
\item The action of $\mcg(\no_g)$ on the Teichmüller space $\teich(\no_g)$ has infinite $\nu_N$-covolume, where $\nu_N$ is the generalization of the Weil-Petersson volume form on Teichmüller spaces of non-orientable surfaces (\cite[Theorem 17.1]{gendulphe2017whats} and \cite{norbury2008lengths}).
\item The limit set of $\mcg(\no_g)$ in the Thurston boundary of $\teich(\no_g)$ is $\pml^+(\no_g)$, i.e.\ the projective measured laminations that have no two-sided components  (\cite{erlandsson2023mapping} and \cite{limitsetkhan}), which is a subset of the boundary with zero Lebesgue measure (\cite{norbury2008lengths}).
\item The Teichmüller geodesic flow is not ergodic with respect to any Borel measure on the unit cotangent bundle with full support (\cite[Proposition 17.5]{gendulphe2017whats}).
\item There exists an $\mcg(\no_g)$-equivariant deformation retract of $\teich(\no_g)$ to $\systole(\no_g)$, the subset where no one-sided curve is shorter than $\vept > 0$. The action of $\mcg(\no_g)$ on $\systole(\no_g)$ is finite $\nu_N$-covolume \cite[Proposition 19.1]{gendulphe2017whats}.
\end{enumerate}

With these similarities, one might expect that $\systole(\no_g)$ serves as a convex core of $\teich(\no_g)$, and the geodesic flow restricted to tangent directions whose forward and backward end points lie in the limit set will be ergodic, with respect to a finite geodesic flow invariant measure supported only on these tangent directions.

A prior result of the author shows that this is not the case: $\systole(\no_g)$ is not even quasi-convex, i.e.\ geodesic segments whose endpoints lie in $\systole(\no_g)$ can travel arbitrarily far from $\systole(\no_g)$.

\begin{theorem}[Theorem 5.2 of \cite{limitsetkhan}]
  For all $\vept > 0$, and all $D > 0$, there exists a Teichmüller geodesic segment whose endpoints lie in $\systole(\no_g)$, such that some point in the interior of the geodesic segment is more than distance $D$ from $\systole(\no_g)$.
\end{theorem}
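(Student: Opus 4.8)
The plan is to produce, for the given $\vept$ and $D$, a Teichm\"uller geodesic segment whose endpoints lie in $\systole(\no_g)$ but which is forced to pass through a point at which some one-sided curve is extremely short; together with the elementary fact that such a point is uniformly far from $\systole(\no_g)$, this yields the theorem. I will use two standard facts. \emph{(Distortion.)} Along any Teichm\"uller geodesic the hyperbolic length of a fixed curve $\alpha$ changes at most exponentially in distance, $|\log \ell_X(\alpha) - \log\ell_Y(\alpha)| \le 2\, d_{\teich}(X,Y)$; hence if $\ell_X(\mu) \le \vept e^{-3D}$ for a one-sided curve $\mu$, then every $Y \in \systole(\no_g)$ has $\ell_Y(\mu)\ge\vept$, so $d_{\teich}(X,Y) \ge \tfrac12\log(\vept/\ell_X(\mu)) \ge \tfrac{3D}{2}$, whence $d_{\teich}(X,\systole(\no_g)) > D$. \emph{(M\"obius collar.)} If $\mu$ is one-sided and $\gamma = \partial N(\mu)$ is the boundary of a regular neighbourhood, then $\ell_X(\mu) \le \tfrac12\ell_X(\gamma)$ for every $X$: the geodesic representative of $\gamma$ bounds an embedded hyperbolic M\"obius band with core the geodesic $\mu$, whose orientation double cover is a hyperbolic cylinder with core of length $2\ell_X(\mu)$ and two geodesic boundary circles, each of length $\ell_X(\gamma)$ and at equal distance $d\ge 0$ from the core, so $\ell_X(\gamma) = 2\ell_X(\mu)\cosh d \ge 2\ell_X(\mu)$. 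By these two facts it suffices to exhibit a geodesic segment with endpoints in $\systole(\no_g)$ along which $\ell(\gamma) < 2\vept e^{-3D}$ at some interior point, $\gamma$ being the M\"obius-boundary of a one-sided curve $\mu$.

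To build such a segment, fix any $X \in \systole(\no_g)$ (which is nonempty), a one-sided curve $\mu$, set $\gamma := \partial N(\mu)$, and put $Z := T_\gamma^N X$ for the Dehn twist $T_\gamma$ about the two-sided curve $\gamma$ and a large integer $N$. Since $\mcg(\no_g)$ preserves $\systole(\no_g)$ we have $Z \in \systole(\no_g)$, and $\ell_X(\gamma) \ge 2\ell_X(\mu) \ge 2\vept$ by the M\"obius collar estimate. To analyse the geodesic $[X,Z]$ I pass to the orientation double cover $p\colon \Sigma_{g-1} \to \no_g$, in which $\teich(\no_g)$ is an isometrically embedded totally geodesic copy of the fixed locus of the orientation-reversing involution $\sigma$; thus $[X,Z]$ is a Teichm\"uller geodesic of $\Sigma_{g-1}$ between $\widetilde X$ and $\widetilde Z = T_{\widetilde\gamma_1}^N T_{\widetilde\gamma_2}^N \widetilde X$, where $p^{-1}(\gamma) = \widetilde\gamma_1 \sqcup \widetilde\gamma_2$ and each $\widetilde\gamma_i$ maps isometrically onto $\gamma$, so $\ell_W(\gamma) = \ell_W(\widetilde\gamma_i)$ for every $W$ on $[X,Z]$.

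The heart of the matter is that this geodesic is dragged arbitrarily deep into the $\gamma$-thin part as $N\to\infty$. On one hand the relative twisting satisfies $d_{\widetilde\gamma_1}(\widetilde X, \widetilde Z) = N + O(1)$ (since $T_{\widetilde\gamma_2}$ is supported away from $\widetilde\gamma_1$ and so acts trivially on the $\widetilde\gamma_1$-annular projection), while the segment has length $d_{\teich}(X,Z) = d_{\teich}(X, T_\gamma^N X) \le C\log N + C'$, via the explicit path that shrinks $\gamma$ to length $\asymp 1/N$, twists $N$ times, and reopens $\gamma$ to its original length. On the other hand, by Minsky's product-region theorem the annular projection to $\widetilde\gamma_1$ is coarsely $(1/\eta)$-Lipschitz on $\{\ell(\widetilde\gamma_1)\ge\eta\}$; so if $\ell_t(\gamma)\ge\eta$ held along all of $[X,Z]$ we would get $N + O(1) = d_{\widetilde\gamma_1}(\widetilde X,\widetilde Z) \le (1/\eta)(C\log N + C')$, which fails once $N$ is large (for fixed $\eta$). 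Hence, for each fixed $\eta>0$ and all large $N$, some $W\in[X,Z]$ has $\ell_W(\gamma)<\eta$. Taking $\eta = 2\vept e^{-3D}$ and $N$ correspondingly large gives $\ell_W(\mu)\le\tfrac12\ell_W(\gamma)<\vept e^{-3D}$ by the M\"obius collar estimate, hence $d_{\teich}(W,\systole(\no_g))>D$ by the distortion estimate; and $W$ is interior because $\ell_W(\mu)<\vept\le\ell_X(\mu),\,\ell_Z(\mu)$, so $W\ne X,Z$.

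The main obstacle is precisely this last step: establishing quantitatively that the geodesic between two points of $\systole(\no_g)$ differing by a high power of $T_\gamma$ must dive far into the $\gamma$-thin part. This is governed by the thick--thin decomposition of Teichm\"uller geodesics; instead of the Lipschitz-projection argument one could invoke Rafi's characterization of which curves become short along a Teichm\"uller geodesic, applied on $\Sigma_{g-1}$, after checking that the only subsurfaces carrying a large projection between $\widetilde X$ and $\widetilde Z = T_{\widetilde\gamma_1}^N T_{\widetilde\gamma_2}^N\widetilde X$ are the two annuli about $\widetilde\gamma_1$ and $\widetilde\gamma_2$. One must also be a little careful that working with the $\sigma$-symmetric geodesic on the double cover is harmless — which it is, since $\teich(\no_g)\hookrightarrow\teich(\Sigma_{g-1})$ is isometric and totally geodesic, and $\widetilde\gamma_1,\widetilde\gamma_2$ have equal length at every point of the symmetric locus.
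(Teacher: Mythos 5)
Your preliminary reductions are fine: Wolpert's inequality does show that a point where some one-sided curve has length at most $\vept e^{-3D}$ lies at distance more than $D$ from $\systole(\no_g)$, and the length comparison you want is even an equality, $\ell_X(\gamma)=2\ell_X(\mu)$ for $\gamma=\partial N(\mu)$ (the geodesic representative of $\gamma$ is the core geodesic $\mu$ traversed twice, so your picture of an embedded hyperbolic M\"obius band with geodesic boundary $\gamma$ at positive distance from $\mu$ is not quite right, though the inequality you extract from it is true). The genuine gap is the choice $Z=T_\gamma^N X$. Because $\gamma$ bounds a M\"obius band, its two lifts $\widetilde{\gamma}_1,\widetilde{\gamma}_2$ to the orientation double cover are the two boundary circles of the annulus covering $N(\mu)$, hence each is isotopic to the lift $\widetilde{\mu}$ of the core, and in particular $\widetilde{\gamma}_1$ is isotopic to $\widetilde{\gamma}_2$. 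Moreover the orientation-preserving lift of $T_\gamma$ is $T_{\widetilde{\gamma}_1}T_{\widetilde{\gamma}_2}^{-1}$, not $T_{\widetilde{\gamma}_1}T_{\widetilde{\gamma}_2}$: the deck involution is orientation-reversing, so it conjugates a right-handed twist to a left-handed one. Combining the two observations, the lift of $T_\gamma$ is isotopic to the identity; since $\teich(\no_g)\hookrightarrow\teich(\os_{g-1})$ is equivariant and injective, $T_\gamma$ acts trivially on $\teich(\no_g)$ (in fact $T_\gamma$ is trivial in $\mcg(\no_g)$ — twists about curves bounding disks or M\"obius bands are the degenerate ones). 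So $Z=X$, your segment is a point, and the key claim $d_{\widetilde{\gamma}_1}(\widetilde X,\widetilde Z)=N+O(1)$ is false: $T_{\widetilde{\gamma}_2}$ is not ``supported away from'' $\widetilde{\gamma}_1$ in any useful sense, since the two curves are isotopic and the opposite-signed lifted twists cancel in that annular coordinate.

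This is not a repairable detail but the heart of why the statement is delicate: $\systole(\no_g)$ only constrains one-sided curves, there is no twist deformation about a one-sided curve (its product-region factor is $\mathbb{R}_{>0}$, not $\mathbb{H}$), and the twist about its M\"obius-band boundary is trivial, so no twisting construction can push a geodesic out of $\systole(\no_g)$; making an ordinary two-sided curve short does not leave $\systole(\no_g)$ at all. The mechanism that does work is the non-annular half of Rafi's short-curve criterion: take $\psi\in\mcg(\no_g)$ which is pseudo-Anosov on the complement of $N(\mu)$ and consider $[X,\psi^N X]$. Then $\ell_{\psi^N X}(\mu)=\ell_X(\mu)$, while the subsurface projection to the complement of $\mu$ (whose boundary lifts to curves isotopic to $\widetilde{\mu}$) grows linearly in $N$, forcing $\widetilde{\mu}$, hence the one-sided curve $\mu$, to become arbitrarily short at an interior point; your Wolpert estimate then finishes the proof. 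Note that the present paper does not prove this statement — it quotes it from \cite{limitsetkhan} — and the construction there is of this complementary-subsurface type rather than a twisting one.
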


However, we show that this failure of quasi-convexity is not a serious obstruction to understanding geodesic segments whose endpoints lie in $\systole(\no_g)$.

\begingroup
\def\thetheorem{\ref{thm:weak-convexity}}
\begin{theorem}
  For any $\vepd > 0$, there exists constants $\vept^{\prime}$ and $c$, such that any geodesic segment $\gamma$, whose length is more than $c$, with endpoints in $\systole(\no_g)$, for $0 < \vept < \vept^{\prime}$, can be homotoped to a segment relative to endpoints to lie entirely within $\systole(\no_g)$, such that the length of the homotoped segment $\gamma^{\prime}$ satisfies the following inequality.
  \begin{align*}
    \ell(\gamma^{\prime}) \leq \ell(\gamma) \cdot (1 + \vepd)
  \end{align*}
\end{theorem}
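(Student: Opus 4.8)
The plan is to realize $\gamma^\prime$ as $r\circ\gamma$ for Gendulphe's $\mcg(\no_g)$-equivariant deformation retract $r\colon\teich(\no_g)\to\systole(\no_g)$ \cite{gendulphe2017whats} (or for a suitable smoothing of it). Since $r$ fixes $\systole(\no_g)$ pointwise and the endpoints of $\gamma$ lie there, $r\circ\gamma$ is homotopic to $\gamma$ relative to its endpoints and lies entirely in $\systole(\no_g)$, so the bound $\ell(\gamma^\prime)\le(1+\vepd)\,\ell(\gamma)$ follows once we show $\lVert Dr_X\rVert\le 1+\vepd$ at every $X$ on $\gamma$. Where $\gamma$ already lies in $\systole(\no_g)$ this is immediate, as $r$ is the identity there, so the whole problem is to estimate $\lVert Dr\rVert$ along the \emph{excursions} of $\gamma$ --- the subsegments on which some one-sided curve has length less than $\vept$.

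To do this I would pass to the orientation double cover $S$ of $\no_g$ and regard $\teich(\no_g)$ as the fixed locus $\mathrm{Fix}(\sigma)$ of the induced anti-holomorphic involution $\sigma$ on $\teich(S)$; this inclusion is isometric, since the unique Teichm\"uller geodesic joining two $\sigma$-fixed points is itself $\sigma$-invariant and hence fixed pointwise. A one-sided curve $\alpha$ lifts to a single curve $\widetilde{\alpha}$ with $\ell(\widetilde{\alpha})=2\ell(\alpha)$, so an excursion into the thin part of a one-sided multicurve $\alpha$ maps into the region of $\teich(S)$ where $\widetilde{\alpha}$ is short, which by Minsky's product regions theorem is, up to uniform additive error, a product $\prod_i\mathbb{H}^2_{\widetilde{\alpha}_i}\times\teich(S\setminus\widetilde{\alpha})$. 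The key geometric input --- which I take to be the crux, distilled from the mechanism behind Theorem~5.2 of \cite{limitsetkhan} --- is that, because there is no twisting about a one-sided curve, $\sigma$ relates the two sides of each $\widetilde{\alpha}_i$ through the geometry of the complement, so that $\mathrm{Fix}(\sigma)$ meets the factor $\mathbb{H}^2_{\widetilde{\alpha}_i}$ not in a vertical geodesic but in the graph $\{\mathrm{Re}(z)=\tfrac12 g_{\alpha_i}(x)\}$ of a twist-offset function $g_{\alpha_i}$ over $\teich(\no_g\setminus\alpha)$, and that $g_{\alpha_i}$ is \emph{coarsely Lipschitz}, with constant $C_0$ depending only on the topology, as long as the complement point $x$ stays thick. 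Then in the coordinates $(x,\ell)$ on this slice the Teichm\"uller metric is, up to additive error, a sup metric whose $\widetilde{\alpha}_i$-direction contributes $\sqrt{(d\log\ell_i)^2+\ell_i^2\,|dg_{\alpha_i}(x)|^2}$, in which the shear term $\ell_i\,|dg_{\alpha_i}|$ is negligible once $\ell_i$ is small. On such an excursion $r$ merely grows each $\ell(\alpha_i)$ back to $\vept$ while sliding along the graph of $\tfrac12 g_{\alpha_i}$, and a direct computation in this model gives $\lVert Dr\rVert\le\max(1,\vept C_0)$; choosing $\vept^\prime$ small enough in terms of $\vepd$ and $C_0$ we get $\lVert Dr\rVert\le 1+\vepd$ for every $\vept<\vept^\prime$.

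The main obstacle is the case of \emph{nested} thin parts: along an excursion the complement $\no_g\setminus\alpha$ can itself acquire a short curve, at which point $g_\alpha$ is no longer Lipschitz and the metric computation above fails. I would resolve this by an induction on the complexity of the surface, re-running the preceding analysis inside the thin part of the inner curve, and organizing the accumulation of contributions from the nested strata with the \emph{complexity length} of Dowdall and Masur. Complexity length dominates Teichm\"uller length and is coarsely additive over the levels of a nested thick--thin decomposition, which is precisely what lets the $(1+\vepd)$ estimates telescope through the nesting and lets one control how often and how deeply $\gamma$ can re-enter the thin part; the residual additive errors --- from the coarseness of the product-region picture near $\partial\systole(\no_g)$ and from the finitely many transitions between strata --- are absorbed into the factor $1+\vepd$ using the hypothesis $\ell(\gamma)>c$ for $c=c(\vepd)$ large enough. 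I expect the two genuinely delicate points to be the coarse Lipschitz bound on the twist-offset functions $g_{\alpha_i}$ in the thick-complement regime, and the estimate controlling how the contributions of the nested strata compound; the absence of twisting about one-sided curves is what makes both tractable, and is the reason this theorem holds for the non-orientable systole region despite its failure of quasi-convexity.
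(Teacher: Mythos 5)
Your core mechanism is the right one -- pass to product regions for short one-sided curves, exploit the absence of a twist parameter there, and grow the short lengths back to $\vept$ -- but the way you propose to run it has two genuine gaps. First, you reduce everything to a pointwise bound $\lVert Dr\rVert \le 1+\vepd$ for the retraction, and claim a ``direct computation'' of the Teichm\"uller metric in the thin part of the form $\sqrt{(d\log\ell_i)^2+\ell_i^2|dg_{\alpha_i}|^2}$. Minsky's product region theorem (and its non-orientable version, Theorem \ref{thm:prno}) is a \emph{coarse} statement: it compares distances to a sup metric up to a uniform \emph{additive} error, and gives no infinitesimal model of the metric accurate to a multiplicative factor $1+o(1)$. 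So the derivative bound, which is the load-bearing step of your argument, does not follow from the inputs you cite; it would in fact prove a strictly stronger statement (distortion $\le 1+\vepd$ for all paths, with no length threshold $c$), which should make you suspicious. The paper avoids this entirely by discretizing: it subdivides the geodesic into segments of a fixed length $\delta$, applies Theorem \ref{thm:prno} to each segment lying in the one-sided-thin region, and observes that replacing $p_i$ by $p_i'$ leaves the complement and two-sided coordinates untouched while killing the $\bigl|\log(\ell_{p_i}(\gamma)/\ell_{p_{i+1}}(\gamma))\bigr|$ term (compare \eqref{eq:p-estimate} and \eqref{eq:p-prime-estimate}), so each $\delta$-segment grows by at most the additive error $2c(\vept')$, i.e.\ multiplicatively by $1+2c(\vept')/\delta$; the hypothesis $\ell(\gamma)>c$ is only needed to absorb the endpoint corrections. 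Relatedly, your ``twist-offset function'' $g_{\alpha_i}$ and its coarse Lipschitz property are asserted rather than proved; in fact a stronger and simpler statement is what is true and what the paper establishes (appendix, proof of Theorem \ref{thm:prno}): one cannot twist about the lift $\wt{\alpha}_i$ at all without leaving the image of $\teich(\no_g)$ in $\teich(\os_{g-1})$, so the one-sided factor of the product region is just $\mathbb{R}_{>0}$ with the $|\log|$ metric and there is no shear term to estimate.

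Second, your treatment of nested thin parts is both unnecessary and not fit for purpose. The product region theorem does not require the complementary subsurface to be thick: the complement factor carries its own Teichm\"uller metric, and since the projection $p_i\mapsto p_i'$ does not move the complement coordinate (nor the $\mathbb{H}$ coordinates of short two-sided curves), short curves appearing in $\no_g\setminus\alpha$ cause no loss and no induction is needed. Invoking Dowdall--Masur complexity length here is a category error: in this paper complexity length is an entropy-weighted quantity used to \emph{count} net points whose geodesics linger in thin parts (Sections \ref{sec:equal-latt-point} and \ref{sec:line-gap-compl}); it is not additive in a way that converts into a $(1+\vepd)$ bound on the length of a homotoped path, and the claim that it ``dominates Teichm\"uller length'' is not one the paper proves or uses. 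If you strip out the derivative bound and the complexity-length scaffolding and replace them with the coarse subdivision argument sketched above, your outline collapses to the paper's proof.
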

\addtocounter{theorem}{-1}
\endgroup

Theorem \ref{thm:weak-convexity} shows that $\systole(\no_g)$, despite not being convex, almost behaves like the convex core of $\teich(\no_g)$: it is a metric subset of $\teich(\no_g)$ (with respect to the induced path metric) which is distorted by an arbitrarily small amount.
We call $\systole(\no_g)$ the \emph{weak convex core} of $\teich(\no_g)$, and focus our attention on this subspace as a metric space, where the metric is the induced path metric, which we denote by $d_{\vept}$ (in all of our results that follow, we are using this metric on $\systole(\no_g)$, and not the Teichmüller metric).
If we restrict our attention to the cotangent directions in $\teich(\no_g)$ along which the geodesic flow does not eventually leave $\systole(\no_g)$, we can use those cotangent directions to define a geodesic flow for $\systole(\no_g)$.
We call this collection of restricted directions the restricted cotangent bundle over $\systole(\no_g)$.


In light of this, we restrict our attention to $\systole(\no_g)$, and the $\mcg(\no_g)$ action on $\systole(\no_g)$.
Since the action of $\mcg(\no_g)$ on $\systole(\no_g)$ is finite $\nu_N$-covolume (but not cocompact), one might try to prove that the action is
{like}
the action of lattices in $\mathrm{SL}_2(\mathbb{R})$ on $\mathbb{H}$.
However, the results on lattices (and Teichmüller spaces of orientable surfaces) rely on having a measure preserving $\mathrm{SL}_2(\mathbb{R})$ action on the unit tangent bundle (respectively on the moduli space of quadratic differentials), and use the interplay between the geodesic flow and the horocycle flow.

For non-orientable surfaces, we do not have an analog of the horocycle flow on the space of quadratic differentials, so we cannot hope to directly import the techniques from the orientable case.
However, \textcite{10.1093/imrn/rny001} introduced a notion of \emph{statistically convex-cocompact action}, which can replace the notion of a lattice-like action for our setting.
In the setting of $\systole(\no_g)$, proving statistical convex-cocompactness is equivalent to proving that geodesic segments between $\mcg(\no_g)$ orbit points in $\systole(\no_g)$ enter the thin part (i.e.\ the region in $\systole(\no_g)$ where some two-sided curve is short) with low probabilities.

Our main result is that this holds for the $\mcg(\no_g)$ action on $\systole(\no_g)$.
\begin{theorem}[Corollary of Theorems \ref{thm:entropy-equality-implies-scc} and \ref{thm:entropy-equality}]
  \label{thm:statistical-convex-cocompactness}
  The action of $\mcg(\no_g)$ on $(\systole(\no_g), d_{\vept})$ is statistically convex-cocompact.
\end{theorem}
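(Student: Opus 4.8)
The plan is to verify Yang's criterion for statistical convex-cocompactness directly. Fix a basepoint $o$ in the thick part of $\systole(\no_g)$, write $\delta$ for the exponential growth rate of the orbit $\mcg(\no_g)\cdot o$ measured with $d_{\vept}$, and observe that the only source of non-compactness of $\systole(\no_g)/\mcg(\no_g)$ is a two-sided curve becoming short, since every one-sided curve has length at least $\vept$ on all of $\systole(\no_g)$. Hence a point is far from every orbit point precisely when it lies deep in a region where some two-sided curve is short (the ``thin part''), and statistical convex-cocompactness is exactly the statement that the set of $g$ for which every $d_{\vept}$-geodesic from $o$ to $go$ makes a deep excursion into the thin part has growth rate strictly below $\delta$; equivalently, that such ``thin-heavy'' geodesics join exponentially few orbit points. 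Theorem~\ref{thm:weak-convexity} is what makes this tractable: a $d_{\vept}$-geodesic is a $(1+\vepd)$-quasigeodesic for the Teichm\"uller metric that stays inside $\systole(\no_g)$, so up to the arbitrarily small factor $1+\vepd$ I may reason with honest Teichm\"uller geodesics and with Minsky's product-region picture of the thin part, $\prod_{\alpha\in\mathcal{C}}\mathbb{H}_{\alpha}\times\teich(\no_g\setminus\mathcal{C})$, where $\mathcal{C}$ is the collection of short two-sided curves.

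The counting is where complexity length enters. Following Dowdall--Masur, I would attach to each Teichm\"uller geodesic segment $\gamma$ a \emph{complexity length} $\mathcal{L}(\gamma)$ which agrees with $\ell_{\teich}(\gamma)$ up to an additive constant away from the thin part but charges each maximal excursion into a thin region only a discounted amount, reflecting that twisting around the curves of $\mathcal{C}$ and motion in the lower-complexity factor $\teich(\no_g\setminus\mathcal{C})$ carry strictly less entropy than the ambient space. The two properties I need are: (a) $\mathcal{L}(\gamma)\le\ell_{\teich}(\gamma)+O(1)$ always, and $\mathcal{L}(\gamma)\le(1-c\theta)\,\ell_{\teich}(\gamma)+O(1)$ for a fixed $c>0$ whenever at least a $\theta$-fraction of $\gamma$ lies in the thin part; and (b) a Dowdall--Masur-type orbit bound $\#\{g\in\mcg(\no_g):\mathcal{L}([o,go])\le R\}\preceq\mathrm{e}^{hR}$, where $h$ is the volume entropy of $\teich(\no_g)$ read off from the product-region structure. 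Carrying the Dowdall--Masur machinery over to the non-orientable setting is, I expect, the main obstacle: one must redo the bookkeeping of excursions and their active intervals, and in particular check that cutting $\no_g$ along a short two-sided curve --- which may be separating or not and may produce non-orientable complementary pieces --- genuinely lowers the complexity, so that the thin factors really are entropically cheaper.

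Granting (a) and (b), statistical convex-cocompactness follows from an \emph{entropy equality} $\delta=h$. The inequality $\delta\le h$ is immediate: by Theorem~\ref{thm:weak-convexity} and (a), $d_{\vept}(o,go)\le R$ forces $\mathcal{L}([o,go])\le(1+\vepd)R+O(1)$, so (b) bounds the number of such $g$ by $\mathrm{e}^{h(1+\vepd)R+O(1)}$, and we let $\vepd\to 0$. For $\delta\ge h$ I would exhibit enough orbit points joined to $o$ by genuinely thick geodesics --- for instance via a free sub-semigroup generated by pseudo-Anosov mapping classes of $\no_g$ whose axes lie in the thick part of $\systole(\no_g)$ (so those axes are $d_{\vept}$-geodesics up to $1+\vepd$ as well), or by a direct lattice-point count in the thick part --- so that the thick orbit alone already grows at rate $h$; this is the content of Theorem~\ref{thm:entropy-equality}. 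With $\delta=h$ established, fix any $\theta>0$: by Theorem~\ref{thm:weak-convexity} and (a), every $g$ for which $[o,go]$ spends a $\theta$-fraction of its length in the thin part satisfies $\mathcal{L}([o,go])\le(1-c\theta)(1+\vepd)\,d_{\vept}(o,go)+O(1)$, so by (b) the number of such $g$ with $d_{\vept}(o,go)\le R$ is $\preceq\mathrm{e}^{h(1-c\theta)(1+\vepd)R}$. Choosing $\vepd$ small enough that $(1-c\theta)(1+\vepd)<1$ makes this exponent strictly smaller than $h=\delta$, which is precisely Yang's statistical convex-cocompactness condition --- the implication ``entropy equality $\Rightarrow$ statistical convex-cocompactness'' being Theorem~\ref{thm:entropy-equality-implies-scc} --- and completes the proof.
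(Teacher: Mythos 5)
Your top-level logic (gap for concave lattice points, plus an entropy equality, plus Yang/Coulon) matches the paper, but the mechanism you propose for the gap is genuinely different from the paper's and, as written, has a hole. The paper does \emph{not} obtain the concave-lattice-point gap from complexity length: it runs a random walk on a net and uses the Foster--Lyapunov--Margulis function of Proposition \ref{prop:flm-is-flm} (Propositions \ref{prop:rw-recurrence} and \ref{prop:counting-geodesics}) to get $\hLPb \leq \hNP - 1$, and complexity length enters only inside the proof of Theorem \ref{thm:entropy-equality}, to count \emph{bad net points}. Your property (a) --- $\mathfrak{L}([o,go]) \leq (1-c\theta)\,\ell + O(1)$ for geodesics with \emph{thick endpoints} spending a $\theta$-fraction of their length in the thin part --- is not available off the shelf: the paper's linear gap (Theorem \ref{thm:linear-gap}) treats geodesics terminating at points far from the orbit and uses exactly that tail structure (the resolution point $\widehat{y}_{\os}^{\Omega}$ lies near a point of $[p,y]$ and the terminal segment of length $\geq \vepb R$ carries no top-level witness). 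For your configuration you would additionally have to show that the thin middle is covered, up to controlled badness, by active \emph{proper} witnesses (a Rafi-type short-curve criterion with compatible thresholds), and --- most importantly --- that every proper configuration is entropically strictly cheaper, i.e. $\hNP(V) < \hNP(\os)$. You flag this last point as bookkeeping in the Dowdall--Masur transfer, but in this paper it is the inductive heart of the argument: it is Lemma \ref{lem:net-point-entropy-inequality}, which needs entropy equality on subsurfaces together with the free-product/Poincar\'e-series argument of Lemma \ref{lem:entropy-inequality}, which in turn uses Yang's divergence-type result for the (inductively established) SCC actions on subsurface cores. It cannot be obtained by redoing the orientable bookkeeping alone.

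A second gap is the bridge between the definition of concavity and your complexity estimate. Concave lattice points are defined by the existence of \emph{some} $d_{\vept}$-geodesic between $s$-balls around $p$ and $\gamma p$ that avoids $\thick(\no_g)$; your bound is attached to the Teichm\"uller geodesic (equivalently, the witness structure of the pair), so you need ``some $d_{\vept}$-geodesic avoids the thick part $\Rightarrow$ the pair $(o,go)$ has discounted complexity.'' That implication is not automatic: a $d_{\vept}$-geodesic is only a $(1+\vepd)$-quasigeodesic for the Teichm\"uller metric, Teichm\"uller space is not hyperbolic, and quasigeodesics need not fellow-travel geodesics in product regions, so thin-heaviness of the concavity-detecting path does not formally transfer to the Teichm\"uller geodesic. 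The paper avoids this entirely by discretizing the concavity-detecting path itself into a random-walk trajectory and applying the drift estimate to that trajectory. Finally, a smaller point: your sketch of $\delta \geq h$ via a free semigroup of pseudo-Anosovs with thick axes would only give \emph{some} exponential rate, not the net-point entropy; the hard direction of Theorem \ref{thm:entropy-equality} is $\hNP \leq \hLP$, proved via the good/bad point decomposition. Since you ultimately cite Theorem \ref{thm:entropy-equality} (and the statement is indeed a corollary of Theorems \ref{thm:entropy-equality-implies-scc} and \ref{thm:entropy-equality}), that part is recoverable, but the two gaps above are where your alternative route would need real new arguments.
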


Using Theorem \ref{thm:weak-convexity} and a result of \textcite{minsky1996quasi}, we have that the projection of balls disjoint from axes of the pseudo-Anosov elements have bounded diameter.
This means pseudo-Anosov elements are \emph{strongly contracting} and most reducible elements are not (see Lemma \ref{lem:strongly-contract-class}).

Coulon (\cite{coulon2022patterson}, \cite{coulon2024ergodicity}) and Yang (\cite{10.1093/imrn/rny001}, \cite{yang2020genericity}) prove fairly general results in the setting of statistically convex cocompact group actions with strongly contracting elements.
We state these results in our setting, where $\mcg(\no_g)$ plays the role of the group, $\systole(\no_g)$ is the metric space upon which $\mcg(\no_g)$ acts via a statistically convex cocompact action (and $d_{\vept}$ denotes the distance function), and pseudo-Anosovs play the role of strongly contracting elements.
We also denote balls of radius $R$ with respect to the metric $d_{\vept}$ centered at $p$ as $B_R(p)$.

\begin{theorem}[Purely exponential growth (Theorem B of \cite{10.1093/imrn/rny001})]
  There exist positive constants $A$ and $B$ such that the following bounds hold for the cardinality of the $\mcg(\no_g)$ orbit of a point $p \in \systole(\no_g)$ in a ball of radius $R$.
  \begin{align*}
    A \exp(hR) \leq \#\left( \mcg(\no_g) \cdot p \cap B_R(p) \right) \leq B \exp(hR)
  \end{align*}
  Here, $h$ is the critical exponent for the group action.
\end{theorem}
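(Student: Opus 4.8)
The plan is to obtain this statement as a direct application of Yang's purely exponential growth theorem \cite[Theorem B]{10.1093/imrn/rny001}, so the work consists of checking that the $\mcg(\no_g)$-action on $(\systole(\no_g), d_{\vept})$ meets its hypotheses and then quoting it. Yang's theorem applies to a proper isometric action of a group on a geodesic metric space which is statistically convex-cocompact and which contains a contracting element; its conclusion is precisely that $R \mapsto \#(\mcg(\no_g)\cdot p \cap B_R(p))$ is comparable, up to multiplicative constants $A$ and $B$, to $\exp(hR)$, with $h$ the critical exponent of the action.

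First I would record that $(\systole(\no_g), d_{\vept})$ is a proper geodesic metric space and that the action on it is proper. Since $d_{\vept}$ dominates the Teichmüller metric and (by the deformation-retract structure together with Theorem \ref{thm:weak-convexity}) induces the same topology on $\systole(\no_g)$, closed $d_{\vept}$-balls sit inside compact Teichmüller balls, so $(\systole(\no_g), d_{\vept})$ is proper; it is a complete length space, hence geodesic by Hopf--Rinow, and properness of the action is inherited from the proper discontinuity of the $\mcg(\no_g)$-action on $\teich(\no_g)$.

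Next, statistical convex-cocompactness of the action is exactly the content of Theorem \ref{thm:statistical-convex-cocompactness}, and the existence of a contracting element is furnished by Lemma \ref{lem:strongly-contract-class}, which shows that pseudo-Anosov mapping classes act as strongly contracting isometries of $(\systole(\no_g), d_{\vept})$ (and such mapping classes exist in $\mcg(\no_g)$). With the three hypotheses in place, Yang's theorem yields the claimed two-sided bound $A\exp(hR) \le \#(\mcg(\no_g)\cdot p \cap B_R(p)) \le B\exp(hR)$.

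I do not expect a genuine obstacle in this step itself: once the hypotheses are verified it is an immediate citation, and the substantive work has already been carried out in Theorems \ref{thm:weak-convexity} and \ref{thm:statistical-convex-cocompactness} and in Lemma \ref{lem:strongly-contract-class}. The point that requires the most care is making sure that it is the path metric $d_{\vept}$, and not the Teichmüller metric, that satisfies Yang's axioms — both the contraction property of pseudo-Anosov axes and the statistical convex-cocompactness estimate must be established for $d_{\vept}$ — and it is only through the weak-convexity theorem that these properties survive the passage from $\teich(\no_g)$ to its weak convex core $\systole(\no_g)$.
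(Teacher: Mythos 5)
Your proposal is correct and matches the paper's treatment: the paper states this result as a direct application of Yang's Theorem B, with the hypotheses supplied by Theorem \ref{thm:statistical-convex-cocompactness} (statistical convex-cocompactness of the action on $(\systole(\no_g), d_{\vept})$) and Lemma \ref{lem:strongly-contract-class} (pseudo-Anosovs are strongly contracting for $d_{\vept}$), exactly as you argue. Your additional remarks on properness and geodesicity of $(\systole(\no_g), d_{\vept})$ and on working with $d_{\vept}$ rather than the Teichmüller metric are consistent with, and slightly more explicit than, what the paper records.
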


We also have that pseudo-Anosov elements are exponentially generic with respect to the induced metric on $\systole(\no_g)$.

\begin{theorem}[Exponential genericity of contracting elements (Theorem 1.4 of \cite{yang2020genericity})]
  \label{thm:pure-exponential}
  For a point $p \in \systole(\no_g)$, let $N(R)$ denote the number of mapping class elements $\gamma$ such that $d_{\vept}(p, \gamma p) \leq R$, and let $N_{nc}(R)$ denote the reducible mapping class elements $\gamma$ that leave a two-sided curve invariant, and $d_{\vept}(p, \gamma p) \leq R$.
  Then there exists a positive constant $c$ such that the following holds for large enough $R$.
  \begin{align*}
    \frac{N_{nc}(R)}{N(R)} \leq \exp(-cR)
  \end{align*}
\end{theorem}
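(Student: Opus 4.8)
The plan is to obtain Theorem~\ref{thm:pure-exponential} directly from Yang's genericity theorem (Theorem~1.4 of \cite{yang2020genericity}), whose hypotheses are now all available in our setting. That theorem asserts that when a group acts statistically convex-cocompactly on a geodesic metric space and the action admits a strongly contracting element, the strongly contracting elements are exponentially generic: writing $N_c(R)$ for the number of $\gamma$ with $d_{\vept}(p,\gamma p)\le R$ acting as a strongly contracting isometry, there exist constants $C\ge 1$ and $c_0>0$ with $N(R)-N_c(R)\le C\exp(-c_0R)\,N(R)$ for all sufficiently large $R$. Here the group is $\mcg(\no_g)$, the space is $(\systole(\no_g),d_{\vept})$, the statistical convex-cocompactness is exactly Theorem~\ref{thm:statistical-convex-cocompactness}, and the strongly contracting element needed to start the machinery is any pseudo-Anosov mapping class, which is strongly contracting by Lemma~\ref{lem:strongly-contract-class}. (Finiteness of $N(R)$ is automatic: $d_{\vept}$ dominates the Teichm\"uller metric, so a $d_{\vept}$-ball is contained in a Teichm\"uller ball, and such a ball meets the closed set $\systole(\no_g)$ in a compact set on which $\mcg(\no_g)$ acts properly discontinuously.)

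It then suffices to show $N_{nc}(R)\le N(R)-N_c(R)$, i.e.\ that every reducible mapping class leaving a two-sided curve invariant is non-contracting; this is the second half of Lemma~\ref{lem:strongly-contract-class}, the same product-region obstruction that rules out non-pseudo-Anosov elements in the orientable case --- a quasi-axis of such an element lies in a region where the invariant two-sided curve has bounded length, inside which one can deform in the twist direction without appreciably changing the nearest-point projection, so projection to the quasi-axis is not coarsely Lipschitz and the strongly contracting property (in the quasi-projection form of \cite{minsky1996quasi}) fails. Granting this, every element counted by $N_{nc}(R)$ is non-contracting, so Yang's bound gives $N_{nc}(R)/N(R)\le C\exp(-c_0R)$; choosing any $c<c_0$ absorbs the constant $C$ for $R$ large and yields $N_{nc}(R)/N(R)\le \exp(-cR)$, as required.

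The substantive content is therefore entirely packaged in Lemma~\ref{lem:strongly-contract-class}, and this is where I expect the genuine difficulty: one must establish the pseudo-Anosov versus reducible dichotomy for the strongly contracting property with respect to the path metric $d_{\vept}$ on the weak convex core, not the ambient Teichm\"uller metric, and in particular argue that the failure of quasi-convexity of $\systole(\no_g)$ (Theorem~5.2 of \cite{limitsetkhan}) does not spoil the strong contraction of pseudo-Anosov axes. Theorem~\ref{thm:weak-convexity} --- that $d_{\vept}$ distorts the Teichm\"uller metric by an arbitrarily small multiplicative factor on long segments --- is precisely the ingredient that allows Minsky's bounded-quasi-projection estimate for pseudo-Anosov axes to be transported from $(\teich(\no_g),d_{\vept})$-neighborhoods back to genuine statements about $(\systole(\no_g),d_{\vept})$, so that Yang's framework applies verbatim.
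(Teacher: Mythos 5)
Your proposal is correct and follows essentially the same route as the paper: the theorem is obtained by feeding the statistical convex-cocompactness of the $\mcg(\no_g)$-action on $(\systole(\no_g),d_{\vept})$ (Theorem \ref{thm:statistical-convex-cocompactness}) and the classification of Lemma \ref{lem:strongly-contract-class} (pseudo-Anosovs strongly contracting, elements preserving a two-sided curve not) into Yang's genericity theorem, with the distortion control of Theorem \ref{thm:weak-convexity} ensuring the contraction statements hold for $d_{\vept}$. Nothing in your argument deviates from the paper's (implicit) derivation.
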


After picking a basepoint $p \in \systole(\no_g)$, we can replicate the classical construction of Patterson-Sullivan measures \cite{sullivan1979density} to get a measure $\nu$ supported on the limit set $\pml^+(\no_g)$.
However, for Theorem \ref{thm:hts-dichotomy}, we need to consider the limit set in an alternative compactification, namely the horofunction compactification, and consider the Patterson-Sullivan measure on the limit set in this compactification.
We will abuse notation, and denote the Patterson-Sullivan measure on this limit set by $\nu$ as well.
Since we have that the action is statistically convex cocompact, and we have plenty of strongly contracting elements, the results of Coulon (\cite{coulon2022patterson},\cite{coulon2024ergodicity}) let us say a lot about the Patterson-Sullivan measure $\nu$.
We have the Hopf-Tsuji-Sullivan dichotomy (or rather, the divergent half of the result).

\begin{theorem}[Hopf-Tsuji-Sullivan dichotomy (Theorem 1.1 of \cite{coulon2024ergodicity})]
  \label{thm:hts-dichotomy}
  For the action of $\mcg(\no_g)$ on $\systole(\no_g)$, the following are equivalent.
  \begin{enumerate}[(i)]
  \item The Poincaré series for $\mcg(\no_g)$ diverges at the critical exponent.
  \item The measure $\nu$ gives full measure to the radial limit set.
  \item The diagonal action on $\Lambda \times \Lambda$ is ergodic with respect to the product measure $\nu \otimes \nu$, where $\Lambda$ is the limit set of $\mcg(\no_g)$ in the horofunction boundary.
  \item The geodesic flow on the unit tangent bundle $(\Lambda \times \Lambda \times \mathbb{R})/\mcg(\no_g)$ is ergodic with respect to the Bowen-Margulis measure.
  \end{enumerate}
\end{theorem}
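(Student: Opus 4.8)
The plan is to deduce Theorem~\ref{thm:hts-dichotomy} directly from Coulon's abstract Hopf--Tsuji--Sullivan dichotomy (Theorem 1.1 of \cite{coulon2024ergodicity}) by verifying that the action of $\mcg(\no_g)$ on $(\systole(\no_g), d_{\vept})$ satisfies its hypotheses. Those hypotheses are: (1) $(\systole(\no_g), d_{\vept})$ is a proper geodesic metric space; (2) $\mcg(\no_g)$ acts on it properly discontinuously and by isometries; (3) the group is non-elementary; (4) the action is statistically convex-cocompact; and (5) the action has a strongly contracting element. Items (4) and (5) are precisely Theorem~\ref{thm:statistical-convex-cocompactness} and Lemma~\ref{lem:strongly-contract-class} (pseudo-Anosov mapping classes), so the remaining work is to record (1)--(3), after which Coulon's theorem produces the Patterson--Sullivan measure $\nu$ on the limit set $\Lambda$ in the horofunction boundary together with the equivalence of (i)--(iv).

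For (1), $\systole(\no_g) = \{X \in \teich(\no_g) : \ell_X(c) \geq \vept \text{ for every one-sided simple closed curve } c\}$ is a closed, connected subset of the proper geodesic space $(\teich(\no_g), d_{\teich})$ (connectedness follows from it being a deformation retract of $\teich(\no_g)$; see point (v) of the introduction), and $d_{\vept}$ is the induced length metric, which is finite on all of $\systole(\no_g)$ by Theorem~\ref{thm:weak-convexity}. Since $d_{\vept} \geq d_{\teich}$ on $\systole(\no_g)$, every closed $d_{\vept}$-ball is contained in a closed $d_{\teich}$-ball intersected with the closed set $\systole(\no_g)$, hence is compact; thus $(\systole(\no_g), d_{\vept})$ is a complete, locally compact length space, and Hopf--Rinow makes it proper and geodesic. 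For (2), the defining systole condition is $\mcg(\no_g)$-invariant, so the action restricts to $\systole(\no_g)$; it is by $d_{\vept}$-isometries because mapping classes act as $d_{\teich}$-isometries preserving $\systole(\no_g)$, hence preserve the induced length metric; and it is properly discontinuous because the $\mcg(\no_g)$-action on $\teich(\no_g)$ is. Item (3) is standard: for $g$ large enough $\mcg(\no_g)$ contains a nonabelian free group and so is not virtually cyclic.

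The main obstacle is not new geometry but the reconciliation of frameworks: one must confirm that the notions of statistical convex-cocompactness and strongly contracting element in the sense used by Yang (and in the theorems quoted above) are exactly those required by Coulon's papers, and that the horofunction compactification of $(\systole(\no_g), d_{\vept})$ --- rather than the Teichmüller or Thurston compactification of $\teich(\no_g)$ --- is the one to which Coulon's Busemann-cocycle and shadow-lemma arguments apply; this is also why (1) must be checked carefully, since that machinery uses that $d_{\vept}$ is a genuine geodesic metric and not merely a quasi-metric. Finally, one may remark that a statistically convex-cocompact action with a strongly contracting element is of divergence type (by Yang's work, consistent with the purely exponential orbit growth stated above), so condition (i) in fact holds and all four equivalent conditions are satisfied.
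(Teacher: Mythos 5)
Your proposal is correct and matches the paper's treatment: the theorem is not proved from scratch but obtained by applying Coulon's general Hopf--Tsuji--Sullivan dichotomy to the action of $\mcg(\no_g)$ on $(\systole(\no_g), d_{\vept})$, with the substantive hypotheses supplied by Theorem~\ref{thm:statistical-convex-cocompactness} (statistical convex-cocompactness) and Lemma~\ref{lem:strongly-contract-class} (pseudo-Anosovs are strongly contracting), exactly as you outline. Your additional verifications (properness and geodesicity of the induced path metric, proper discontinuity, non-elementarity) are routine points the paper leaves implicit, and your closing remark that divergence at the critical exponent actually holds is also how the paper concludes that all four conditions are satisfied.
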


It follows as a corollary of Theorem \ref{thm:pure-exponential} that the Poincaré series for $\mcg(\no_g)$ diverges at the critical exponent, and as a result, we have all the other equivalent criteria that appear in the Hopf-Tsuji-Sullivan dichotomy.



\begin{remark}
  In practice, it is preferable to work with the Thurston boundary of Teichmüller space rather than the horofunction boundary, since the Thurston boundary is a much better understood object.

  For statistically convex-cocompact actions of subgroups of mapping class groups (with respect to the Teichmüller metric) (see \textcite{gekhtman2023dynamics}, and \textcite{CGTY}), one can replace the horofunction boundary in the statement of the Hopf-Tsuji-Sullivan dichotomy with the Thurston boundary using the following sequence of results.
  \begin{enumerate}[(i)]
  \item \textcite{miyachi2008teichmuller} proves that there is an $\mcg(\no_g)$ equivariant bijective map between the uniquely ergodic points in the horofunction boundary, and the uniquely ergodic points in the Thurston boundary.
  \item By condition (ii) of the Hopf-Tsuji-Sullivan dichotomy, the radial limit points in the horofunction boundary have full measure.
  \item A geodesic converging to a radial limit point stays within bounded distance of the mapping class group orbit, and by Masur's criterion for unique ergodicity \cite{masur1982interval}, we have that the corresponding limit point in the boundary must be uniquely ergodic.
  \item Consequently, the uniquely ergodic limit points have full measure in the horofunction boundary, and we can thus replace that with the Thurston boundary, since they agree on a full measure subset.
  \end{enumerate}

  In our setting, i.e. the $\mcg(\no_g)$ action on $\systole(\no_g)$ with the $d_{\vept}$ metric, we do not know if there is an $\mcg(\no_g)$-equivariant map between the uniquely ergodic points in the Thurston boundary with a full measure subset of the horofunction boundary.
  We do not even have a very good description of the horofunction boundary of $(\systole(\no_g), d_{\vept})$ in terms of measured foliations on $\no_g$.
  One way to obtain ergodicity of the $\mcg(\no_g)$ action on $\pml^+(\no_g)$ would be to get an affirmative answer to the following question.
\begin{question}[Restatement of Question \ref{ques:full-stat-core}]
  \label{ques:full-scc}
  Is the action of $\mcg(\no_g)$ on $(\teich(\no_g), d_T)$ statistically convex cocompact, where $d_T$ is the Teichmüller metric?
\end{question}
We expect the answer to this question is yes, despite the random walk methods of our proof not working in this setting.
\end{remark}

\subsection*{Why we care about Patterson-Sullivan theory for $\mcg(\no_g)$}

\subsubsection*{Counting functions}

Understanding the dynamics of the geodesic flow over the moduli space of \emph{orientable surfaces} has led to solutions for two counting problems: one on the moduli space of orientable hyperbolic surfaces, and one on orientable hyperbolic surfaces themselves.

\begin{enumerate}[(i)]
\item Counting closed curves in moduli space: Via techniques originally introduced to Margulis in his thesis \cite{margulis2004some}, one can reduce counting closed curves, which are conjugacy classes of mapping class group orbit points, to understanding the geodesic flow over the moduli space.
  The number of closed curves of length at most $R$, which we denote by $N(R)$ has the following asymptotics (see \cite{eskinmirzakhani}).
  \begin{align}
    \label{eq:counting-closed}
    N(R) \sim \frac{\exp(hR)}{hR}
  \end{align}
  Here, the symbol $\sim$ means that the ratio of the two quantities approaches a positive constant as $R$ goes to $\infty$, and $h$ is the volume growth entropy of $\teich(\os_{g})$, which is $6g-6$.
\item Counting \emph{simple} closed curves on orientable hyperbolic surfaces: \textcite{mirzakhani2008growth} proved that the counting function $M(R)$ that counts \emph{simple} closed curves satisfies a polynomial asymptotic.
  \begin{align}
    \label{eq:counting-simple-closed}
    M(R) \sim R^{h}
  \end{align}
  Here, $h$ is again the volume growth entropy, i.e.\ $6g-6$.
\end{enumerate}

For non-orientable surfaces, the counting function does not behave like the orientable version.
\textcite{gendulphe2017whats} showed that the counting function $N_{\mathrm{no}}(R)$ and $M_{\mathrm{no}}(R)$, which is the versions of the functions $N(R)$ and $M(R)$ for non-orientable surfaces satisfy the following asymptotic.
\begin{align*}
  N_{\mathrm{no}}(R) &= o\left( \frac{\exp((3g-6)R)}{(3g-6)R} \right) \\
  M_{\mathrm{no}}(R) &= o(R^{3g-6})
\end{align*}
These asymptotics raise the question of whether there is an exponent $h < 3g-6$ for which the non-orientable versions of \eqref{eq:counting-closed} and \eqref{eq:counting-simple-closed} continue to hold.
For $g = 3$, \textcite{10.1093/imrn/rny112} obtained precise asymptotics, and in this case, the growth rate is a non-integer exponent smaller than $3g - 6 = 3$.

If we can upgrade ergodicity of the geodesic flow to mixing of the geodesic flow, we can count lattice points and their conjugacy classes to obtain a non-orientable version of \eqref{eq:counting-closed} where the role of $h$ is played by the critical exponent for the group action of $\mcg(\no_g)$ with respect to the Teichmüller metric.

If the answer to Question \ref{ques:full-scc} is yes, we can use the results of \textcite{CGTY} to establish mixing of the geodesic flow with respect to the Bowen-Margulis measure, and use that to count lattice points.

To explain how ergodicity of the $\mcg(\no_g)$ action on $\pml^+(\no_g)$ might help count simple closed curves on $\no_g$, we outline Mirzakhani's original proof of the fact for orientable surfaces (see \cite{mirzakhani2008growth} for the original proof, and \cite{2022arXiv220204156A} for a gentler exposition).

\begin{proof}[Sketch of simple closed curve counting in the orientable case]
  The proof proceeds in 3 steps.
  \begin{enumerate}[Step 1:]
  \item For any simple closed curve $\gamma$ and any $L > 0$, consider the measure $\mu_L$ on the space $\ml(\os_g)$ of measured laminations.
    \begin{align*}
      \mu_L \coloneqq \frac{1}{L^{6g-6}} \sum_{\alpha \in \mcg(\os_g)} \delta_{\frac{1}{L}\alpha \gamma}
    \end{align*}
  \item Letting $L$ go to $\infty$, $\left\{ \mu_L \right\}$ converges to measure $\mu$ that is $\mcg(\os_g)$-invariant.
    By ergodicity of the $\mcg(\os_g)$-action on $\ml(\os_g)$ with respect to the Thurston measure, we have that the limiting measure $\mu$ is a constant multiple $c$ times the Thurston measure.
  \item To show that the constant $c$ is positive, one needs to average over the moduli space $\mo(\os_g)$, using Mirzakhani's integration formula for the Weil-Petersson volume form.
  \end{enumerate}
\end{proof}

To replicate this proof in the non-orientable setting, we pick the original simple closed curve $\gamma$ to be a \emph{two-sided} curve, and replace the exponent $6g-6$ with $h + 1$, where $h$ is the critical exponent of $\mcg(\no_g)$.
With this replacement, we have the following question.
\begin{question}
  Do the sequence of measures $\mu_L$ converge to a locally finite measure supported on $\ml^+(\no_g)$?
\end{question}

\textcite{erlandsson2023mapping} show that the $\mcg(\no_g)$ orbit closure of such a point indeed is $\ml^+(\no_g)$, but the question about convergence of measures is still open.
Since our results suggest that the $\mcg(\no_g)$ action on $\pml^+(\no_g)$ is ergodic with respect to the Patterson-Sullivan measures (and answering Question \ref{ques:full-scc} would imply ergodicity), we can ask the following questions about the limiting measure $\mu$ as well as the product of the Patterson-Sullivan measures with the Lebesgue measure.

\begin{question}
  Is the limiting measure $\mu$ absolutely continuous with respect to the ergodic measure on $\ml^+(\no_g)$ obtained by taking a product of Patterson-Sullivan measure and the Lebesgue measure?
\end{question}
If the answer to the question is yes, then one will have completed Step 2 of the proof for non-orientable surfaces.

To make Step 3 work for non-orientable surfaces, one needs to construct a recursive formula for the volumes of $\systole(\no_g)$: this has been done by \textcite{stanford2023mirzakhani}.






\subsubsection*{Geometric finiteness for mapping class subgroups}

One can think of $\mcg(\no_g)$ as a subgroup of $\mcg(\os_{g-1})$ (where $\os_{g-1}$ is the orientation double cover of $\no_g$), where the embedding is obtained by lifting mapping classes on $\no_g$ to orientation preserving mapping classes on $\os_{g-1}$.
The image of $\mcg(\no_g)$ is an infinite-index subgroup, and stabilizes an isometrically embedded copy of $\teich(\no_g)$ inside $\teich(\os_{g-1})$.

For subgroups of mapping class groups, the notion of convex-cocompactness was introduced by \textcite{farb2002convex}: these groups have good properties with respect to their dynamics on the Teichmüller space.
A natural generalization of these subgroups, inspired by the Kleinian setting, is the notion of geometric finiteness.
While there is not universally agreed upon notion of geometric finiteness for mapping class subgroups, the following two classes are subgroups are considered to be geometrically finite by any reasonable definition.

\begin{enumerate}[(i)]
\item Veech groups: These are stabilizers of Teichmüller discs in $\teich(\os_{g-1})$ which are finitely generated.
  They are lattices in $\mathrm{SL}_2(\mathbb{R})$, and their action on the Teichmüller discs they stabilize is well understood via hyperbolic geometry.
\item Combinations of Veech groups: \textcite{leininger2006combination} show that if two Veech groups $H$ and $K$ share a maximal parabolic subgroup $A$, the subgroup they generate is $H \ast_A K$ (after possibly conjugating by a pseudo-Anosov).
\end{enumerate}

The key emphasis with these two examples is that there are only finitely many cusps, i.e.\ finitely many conjugacy classes of reducible elements.
However, that is not the case for $\mcg(\no_g)$, it stabilizes an isometrically embedded sub-manifold, and yet there are infinitely many conjugacy classes of reducible elements.
Despite having infinitely many ``cusps'', our results show that it is still possible to do Patterson-Sullivan theory on $\mcg(\no_g)$, which is a departure from the Fuchsian/Kleinian setting, where finite Bowen-Margulis measure requires finitely many cusps.


\subsection*{Idea behind proofs of main theorems}

In this subsection, we outline the key ideas behind the proof of the main theorems.

\subsubsection*{Weak convexity of $\systole(\no_g)$}
We construct a projection map from $\teich(\no_g)$ to $\systole(\no_g)$ which takes any one-sided curve of length less than $\vept$ and increases its length to $\vept$, while keeping the lengths and twists of other curves constant.
We then use Minsky's product region theorem to show that this projection map increases distance by only a factor of $(1 + \vepd)$, where $\vepd$ can be picked to be arbitrarily small.

\subsubsection*{Statistical convexity of $\systole(\no_g)$}
To show that geodesics in $\systole(\no_g)$ stay away from the thin part, we construct a random walk on $\systole(\no_g)$, and compute the probability of a single step of the random walk entering the thin part, and show that this probability is small.
Estimating this probability reduces to computing an average over a ball in $\mathbb{H}$ because of Minsky's product region theorem.
The random walk argument gives us that the number of geodesics of length at most $R$ entering the thin part is at most $\exp((\hNP - 1)R)$, where $\hNP$ is the discrete analog of the volume growth entropy of $\systole(\no_g)$.
However, the total number of geodesics of length at most $R$ grows like $\exp((\hLP)R)$, where $\hLP$ is the growth rate of the number of lattice points.
To show that the probability of a geodesic entering the thin part is exponentially small, we need to relate the two entropy terms, and show that $\hLP > \hNP - 1$.

\subsubsection*{Showing $\hLP = \hNP$}
We prove entropy equality by inducting on the complexity of the surface.
We first show it for surfaces with Euler characteristic equal to $-1$ using direct methods, and reduce the inductive step to proving an estimate on complexity length for geodesic segments that spend a definite fraction of their time in thin part.

\subsubsection*{Complexity length estimate}

In this section, we
show that geodesic segments that spend a small but definite fraction of time near their end in the thin part are rare.
We do this by showing that $\hNP$ for a proper subsurface is strictly smaller than $\hNP$ for the entire surface, and use the machinery of complexity length (due to \textcite{dowdall2023lattice}), which builds upon Minsky's product region theorem and hierarchical hyperbolicity of Teichmüller space, to show that geodesic segments ending in the thin part are rare.

\subsection*{Acknowledgements}

The author would like to thank his advisor Alex Wright for constant support for the duration of this project.
The author would also like to thank Spencer Dowdall, Howard Masur, Kasra Rafi, and Jacob Russell for explaining various aspects of hierarchical hyperbolicity.
The author would also like to thank Ilya Gekhtman for comments on an earlier draft of the paper.
The work done in this paper was supported by the Rackham Predoctoral Fellowship for the academic year 2022-2023.





\section{Preliminaries}
\label{sec:preliminaries}

\subsection{Non-Orientable Surfaces}
\label{sec:non-orient-surf}

Similar to orientable surfaces, compact non-orientable surfaces with (possibly empty) boundary are classified by their \emph{demigenus} and number of boundary components.
The demigenus of a non-orientable surfaces is the number of copies of $\mathbb{RP}^2$ that need to be connect-summed in order to get the non-orientable surface.
An alternative way to construct non-orientable surfaces is to start with an orientable surface, and attach \emph{crosscaps}: a crosscap is attached by deleting the interior of an embedded disc, and gluing the $S^1$ boundary of that disc to itself via the antipodal map.

To unify notation between orientable and non-orientable surfaces, we will denote a compact surface with boundary using $\os_{g,b,c}$, which denotes a surface of genus $g$, with $b$ boundary components, and $c$ crosscaps attached.
With this notation, a non-orientable surface $\no_{g, b}$ of demigenus $g$ with $b$ boundary components is $\os_{\frac{g-1}{2}, b, 1}$ if $g$ is odd, and $\os_{\frac{g-2}{2}, b, 2}$ if $g$ is even.

One can classify simple closed curves on a non-orientable surface into two categories based on the topology of their tubular neighbourhoods.
\begin{description}
\item[Two-sided curves] These are curves whose tubular neighbourhoods are homeomorphic to cylinders.
\item[One-sided curves] These are curves whose tubular neighbourhoods are homeomorphic to Möbius bands.
\end{description}

The orientation double cover of $\no_g$ is $\os_{g-1}$, where $p$ denotes the covering map: the one-sided curves on $\no_g$ lift to a single curve on $\os_{g-1}$ that is twice as long, and the two-sided curves on $\no_g$ lift to two disjoint curves on $\os_{g-1}$, both of which are the same length as the original curve.
We also have an orientation reversing deck transformation $\iota$ on $\os_{g-1}$ corresponding to the covering map.
The map $\iota$ swaps the lifts of the two-sided curves, and leaves the lifts of the one-sided curves invariant.

The subgroup $\pi_1(\os_{g-1}) < \pi_1(\no_g)$ is a \emph{characteristic} subgroup, i.e.\ left invariant by an automorphism of $\pi_{1}(\no_g)$ induced by a homeomorphism, and consequently, self-homeomorphisms of $\no_g$ have a unique orientation preserving lift to self-homeomorphisms of $\os_{g-1}$, giving us an embedding $p^\ast$ of mapping class groups, induced by the covering map $p$.
\begin{align*}
  p^{\ast}: \mcg(\no_g) \hookrightarrow \mcg(\os_{g-1})
\end{align*}
The image of $\mcg(\no_g)$ is an infinite-index subgroup of $\mcg(\os_{g-1})$.
The lifting map also induces an embedding of the corresponding Teichmüller spaces, where the image of $\teich(\no_g)$ is the locus left invariant by $\iota^{\ast}$, where $\iota^{\ast}$ is the deck transformation induced map on $\teich(\os_{g-1})$.
\begin{align*}
  p^{\ast}: \teich(\no_g) \hookrightarrow \teich(\os_{g-1})
\end{align*}
This embedding is isometric, i.e.\ Teichmüller geodesics joining points in the image of $\teich(\no_g)$ stay within the image of $\teich(\no_g)$.

These facts present an alternative way of thinking about mapping class groups and Teichmüller spaces of non-orientable surfaces.
They can be thought of as a special infinite index subgroup of $\mcg(\os_{g-1})$, and a isometrically embedded totally real submanifold of $\teich(\os_{g-1})$.
We will use this point of view to prove some of the metric properties of $\teich(\no_g)$ we will require, but for most other applications, we prefer to think of $\teich(\no_g)$ and $\mcg(\no_g)$ as independent objects, without embedding them in other spaces.

The Teichmüller space for non-orientable surfaces can be given Fenchel-Nielsen coordinates using a pants decomposition for $\no_g$: the only difference from the orientable setting is that for all the one-sided curves in the pants decomposition, there is only one coordinate, associated to the length of the one-sided curve, rather than both the twist and length.
This means that Teichmüller spaces of non-orientable surfaces can have odd $\mathbb{R}$-dimension.

Since these Teichmüller spaces of non-orientable surfaces can have odd dimension, we no longer have a symplectic structure, and a corresponding volume form.
However, the image of $\teich(\no_g)$ in $\teich(\os_{g-1})$ is a Lagrangian submanifold, and consequently a Lagrangian volume form.
This Lagrangian volume form $\nu_N$ has a particularly nice description in terms of a pants decomposition $\mathcal{P}$, due to \textcite{norbury2008lengths}.

Let $\mathcal{P}$ be a pants decomposition for $\no_g$: $\nu_N$ is defined in terms of the lengths and twists of curves in $\mathcal{P}$.

\begin{align*}
  \nu_N = \left( \bigwedge_{\text{$\gamma_i$ one-sided}} \coth(\ell(\gamma_i)) d\ell(\gamma_i) \right) \wedge \left( \bigwedge_{\text{$\gamma_i$ two-sided}} d\tau(\gamma_i) \wedge d\ell(\gamma_i) \right)
\end{align*}
Here $\ell(\gamma_i)$ denotes the length of the curve $\gamma_i$, and $\tau(\gamma_i)$ denotes the twist, when $\gamma_i$ is two-sided.

Similar to Wolpert's magic formula, the $\mu_N$ has the following properties.
\begin{itemize}
\item[-] The form $\nu_N$ does not depend on the choice of pants decomposition.
\item[-] $\nu_N$ is $\mcg(\no_g)$ invariant, up to sign.
\end{itemize}
This lets us use the absolute value of $\nu_N$ as a volume form on the quotient $\teich(\no_g) / \mcg(\no_g)$.
We will, for notational convenience, use $\nu_N$ to mean $\left| \nu_N \right|$.

With respect to $\nu_N$, the action of $\mcg(\no_g)$ on $\teich(\no_g)$ is infinite covolume: the same also holds for the geodesic flow invariant volume on the full\footnote{Full referring to the entire unit cotangent bundle as opposed to the restricted unit cotangent bundle.} unit cotangent bundle.
Furthermore, the set of cotangent directions in which the geodesic flow recurs to the $\mcg(\no_g)$-cocompact part of $\teich(\no_g)$ has $\nu_N$-measure $0$: this is due to \textcite{norbury2008lengths} (see \textcite{gendulphe2017whats} for more analogies with infinite covolume Fuchsian groups).

\subsection{Critical Exponents and Patterson-Sullivan Theory}
\label{sec:crit-expon-patt}

In this section, we outline techniques that are used to deal with infinite-covolume group actions on non-positively curved metric spaces, i.e.\ Patterson-Sullivan theory.
For the sake of concreteness, we will state most results in this section for infinite-covolume geometrically finite Fuchsian groups, and specify a generalized theorem/conjecture for the setting of mapping class groups.

Let $\Gamma$ be an infinite-covolume geometrically finite Fuchsian group.
Geometric finiteness in this context means that the surface $\mathbb{H}/\Gamma$ is composed of \emph{finitely} many components outside of a large enough compact set, where each component is isometric to one of the following regions.
\begin{enumerate}[(i)]
\item Cusps: A cusp is the quotient of a horoball (i.e.\ $\left\{ \mathrm{Im}(z) > t_0 \right\}$ with the upper half plane model) with respect to an isometry of the form $
  \begin{pmatrix}
    1 & t \\
    0 & 1
  \end{pmatrix}
  $.
\item Flares: A flare is quotient of the region $\left\{ \mathrm{Re}(z) > 0 \right\}$ with respect to an isometry of the form $
  \begin{pmatrix}
    q & 0 \\
    0 & \frac{1}{q}
  \end{pmatrix}
  $.
\end{enumerate}
Note that it is the flares of the hyperbolic surface that make its volume infinite: each of the cusps has finite hyperbolic volume.

The presence of flares also means that the limit set of $\Gamma$, i.e.\ the set $\overline{\Gamma p} \cap \partial \mathbb{H}$ (for any $p \in \mathbb{H}$) is a measure $0$ subset of the boundary (with respect to the usual Lebesgue measure on $S^1$), as well as forcing the Liouville measure, which is a geodesic flow invariant measure on the unit tangent bundle $S^1 \Gamma / \mathbb{H}$ to be infinite.

Since most results from ergodic theory need a finite flow-invariant measure, the Liouville measure does not work for these infinite-covolume groups.
The fix to this problem is to construct a new (family of) measure(s) $\left\{ \mu_q \right\}$ on the boundary, which replaces the Lebesgue measure, with respect to which the limit set has full measure, and then use that measure to construct a finite geodesic flow invariant measure on the unit tangent bundle.

The family of measures on the boundary is called the Patterson-Sullivan measure, and the corresponding measure on the unit tangent bundle is called the Bowen-Margulis-Sullivan measure.

\subsubsection{Construction of Patterson-Sullivan measures}
\label{sec:constr-patt-sull}

We begin by picking a basepoint $p \in \mathbb{H}$, and a parameter $h > 0$, and consider the measure $\mu_q^h$, for $q \in \mathbb{H}$.
\begin{align*}
  \mu_q^h \coloneqq \frac{\sum_{\gamma \in \Gamma} \exp(-h d(p, \gamma q)) \delta_{\gamma q}}{\sum_{\gamma \in \Gamma} \exp(-h d(p, \gamma p))}
\end{align*}
Here, $\delta_{\gamma q}$ denotes the Dirac mass at $\delta_{\gamma q}$, and $d(p, \gamma q)$ denotes the hyperbolic distance between $p$ and $\gamma q$.

For large enough $h$, the denominator of the expression is a convergent sum, and the resulting measure has total mass that only depends on the choice of $p$ and $q$.

Conversely, for small enough values of $h > 0$, the sum in the denominator diverges, and the measure $\mu_q^h$ is not well defined.
To see this, one can use the ping pong lemma to embed a copy of the free group $F_2$ in $\Gamma$, and show that for this copy of $F_2$, there is a small enough $h$ to make the sum diverge.
We can now define the critical exponent $h_{\Gamma}$ of the group $\Gamma$.
\begin{definition}[Critical exponent]
  The critical exponent $h_{\Gamma}$ is the infimum of all the values of $h$ for which the following infinite sum converges.
  \begin{align*}
    \sum_{\gamma \in \Gamma} \exp(-h d(p, \gamma p))
  \end{align*}
\end{definition}

Note that for $h = h_{\Gamma}$, it is possible for the exponential sum to converge or diverge.
If the sum converges at the critical exponent, the group $\Gamma$ is said to be of \emph{convergent type}, and if it diverges, the group $\Gamma$ is of \emph{divergent type}.

Since the measures $\mu_q^h$ are well-defined for $h > h_{\Gamma}$, and their mass is uniformly bounded (where the bound only depends on $q$), we have that for some sequence of $h \searrow h_{\Gamma}$, the sequence of measures $\mu_q^h$ converges to some limiting measure $\mu_q$.
This family of limiting measures $\left\{ \mu_q \right\}$ is called a Patterson-Sullivan measure.
The Patterson-Sullivan measure $\left\{ \mu_q \right\}$ is not unique \emph{a priori}, since picking different sequences $h \searrow h_{\Gamma}$ might lead to different limiting measures.

In practice, the uniqueness of the Patterson-Sullivan measure follows from the ergodicity of the geodesic flow with respect to the Bowen-Margulis-Sullivan measure constructed from a given Patterson-Sullivan measure.
We will skip the construction of the Bowen-Margulis-Sullivan measure $\mu_{\mathrm{BMS}}$, since the specifics of the construction are not relevant for the remainder of the paper.
We refer the reader to \textcite{quint2006overview} for the construction of $\mu_{\mathrm{BMS}}$.


\subsubsection{Some results in Patterson-Sullivan theory}
\label{sec:some-results-patt}

The question of finiteness and ergodicity of the Bowen-Margulis-Sullivan measure is equivalent to several other conditions, some of which are easier to check in some examples.

\begin{theorem}[Hopf-Tsuji-Sullivan dichotomy; \textcite{sullivan1979density}]
  \label{thm:hts-dich}
  For a geometrically finite group $\Gamma$, the following conditions are equivalent.
  \begin{enumerate}[(i)]
  \item The group is of divergent type.
  \item The Bowen-Margulis-Sullivan measure is finite.
  \item The geodesic flow is ergodic with respect to the Bowen-Margulis-Sullivan measure.
  \end{enumerate}
\end{theorem}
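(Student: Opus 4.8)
The plan is to route all three equivalences through the auxiliary statement that the Patterson--Sullivan density $\{\mu_q\}$, constructed at the critical exponent $h_\Gamma$, assigns full mass to the \emph{conical} (radial) limit set $\Lambda_c \subseteq \partial\mathbb{H}$, i.e.\ to those $\xi$ for which a geodesic ray toward $\xi$ returns infinitely often to a fixed compact subset of $\mathbb{H}/\Gamma$. The technical engine throughout is \emph{Sullivan's shadow lemma}: using the conformal--density relation $\tfrac{d\mu_q}{d\mu_p}(\xi) = e^{-h_\Gamma \beta_\xi(p,q)}$ (with $\beta$ the Busemann cocycle) together with the fact that $\mu_p$ is not concentrated at a single point, one obtains $R_0$ so that for every $R \ge R_0$ the shadow $\mathcal{O}_R(p,\gamma p)$ of the ball $B(\gamma p, R)$ viewed from $p$ satisfies $C^{-1} e^{-h_\Gamma d(p,\gamma p)} \le \mu_p\bigl(\mathcal{O}_R(p,\gamma p)\bigr) \le C\, e^{-h_\Gamma d(p,\gamma p)}$, with the standard refinement (an extra factor polynomial in the cusp depth, with exponent governed by the rank of the stabilising parabolic) when $\gamma p$ lies deep inside a cusp neighbourhood.

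First I would settle $(\mathrm{i}) \Leftrightarrow [\mu_p(\Lambda_c)=1]$ by a Borel--Cantelli argument. Since $\Lambda_c$ is, after a routine manipulation, the set of $\xi$ lying in $\mathcal{O}_R(p,\gamma p)$ for infinitely many $\gamma$ with $d(p,\gamma p) \to \infty$, the shadow lemma matches $\sum_\gamma \mu_p\bigl(\mathcal{O}_R(p,\gamma p)\bigr)$ with the Poincar\'e series of $\Gamma$ at $h_\Gamma$ up to multiplicative constants. In the convergent-type case the trivial half of Borel--Cantelli gives $\mu_p(\Lambda_c) = 0$ at once. In the divergent-type case one needs the converse half, and \textbf{this is the step I expect to be the main obstacle}: it requires a quasi-independence estimate for shadows attached to orbit points in distinct metric annuli (a Kochen--Stone / second-moment input, itself proved via the shadow lemma and the geometry of nested shadows), after which one promotes positive measure to full measure using that $\Lambda_c$ is $\Gamma$-invariant modulo $\mu_p$-null sets. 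Both the quasi-independence and the $0$--$1$ law rest on the group dynamics in an essential way, and in the presence of cusps the estimates must be carried out with the refined shadow lemma.

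Next, $[\mu_p(\Lambda_c) = 1] \Rightarrow (\mathrm{iii})$ is the Hopf argument. Writing the Bowen--Margulis--Sullivan measure in Hopf coordinates $(\xi^-,\xi^+,t)$ on $\bigl((\partial\mathbb{H} \times \partial\mathbb{H}) \setminus \Delta\bigr) \times \mathbb{R}$ modulo $\Gamma$, it disintegrates along stable and unstable horocycles against copies of $\mu_p$ twisted by the Busemann cocycle; full conical measure makes the geodesic flow conservative, and then the Hopf ratio ergodic theorem forces any flow-invariant $L^2$ function to be constant along both the stable and the unstable foliations, hence $\mu_{\mathrm{BMS}}$-almost everywhere constant, which is ergodicity. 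For the converse, $(\mathrm{iii}) \Rightarrow (\mathrm{i})$ is the contrapositive of the convergent case: convergent type gives $\mu_p(\Lambda_c) = 0$, hence a purely dissipative flow, which cannot be ergodic since $\mathbb{H}/\Gamma$ is not reduced to a single geodesic orbit.

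Finally, $(\mathrm{i}) \Leftrightarrow (\mathrm{ii})$ is where geometric finiteness is used directly, through the decomposition of $\mathbb{H}/\Gamma$ into a compact core together with its finitely many flares and cusps. Since $\mu_{\mathrm{BMS}}$ is supported on the geodesics with both endpoints in $\Lambda_\Gamma$, it is carried by the unit tangent bundle over the convex core, which is compact outside the cusps and meets each flare in a bounded set; so all the $\mu_{\mathrm{BMS}}$-mass outside the cusp regions is finite unconditionally. For a cusp of depth $\ge T$ the refined shadow lemma renders its $\mu_{\mathrm{BMS}}$-mass comparable to $\sum_{\sigma \in P,\; d(o,\sigma o) \ge 2T} d(o,\sigma o)\, e^{-h_\Gamma d(o,\sigma o)}$, summed over the rank-one parabolic subgroup $P$ fixing that cusp, and comparing $h_\Gamma$ with the parabolic critical exponent shows this tail is finite exactly when $\Gamma$ is of divergent type; conversely, convergent type makes $\mu_{\mathrm{BMS}}$ dissipative with infinite total mass. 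Chaining the three links closes the cycle of equivalences.
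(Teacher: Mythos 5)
The paper does not prove this theorem: it is quoted as classical background and attributed to Sullivan, so there is no in-paper argument to compare against. Judged on its own, your outline is the standard classical proof and is sound in its overall architecture: the shadow lemma with the cusp refinement, the Borel--Cantelli/Kochen--Stone argument identifying divergence with full Patterson--Sullivan measure on the conical limit set (you correctly flag the second-moment quasi-independence step as the hard point), the Hopf ratio-ergodic-theorem argument for conservativity plus ergodicity, and the reduction of finiteness of the Bowen--Margulis--Sullivan measure to the cusp contributions via geometric finiteness.

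The one step I would not accept as written is the last link. You assert that the cusp tail $\sum_{\sigma \in P,\, d(o,\sigma o)\ge 2T} d(o,\sigma o)\,e^{-h_\Gamma d(o,\sigma o)}$ is finite \emph{exactly when} $\Gamma$ is of divergent type; this conflates two different series and is not a valid biconditional. Convergence of that parabolic series is governed by the comparison of $h_\Gamma$ with the parabolic exponent $\delta_P$ (equal to $1/2$ for a rank-one cusp), not by the behaviour of the full Poincar\'e series of $\Gamma$, and in variable curvature there are divergent geometrically finite groups with infinite Bowen--Margulis--Sullivan measure, so the implication cannot be purely formal. The correct routes are: for $(\mathrm{ii}) \Rightarrow (\mathrm{i})$, finiteness gives conservativity by Poincar\'e recurrence, hence full conical measure, hence divergence by the converse shadow/Borel--Cantelli direction you already established; for $(\mathrm{i}) \Rightarrow (\mathrm{ii})$, you need the strict gap $h_\Gamma > \delta_P$ for each parabolic subgroup --- in the Fuchsian geometrically finite setting this is Beardon's inequality $h_\Gamma > 1/2$ whenever $\Gamma$ contains parabolics, and more generally it is the Dal'bo--Otal--Peign\'e criterion --- which then makes the cusp series converge and the Bowen--Margulis--Sullivan measure finite. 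Supplying that strict inequality (or citing it) is a genuinely separate input that your sketch currently papers over; with it inserted, the cycle of equivalences closes as you describe. A minor point: your claim that a totally dissipative flow cannot be ergodic should be accompanied by the remark that $\Gamma$ is non-elementary, which is what rules out the degenerate single-orbit case.
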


The ergodicity of the geodesic flow with respect to $\mu_{\mathrm{BMS}}$ can be upgraded to mixing if the length spectrum of $\mathbb{H}/\Gamma$ generates a dense subgroup of $\mathbb{R}$.

\begin{theorem}[\textcite{babillot2002mixing}]
  \label{thm:mixing-h2}
  If $\mu_{\mathrm{BMS}}$ is finite, and the lengths of the closed geodesics on $\mathbb{H}/\Gamma$ generate a dense subgroup of $\mathbb{R}$, then the geodesic flow is mixing with respect to $\mu_{\mathrm{BMS}}$.
\end{theorem}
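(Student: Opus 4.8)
The plan is to show that for these flows the only possible obstruction to mixing is an arithmetic one, and that the density hypothesis on the length spectrum removes it. I would work in the Hopf parametrization, identifying the unit tangent bundle $T^1(\mathbb{H}/\Gamma)$ with $\Gamma\backslash\left((\partial\mathbb{H}\times\partial\mathbb{H}\setminus\Delta)\times\mathbb{R}\right)$, where $\Delta$ is the diagonal; in these coordinates the geodesic flow $g_t$ is translation in the $\mathbb{R}$-coordinate, and after fixing a basepoint $o\in\mathbb{H}$ the measure $\mu_{\mathrm{BMS}}$ lifts to $\mu_o(d\xi^-)\,\mu_o(d\xi^+)\,dt$ weighted by the appropriate Busemann/Gromov cocycle, where $\{\mu_q\}$ is the Patterson--Sullivan family at the critical exponent $h_\Gamma$. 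Since $\mu_{\mathrm{BMS}}$ is finite, the Hopf--Tsuji--Sullivan dichotomy (Theorem~\ref{thm:hts-dich}) gives that $(g_t,\mu_{\mathrm{BMS}})$ is ergodic; in particular $\lvert f\rvert$ is constant $\mu_{\mathrm{BMS}}$-a.e.\ for any $L^2$ eigenfunction $f$ of the flow, so eigenfunctions may be normalised to be unimodular.

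The first and main step is a rigidity statement: if $(g_t,\mu_{\mathrm{BMS}})$ is ergodic but \emph{not} mixing, then it has a non-trivial measurable eigenvalue, i.e.\ there exist $\lambda\neq 0$ and $f\in L^2(\mu_{\mathrm{BMS}})$ with $\lvert f\rvert\equiv 1$ and $f\circ g_t=e^{i\lambda t}f$. I would prove this following Babillot's original analysis \cite{babillot2002mixing}: failure of mixing produces $F,G\in L^2$ and times $t_n\to\infty$ along which $\langle F\circ g_{t_n},G\rangle$ fails to converge to the product of the $\mu_{\mathrm{BMS}}$-averages of $F$ and $\overline{G}$; pushing $F$ forward by $g_{t_n}$ spreads its mass along unstable horocycles, and the local product structure of $\mu_{\mathrm{BMS}}$ forces the asymptotics of these correlations to be governed by an almost-periodic function of $t$ whose closed group of periods is a subgroup of $\mathbb{R}$. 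If that group is all of $\mathbb{R}$ one recovers mixing, a contradiction; otherwise it is $(2\pi/\lambda)\mathbb{Z}$ for some $\lambda\neq 0$, and one extracts an eigenfunction with eigenvalue $\lambda$. This is precisely the phenomenon ``weakly mixing $\Leftrightarrow$ mixing'' that is special to geodesic flows carrying Gibbs measures, and it is the step where the product/Gibbs structure of $\mu_{\mathrm{BMS}}$ — rather than mere ergodicity — is essential.

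The second step converts such an eigenvalue into arithmeticity of the length spectrum. Lifting $f$ to $(\partial\mathbb{H}\times\partial\mathbb{H}\setminus\Delta)\times\mathbb{R}$, the relation $f\circ g_t=e^{i\lambda t}f$ forces $f=e^{i\lambda t}\,h(\xi^-,\xi^+)$ for a measurable unimodular $h$ on $(\partial\mathbb{H}\times\partial\mathbb{H})\setminus\Delta$, and $\Gamma$-invariance of $f$ becomes the cohomological identity $h(\gamma\xi^-,\gamma\xi^+)=e^{-i\lambda\,\beta_\gamma(\xi^+)}\,h(\xi^-,\xi^+)$ for all $\gamma\in\Gamma$, where $\beta_\gamma$ is the Busemann cocycle based at $o$. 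By a Livsic-type regularity argument for this cohomological equation — using that the Busemann cocycle is H\"older and that the Patterson--Sullivan measures have the appropriate shadow regularity — $h$ agrees a.e.\ with a function that is continuous on $(\Lambda\times\Lambda)\setminus\Delta$ and unimodular on the support of $\mu_o\otimes\mu_o$, where $\Lambda$ is the limit set. Evaluating the identity at the ordered pair of fixed points $(\xi^-_\gamma,\xi^+_\gamma)$ of a hyperbolic $\gamma\in\Gamma$ — both of which lie in $\Lambda$ and are fixed by $\gamma$ — and using that $\beta_\gamma(\xi^+_\gamma)=\pm\ell(\gamma)$, where $\ell(\gamma)$ is the translation length of $\gamma$, i.e.\ the length of the associated closed geodesic on $\mathbb{H}/\Gamma$, yields $e^{\pm i\lambda\ell(\gamma)}=1$, so $\ell(\gamma)\in(2\pi/\lambda)\mathbb{Z}$. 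Hence the lengths of all closed geodesics generate a subgroup of the discrete group $(2\pi/\lambda)\mathbb{Z}$, contradicting the hypothesis that they generate a dense subgroup of $\mathbb{R}$. Therefore no non-trivial eigenvalue exists, and by the rigidity statement of the previous paragraph $(g_t,\mu_{\mathrm{BMS}})$ is mixing.

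I expect the rigidity step (second paragraph) to be the main obstacle: showing that non-mixing always produces a genuine eigenvalue requires a delicate analysis of how $g_t$ equidistributes mass along unstable horocycles, and it is the only point where the precise product/Gibbs structure of $\mu_{\mathrm{BMS}}$, not just ergodicity, is indispensable. The continuity of the eigenfunction needed in the third paragraph is a secondary but still non-trivial technical point, again relying on the H\"older structure of the boundary cocycle.
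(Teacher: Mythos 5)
The paper does not prove this statement: Theorem \ref{thm:mixing-h2} is quoted as background directly from \textcite{babillot2002mixing}, so there is no internal proof to compare yours against. Judged on its own terms, your sketch follows the standard architecture of Babillot's argument — (1) for these flows, failure of mixing forces a nontrivial $L^2$ eigenvalue, the step where the local product structure of $\mu_{\mathrm{BMS}}$ in Hopf coordinates (and not mere ergodicity) is essential; (2) an eigenvalue yields a $\Gamma$-cohomological identity on the boundary whose evaluation on periodic data puts the length spectrum inside $(2\pi/\lambda)\mathbb{Z}$, contradicting density — and you correctly identify step (1) as the real content.

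The one place you genuinely depart from Babillot, and where a gap could open if you carried this out, is step (2). You propose a Livsic-type regularity theorem to upgrade the measurable solution $h$ to a function continuous on $(\Lambda\times\Lambda)\setminus\Delta$, and then evaluate the identity at the fixed-point pair $(\xi^-_\gamma,\xi^+_\gamma)$, which is a single point of $\mu_o\otimes\mu_o$-measure zero. Measurable Livsic rigidity is not off-the-shelf here: $\Gamma$ is geometrically finite, possibly with cusps and infinite covolume, you are handed no Markov coding or Axiom A structure, and the cocycle lives over the boundary action rather than over a compact hyperbolic set, so "H\"older cocycle plus shadow regularity of Patterson--Sullivan measures" does not by itself give continuity of $h$ on the support. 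Babillot's own argument avoids any pointwise regularity claim: from the a.e.\ cohomological identity she derives, via Fubini and density points, that $e^{i\lambda\,\mathrm{cr}}=1$ for a.e.\ quadruple of limit points, where $\mathrm{cr}$ is the (continuous) Busemann/cross-ratio combination, and then recovers translation lengths as limits of such cross-ratios; the arithmetic conclusion is reached without ever evaluating $h$ at a specific pair. So either replace your Livsic step by this cross-ratio/density-point argument, or supply a genuine measurable-rigidity theorem valid for finite-$\mu_{\mathrm{BMS}}$ geometrically finite groups — as written, that step is asserted rather than available.
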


One can then combine Theorem \ref{thm:mixing-h2} with the following result of Roblin to count lattice points where the logarithmic error goes to $0$.

\begin{theorem}[\textcite{roblin2003ergodicite}]
  \label{thm:counting-h2}
  Let $B_p(R)$ denote the lattice point counting function.
  \begin{align*}
    B_p(R) \coloneqq \#\left( \gamma \in \Gamma \mid d(p, \gamma p) \leq R \right)
  \end{align*}
  Then there exists a constant $C$, which is the $\mu_{\mathrm{BMS}}$-volume of the unit tangent bundle of $\mathbb{H}/\Gamma$, such that $B_p(R)$ can be approximated in the following manner.
  \begin{align*}
    \lim_{R \to \infty} \log\left( \frac{C \exp(h_{\Gamma} R)}{B_p(R)} \right) = 0
  \end{align*}
\end{theorem}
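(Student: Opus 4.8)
The plan is to deduce the counting asymptotic from mixing of the geodesic flow (Theorem~\ref{thm:mixing-h2}) via the thickening argument that goes back to Margulis's thesis, carried out in the infinite-volume setting by Roblin. The statement is invoked in the divergent-type case, so by the Hopf--Tsuji--Sullivan dichotomy (Theorem~\ref{thm:hts-dich}) the Bowen--Margulis--Sullivan measure $\mu_{\mathrm{BMS}}$ is finite; since $\Gamma$ contains hyperbolic elements with incommensurable translation lengths, the length spectrum of $\mathbb{H}/\Gamma$ generates a dense subgroup of $\mathbb{R}$, and hence the geodesic flow on $T^1(\mathbb{H}/\Gamma)$ is mixing with respect to $\mu_{\mathrm{BMS}}$. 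This is the only input beyond soft geometry of $\mathbb{H}$.

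The core step is an equidistribution statement for expanding spheres of directions. Write $v_{x,y}\in T^1\mathbb{H}$ for the unit vector at $x$ pointing toward $y$, and use the Hopf parametrization $T^1\mathbb{H}\cong (\partial\mathbb{H}\times\partial\mathbb{H}\setminus\Delta)\times\mathbb{R}$, in which $\mu_{\mathrm{BMS}}$ has the form $e^{h_{\Gamma}(\beta_{\xi^-}(x)+\beta_{\xi^+}(x))}\,d\mu_x(\xi^-)\,d\mu_x(\xi^+)\,ds$ for the Patterson--Sullivan family $\{\mu_x\}$ and the Busemann cocycle $\beta$. One shows
\[
  h_{\Gamma}\,\|\mu_{\mathrm{BMS}}\|\; e^{-h_{\Gamma}R}\sum_{\gamma\in\Gamma,\ d(x,\gamma x)\le R}\delta_{\overline{v_{x,\gamma x}}}\;\longrightarrow\;\lambda_x
\]
weakly on $T^1(\mathbb{H}/\Gamma)$, where $\lambda_x$ is an explicit finite measure (the pushforward of the conditional of $\mu_{\mathrm{BMS}}$ on the unit sphere at $x$) whose total mass is, up to the normalization of the Patterson--Sullivan family, equal to $h_{\Gamma}\|\mu_{\mathrm{BMS}}\|\cdot C$. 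This is extracted from mixing by covering the sphere of directions at $x$ with finitely many small shadows of balls, applying the mixing statement to products of bump functions supported on those shadows (the renormalization by the Busemann factors is exactly what produces the $e^{h_{\Gamma}R}$), and controlling the errors with a \emph{shadow lemma} bounding $\mu_x$ of the shadow at $x$ of a ball of radius $r$ at distance $t$ by $C(r)\,e^{-h_{\Gamma}t}$.

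Finally, testing the equidistribution statement against the constant function $1$ on $T^1(\mathbb{H}/\Gamma)$ collapses the left-hand side to $h_{\Gamma}\|\mu_{\mathrm{BMS}}\|\,e^{-h_{\Gamma}R}B_p(R)$ and the right-hand side to the total mass of $\lambda_p$; dividing gives $B_p(R)/(C e^{h_{\Gamma}R})\to 1$, which is precisely $\log\!\big(C e^{h_{\Gamma}R}/B_p(R)\big)\to 0$. The main obstacle is the uniformity of the shadow-lemma estimates: near a parabolic point the exponent in the shadow lemma degrades (one needs the Stratmann--Velani global measure formula), so the bump functions discretizing the sphere of directions must be chosen at cusp-dependent scales, and one must show that orbit points whose geodesic representatives spend a long time deep in a cusp contribute negligibly to $B_p(R)$. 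This is the step where finiteness of $\mu_{\mathrm{BMS}}$ (equivalently, divergence type) enters in an essential rather than formal way; everything else is bookkeeping of the Busemann cocycles and of the normalization needed to identify the constant as the stated $\mu_{\mathrm{BMS}}$-volume $C$.
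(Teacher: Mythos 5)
This statement is not proved in the paper at all: it is quoted verbatim as background from \textcite{roblin2003ergodicite}, so there is no internal argument to compare yours against. That said, your sketch is the standard Margulis--Roblin route and is correct in outline: finiteness of $\mu_{\mathrm{BMS}}$ plus non-arithmeticity of the length spectrum gives mixing (Theorem \ref{thm:mixing-h2}), mixing applied to thickened bump functions in the Hopf parametrization gives equidistribution of the renormalized orbital measures $e^{-h_{\Gamma}R}\sum_{d(p,\gamma p)\le R}\delta_{\overline{v_{p,\gamma p}}}$, the shadow lemma controls the discretization of the sphere of directions, and integrating against the constant function $1$ yields the counting asymptotic. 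Your identification of where the real work lies (uniformity of shadow estimates near parabolic fixed points via the global measure formula, and showing that orbit points with long cusp excursions are negligible) is also where Roblin's argument is genuinely delicate, so the sketch is faithful to the actual proof rather than papering over it.

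One caveat on the constant: Roblin's theorem gives $B_p(R)\sim \dfrac{\left\| \mu_p \right\|^2}{h_{\Gamma}\left\| \mu_{\mathrm{BMS}} \right\|}\exp(h_{\Gamma}R)$, where $\mu_p$ is the Patterson--Sullivan measure based at $p$; the constant is therefore not literally the $\mu_{\mathrm{BMS}}$-volume of the unit tangent bundle (the statement in the paper is loose on this point, and your ``up to normalization'' hedge inherits the same looseness). If you were writing this up in full, you would need to track the Busemann normalizations carefully to land on the correct constant, but this does not affect the logarithmic-error form of the statement being proved.
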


\subsubsection{Extending these results to subgroups of mapping class groups}
\label{sec:extend-these-results}

In \cite{10.1093/imrn/rny001}, Yang outlined a criterion for a \emph{non-elementary group with contracting element} acting on metric space to be of divergent type: the action must be \emph{statistically convex-cocompact}.
In the context of subgroups of mapping class groups, a subgroup is non-elementary if it contains two non-commuting pseudo-Anosov elements.

To explain what a statistically convex-cocompact action is, we first need to describe what is means for a subset of a metric space to be statistically convex.

Let $X$ be a metric space with a group $G$ acting on it, and let $Y$ be a subset of $X$ which is invariant under the $G$-action, i.e.\ we have a $G$-action on $Y$ as well, and let $p$ be a point in $Y$.
One can consider two kinds of counting functions for the $G$-action on $Y$.
\begin{align*}
  N_p(R) &\coloneqq \left\{ \gamma \in G \mid d(p, \gamma p) \leq R \right\} \\
\end{align*}
The function $N_p$ is the standard lattice point counting function.
We also want to look at those lattice points that detect a failure of convexity of $Y$: we call these points \emph{concave lattice points}.
\begin{definition}[$s$-Concave lattice points]
  A lattice point $\gamma p$ is $s$-concave if some geodesic segment $\kappa$ starting in a ball of radius $s$ centered at $p$ and ending in ball of radius $s$ centered at $\gamma p$ stays outside the set $Y$.

  The path obtained by joining $p$ to the starting point of $\kappa$, then following $\kappa$, and then joining the end point of $\kappa$ to $\gamma p$ is called the \emph{concavity detecting path for $\gamma p$}.
\end{definition}
For our applications, the precise value of $s$ will not be very important: we fix it to be twice the diameter of the compact set $\thick(\no_g)/\mcg(\no_g)$ (any value larger that the diameter of $\thick(\no_g)/\mcg(\no_g)$ will work though).

Let $M_p(R)$ denote the counting function for concave lattice points.
Let $h$ and $h_c$ be the exponential growth rates for $N_p(R)$ and $M_p(R)$.
\begin{align*}
  h &\coloneqq \lim_{R \to \infty} \frac{\log\left( N_p(R) \right)}{R} \\
  h_c &\coloneqq \lim_{R \to \infty} \frac{\log\left( M_p(R) \right)}{R}
\end{align*}

\begin{definition}[Statistically convex subset]
  \label{defn:statistical-convex-subset}
  The subset $Y$ is said to be statistically convex if $h_c < h$.
\end{definition}

\begin{definition}[Statistically convex-cocompact action]
  The action of $G$ on $X$ is statistically convex-cocompact if there exists some $G$-invariant subset $Y$ such that $Y$ is statistically convex, and the action of $G$ on $Y$ is cocompact.
\end{definition}

In \cite{10.1093/imrn/rny001}, Yang shows that when a non-elementary group acts statistically convex-cocompactly on a space, the group is of divergent type.

\textcite{coulon2024ergodicity} shows that for groups with strongly contracting elements that act statistically convex-cocompactly, a version of the Hopf-Tsuji-Sullivan dichotomy (Theorem \ref{thm:hts-dich}) holds.
Combining this with Yang's result of the group being of divergence type, one can conclude that the Bowen-Margulis-Sullivan measure on the unit cotangent bundle has finite mass and the geodesic flow is ergodic.


In the remainder of this paper, we show that the action of $\mcg(\no_g)$ on $\systole(\no_g)$ (the subset of $\teich(\no_g)$ where the one-sided curves cannot be shorter than $\vept$) is statistically convex-cocompact.





\subsection*{List of notation}
\begin{itemize}
\item[-] $\os_g$: An orientable surface of genus $g$.
\item[-] $\os_{g,b,c}$: A surface of genus $g$ with $b$ boundary components, and $c$ crosscaps attached.
\item[-] $\no_g$: A non-orientable surface of genus $g$: this is the same as $\os_{\frac{g-1}{2}, 0, 1}$ if $g$ is odd, and $\os_{\frac{g-2}{2}, 0, 2}$ if $g$ is even.
\item[-] $\teich(S)$: The Teichmüller space of the surface $S$.
\item[-] $\systole(S)$: The one-sided systole superlevel set in $\teich(S)$.
\item[-] $\nu_N$: The Lagrangian volume form on $\teich(\no_g)$.
\item[-] $B_{\tau}(x)$: A ball of radius $\tau$ (with respect to the Teichmüller metric) centered at $x$.
\item[-] $B_{\tau}^{\vept}(x)$: A ball of radius $\tau$ (with respect to the induced path metric on $\systole(\no_g)$) centered at $x$.
\item[-] $A_{\tau}$: The averaging operator on a ball of radius $\tau$.
\item[-] $\hLP(\teich(S))$: The exponential growth rate for the mapping class group orbit of a point $x$ in $\teich(S)$.
\item[-] $\hLP(H)$: For a subgroup $H$ of $\mcg(S)$, this is the exponential growth rate of for the $H$-orbit of a point $x$ in $\teich(S)$.
\item[-] $\net$: An $(\vepn, 2\vepn)$-net.
\item[-] $\hNP(\core(\teich(S)))$: This is the exponential growth rate for the net points in an $(\vepn, 2 \vepn)$-net in the weak convex core of $\teich(S)$. The value of $\vepn$ is usually clear from the context.
\item[-] $\pitchfork$: $U \pitchfork V$ denotes that the surfaces $U$ and $V$ are transverse.
\item[-] $\pitchfork_{W}$: $U \pitchfork_W V$ denotes that $U$ and $V$ are transverse when restricted to any subsurface of $W$ which intersects both $U$ and $V$ non-trivially.
\item[-] $U \lessdot V$: The Behrstock partial order for transverse subsurfaces $U$ and $V$.
\item[-] $\emul$: We say $a \emul b$ if $a$ and $b$ are equal up to a multiplicative error of $k$ and an additive error of $c$, where $k$ and $c$ are some fixed constants.
\end{itemize}

\section{The Weak Convex Core of $\teich(\no_g)$}
\label{sec:weak-convex-core}

\subsection{Issues with Geometric Finiteness and Statistical Convex-Cocompactness}
\label{sec:issu-with-geom}

In order to show that the action of $\mcg(\no_g)$ on $\teich(\no_g)$ is geometrically finite (in the sense of Fuchsian groups), we need to exhibit a \emph{convex core}, i.e.\ a convex subset of $\teich(\no_g)$ on which the action of $\mcg(\no_g)$ is finite covolume.
Similarly, to show that the action of $\mcg(\no_g)$ on $\teich(\no_g)$ is statistically convex-cocompact, we need to exhibit a \emph{statistical convex core}, which is a \emph{statistically convex subset} (see Definition \ref{defn:statistical-convex-subset}) of $\teich(\no_g)$ on which $\mcg(\no_g)$ acts cocompactly.

A candidate for the convex core was suggested by \textcite{gendulphe2017whats}, namely the \emph{one-sided systole superlevel set} $\systole(\no_g)$.

\begin{definition}[One-sided systole superlevel set]
  The one-sided systole superlevel set is the subset of $\teich(\no_g)$ where no one-sided curve is shorter than $\vept$. This set is denoted $\systole(\no_g)$.
\end{definition}

The subset $\systole(\no_g)$ has several properties that suggest it should be the convex core for the $\mcg(\no_g)$ action.
\begin{itemize}
\item[-] The space $\teich(\no_g)$ $\mcg(\no_g)$-equivariantly deformation retracts onto the subset $\systole(\no_g)$ (Proposition 19.2 of \cite{gendulphe2017whats}).
\item[-] The $\mcg(\no_g)$ action on $\systole(\no_g)$ has finite $\nu_N$-covolume, where $\nu_N$ is the non-orientable analog of the Weil-Petersson volume form (Proposition 19.1 of \cite{gendulphe2017whats}).
\end{itemize}

However, the subset $\systole(\no_g)$ fails to be convex, in a very strong sense, as we show in a prior paper.
\begin{theorem}[Theorem 5.2 of \cite{limitsetkhan}]
  For all $\vept > 0$, and all $D > 0$, there exists a Teichmüller geodesic segment whose endpoints lie in $\systole(\no_g)$ such that some point in the interior of the geodesic is more than distance $D$ from $\systole(\no_g)$.
\end{theorem}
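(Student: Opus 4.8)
The plan is to prove the statement by exhibiting, for a fixed one-sided simple closed curve $\mu$ on $\no_g$ and for arbitrarily small $\delta>0$, a Teichmüller geodesic segment with endpoints in $\systole(\no_g)$ along which the length of $\mu$ drops below $\delta$; since $\systole(\no_g)=\{\,\ell(\mu')\ge\vept\text{ for every one-sided }\mu'\,\}$, a point $Z$ with $\ell_Z(\mu)=\delta<\vept$ is far from $\systole(\no_g)$, and taking $\delta$ small makes it as far as we like. The distance estimate is the easy half. Working in the orientation double cover, where $\mu$ lifts to a curve $\tilde\mu$ of length $2\,\ell(\mu)$, Minsky's product region theorem says that on $\{\ell(\tilde\mu)<\varepsilon_0\}$ the Teichmüller metric agrees up to an additive constant with a sup-metric on a product, one factor of which is a copy of $\mathbb{H}$ in which $1/\ell(\tilde\mu)$ plays the role of the imaginary part; escaping this region starting from $\ell(\tilde\mu)=2\delta$ therefore costs at least $\log(\varepsilon_0/\delta)-O(1)$, so the Teichmüller distance from $Z$ to $\systole(\no_g)$ tends to $\infty$ as $\delta\to0$.

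To build the segments I would use the $\mcg(\no_g)$-invariance of $\systole(\no_g)$. Let $\gamma:=\partial N(\mu)$ be the boundary of a regular neighbourhood of $\mu$; this is a two-sided simple closed curve, and --- the one genuinely non-orientable ingredient --- it is essential and freely homotopic to $\mu$ traversed twice, so its geodesic representative has length $2\,\ell(\mu)$, and in particular $\ell(\gamma)$ is small exactly when $\ell(\mu)$ is small. Let $W:=\no_g\setminus N(\mu)$, an essential subsurface with $\partial W=\gamma$ that supports a pseudo-Anosov for $g\ge 3$, and let $\phi$ be such a pseudo-Anosov, supported in the interior of $W$; then $\phi\in\mcg(\no_g)$ fixes $\mu$. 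Fix any $X\in\systole(\no_g)$ and put $X_N:=\phi^N X$. Then $X_N\in\systole(\no_g)$ for every $N$ (indeed $\ell_{X_N}(\mu)=\ell_X(\mu)$), and the subsurface coefficient $d_W(X,X_N)$ is comparable to $N$, since $\phi$ acts on the curve complex of $W$ with positive translation length. Passing to the double cover --- under the isometric, totally geodesic embedding $\teich(\no_g)\hookrightarrow\teich(\os_{g-1})$ the geodesic $[X,X_N]$ is the $\iota$-invariant geodesic between the lifts, $W$ lifts to a subsurface (or pair of subsurfaces) adjacent to $\tilde\mu$, and $\phi$ lifts to a pseudo-Anosov of it --- Rafi's thick--thin description of Teichmüller geodesics applies: a subsurface coefficient of size $\asymp N$ about $W$ forces $\partial W=\gamma$, hence $\tilde\mu$, hence $\mu$, to become short along $[X,X_N]$, with $\min_t\ell_t(\mu)\to0$ as $N\to\infty$. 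Combined with the preceding paragraph, for $N$ large the segment $[X,\phi^N X]$ has its endpoints in $\systole(\no_g)$ and an interior point farther than $D$ from $\systole(\no_g)$.

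The step I expect to be the main obstacle is the quantitative content of the last point: one must show that a subsurface coefficient of size $N$ about $W$ drives $\ell(\gamma)$ (equivalently $\ell(\mu)$) below \emph{any} prescribed value, not merely below a uniform Margulis-type constant. This is where the combinatorial length estimates for Teichmüller geodesics --- in the form that non-annular subsurface coefficients enter $\log(1/\ell_t(\partial W))$ essentially linearly --- do the work; if one wishes to avoid the double cover one must instead establish this intrinsically on $\teich(\no_g)$ via its hierarchically hyperbolic structure. A secondary point, which dictates the choice of $\phi$, is that one should \emph{not} try to use the Dehn twist $T_\gamma$ directly: its equivariant lift is the inert $T_{\gamma_1}T_{\gamma_2}^{-1}$ (the deck involution reverses orientation along $\tilde\mu$), whereas a pseudo-Anosov supported in the interior of $W$ produces a genuine subsurface coefficient about $W$ that survives the lift. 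Finally, one should check that no \emph{other} one-sided curve accidentally becomes short at the endpoints $X,X_N$, but this is automatic here, since $X_N$ lies in the $\mcg(\no_g)$-orbit of the point $X$, which was chosen in $\systole(\no_g)$.
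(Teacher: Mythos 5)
The paper does not prove this statement at all: it is quoted from \cite{limitsetkhan}, so there is no argument in this source to compare yours against line by line, and I can only judge your proposal on its own terms. So judged, it is correct in outline and the ingredients you invoke all hold. The endpoints $X$ and $\phi^N X$ lie in $\systole(\no_g)$ by $\mcg(\no_g)$-invariance; since $\phi$ preserves $W$, the subsurface projection is $\phi$-equivariant and $d_W(X,\phi^N X)$ grows linearly in $N$ because the pseudo-Anosov $\phi$ has positive stable translation length on $\cC(W)$; and under the isometric, totally geodesic embedding $\teich(\no_g)\hookrightarrow\teich(\os_{g-1})$ the segment lifts to the geodesic between the lifts, with each boundary component of (a component of) the preimage of $W$ isotopic to $\tilde\mu$, so shortness of the boundary there pulls back to shortness of $\mu$. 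The step you single out as the main obstacle --- that a subsurface coefficient above a threshold depending on $\delta$ forces $\ell(\partial W)$ below \emph{any} prescribed $\delta$ --- is exactly Rafi's characterization of short curves in its segment form, i.e.\ items (i) and (ii) of Proposition \ref{thm:active-intervals} (Rafi \cite{rafi2007combinatorial}, and Dowdall--Masur's version), and the appendix of this very paper records the non-orientable statement, so you could even run that step intrinsically on $\teich(\no_g)$ without passing to the cover; either way it is a citation, not a gap.

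Two minor simplifications and one caution. The distance estimate needs only Wolpert's inequality $d_T(Z,Y)\ge\tfrac{1}{2}\bigl|\log\bigl(\ell_Y(\mu)/\ell_Z(\mu)\bigr)\bigr|$, transported through the isometric embedding, so Minsky's product regions are not required there. When $W$ is orientable its preimage in $\os_{g-1}$ has two components, and you should project to a single component (whose boundary is one of the two lifts of $\gamma$, each isotopic to $\tilde\mu$); your parenthetical already allows for this. Finally, your instinct to avoid $T_\gamma$ is right for an even stronger reason than ``inertness'': since both lifts of $\gamma$ are isotopic to $\tilde\mu$, the equivariant lift $T_{\gamma_1}T_{\gamma_2}^{-1}$ is isotopic to the identity, so twisting about $\gamma$ generates no subsurface coefficient whatsoever, and a pseudo-Anosov supported on $W$ is indeed the correct driving mapping class.
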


With this obstruction, we see that $\systole(\no_g)$ will not work as a convex core for a geometrically finite action.
There are two directions one could go in with this obstruction in mind, which we phrase as open questions.

If we wish to show that $\mcg(\no_g)$ acts geometrically finitely, this is the question we need to answer.
\begin{question}
  \label{ques:geom-finite-core}
  Does there exist some other subset of $\teich(\no_g)$ that is finite $\nu_N$-covolume, convex, and an $\mcg(\no_g)$-equivariant deformation retract of $\teich(\no_g)$?
\end{question}

Alternatively, if we wish to show that $\mcg(\no_g)$ acts statistically convex-cocompactly on $\teich(\no_g)$, where $\thick(\no_g)$ acts as the statistical convex core, this is the question we need to answer.

\begin{question}
  \label{ques:full-stat-core}
  Is $\thick(\no_g)$ statistically convex?
\end{question}

We suspect the answer to Question \ref{ques:full-stat-core} is yes, despite our methods not working.
Our methods for proving statistical convexity rely on proving recurrence of random walks on $\teich(\no_g)$, and random walks on all of $\teich(\no_g)$ have poor recurrence properties when they enter regions where one-sided curves are short.
When the random walks enter these one-sided thin regions, they behave like symmetric random walks on $\mathbb{Z}$, which we know do not have very strong recurrence properties.
We explain this in more detail in Section \ref{sec:why-approach-fails}, when we set up the machinery of Foster-Lyapunov-Margulis functions.

\subsection{A Weaker Notion of Convexity}
\label{sec:weak-noti-conv}

Rather than directly answering questions \ref{ques:geom-finite-core} or \ref{ques:full-stat-core}, we define an even weaker notion of convexity as an intermediate goal.
In the next subsection, we will show that $\systole(\no_g)$ satisfies this weaker notion of convexity.
In this section, we define the notion, and explain why this weaker notion of convexity is still sufficient for the purposes of Patterson-Sullivan theory.

\begin{definition}[Weak convexity]
  A subset $S$ of a geodesic metric space $X$ is said to be $\vepd$-weak convex (for $\vepd > 0$) if there exists a constant $t > 0$ such that for any pair of points $x$ and $y$ in $S$, any geodesic path $\gamma$ joining $x$ and $y$ longer than $t$ can be homotoped to a path $\gamma^{\prime}$ joining $x$ and $y$ such that $\gamma^{\prime}$ lies entirely within $S$, and the lengths of $\gamma$ and $\gamma^{\prime}$ satisfy the following inequality.
  \begin{align*}
    \ell(\gamma^{\prime}) \leq (1 + \vepd) \ell(\gamma)
  \end{align*}
\end{definition}

\begin{remark}
  Strictly speaking, we should call a subset $(\vepd, t)$-weak convex, as the constant $t$ is part of the data that makes a set weak convex.
  However, the constant $t$ will not matter for us, so we suppress it in all mentions of weak convexity.
\end{remark}

An $\vepd$-weak convex subset is a subset which, while not entirely undistorted, has bounded distortion with respect to the ambient metric space at large enough scale.
Weak convexity also interacts well with results from Patterson-Sullivan theory.
Suppose we have a discrete group $G$ acting properly discontinuously on a metric space $X$, and let $X_{\vepd}$ be an $\vepd$-weak convex subset of $X$ upon which $G$ also acts.
If the critical exponent for the $G$ action on $X$ is $\delta$, and the corresponding exponent for $X_{\vepd}$ is $\delta_{\vepd}$, we immediately get the following estimate for $\delta$.
\begin{align*}
  \delta \leq \delta_{\vepd} (1 + \vepd)
\end{align*}

For tangent directions along which the Teichmüller geodesic stays in $\systole(\no_g)$ for arbitrarily large times, we can consider the Teichmüller geodesic flow, and reparameterize the flow speed such that the following equation holds.
\begin{align*}
  d_{\vept}(v, g_{\tau} v) = \tau
\end{align*}
If we can establish mixing results for this reparameterized geodesic flow, we can use the techniques of \textcite{roblin2003ergodicite} to count lattice points in $X_{\vepd}$, the counting results translate into estimates for lattice points in $X$.
\begin{align*}
  \#\left( \text{Lattice points in $X$ within distance $R$} \right) \leq \#\left( \text{Lattice points in $X_{\vepd}$ within distance $R(1 + \vepd)$} \right)
\end{align*}

We can get even better estimates if rather than having a single $\vepd$-weak convex subset, we have an family of subsets, such that the $\vepd$ goes to $0$, and the union of the weak convex subsets is the entire space.

\begin{definition}[Exhaustion by weak convex subsets]
  A metric space $X$ is said to be exhausted by weak convex subsets if there exists a nested family of subsets $\left\{ X_i \right\}$, such that $X_i$ is $\varepsilon_i$-weak convex, where $\varepsilon_i$ goes to $0$, and $\bigcup_{i=1}^{\infty} X_i = X$.
\end{definition}

When there is an exhaustion by weak convex subsets, one can get arbitrarily good bounds for the critical exponent for the $G$ action on $X$.

\subsection{Weak Convexity for $\systole(\no_g)$}
\label{sec:weak-conv-syst}

In this section, we will show that $\systole(\no_g)$ is an $\vepd$-weak convex subset of $\teich(\no_g)$, and that $\teich(\no_g)$ can be exhausted by the subsets $\systole(\no_g)$ as $\vept$ goes to $0$.

\begin{theorem}
  \label{thm:weak-convexity}
  For any $\vepd > 0$, there exists a $\vept > 0$ such that $\systole(\no_g)$ is a $\vepd$-weak convex subset of $\teich(\no_g)$.
\end{theorem}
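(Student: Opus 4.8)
The plan is to construct an explicit homotopy that pushes a geodesic segment with endpoints in $\systole(\no_g)$ into $\systole(\no_g)$, and to control the length distortion using Minsky's product region theorem. First I would define a projection map $\Pi \colon \teich(\no_g) \to \systole(\no_g)$ as sketched in the introduction: given $x \in \teich(\no_g)$, look at the collection of one-sided curves of length less than $\vept$ (these are pairwise disjoint, since two one-sided curves that intersect cannot both be very short, so this collection extends to a pants decomposition), and in Fenchel--Nielsen coordinates adapted to such a pants decomposition, increase the length of each short one-sided curve up to exactly $\vept$ while leaving all other length and twist coordinates fixed. One must check this is well defined (independent of the choice of completing pants decomposition, which follows because the modification only touches the one-sided coordinates of a canonically determined multicurve) and continuous, and that $\Pi$ restricts to the identity on $\systole(\no_g)$.

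The key estimate is that $\Pi$ is $(1+\vepd)$-Lipschitz on the complement of the deeper part, for $\vept$ small. Here is where I invoke Minsky's product region theorem: in the region where a collection $\Gamma$ of one-sided curves is short (say shorter than some fixed $\vept_0 \gg \vept$), the Teichmüller metric is, up to bounded additive error, the sup metric on a product $\prod_i \mathbb{H}_{\gamma_i} \times \teich(\no_g \setminus \Gamma)$, where the factor $\mathbb{H}_{\gamma_i}$ associated to a one-sided curve degenerates to a $\tfrac12 \log$-type coordinate on the length alone (no twist). The map $\Pi$ acts only in these one-sided factors, moving the point toward the horocycle-depth corresponding to length $\vept$. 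Since changing a coordinate $\tfrac12\log(1/\ell)$ by a bounded amount while fixing everything else contributes only a bounded additive change in the sup metric, and since the relevant geodesic segments we care about (those longer than $c$) have length growing, the ratio of the distortion to the total length is at most $\vepd$ once $c$ is chosen large and $\vept$ small. Iterating/patching the local product-region descriptions over the finitely many possible short one-sided multicurves gives that $\Pi$ distorts distances between points of $\teich(\no_g)$ by at most an additive constant $C(\vept)$; applying $\Pi$ to a geodesic $\gamma$ of length $\ell(\gamma) > c$ produces a path $\gamma' = \Pi \circ \gamma$ in $\systole(\no_g)$ with the same endpoints (since the endpoints already lie in $\systole(\no_g)$) and $\ell(\gamma') \le \ell(\gamma) + C(\vept)$, hence $\ell(\gamma') \le (1+\vepd)\ell(\gamma)$ provided $c > C(\vept)/\vepd$. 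This is exactly the $(\vepd,t)$-weak convexity with $t = c$. The exhaustion statement is then automatic: as $\vept \to 0$ the sets $\systole(\no_g)$ increase and exhaust $\teich(\no_g)$ (every point has some positive one-sided systole), and the argument above gives $\vepd \to 0$.

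The main obstacle I anticipate is making the product-region argument uniform and global rather than merely local. Minsky's theorem gives a product description valid on each thin region separately with additive error constants depending on the cutoff, but a geodesic can pass through several such regions and through regions where the set of short one-sided curves changes; one must argue that $\Pi$ is globally coarsely Lipschitz (additive distortion bounded by a constant depending only on $\vept$ and the topology of $\no_g$) by covering $\teich(\no_g)$ by overlapping product regions and controlling how $\Pi$ behaves on the overlaps where a curve's length crosses the threshold $\vept_0$. A secondary subtlety is that $\Pi$ as literally defined (snapping lengths up to exactly $\vept$) may not be continuous where a curve's length crosses $\vept$ — one should instead use a smoothed version that interpolates over a collar $\vept \le \ell \le 2\vept$, which does not affect any of the metric estimates. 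Once these uniformity points are handled, the length bound and the exhaustion follow by the bookkeeping indicated above.
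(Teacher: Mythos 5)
Your overall strategy is the paper's: project by increasing short one-sided lengths to $\vept$ and control the distortion with the non-orientable product region theorem, using crucially that one-sided curves carry no twist. But there is a genuine gap in the key length estimate. You assert that $\Pi$ distorts distances by a uniform \emph{additive} constant $C(\vept)$ and conclude $\ell(\Pi\circ\gamma)\leq \ell(\gamma)+C(\vept)$. The second step does not follow from the first: path length is a supremum over partitions of sums of pairwise distances, and a coarse bound $d(\Pi x,\Pi y)\leq d(x,y)+C$ contributes $+C$ \emph{per partition piece}, so it yields no bound at all on the length of the image path (the errors accumulate without limit as the partition is refined). Moreover the heuristic you give for the additive bound is incorrect as stated: $\Pi$ does not change the one-sided coordinate $\log(1/\ell)$ by a bounded amount — a point can be arbitrarily deep in the one-sided thin part, so that coordinate can jump by an arbitrarily large amount. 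What is true is that clamping is $1$-Lipschitz in that factor, so within a single product region one gets $d(\Pi x,\Pi y)\leq d(x,y)+2c(\vept)$ with $c(\vept)$ the additive error from the product region theorem; this is a coarse statement, useless below scale $c(\vept)$, and cannot be upgraded to exact local non-expansion of $\Pi$ (the Teichmüller metric is only coarsely the sup metric there).

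The repair is exactly where the paper's proof does its work: subdivide the geodesic into segments of a fixed length $\delta$, chosen small enough that along any segment meeting the one-sided thin part some one-sided curve stays short throughout (so the product region estimate applies to that pair of endpoints); project the subdivision points, getting $d(p_i',p_{i+1}')\leq \delta+2c(\vept')$; and rejoin the projected points by geodesics. This gives a \emph{multiplicative} distortion $1+2c(\vept')/\delta$, and one then chooses first $\delta$ and then $\vept'$ so that $2c(\vept')/\delta<\vepd$ — the order of quantifiers matters, since $c(\vept')\to 0$ only as $\vept'\to 0$. Two further points your write-up glosses over and which the subdivision handles: the short connecting geodesics $[p_i',p_{i+1}']$ need not lie in $\systolearg{\vept'}(\no_g)$, only in $\systole(\no_g)$ for some smaller $\vept$ (which is all the theorem asks), and geodesics whose endpoints lie in $\systole(\no_g)\setminus\systolearg{\vept'}(\no_g)$ require a bounded prefix/suffix modification whose additive cost is absorbed into the multiplicative error only for segments longer than the threshold $t$ in the definition of weak convexity. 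With these changes your argument becomes the paper's proof; as written, the claimed additive bound on $\ell(\gamma')$ is unsupported.
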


The key ingredient in the proof of Theorem \ref{thm:weak-convexity} is a version of Minsky's product region theorem \cite[Theorem 6.1]{1077244446} for non-orientable surfaces, which we prove in Section \ref{sec:minskys-prod-regi}.

Let $\gamma = \left\{ \gamma_1, \ldots, \gamma_j, \ldots, \gamma_k \right\}$ be a multicurve on a non-orientable surface $\no_g$, where for $i \leq j$, $\gamma_i$ is a two-sided curve, and for $i > j$, $\gamma_i$ is a one-sided curve.
Let $X_\gamma$ denote the metric space obtained as the $\sup$-product $\teich(\no_g \setminus \gamma) \times \mathbb{H}_1 \times \cdots \times \mathbb{H}_j \times (\mathbb{R}_{>0})_{j+1} \times \cdots \times (\mathbb{R}_{>0})_k$, where the $\mathbb{H}_i$ are copies of the upper half plane with the hyperbolic metric, and $\mathbb{R}_{>0}$ is the set of positive real numbers, where the distance between $x$ and $y$ is $\left| \log \left( \frac{x}{y} \right) \right|$.
For any pants decomposition that contains $\gamma$, we consider the Fenchel-Nielsen coordinate systems associated to the pants decomposition.
We have a map $\Pi$ from $\teich(\no_g)$ to $X_\gamma$, which is called the \emph{product region projection map}.
\begin{definition}[Product region projection map]
  The product region projection map $\Pi: \teich(\no_g) \to X_\gamma$ is defined in the following manner.
  \begin{itemize}
  \item The $\teich(\no_g \setminus \gamma)$-coordinate is obtained by setting the lengths to $0$ of all the curves in $\gamma$ to get a punctured hyperbolic surface.
  \item The $\mathbb{H}_i$-coordinate is $\left( t, \frac{1}{\ell} \right)$, where $t$ is the \emph{twist} (i.e. the twist coordinate in the Fenchel-Nielsen coordinate system) of the two-sided curve $\gamma_i$, and $\ell$ is the hyperbolic length.
  \item The $(\mathbb{R}_{>0})_i$ coordinate is $\frac{1}{\ell}$, where $\ell$ is the hyperbolic length of the one-sided curve $\gamma_i$.
  \end{itemize}
\end{definition}

We define a metric on the product $X_\gamma$ as the supremum of the metrics on each of the components, where the metric on $\teich(\no_g \setminus \gamma)$ is the Teichmüller metric, the metric on the $\mathbb{H}_i$ components is the hyperbolic metric, and the metric on the $(\mathbb{R}_{>0})_i$ is given by $d(x,y) = \left| \log\left( \frac{x}{y}  \right) \right| $, i.e.\ the restriction of the hyperbolic metric in $\mathbb{H}$ to a vertical line.

We consider the restriction of $\Pi$ to the thin region of Teichmüller space, denoted $\teich_{\gamma \leq \vept}(\no_g)$, which is the region where all curves in $\gamma$ have hyperbolic length at most $\vept$.

\begingroup
\def\thetheorem{\ref{thm:prno}}
\begin{theorem}[Product region theorem for non-orientable surfaces]
  For any $c >0$, there exists  $\vept^{\prime} > 0$, such that for any $\vept < \vept^{\prime}$, the restriction of $\Pi$ to $\teich_{\gamma \leq \vept}(\no_g)$ is an isometry with additive error at most $c$, i.e.\ the following holds for any $x$ and $y$ in $\teich_{\gamma \leq \vept}(\no_g)$.
  \begin{align*}
    \left| d(x, y) - d_{X_{\gamma}}(\Pi(x), \Pi(y)) \right| \leq c
  \end{align*}

\end{theorem}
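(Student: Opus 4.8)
The plan is to derive the non-orientable product region theorem from Minsky's theorem on the orientation double cover $p\colon\os_{g-1}\to\no_g$, using that $\teich(\no_g)$ embeds isometrically into $\teich(\os_{g-1})$ as the fixed locus of the deck-transformation-induced involution $\iota^{\ast}$. First I would lift the multicurve $\gamma$ on $\no_g$ to a multicurve $\tilde\gamma$ on $\os_{g-1}$: each two-sided $\gamma_i$ lifts to a pair $\{\tilde\gamma_i,\iota\tilde\gamma_i\}$ of disjoint curves of the same length, and each one-sided $\gamma_i$ lifts to a single curve $\tilde\gamma_i$ of twice the length. A curve of $\gamma$ is short on $\no_g$ precisely when its lift(s) are short on $\os_{g-1}$ (up to the factor of $2$ for one-sided curves), so $\teich(\no_g)\cap\teich_{\tilde\gamma\leq\vept}(\os_{g-1})\subseteq\teich_{\gamma\leq\vept}(\no_g)\subseteq\teich(\no_g)\cap\teich_{\tilde\gamma\leq 2\vept}(\os_{g-1})$; in particular, once $\vept$ is small, $\teich_{\gamma\leq\vept}(\no_g)$ lies inside the region of $\teich(\os_{g-1})$ where Minsky's estimate applies.

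Next I would invoke Minsky's product region theorem \cite[Theorem~6.1]{1077244446} for $\os_{g-1}$ and $\tilde\gamma$: given $c>0$, there is $\vepd>0$ so that the Fenchel--Nielsen coordinate projection $\tilde\Pi\colon\teich_{\tilde\gamma\leq\vepd}(\os_{g-1})\to X_{\tilde\gamma}$ (with the sup metric on the product $X_{\tilde\gamma}$ of $\teich(\os_{g-1}\setminus\tilde\gamma)$ with one upper half-plane factor per curve of $\tilde\gamma$) is an isometry up to additive error $c$. The key structural observation is that $\tilde\Pi$, being a coordinate projection, genuinely intertwines $\iota^{\ast}$ with the evident involution $\hat\iota$ of $X_{\tilde\gamma}$: on $\teich(\os_{g-1}\setminus\tilde\gamma)$, $\hat\iota$ is the involution induced by $\iota$; it swaps the two $\mathbb{H}$-factors attached to a two-sided $\gamma_i$; and on the $\mathbb{H}$-factor attached to a one-sided $\gamma_i$ it acts by $\tau\mapsto-\tau$, because the deck transformation swaps the two sides of the annulus double-covering the Möbius-band neighbourhood of $\gamma_i$ and thereby reverses the Fenchel--Nielsen twist about $\tilde\gamma_i$. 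Since the embedding $\teich(\no_g)\hookrightarrow\teich(\os_{g-1})$ is isometric and $\tilde\Pi$ commutes with the involutions, restricting $\tilde\Pi$ to $\mathrm{Fix}(\iota^{\ast})=\teich(\no_g)$ yields a map onto $\mathrm{Fix}(\hat\iota)\subseteq X_{\tilde\gamma}$ that is still an isometry up to additive error $c$.

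The last step is a purely metric computation identifying $\mathrm{Fix}(\hat\iota)$, with the restricted sup metric, isometrically with $X_\gamma$. Factor by factor: the $\iota$-invariant and $\iota$-swapped components of $\os_{g-1}\setminus\tilde\gamma$ recombine on the fixed locus to $\teich(\no_g\setminus\gamma)$ with its Teichmüller metric (by the isometric-embedding principle applied to the non-orientable pieces, and the diagonal embedding $\teich(Q)\hookrightarrow\teich(Q)\times\teich(Q)$ for each $\iota$-swapped pair); for each two-sided $\gamma_i$ the swap on $\mathbb{H}_{\tilde\gamma_i}\times\mathbb{H}_{\iota\tilde\gamma_i}$ has fixed set the diagonal, which in the sup metric is isometric to $\mathbb{H}$ with coordinate $(\tau(\gamma_i),1/\ell(\gamma_i))$; and for each one-sided $\gamma_i$ the fixed set of $\tau\mapsto-\tau$ on $\mathbb{H}_{\tilde\gamma_i}$ is a vertical line, whose hyperbolic metric in the coordinate $1/\ell(\tilde\gamma_i)=1/(2\ell(\gamma_i))$ equals $|d\log\ell(\gamma_i)|$, i.e.\ it is isometric to $(\mathbb{R}_{>0})_i$ with coordinate $1/\ell(\gamma_i)$ and metric $|\log(x/y)|$ after the obvious reparameterization. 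Since the sup of a list of distances restricted to such a diagonal subset equals the sup over the reduced list, these identifications assemble to $\mathrm{Fix}(\hat\iota)\cong X_\gamma$, and under this identification $\tilde\Pi|_{\teich(\no_g)}$ becomes precisely the map $\Pi$ of the statement (the Fenchel--Nielsen data for $\gamma$ on $\no_g$ being the restriction of the data for $\tilde\gamma$ on $\os_{g-1}$, with one-sided lengths halved). Chaining the two identifications and taking $\vept^{\prime}=\vepd/2$ gives the claim.

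I expect the main obstacle to be the twist computation in the second step: showing that $\iota^{\ast}$ acts on the half-plane factor of a one-sided curve exactly by negating the twist requires working out, in the annular double cover of a Möbius band, how the deck transformation conjugates a Fenchel--Nielsen twist, and then reconciling this with the conventions of the Norbury--Gendulphe description of non-orientable Fenchel--Nielsen coordinates, in which one-sided curves carry no twist at all. A secondary nuisance is the bookkeeping of which components of $\os_{g-1}\setminus\tilde\gamma$ are preserved versus swapped by $\iota$ when $\no_g\setminus\gamma$ has both orientable and non-orientable components, but this is routine. The additive-error and $\vept$-to-$\vepd$ tracking is harmless, since passing to the totally geodesic subspace $\teich(\no_g)$ does not degrade Minsky's constant $c$.
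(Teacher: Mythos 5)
Your proposal is correct and follows essentially the same route as the paper: lift to the orientation double cover, use the isometric embedding of $\teich(\no_g)$ as the fixed locus of $\iota^{\ast}$, apply Minsky's product region theorem upstairs, and identify the factors of $X_{\tilde\gamma}$ restricted to the invariant locus with those of $X_\gamma$ (diagonals for two-sided curves, a single length coordinate for one-sided curves, and the complementary subsurface via the isometric embedding of its Teichmüller space). The only cosmetic difference is how the absence of twisting about a one-sided curve is justified — you use equivariance of the coordinate projection and the involution negating the twist, while the paper pins the twist by observing that twisting about the lift changes the length of the lift of a transverse one-sided curve, forcing such points outside the image of $\teich(\no_g)$.
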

\addtocounter{theorem}{-1}
\endgroup

We can now prove Theorem \ref{thm:weak-convexity}.

\begin{proof}[Proof of Theorem \ref{thm:weak-convexity}]
  We begin by picking a small constant $\vept^{\prime} > 0$ and $\delta > 0$.
  We will fix the values of these constants at the end of the proof.
  Let $[x,y]$ be a geodesic segment that starts and ends in $\systolearg{\vept^{\prime}}(\no_g)$.
  Let $\{p_i\}$ be points on $[x,y]$, such that $x = p_0$, $d(p_i, p_{i+1}) = \delta$, and $d(p_n, y) \leq \delta$, where $p_n$ is the last of the $p_i$'s.

  The first step of our proof is modifying the path $[x,y]$ and estimating the length of the modified path.
  We do so by constructing new points $p_i^{\prime}$, where $p_i^{\prime}$ is obtained from $p_i$ by increasing the length of any one-sided curve that is shorter than $\vept^{\prime}$ to $\vept^{\prime}$.
  This ensures that the endpoints of the segments $[p_i^{\prime}, p_{i+1}^{\prime}]$ are in $\systolearg{\vept^{\prime}}(\no_g)$.
  Estimating $d(p_i^{\prime}, p_{i+1}^{\prime})$ splits up into two cases.
  \begin{enumerate}[(i)]
  \item When $p_i = p_i^{\prime}$ and $p_{i+1} = p_{i+1}^{\prime}$: In this case $d(p_i^{\prime}, p_{i+1}^{\prime}) = \delta$, by construction.
  \item When at least one of $p_i$ and $p_{i+1}$ are not equal to $p_i^{\prime}$ and $p_{i+1}^{\prime}$: In this case, we can assume without loss of generality that both $p_i \neq p_i^{\prime}$ and $p_{i+1} \neq p_{i+1}^{\prime}$.
    If that is not the case, and say $p_i \neq p_i^{\prime}$ and $p_{i+1} = p_{i+1}^{\prime}$, we replace $p_{i+1}$ with the last point $y$ on $[p_i, p_{i+1}]$ that is outside $\systolearg{\vept^{\prime}}(\no_g)$.
    The interval $[y, p_{i+1}]$ can be treated as in case (i), and we focus on $[p_i, y]$.

    We have that the interior of $[p_i, p_{i+1}]$ and $[p_i^{\prime}, p_{i+1}^{\prime}]$ both lie in the region where some one-sided curve $\gamma$ is shorter than $\vept^{\prime}$.
    We invoke Theorem \ref{thm:prno} to estimate distances in this region: we have a constant $c(\vept^{\prime})$ that depends on $\vept^{\prime}$ such that following holds.
    \begin{align}
      \label{eq:p-estimate}
      \left| d(p_i, p_{i+1}) - \sup\left( d_{\teich(\no_g \setminus \gamma)}(\Pi(p_i), \Pi(p_{i+1})), \left| \log \left( \frac{\ell_{p_i}(\gamma)}{\ell_{p_{i+1}}(\gamma)}  \right) \right| \right) \right| \leq c(\vept^{\prime})
    \end{align}
    Observe that when we replace $p_i$ by $p_i^{\prime}$ and $p_{i+1}$ by $p_{i+1}^{\prime}$, the first argument $\sup$ stays the same, and the second argument becomes $0$.
    \begin{align}
      \label{eq:p-prime-estimate}
      \left| d(p_i^{\prime}, p_{i+1}^{\prime}) - \sup\left( d_{\teich(\no_g \setminus \gamma)}(\Pi(p_i^{\prime}), \Pi(p_{i+1}^{\prime})), 0 \right) \right| \leq c(\vept^{\prime})
    \end{align}
    This leads to the following estimate for $d(p_i^{\prime}, p_{i+1}^{\prime})$.
    \begin{align}
      \label{eq:length-estimate-homotope}
      d(p_i^{\prime}, p_{i+1}^{\prime}) \leq \delta + 2c(\vept^{\prime})
    \end{align}
  \end{enumerate}
  We construct a new path $\lambda$ by joining $p_i^{\prime}$'s, and $p_n$ to $y$.
  If we let $l$ denote the length of $[x,y]$, we get the following estimate for $\ell(\lambda)$ using \eqref{eq:length-estimate-homotope}.
  \begin{align*}
    \ell(\lambda) \leq l \left( 1 + \frac{2c(\vept^{\prime})}{\delta} \right)
  \end{align*}
  We now pick a value of $\delta$ small enough such that along each of the segments $[p_i, p_{i+1}]$, there is at least one one-sided curve that stays short throughout, and then we pick $\vept^{\prime}$ small enough so that $c(\vept^{\prime})$ is small enough to make $\frac{2c(\vept^{\prime})}{\delta} < \vepd$.

  We now need to show that this new path stays within $\systole(\no_g)$ for some $\vept < \vept^{\prime}$.
  We already have that $x$, $y$ and all the $p_{i}^{\prime}$ are in $\systolearg{\vept^{\prime}}(\no_g)$ and thus in $\systole(\no_g)$.
  For the interior of the geodesic segments $[p_i^{\prime}, p_{i+1}^{\prime}]$, since the endpoints are in $\systolearg{\vept^{\prime}}(\no_g)$, and the length of the segments is no more than $\delta(1 + \vepd)$, we have that there exists some $\vept$ such that $[p_i^{\prime}, p_{i+1}^{\prime}]$ lies in $\systole(\no_g)$.

  Finally, we have to deal with geodesic segments $[w,z]$ which start or end in $\systole(\no_g) \setminus \systolearg{\vept^{\prime}}(\no_g)$.
  We do so by increasing the lengths of short one-sided curves on $w$ and $z$ to $\vept^{\prime}$ if there are any curves shorter than $\vept^{\prime}$.
  Let the modified points be $w^{\prime}$ and $z^{\prime}$: we first construct a path joining $w^{\prime}$ and $z^{\prime}$ that stays within $\systole(\no_g)$ using our construction, and then prepend that path with a path joining $w$ with $w^{\prime}$ and append a path joining $z^{\prime}$ to $z$.
  This new path joining $w$ to $z$ stays entirely withing $\systole(\no_g)$, but we now incur a fixed additive error along with our multiplicative error as well.
  However, if the path is long enough, the additive error can be absorbed in the multiplicative error, with a slightly worse constant.
  We do that, and the threshold for the path being long enough is the constant $t$ that appears in our definition of weak convexity.
  This proves the result.
\end{proof}

\begin{remark}
  We emphasize that the key step in the above proof is going from \eqref{eq:p-estimate} to \eqref{eq:p-prime-estimate}, where the $\log\left( \frac{\ell_{p_i}(\gamma)}{\ell_{p_i^{\prime}}(\gamma)}  \right)$ term becomes $0$.
  This is only possible because there cannot be any twisting around a one-sided curve $\gamma$, so the projection map that sends $p_i$ to $p_i^{\prime}$ and $p_{i+1}$ to $p_{i+1}^{\prime}$ is distance reducing.
  If one tried to use the same proof strategy to show that the thick part of $\teich(\os)$, for any orientable or non-orientable surface $\os$ is weak convex in $\teich(\os)$, the step we described would be the point of failure.
  In particular, if there's a twist along $\gamma$, going from \eqref{eq:p-estimate} to \eqref{eq:p-prime-estimate} will not be distance reducing, and will exponentially increase the distance, leading the estimate to fail.
\end{remark}

Now that we have established that $\systole(\no_g)$ is $\vepd$-weak convex, we can justifiably call it the weak convex core of $\teich(\no_g)$.
For the remainder of this paper, we fix $\vepd < \left(  \frac{1}{6g-12} \right)^2$, and a value of $\vept$ such that $\systole(\no_g)$ is $\vepd$-weak convex.

\begin{definition}[Weak convex core of $\teich(\no_g)$]
  We call $\systole(\no_g)$ the weak convex core of $\teich(\no_g)$, and denote it $\core(\teich(\no_g))$.
\end{definition}

We now also provide a partial classification of the \emph{strongly contracting} elements of $\mcg(\no_g)$ for the metric space $\systole(\no_g)$.

\begin{definition}[Strongly contracting element]
  An infinite order element $\gamma$ in $\mcg(\no_g)$ is said to be strongly contracting if there exists a $p \in \systole(\no_g)$ such that the following two conditions hold.
  \begin{enumerate}[(i)]
  \item $\left\{ \gamma^i p \right\}_{i \in \mathbb{Z}}$ quasi-isometrically embeds in $\systole(\no_g)$.
  \item For any ball of radius $R$ disjoint from $\left\{ \gamma^i p \right\}$, its closest point projection onto $\left\{ \gamma^i p \right\}$ has uniformly bounded diameter.
  \end{enumerate}
\end{definition}

\begin{lemma}[Partial classification of strongly contracting elements]
  \label{lem:strongly-contract-class}
  Let $\gamma$ be an infinite order element in $\mcg(\no_g)$.
  \begin{enumerate}[(i)]
  \item If $\gamma$ is pseudo-Anosov, then $\gamma$ is strongly contracting.
  \item If $\gamma$ leaves a two-sided curve invariant, then $\gamma$ is not strongly contracting.
  \end{enumerate}
\end{lemma}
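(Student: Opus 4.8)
The plan is to prove the two parts by contrasting the behavior of the weak convex core metric $d_\vept$ with the Teichmüller metric, using Theorem \ref{thm:weak-convexity} to transfer contraction/non-contraction statements between the two. For part (i), I would start from the fact, due to \textcite{minsky1996quasi}, that a pseudo-Anosov axis in $\teich(\no_g)$ (equivalently, in the $\iota$-invariant locus of $\teich(\os_{g-1})$, where the embedding is isometric and geodesics are preserved) is strongly contracting \emph{for the Teichmüller metric}: any Teichmüller ball disjoint from the axis has uniformly bounded nearest-point projection, and the axis quasi-isometrically embeds. The content is then to see that this property survives when we pass to the induced path metric $d_\vept$ on $\systole(\no_g)$. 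Here is where Theorem \ref{thm:weak-convexity} does the work: a $d_\vept$-geodesic segment between two points of $\systole(\no_g)$ has length at most $(1+\vepd)$ times the Teichmüller distance between its endpoints (take the Teichmüller geodesic, homotope it into $\systole(\no_g)$ at cost $(1+\vepd)$), so the two metrics on $\systole(\no_g)$ are $(1+\vepd)$-bilipschitz at large scale. Since the pseudo-Anosov axis may leave $\systole(\no_g)$, I would instead work with the orbit $\{\gamma^i p\}$ for a well-chosen basepoint $p$ in the thick part, note this orbit fellow-travels the axis in the Teichmüller metric, hence quasi-isometrically embeds in $(\systole(\no_g), d_\vept)$ as well; and a $d_\vept$-ball disjoint from $\{\gamma^i p\}$ is contained in a Teichmüller ball of comparable radius, whose projection to the axis — hence to the orbit — is bounded by Minsky. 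This gives condition (ii).

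For part (ii), suppose $\gamma$ fixes a two-sided simple closed curve $\alpha$. The idea is that $\gamma$ then acts on a product region $\teich_{\alpha\le\vept}(\no_g)$ whose $\mathbb{H}$-factor (coordinates: twist about $\alpha$ and $1/\ell_\alpha$) is $\gamma$-invariant, with $\gamma$ acting essentially by a parabolic or by a bounded translation on the twist coordinate, composed with its action on $\teich(\no_g\setminus\alpha)$. By Theorem \ref{thm:prno} this region embeds isometrically (up to bounded additive error) into $X_\alpha = \teich(\no_g\setminus\alpha)\times\mathbb{H}\times\cdots$, and all of this region lies inside $\systole(\no_g)$ provided $\vept$ is below the threshold for one-sided curves. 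One then exhibits, inside $\systole(\no_g)$, a large $d_\vept$-ball disjoint from the orbit $\{\gamma^i p\}$ — for instance a ball deep in the $\alpha$-thin part, pushed far in the twist direction — whose nearest-point projection onto $\{\gamma^i p\}$ has unbounded diameter as the ball grows, because in an $\mathbb{H}$-factor (or an $\mathbb{R}$-twist factor) nearest-point projection onto a horocyclic-type orbit is not coarsely Lipschitz. Equivalently, one can appeal to the general principle that an element whose centralizer contains a $\mathbb{Z}$ (the Dehn twist about $\alpha$) that grows sublinearly along the orbit cannot be strongly contracting, since a strongly contracting element must have virtually cyclic centralizer acting cocompactly on a quasi-line; the Dehn twist about $\alpha$ commutes with $\gamma$ and moves $p$ a bounded amount off the orbit while being of infinite order, contradicting the bounded-projection property.

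I expect the main obstacle to be part (i): specifically, verifying condition (ii) in the $d_\vept$ metric rather than the Teichmüller metric. The subtlety is that a $d_\vept$-ball is \emph{not} simply the intersection of a Teichmüller ball with $\systole(\no_g)$ — it could in principle be "fatter" because $d_\vept \ge d_T$ pointwise only gives containment one way. The fix is the large-scale bilipschitz comparison from Theorem \ref{thm:weak-convexity}: a $d_\vept$-ball of radius $R$ about a point of $\systole(\no_g)$ is contained in a Teichmüller ball of radius $R$ (since $d_T \le d_\vept$), so disjointness from the orbit in $d_\vept$ is \emph{weaker} than, hence implied by, the corresponding Teichmüller statement — one must be careful to run the implication in the correct direction, and to check that the Teichmüller-nearest-point projection and the $d_\vept$-nearest-point projection onto the (quasi-isometrically embedded) orbit agree up to bounded error, which again follows from the $(1+\vepd)$-bilipschitz comparison together with the stability of quasigeodesics in the contracting setting. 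The non-orientable bookkeeping — passing through the orientation double cover to quote Minsky's theorem, and checking the $\iota$-invariant axis is still contracting in $\teich(\os_{g-1})$ and hence in its totally real submanifold $\teich(\no_g)$ — is routine but should be spelled out.
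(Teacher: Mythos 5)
Your proposal follows essentially the same route as the paper: for part (i) it is Minsky's bounded-projection theorem pulled back through the orientation double cover and transferred to $(\systole(\no_g), d_{\vept})$ via the $(1+\vepd)$-comparison of Theorem \ref{thm:weak-convexity} (the paper phrases this as quasi-isometry invariance of contraction, and when the axis leaves $\systole(\no_g)$ it replaces the axis by its image under the map raising one-sided curves to length $\vept$), and for part (ii) it is the same product-region picture, exhibiting balls of radius $R$ centered where the invariant curve is $e^{-R}$ times shorter, which are disjoint from the orbit yet have projection diameter growing linearly in $R$. The only substantive details the paper supplies that you elide are the reduction, for an arbitrary basepoint $p$, to the case where the invariant two-sided curve is already short at $p$ (so that the whole orbit lies in the product region where Theorem \ref{thm:prno} applies), and it does not rely on your centralizer aside, which as stated is shaky in the non-orientable setting (conjugation by $\gamma$ may invert the twist, and the argument would not rule out $\gamma$ being a power of the Dehn twist itself).
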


\begin{proof}
  We deal with the two cases separately.
  \begin{description}
  \item[Case (i)] In this case, we pick $p$ to lie along the axis of the pseudo-Anosov $\gamma$.
    Passing to the orientable double cover, we have that the closest point projection of any ball disjoint from the axis has bounded diameter, by \textcite{minsky1996quasi}.
    Since $\teich(\no_g)$ isometrically embeds inside the Teichmüller space of the double cover, we have the claim for $\teich(\no_g)$.
    To show now that the result holds for $\systole(\no_g)$, observe that the induced metric on $\systole(\no_g)$ is minimally distorted from the metric on $\teich(\no_g)$, by Theorem \ref{thm:weak-convexity}.
    One of two things can happen: the axis of $\gamma$ lies in $\systole(\no_g)$, or it lies outside.
    In the first case, we project as usual, and by Theorem \ref{thm:weak-convexity}, the size of the projection increases by a bounded multiplicative factor: the metric $d_{\vept}$ differs from the Teichmüller metric $d_T$ by a multiplicative factor of $(1 + \vepd)$, and \textcite{Sisto+2018+79+114} shows that the property of being a contracting subset is a quasi-isometry invariant.
    In the second case, we first create a new axis, by increasing the lengths of all one-sided curves on the old axis to $\vept$ (while keeping all the other lengths and twists with respect to a Fenchel-Nielsen coordinate system constant): we call this map $\pi$.
    This construction gives us a new axis that lies in $\systole(\no_g)$.
    We then get a projection of a ball by first projecting to the old axis, and then composing that with the $\pi$.
    This composed map still has bounded diameter because the first projection does, and the second projection increases distances by a bounded multiplicative factor.
  \item[Case (ii)] In this case, we need to show there is no choice of $p$ such that the projection onto $\left\{ \gamma^i p \right\}$ has bounded diameter.
    Suppose there is such a $p$: we will construct a family of balls disjoint from $\gamma$ with arbitrarily large closest point projections onto $\gamma$.
    We can assume without loss of generality that at $p$, one of the invariant two-sided curves is short.
    If not, we can create a new point $p^{\prime}$ where this is the case, and the orbits $\left\{ \gamma^i p \right\}$ and $\left\{ \gamma^i p^{\prime} \right\}$ have unbounded closed point projections onto each other, and if a family of balls has unbounded closest point projections on $\left\{ \gamma^i p^{\prime} \right\}$, it will also have unbounded closest point projections on $\left\{ \gamma^i p \right\}$.

    Since we have that some two-sided curve $\kappa$ is short at $p$, and $\gamma$ leaves $\kappa$ invariant, we have that the entire orbit $\left\{ \gamma^i p \right\}$ lies in the product region associated to $\kappa$.
    In this product region, we can verify the claim via Minsky's product region theorem using a family of balls of radius $R$ centered at $q$, where $q$ is a point in the product region where the length of $\gamma$ is $\exp(-R)$ times smaller than the length of $\gamma$ at $p$.
    For each $R$, such a ball is disjoint from $\left\{ \gamma^i p \right\}$, and will have projection diameter $2R - c^{\prime}$, where $c^{\prime}$ is some constant that only depends on $\vept$ (see \autoref{fig:non-contracting}).
    \begin{figure}[h]
      \centering
    \def\svgscale{1}
    \import{./images/}{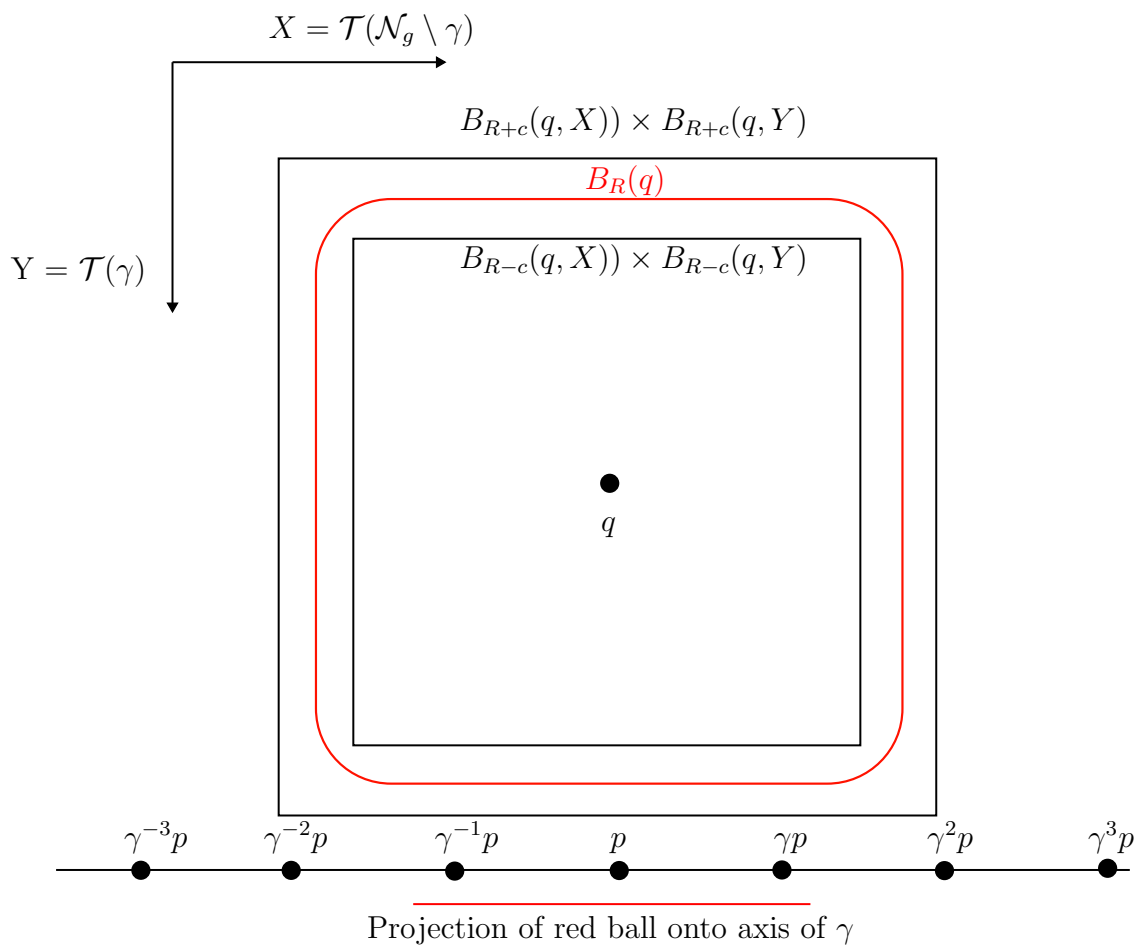}

      \caption{Ball in product region with large projection onto axis.}
      \label{fig:non-contracting}
    \end{figure}
  \end{description}
  This proves the theorem in the two cases we specified.
\end{proof}

\begin{remark}
  The only case the above classification does not deal with is the case where $\gamma$ is a pseudo-Anosov on a subsurface that is the complement of only one-sided curves.
\end{remark}

\section{Geodesics in the Thin Part of $\core(\teich(\no_g))$}
\label{sec:recurr-rand-walks}

Inspired by Theorem \ref{thm:weak-convexity}, we will focus our attention on $\core(\teich(\no_g))$ instead of the entirety of $\teich(\no_g)$.
In this section, we will begin a proof of the fact that the action of $\mcg(\no_g)$ on $\core(\teich(\no_g))$ is statistically convex-cocompact (we abbreviate that to SCC for the remainder of the paper).

To show that the $\mcg(\no_g)$ action is SCC, we need to exhibit a subset of $\core(\teich(\no_g))$ which has the following two properties.
\begin{enumerate}[(i)]
\item The action of $\mcg(\no_g)$ on the subset is cocompact.
\item The subset is statistically convex.
\end{enumerate}
We claim that the subset $\thick(\no_g)$ satisfies these properties.
\begin{align*}
  \thick(\no_g) \coloneqq \left\{ z \in \teich(\no_g) \mid \text{No curve on $z$ is shorter than $\vept$} \right\}
\end{align*}

Although we have defined $\thick(\no_g)$ as a subset of $\teich(\no_g)$, it is also a subset of $\core(\teich(\no_g)) = \systole(\no_g)$: this follows from its very definition, which is a more restrictive version of the definition of $\systole(\no_g)$.
The action of $\mcg(\no_g)$ on $\thick(\no_g)$ is also cocompact, because the quotient is the thick part of the moduli space, which is known to be compact.

Showing that $\thick(\no_g)$ is a statistically convex subset of $\core(\teich(\no_g))$ requires more work.
We begin by rephrasing what it means for $\thick(\no_g)$ to be statistically convex in a form that's more convenient for our methods.

Consider the metric $d_{\vept}$ on $\core(\teich(\no_g))$, defined by the following formula.
\begin{align*}
  d_{\vept}(x, y) \coloneqq \inf\left( \ell(\lambda) \mid \text{$\lambda$ is a path in $\core(\teich(\no_g))$ joining $x$ and $y$} \right)
\end{align*}
This metric is not the same as the usual Teichmüller metric $d$, but by Theorem \ref{thm:weak-convexity}, we can make the ratio of these two metrics arbitrarily close to $1$ by picking $\vept$ small enough.

We now define lattice point entropy, and entropy for \concave lattice points.
\begin{definition}[Lattice point entropy for $\core(\teich(\no_g))$]
  Let $p$ be a point in $\thick(\no_g)$, and let $N_p(R, \vept)$ be the lattice point counting function.
  \begin{align*}
    N_p(R, \vept) \coloneqq \#\left( \gamma \in \mcg(\no_g) \mid d_{\vept}(p, \gamma p) \leq R \right)
  \end{align*}
  The lattice point entropy $\hLP(\core(\teich(\no_g)), \vept)$ is the following quantity.
  \begin{align*}
    \hLP(\core(\teich(\no_g)), \vept) \coloneqq \lim_{R \to \infty} \frac{\log N_p(R, \vept)}{R}
  \end{align*}
\end{definition}

\begin{remark}
  The lattice point entropy is a well-defined quantity since we have that $N_p(R, \vept)$ is a sub-multiplicative function, and therefore $\log N_p(R, \vept)$ is sub-additive, and the limit is well defined by Fekete's lemma.
\end{remark}

 We define a variable $s = 2 \diam \left( \thick(\no_g)/\mcg(\no_g) \right)$, and recall the definition of $s$-\concave lattice points.
\begin{definition}[\Concave lattice points]
 A lattice point $\gamma p$ is $s$-concave if some geodesic segment $\kappa$ starting in a ball of radius $s$ centered at $p$ and ending in ball of radius $t$ centered at $\gamma p$ stays outside the set $\thick(\no_g)$.

  The path obtained by joining $p$ to the starting point of $\kappa$, then following $\kappa$, and then joining the end point of $\kappa$ to $\gamma p$ is called the \emph{concavity detecting path for $\gamma p$}.
\end{definition}

\begin{definition}[Entropy for \concave lattice points]
  Let $M_p(R, \vept)$ be the counting function for \concave lattice points.
  \begin{align*}
    M_p(R, \vept) \coloneqq \#\left( \gamma \in \mcg(\no_g) \mid \text{$d_{\vept}(p, \gamma p) \leq R$ and $\gamma p$ is \concave}  \right)
  \end{align*}
  The entropy for \concave lattice points $\hLPb(\core(\teich(\no_g)), \vept)$ is the following quantity.
  \begin{align*}
    \hLPb(\core(\teich(\no_g)), \vept) \coloneqq \lim_{R \to \infty} \frac{\log M_p(R, \vept)}{R}
  \end{align*}
\end{definition}

The statistical convexity of $\thick(\no_g)$ is equivalent to the following statement, which states that the entropy for \concave lattice points is strictly lower than entropy for all lattice points.

\begin{theorem}[Statistical convexity]
  \label{thm:stat-convex}
  For $\vept > 0$ small enough, the following inequality holds.
  \begin{align*}
    \hLPb(\core(\teich(\no_g)), \vept) < \hLP(\core(\teich(\no_g)), \vept)
  \end{align*}
\end{theorem}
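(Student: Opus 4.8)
The plan is to bound the two exponents separately and compare them, following the strategy outlined in the introduction: I would produce an upper bound $\hLPb(\core(\teich(\no_g)),\vept)\le \hNP(\core(\teich(\no_g)))-1$ together with the matching lower bound $\hLP(\core(\teich(\no_g)),\vept)=\hNP(\core(\teich(\no_g)))$, where $\hNP$ denotes the exponential growth rate of the number of points of a fixed $(\vepn,2\vepn)$-net of $(\core(\teich(\no_g)),d_\vept)$ inside a metric ball of radius $R$; the strict inequality $\hLPb<\hLP$ is then immediate. Throughout, $\vept$ is taken small enough that Theorems \ref{thm:weak-convexity} and \ref{thm:prno} apply with small constants, so that $d_\vept$ agrees with the Teichmüller metric up to the factor $(1+\vepd)$ and the product region picture is accurate in the thin part.

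\textbf{Upper bound on $\hLPb$.} Let $\gamma p$ be an $s$-concave lattice point. Since $p$ lies in $\thick(\no_g)$, so does $\gamma p$, hence the central geodesic $\kappa$ of its concavity-detecting path enters and exits the thin region at bounded depth while having length within $2s$ of $d_\vept(p,\gamma p)$. Cutting along the short two-sided curve $\gamma_0$ that $\kappa$ witnesses, Theorem \ref{thm:prno} says the thin region is coarsely the $\sup$-product $\core(\teich(\no_g\setminus\gamma_0))\times(\text{horoball in }\mathbb{H}_{\gamma_0})$. A net point in a ball of radius $R$ of such a product splits as a net point of $\core(\teich(\no_g\setminus\gamma_0))$ within $R$ together with a net point of the horoball within $R$: the first contributes growth rate at most $\hNP(\core(\teich(\no_g\setminus\gamma_0)))$, and the horoball — where a geodesic from its boundary returning to its boundary within length $R$ can only dip to depth about $R/2$ — contributes growth rate at most $1/2$. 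Since $\no_g\setminus\gamma_0$ is a proper subsurface, the total is at most $\hNP(\core(\teich(\no_g)))-1$; summing over the finitely many $\mcg(\no_g)$-orbits of two-sided curves gives $\hLPb\le \hNP(\core(\teich(\no_g)))-1$. To upgrade this net count to a count of concave lattice points I would run a random walk on the net whose $n$-step distribution is comparable to the uniform measure on a ball of radius $\lambda n$ and whose steps are bounded; Theorem \ref{thm:prno} together with Minsky's product region picture reduces the probability that a single step lands in the thin region to an average over a ball in $\mathbb{H}$ of an exponentially small quantity, which lets one transfer the estimate.

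\textbf{Lower bound on $\hLP$.} It now suffices to prove $\hLP(\core(\teich(\no_g)),\vept)>\hNP(\core(\teich(\no_g)))-1$, and I would establish the sharp identity $\hLP=\hNP$. The inequality $\hLP\le\hNP$ is easy: an $\mcg(\no_g)$-orbit under a properly discontinuous action is uniformly separated, hence extends to an $(\vepn,2\vepn)$-net. For $\hLP\ge\hNP$ I would induct on the complexity of the surface: the base case $\chi=-1$ is handled by direct computation, since there $\core(\teich(\no_g))$ is a bounded-geometry perturbation of a hyperbolic plane, and for the inductive step one must show that net points reachable only by geodesics spending a definite fraction of their length in the thin part of a proper subsurface $W$ do not affect the leading-order growth; here the complexity length of Dowdall and Masur converts a long sojourn in the thin part of $W$ into a definite exponential saving governed by $\hNP(W)$, which by the inductive hypothesis and a strict-drop estimate is smaller than $\hNP(\no_g)$. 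Assembling the two bounds yields $\hLPb<\hLP$, which is Theorem \ref{thm:stat-convex}.

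\textbf{Main obstacle.} The genuinely hard part is the entropy equality $\hLP=\hNP$, and within it the complexity-length estimate: one needs uniform control, over all subsurfaces simultaneously, on how often and how deeply geodesics between net points dip into the various thin parts, and must establish $\hNP(W)<\hNP(\no_g)$ for every proper subsurface $W$ with enough room to absorb the $1/2$ contributed by the horoball directions, ruling out the possibility that the one-sided thin directions — which carry only one Fenchel–Nielsen coordinate apiece, and whose poor recurrence behaviour is exactly what obstructs the naive approach — conspire to restore the missing growth. By comparison, the random-walk upper bound on $\hLPb$ is relatively soft once the non-orientable product region theorem (Theorem \ref{thm:prno}) is in hand.
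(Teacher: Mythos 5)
Your overall plan coincides with the paper's: bound $\hLPb$ by $\hNP-1$ via a random walk on a net whose single-step estimate reduces, through Theorem \ref{thm:prno}, to an average over a ball in $\mathbb{H}$, and then prove the entropy equality $\hNP(\core(\teich(\no_g)))=\hLP(\teich(\no_g))$ by induction on complexity, with the $\chi=-1$ base cases done by hand, a strict subsurface entropy gap, and Dowdall--Masur complexity length controlling the bad points; this is exactly the route taken in Theorems \ref{thm:entropy-equality-implies-scc} and \ref{thm:entropy-equality}. Your identification of the entropy equality (and within it the complexity-length estimate and the strict drop $\hNP(V)<\hNP(\os)$) as the hard part is also accurate.

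The one genuine soft spot is your derivation of $\hLPb\le \hNP-1$. A concavity-detecting geodesic is only required to stay outside $\thick(\no_g)$; the curve that is short changes along the excursion, so you cannot confine the whole thin segment to the product region of a single two-sided curve $\gamma_0$ and count there. Even within one product region, your arithmetic needs $\hNP(\core(\teich(\no_g\setminus\gamma_0)))+\tfrac12\le \hNP(\core(\teich(\no_g)))-1$, i.e.\ a quantitative entropy drop of at least $3/2$ for proper subsurfaces, which is neither established nor needed anywhere in the paper (only a strict inequality is proved, and only for the complexity-length step). Relatedly, the mechanism is not that ``a single step lands in the thin region with exponentially small probability'' --- deep in the thin part a single step stays thin with probability close to $1$. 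What actually produces the $-1$ is the Foster--Lyapunov--Margulis function $\flm(x)=\sqrt{1/\inf_{\gamma\ \text{two-sided}}\ell_\gamma(x)}$: Theorem \ref{thm:prno} reduces $A_\tau\flm$ at any thin point to a hyperbolic ball average of $\sqrt{\mathrm{Im}\,z}$, giving $(A_\tau\flm)(x)\le c\,\tau e^{-\tau}\flm(x)$ uniformly on the thin part no matter which (or how many) curves are short (Proposition \ref{prop:flm-is-flm}); iterating this contraction over the roughly $R/\tau$ steps of a concave trajectory, and then mapping concavity-detecting geodesics to concave trajectories with uniformly bounded fibers, yields $M_p(R)\le C\exp((\hNP-1+\veperr)R)$ (Propositions \ref{prop:rw-recurrence} and \ref{prop:counting-geodesics}). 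Your lower-bound plan matches the paper; only note that the $\chi=-1$ cores are not all bounded-geometry copies of $\mathbb{H}$ --- two of the four base surfaces have compact or horoball cores --- but those cases are easier, not harder.
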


We prove Theorem \ref{thm:stat-convex} by constructing a random walk on $\core(\teich(\no_g))$ and proving a similar entropy gap between all random walk trajectories and the random walk trajectories that spend their time outside $\thick(\no_g)$.

\subsection{Construction of Random Walk}
\label{sec:constr-rand-walk}

Let $\mathfrak{p}$ be the projection map from $\core(\teich(\no_g))$ to $\core(\teich(\no_g)) / \mcg(\no_g)$, and $\vepn > 0$ be a fixed constant.

\begin{definition}[$(\vepn, 2\vepn)$-net in $\core(\teich(\no_g))$]
  Let $\mathfrak{M}$ be a subset of $\core(\teich(\no_g)) / \mcg(\no_g)$ satisfying the following two conditions.
  \begin{enumerate}[(i)]
  \item If $z_1$ and $z_2$ lie in $\mathfrak{M}$, then $d_{\vept}(z_1, z_2) \geq \vepn$.
  \item For any $z_1$ in $\core(\teich(\no_g)) / \mcg(\no_g)$, there exists $z_2 \in \mathfrak{M}$ such that $d_{\vept}(z_1, z_2) \leq 2 \vepn$.
  \end{enumerate}
  An $(\vepn, 2\vepn)$-net $\net$ in $\core(\teich(\no_g))$ is $\mathfrak{p}^{-1}(\mathfrak{M})$ for any subset $\mathfrak{M}$ satisfying the above conditions.
\end{definition}

The random walk is defined in terms of a net $\net$ and a parameter $\tau > 0$: we pick a starting point $r_0$ (which we call step $0$) for the random walk from one of the net points, and $r_n$ is picked uniformly at random amongst all the net points that are within distance $\tau$ of $r_{n-1}$.

We will be interested in counting the number of $n$-step trajectories of the random walk as a function of $n$ and $\tau$.
The count will also involve the exponential growth rate of the number of net points in a ball of radius $R$, which we call the \emph{net point entropy} $\hNP(\core(\teich(\no_g)), \vept)$.

\begin{definition}[Net point entropy]
  Let $K_p(R, \vept)$ be the counting function for net points, where $p \in \core(\teich(\no_g))$.
  \begin{align*}
    K_p(R, \vept) \coloneqq \#\left( y \in \net \mid d_{\vept}(p, y) \leq R \right)
  \end{align*}
  The net point entropy $\hNP(\core(\teich(\no_g)), \vept)$ is the following function defined in terms of $K_p$.
  \begin{align*}
    \hNP(\core(\teich(\no_g)), \vept) \coloneqq \lim_{R \to \infty} \frac{\log K_p(R, \vept)}{R}
  \end{align*}
\end{definition}

Note that $\hNP(\core(\teich(\no_g)), \vept)$ does not depend on the choice of the actual net, nor does it depend on the parameter $\vepn$. Two different nets with different choices of $\vepn$ will have counting functions that differ by at most a constant multiplicative term, which will not change the value of $\hNP(\core(\teich(\no_g)), \vept)$.
This follows from Proposition \ref{prop:uniform-volume-bound}: let $n_1$ and $n_2$ be the number of net points of two different nets contained in a ball $B_R(z)$ of radius $R$ centered at a point $z$.
We get a lower bound for $\nu_N(B_{R+\vepn}(z))$ by adding up the $\nu_N$ volumes of balls of radius $\vepn$ around each point in the first net.
\begin{align}
  \label{eq:volume-lower-bound-net}
  n_1 c_1(\vepn) \leq \nu_N(B_{R+\vepn}(z))
\end{align}
We get an upper bound for $\nu_N(B_{R+\vepn}(z))$ by adding up the $\nu_N$ volumes of radius $2 \vepn$ around each point in the second net.
\begin{align}
  \label{eq:volume-upper-bound-net}
   \nu_N(B_{R+\vepn}(z)) \leq n_2 c_2(2 \vepn)
\end{align}
Here, $c_1$ and $c_2$ are functions that appear in the statement of Proposition \ref{prop:uniform-volume-bound}.
Combining \eqref{eq:volume-lower-bound-net} and \eqref{eq:volume-upper-bound-net}, as well as using the fact that $c_1(\vepn)$ is positive gives us the claim.

The above argument also shows that the number of net points in a ball of radius $R$ is equal (up to multiplicative errors) to the Norbury measure $\nu_N$.
In Section \ref{sec:constr-marg-funct}, we will focus our attention on averaging functions with respect to this measure, instead of the uniform measure obtained via the net points.

We will replicate the proof of Theorem 1.2 of \textcite{eskinmirzakhani}, where they construct a random walk on a net, and use that to count \concave trajectories.
The key difficulty that comes up in our proof and which does not come up in their proof is the fact that they get an estimate for the cardinality of \concave trajectories (and therefore \concave lattice points) in terms of $\hNP(\teich(\os_g))$, which they know is the same as $\hLP(\teich(\os_g))$ (i.e.\ $6g-6$) by Theorem 1.2 of \textcite{10.1215/00127094-1548443}.

Since we are working with non-orientable surfaces, we cannot invoke Theorem 1.2 of \textcite{10.1215/00127094-1548443}, and instead need to relate $\hLP$ and $\hNP$ more directly: this is what we do in Sections \ref{sec:equal-latt-point} and \ref{sec:line-gap-compl}.

\subsection{Construction of the Foster-Lyapunov-Margulis Function}
\label{sec:constr-marg-funct}

One of the ways to show that a random walk on a non-compact space avoids the complement of a compact region with high probability is to construct a proper function $\flm$ on the space which satisfies a certain inequality when averaged over one step of the random walk.
See \textcite{EskinMozes+2022+342+361} for an exposition on the construction of these functions as well as some applications to dynamics and random walks.

\begin{definition}[Averaging operator]
  Let $\tau > 0$ be the parameter associated to the random walk, and $f$ be any real valued function $\core(\teich(\no_g))$.
  Then the action of the averaging operator $A_{\tau}$ on $f$ is given by the following formula.
  \begin{align*}
    (A_{\tau}f)(x) \coloneqq \frac{1}{\nu_N\left( B_{\tau}^{\vept}(x) \right)} \left( \int_{B_{\tau}^{\vept}(x)} f(z) d\nu_N(z) \right)
  \end{align*}
  Here, $B_{\tau}^{\vept}(x)$ is a ball of radius $\tau$ around $x$ with respect to the metric $d_{\vept}$.
\end{definition}

A Foster-Lyapunov-Margulis function is a function that has strong decay properties when the operator $A_{\tau}$ is applied to it.

\begin{definition}[Foster-Lyapunov-Margulis function]
  \label{defn:flm}
  A proper function $f$ on $\core(\teich(\no_g))$ quotiented by the $\mcg(\no_g)$-action is called a Foster-Lyapunov-Margulis function if, if there exists a polynomial $p$, and a compact subset $W_0$ of $\core(\teich(\no_g))$, and functions $b(x)$ and $c(x)$, such that $A_{\tau}f$ satisfies the following inequality.
  \begin{align*}
    (A_{\tau}f)(x) \leq c(x) f(x) + b(x)
  \end{align*}
  Furthermore, $b(x)$ is a bounded function that is supported within a compact set $W_0$, and $c(x)$ satisfies the following inequality for all $x$ outside of $W_0$.
  \begin{align*}
    c(x) \leq p(\tau) \cdot \exp(-\tau)
  \end{align*}
\end{definition}

Consider the function $\flm$, defined on $\core(\teich(\no_g))$ in terms of the length of the shortest two-sided curve on the surface.
\begin{align*}
  \flm(x) \coloneqq \sqrt{\frac{1}{\inf_{\text{$\gamma$ two-sided}}\ell_{\gamma}(x)}}
\end{align*}
This function is a proper function on $\systole(\no_g)/\mcg(\no_g)$, since the sub-level sets of this function are regions in $\systole(\no_g)$ where the hyperbolic lengths of all curves are bounded from below.

\begin{proposition}
  \label{prop:flm-is-flm}
  The function $\flm$ is a Foster-Lyapunov-Margulis function on $\core(\teich(\no_g))$ with respect to $A_{\tau}$, for large values of $\tau$.
\end{proposition}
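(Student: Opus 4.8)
The goal is to verify the averaging inequality of Definition \ref{defn:flm} for $\flm$ by localizing to the thin part of $\core(\teich(\no_g))$ and using the product region theorem (Theorem \ref{thm:prno}) to reduce $A_\tau\flm$, on each product factor, to the average of the function $y^{1/2}$ over a hyperbolic ball in $\mathbb{H}$. First I would fix $\tau$ large and a constant $c_0$ slightly bigger than the additive error in Theorem \ref{thm:prno} plus the distortion of $d_\vept$ against the Teichm\"uller metric, and let $W_0$ be (the preimage in $\core(\teich(\no_g))$ of) the set in moduli space where every two-sided curve has length at least $\vept e^{-c_0\tau}$. Since one-sided curves are already bounded below by $\vept$ on $\systole(\no_g)$, this set is compact by Mumford compactness, and $\flm$ is bounded there by $\vept^{-1/2}e^{c_0\tau/2}$; on $W_0$ one simply puts an upper bound for $A_\tau\flm$ into the compactly supported function $b$, so the inequality is trivial there.

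\textbf{Localizing and splitting via the product region theorem.} For $x\notin W_0$ let $\gamma=\{\gamma_1,\dots,\gamma_k\}$ be the two-sided curves with $\ell_{\gamma_i}(x)<\vept e^{-c_0\tau}$ (a nonempty, pairwise disjoint collection by the collar lemma). Because $\tau$ is a fixed constant, along any $d_\vept$-ball of radius $\tau$ about $x$ each $\ell_{\gamma_i}$ changes by a factor of at most $e^{O(\tau)}$, so every curve in $\gamma$ stays shorter than $\vept$ on the whole ball; hence the ball lies in $\teich_{\gamma\le\vept}(\no_g)$, where Theorem \ref{thm:prno} identifies $d_\vept$ up to bounded additive error with the sup-metric on $X_\gamma=\teich(\no_g\setminus\gamma)\times\prod_i\mathbb{H}_i$. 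The key point is that in the Fenchel--Nielsen coordinates $(t,y)=(\tau(\gamma_i),1/\ell_{\gamma_i})$ one has $d\tau(\gamma_i)\wedge d\ell(\gamma_i)=-y^{-2}\,dt\,dy$, so $\nu_N$ restricts on each $\mathbb{H}_i$ to the hyperbolic area form, while on $\teich(\no_g\setminus\gamma)$ it is the Norbury form of the (lower-complexity) subsurface. Writing
\begin{align*}
  \flm(z)\ \le\ \sum_{i}\ell_{\gamma_i}(z)^{-1/2}+\flm_{\no_g\setminus\gamma}(z)+O(1),
\end{align*}
and using monotonicity of $A_\tau$ together with the fact that, up to the additive error of Theorem \ref{thm:prno} and the volume comparison of Proposition \ref{prop:uniform-volume-bound}, $A_\tau$ factors as an iterated average over the pieces of $X_\gamma$, it suffices to estimate (i) the average of $y^{1/2}$ over a hyperbolic $\tau$-ball, and (ii) the average of $\flm_{\no_g\setminus\gamma}$, which I would control by induction on the complexity of the surface (the base cases being surfaces on which $\flm$ is constant, or on which the only product regions are annuli, handled directly by (i)).

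\textbf{The hyperbolic estimate and assembling the bound.} The function $y^{1/2}$ satisfies $\Delta_{\mathbb{H}}y^{1/2}=-\tfrac14 y^{1/2}$; the scaling isometry $z\mapsto y_0 z$ and a one-variable integral give $A^{\mathbb{H}}_\tau(y^{1/2})(iy_0)\le p(\tau)\,e^{-\tau/2}\,y_0^{1/2}$ for a fixed polynomial $p$ (this is the classical bottom-of-spectrum spherical-average estimate, and is the source of the exponential decay demanded by Definition \ref{defn:flm}). Thus the curves in $\gamma$ contribute $p(\tau)e^{-\tau/2}\ell_{\gamma_i}(x)^{-1/2}\le p(\tau)e^{-\tau/2}\flm(x)$. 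The remaining contributions are all \emph{bounded} functions: two-sided curves whose length at $x$ lies in the ``medium'' range $[\vept e^{-c_0\tau},\vept)$ contribute a quantity bounded purely in terms of $\tau$ (so does the boundary effect when a ball pokes out of $\teich_{\gamma\le\vept}(\no_g)$, which only happens in that range), and the inductive error term $b_{\no_g\setminus\gamma}$ is bounded. Since $\flm(x)\ge \ell_{\gamma_1}(x)^{-1/2}$ is large outside any fixed compact set, each such bounded term can be traded, off a compact set, for an arbitrarily small multiple of $\flm(x)$, and on the complementary compact set (where $\flm$ is bounded) it is absorbed into $b$ after enlarging $W_0$ to include it. This yields $(A_\tau\flm)(x)\le c(x)\flm(x)+b(x)$ with $b$ bounded and compactly supported, and $c(x)\le p(\tau)e^{-\tau/2}$ outside $W_0$.

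\textbf{Main obstacle.} The real work is the bookkeeping for several simultaneously short curves: one must check that bounding $\flm$ by a sum of single-curve terms plus a subsurface term does not destroy the decay, that the induction on complexity genuinely closes, and — above all — that the union of $W_0$ with all the auxiliary compact sets (the ``medium-range'' locus, the thick--thin interface $\tau$-collar, and the pullback of the subsurface's compact sets) is still compact in the moduli space of $\no_g$; this is exactly where Mumford compactness and the fact that $\tau$, while large, is a fixed constant are essential, and where one must be careful that ``deep in one thin region but bounded elsewhere'' is reconciled with compact support of $b$ via the trade-off described above.
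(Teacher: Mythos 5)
Your proposal is correct in its essentials and runs on the same engine as the paper's proof: reduce to a product region via Theorem \ref{thm:prno}, observe that the average of $\flm$ there is governed by the average of $\sqrt{\mathrm{Im}\,z}$ over a ball in $\mathbb{H}$ (the bottom-of-spectrum estimate), handle several simultaneously short curves by replacing the max by a sum at the cost of a bounded constant, and dump the remaining region into the compactly supported term $b$. The genuine difference is in the decomposition. The paper defines its regions $R_1, R_2$ by requiring that the \emph{shortest} two-sided curve at every point of $B_\tau^{\vept}(x)$ belongs to the designated short family $\gamma$; with that choice $\flm$ restricted to the ball depends only on the $\mathbb{H}_i$-factors and is constant along the $\core(\teich(\no_g\setminus\gamma))$ direction, so no subsurface term, no induction on complexity, and essentially no ``trading'' is needed. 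Your cut at the $\tau$-dependent threshold $\vept e^{-c_0\tau}$ forces you to control curves outside $\gamma$ that may realize the minimum somewhere in the ball, and you route this through $\flm_{\no_g\setminus\gamma}$ and an induction; note that this induction is actually superfluous given your own observation: any two-sided curve not in $\gamma$ either crosses a curve of $\gamma$ (collar lemma, $O(1)$ contribution) or has length at least $\vept e^{-(c_0+O(1))\tau}$ throughout the ball, so the entire non-$\gamma$ contribution is a constant depending only on $\tau$, which your trade-off then converts into a small multiple of $\flm(x)$ off a compact set. So your argument closes, just with more bookkeeping than the paper's.

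Two caveats worth recording. First, your trade makes $W_0$ grow with $\tau$ (the threshold on the two-sided systole becomes $e^{-C\tau}$ for a possibly large $C$). Definition \ref{defn:flm} tolerates this, but downstream (Proposition \ref{prop:rw-recurrence}) the strong decay is invoked exactly at points that are $\tau$-far from $\thick(\no_g)$; with the paper's regions such points automatically lie in $R_1\cup R_2$, whereas with your enlarged $W_0$ one would have to recalibrate constants (e.g.\ define concave trajectory points using distance $C\tau$ from $\thick(\no_g)$). Second, you obtain $c(x)\le p(\tau)e^{-\tau/2}$ while Definition \ref{defn:flm} literally asks for $p(\tau)e^{-\tau}$; this is a normalization issue in the metric on the $\mathbb{H}$-factors (the paper cites the Eskin--Mozes estimate in a convention where the exponent comes out as $e^{-\tau}$), it is harmless for the qualitative conclusion, but you should either adopt the paper's normalization or note explicitly that the weaker exponent suffices for all later uses.
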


\begin{proof}
  Let $W_0$ be the region of $\core(\teich(\no_g))$ where all two-sided curves are longer than $\vept$.
  We divide $\core(\teich(\no_g))$ into three regions, and prove the estimate for $(A_\tau \flm)(x)$ for $x$ in these three regions.
  The regions $R_1$, $R_2$ and $R_3$ are defined in the following manner.
  \begin{itemize}
  \item[-] $R_1$: The subset $R_1$ is defined in the following manner.
    \begin{align*}
      R_1 \coloneqq \left\{ x \mid \text{Shortest curve for any $z \in B_{\tau}^{\vept}(x)$ is $\gamma$ and $\ell_z(\gamma) < \vept$ for $z \in B_{\tau}^{\vept}(x)$} \right\}
    \end{align*}
    This is the set of points $x$ such that there exists a unique curve $\gamma$ which is the shortest curve at all points in $B_{\tau}^{\vept}(x)$, and the length of $\gamma$ is less than $\vept$ for all points in $B_{\tau}^{\vept}(x)$.
  \item[-] $R_2$: This subset is $R_0 \setminus R_1$, where $R_0$ is defined in the following manner.
    \begin{align*}
      R_0 \coloneqq \left\{ x \mid \text{Shortest curve for any $z \in B_{\tau}^{\vept}(x)$ shorter than $\vept$} \right\}
    \end{align*}
    $R_2$ is the region where multiple curves can be simultaneously short.
  \item[-] $R_3$: This is all of $\core(\teich(\no_g))$ with $R_1$ and $R_2$ removed.
\end{itemize}
  \begin{enumerate}[(i)]
  \item Proof for $x \in R_1$: In this case, the entire ball $B_{\tau}^{\vept}$ is contained in the product region where $\gamma$ stays short.
    By Theorem \ref{thm:prno}, there exists a constant $c(\vept)$ such that the ball $B_{\tau}^{\vept}(x)$ contains, and is contained inside a product of balls in $\core(\teich(\no_g \setminus \gamma))$ and $\mathbb{H}$ (which corresponds to length and twist around $\gamma$).
    \begin{align*}
      B_{\tau - c(\vept)}(x, \core(\teich(\no_g \setminus \gamma))) \times B_{\tau - c(\vept)}(\mathbb{H}) &\subset B_{\tau}^{\vept}(x) \\
      &\subset B_{\tau + c(\vept)}(x, \core(\teich(\no_g \setminus \gamma))) \times B_{\tau + c(\vept)}(\mathbb{H})
    \end{align*}
    Instead of computing the average of $f$ over $B_{\tau}^{\vept}(x)$, we can compute it over the product of the balls as described above.
    To do so, we need to verify that the measure on the product of the two balls is the product of the measures on the individual balls: we do this in the proof of Proposition \ref{prop:uniform-volume-bound}: specifically \eqref{eq:measure-of-product-is-product-of-measure}.
    Since the volumes of these balls grow exponentially with respect to radius, computing the average over the product of balls will give us an average that differs from the true average by a bounded multiplicative constant.
    This constant will be one of the terms that contribute to $c^{\prime}$ in Definition \ref{defn:flm}.
    Furthermore, note that the function $\flm$ is constant along the $\core(\teich(\no_g \setminus \gamma))$ component, since $\gamma$ is the shortest curve in the product of balls.
    It thus suffices to compute the average of $\flm$ on a ball in $\mathbb{H}$.

    Parameterizing $\mathbb{H}$ as the upper half plane with coordinates $z = (z_{\mathrm{real}}, z_{\mathrm{im}})$, the function $\flm(z)$ is the square root of the second coordinate, i.e.\ $\flm(z) = \sqrt{z_{\mathrm{im}}}$.
    The average of this function over a sphere is well-understood (see \cite[Lemma 4.2]{EskinMozes+2022+342+361}).
    We recall the estimate here for the reader's convenience: denoting the averaging operator over a sphere of radius $\tau$ in $\mathbb{H}$ by $B_{\tau}$, the estimate is as follows.
    \begin{align*}
      (B_{\tau}\flm)(z) \leq c^{\prime \prime} \exp(-\tau) \flm(z)
    \end{align*}
    We use the spherical average to compute the average over a ball by taking a weighted average of the spherical averages.
    Doing so gives the following estimate for $(A_{\tau}\flm)(z)$ (where $c^{\prime}$ is some fixed constant).
    \begin{align*}
      (A_{\tau}\flm)(z) \leq c^{\prime} \tau \exp(-\tau) \flm(z)
    \end{align*}
    Since we have already established that the value $\flm(x)$ only depends on depends on what happens in the $\mathbb{H}$-coordinate, namely $z$, we get a corresponding inequality for $x$, which proves the result in this case.
    \begin{align*}
      (A_{\tau}\flm)(x) \leq c^{\prime} \tau \exp(-\tau) \flm(x)
    \end{align*}
  \item Proof for $x \in R_2$: In this case, let $\left\{ \gamma_1, \ldots, \gamma_k \right\}$ be the two-sided curves that become shorter at some point in $B_{\tau}^{\vept}(x)$.
    We have that the ball lies in the product region where all the curves $\left\{ \gamma_1, \ldots, \gamma_k \right\}$ are short simultaneously.
    We have that there exists some constant $c_g$, depending only on $g$, such that $k \leq c_g$, since we cannot have too many curves being short simultaneously.

    Similar to the previous case, changing only the $\core(\teich(\no_g \setminus \bigcup_{i=1}^k \gamma_i))$ coordinate will not change the value of the function $\flm$, so it suffices to focus our attention on the coordinates $\prod_{i=1}^k \mathbb{H}_i$, where each $\mathbb{H}_i$ corresponds to the length and twist around $\gamma_i$.

    Let $z_{i, \mathrm{im}}$ be the imaginary part of the $i$\textsuperscript{th} copy of $\mathbb{H}$ in $\prod_{i=1}^k \mathbb{H}_i$.
    The function $\flm$ on $\prod_{i=1}^k \mathbb{H}_i$ is given by the following formula.
    \begin{align*}
      \flm(x) = \max_{i} \sqrt{z_{i, \mathrm{im}}}
    \end{align*}
    Since averaging this function over a product of balls is somewhat tedious, we relate it to a different function $\flm^{\prime}$ that is easier to average.
    \begin{align*}
      \flm^{\prime}(x) \coloneqq \sum_i \sqrt{z_{i, \mathrm{im}}}
    \end{align*}
    These two functions are equal, up to a constant multiplicative error.
    \begin{align*}
      \frac{\flm^{\prime}(x)}{c_g} \leq \flm(x) \leq \flm^{\prime}(x)
    \end{align*}
    This means we can prove the averaging estimate for $\flm^{\prime}$, and the same estimate will hold for $\flm$, with a slightly worse multiplicative constant.

    Furthermore, since $z_{i, \mathrm{im}}$ is constant along balls in $\mathbb{H}_j$ for $j \neq i$, it suffices to average just each term of the sum in the corresponding $\mathbb{H}_i$.
    We do so, using the same estimate from the proof in the $R_1$ case.
    \begin{align*}
      (A_{\tau}\flm^{\prime})(x) \leq c^{\prime} \tau \exp(-\tau) \flm^{\prime}(x)
    \end{align*}
    Replacing $\flm^{\prime}$ with $\flm$ gives us the inequality we want, and proves the result in this case.
    \begin{align*}
      (A_{\tau}\flm)(x) \leq  (c_g \cdot c^{\prime}) \tau \exp(-\tau) \flm(x)
    \end{align*}
  \item Proof for $x \in R_3$: Note that the region $R_3$ is compact, which means the function $\flm$ is bounded in this region, and consequently, there exists a uniform upper bound for $A_\tau \flm$ as well.
    Let us denote the uniform upper bounding function by $b(x)$: we can modify this function to be compactly supported by multiplying it with a bump function that is $1$ on a $\tau$-neighbourhood of $R_3$, and decays to $0$ outside.
    By construction, we have for $x \in R_3$, $(A_{\tau}\flm)(x) \leq b(x)$.
    This proves the result for $x \in R_3$.
  \end{enumerate}
  Putting together the estimate from the three cases, we get the standard form of the inequality (which holds for any $x \in \core(\teich(\no_g))$).
  \begin{align*}
    (A_{\tau} \flm)(x) \leq c(x) \flm(x) + b(x)
  \end{align*}
  Here, $c(x) \coloneqq (c_g \cdot c^{\prime}) \tau \exp(-\tau)$, and $b(x)$ is the function from the proof in case $R_3$.
\end{proof}

\subsection{Recurrence for Random Walks and Geodesic Segments}
\label{sec:recurr-rand-walks-1}


\subsubsection{Recurrence for random walks}
\label{sec:recurr-rand-walks-2}

In this section, we will count the number of random walk trajectories that are $\mathfrak{s}$-\emph{concave}, where $\mathfrak{s} = \lceil \frac{\diam(\thick(\no_g))}{\tau} \rceil + 1$.

\begin{definition}[\Concave trajectories]
  A trajectory $\left( r_0, r_1, \ldots, r_{n-1} \right)$ is said to be $\mathfrak{s}$-\concave if all the points in the trajectory except the first $\mathfrak{s}$ points and the last $\mathfrak{s}$ points are at least $\tau$-distance away from $\thick(\no_g)$.

  We call these middle points the \emph{concave trajectory points}.
\end{definition}
From Proposition \ref{prop:flm-is-flm}, we have that for any of the \concave trajectory points $r_i$, the following decay estimate for $A_{\tau}\flm(r_i)$ holds.
\begin{align*}
  (A_{\tau} \flm)(r_i) \leq c^{\prime} \tau \exp(-\tau) \flm(r_i)
\end{align*}
If $r_i$ is not a \concave trajectory point, we only have a weaker estimate in terms of a uniformly bounded function $b(x)$.
\begin{align*}
  (A_{\tau} \flm)(r_i) \leq b(x)
\end{align*}


Fix an $r_0$ in $\thick(\no_g)$, and let $\traj(n, \tau)$ denote the collection of $n$-step \concave random walk trajectories which start at $r_0$.
We will use the term $r$ to denote trajectories in $\traj(n, \tau)$, and $r_i$ to denote the $i$\textsuperscript{th} step of the trajectory $r$.

\begin{proposition}
  \label{prop:rw-recurrence}
  For any $\veperr > 0$, there exists a $\tau > 0$ large enough, and a constant $C \gg 0$,  such that the following bound on $\left| \traj(n, \tau) \right|$ holds.
  \begin{align*}
    \left| \traj(n, \tau) \right| \leq C \exp((\hNP - 1 + \veperr)n\tau)
  \end{align*}
  Here, $\hNP = \hNP(\core(\teich(\no_g)), \vept)$.
\end{proposition}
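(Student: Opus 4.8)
The plan is to estimate $|\traj(n,\tau)|$ by assigning to each $n$-step concave trajectory a ``weight'' built out of the Foster-Lyapunov-Margulis function $\flm$, and then to convert the multiplicative decay established in Proposition \ref{prop:flm-is-flm} into an exponential bound on the number of trajectories. The point is that the averaging operator $A_\tau$ is, up to bounded multiplicative error, exactly an average over the net points within distance $\tau$, since by the discussion following the definition of net point entropy the number of such net points is comparable to $\nu_N(B_\tau^{\vept}(\cdot))$, and (in the concave range) the ball lies in a product region where $\flm$ only depends on the $\mathbb{H}$-coordinates. So for a concave trajectory point $r_i$, summing $\flm(r_{i+1})$ over the roughly $\exp(\hNP\tau)$ choices of next net point $r_{i+1}$ gives, up to a bounded constant, $\exp(\hNP\tau)\cdot (A_\tau\flm)(r_i) \leq \exp(\hNP\tau)\cdot c'\tau\exp(-\tau)\flm(r_i)$.

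First I would set up the bookkeeping: write
\begin{align*}
  \sum_{r \in \traj(n,\tau)} \flm(r_{n-1}) = \sum_{r_0 \to r_1 \to \cdots \to r_{n-1}} \flm(r_{n-1}),
\end{align*}
where the sum is over concave trajectories starting at the fixed basepoint $r_0 \in \thick(\no_g)$, and peel off the last step. Iterating the inequality of the previous paragraph over the $n - 2\mathfrak{s}$ concave middle steps, and absorbing the first $\mathfrak{s}$ and last $\mathfrak{s}$ steps (where only the weak bound $(A_\tau\flm)(r_i)\leq b(r_i)$ is available, but $b$ is bounded and $\mathfrak{s}$ is a fixed constant once $\tau$ is fixed) into a single multiplicative constant $C_0 = C_0(\tau)$, we obtain
\begin{align*}
  \sum_{r \in \traj(n,\tau)} \flm(r_{n-1}) \leq C_0 \,\flm(r_0)\, \bigl(\exp(\hNP\tau)\cdot c'\tau\exp(-\tau)\bigr)^{\,n - 2\mathfrak{s}}.
\end{align*}
Since every $r_{n-1}$ lies in $\core(\teich(\no_g))$ we have $\flm(r_{n-1}) \geq \delta_0 > 0$ for some uniform constant (because $\flm$ is bounded below, two-sided curves having length at most the Bers constant), hence $|\traj(n,\tau)| \leq \delta_0^{-1}\sum_{r}\flm(r_{n-1})$, which gives
\begin{align*}
  |\traj(n,\tau)| \leq C \bigl(c'\tau\bigr)^{n}\exp\bigl((\hNP - 1)n\tau\bigr)
\end{align*}
after adjusting constants. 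Finally, given $\veperr > 0$, choose $\tau$ large enough that $\frac{1}{\tau}\log(c'\tau) < \veperr$ (this is possible since $\frac{\log\tau}{\tau}\to 0$); then $(c'\tau)^n = \exp(n\log(c'\tau)) \leq \exp(\veperr n\tau)$, which yields the claimed bound $|\traj(n,\tau)| \leq C\exp((\hNP - 1 + \veperr)n\tau)$.

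The main obstacle I expect is the careful justification of the step where $(A_\tau\flm)(r_i)$ — an integral average against $\nu_N$ over a metric ball — is replaced by the discrete average over net points within distance $\tau$, uniformly over $i$ and with a constant independent of $n$. This requires knowing that $\nu_N(B_\tau^{\vept}(x))$ is comparable to the net point count $K_x(\tau,\vept)$ with constants depending only on $\tau$ (not on $x$), which in turn relies on the uniform volume bounds of Proposition \ref{prop:uniform-volume-bound} and on the fact — via the product region Theorem \ref{thm:prno} — that on the concave range the relevant ball factors as a product on which $\flm$ depends only on the hyperbolic factors. A secondary technical point is handling the boundary steps of the trajectory (the first and last $\mathfrak{s}$ steps, and any middle step whose $\tau$-ball is not entirely in a single product region, i.e.\ the $R_2$ region): here one uses that $b$ is bounded and compactly supported, that $\mathfrak{s}$ is a fixed constant once $\tau$ is fixed, and that the number of ways a trajectory can pass through $R_2 \cup R_3$ contributes only a bounded multiplicative factor per such visit — or, more cleanly, one notes that concave trajectory points by definition avoid $\thick(\no_g)$ by distance $\tau$, so the decay estimate from cases $R_1$ and $R_2$ of Proposition \ref{prop:flm-is-flm} applies to all of them.
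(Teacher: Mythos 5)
Your proposal is correct and follows essentially the same route as the paper: bound $\left| \traj(n, \tau) \right|$ by $\sum_r \flm(r_{n-1})$ using the uniform lower bound on $\flm$, peel off one step at a time, convert the discrete sum over net points into the $\nu_N$-average $A_\tau\flm$ via the uniform volume bounds of Proposition \ref{prop:uniform-volume-bound}, apply the strong decay $(A_\tau\flm)\leq c'\tau\exp(-\tau)\flm$ at the $n-2\mathfrak{s}$ concave steps and the bounded estimate at the remaining $2\mathfrak{s}$ steps, and finally take $\tau$ large to absorb the polynomial factor $(c'\tau)^n$ (and the slack in the net-point count) into $\veperr$. The obstacle you flag — uniform comparability of the net-point count in $B_\tau^{\vept}(x)$ with $\nu_N(B_\tau^{\vept}(x))$ independently of $x$ — is exactly the point the paper handles with Proposition \ref{prop:uniform-volume-bound}, so no gap remains.
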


\begin{proof}
  Since the $\flm(x)$ has a positive lower bound $C_l$ as $x$ varies over $\core(\teich(\no_g))$ (which comes from the Bers constant associated to $\no_g$), we can estimate $\left| \traj(n, \tau) \right|$ by summing up $\flm(r_{n-1})$ over all the trajectories in $\traj(n, \tau)$.
  \begin{align*}
   \left| \traj(n, \tau) \right| \leq \frac{1}{C_l} \sum_{r \in \traj(n, \tau)} \flm(r_{n-1})
  \end{align*}
  It therefore will suffice to estimate $\sum_{r \in \traj(n, \tau)} \flm(r_{n-1})$: we do so by conditioning on the previous step of the random walk over and over again until we get to the first step $r_0$.

  We have the following recursive inequality for $\sum_{r \in \traj(n, \tau)} \flm(r_{n-i})$.
  \begin{align}
    \sum_{r \in \traj(n-i+1, \tau)} \flm(r_{n-1}) &= \sum_{r \in \traj(n-i, \tau)} \left( \sum_{\substack{y \in \net \\  d_{\vept(y, r_{n-i-1}) \leq \tau} } } \flm(y) \right) \label{eq:recursive-ineq1} \\
                                              &\leq \sum_{r \in \traj(n-i, \tau)} C \int_{B_{\tau}^{\vept}(r_{n-i-1})} \flm(y) d\nu_N(y) \label{eq:recursive-ineq2} \\
    &= \sum_{r \in \traj(n-i, \tau)} C \nu_N(B_{\tau}^{\vept}(r_{n-i-1})) (A_{\tau}\flm)(r_{n-i-1})
  \end{align}
  Here, we go from \eqref{eq:recursive-ineq1} to \eqref{eq:recursive-ineq2} by integrating the indicator function supported in a ball of radius $\frac{\vepn}{2}$ around each net point, and using Proposition \ref{prop:uniform-volume-bound} to uniformly bound the integral of the indicator independent of the basepoint.

  If $n-i-1$ is one of the first $\mathfrak{s}$ or last $\mathfrak{s}$ indices, we have the following inequality.
  \begin{align}
    \label{eq:flm-bad}
    (A_{\tau}\flm)(r_{n-i-1}) \leq b(r_{n-i-1}) \flm(r_{n-i-1})
  \end{align}

  Otherwise $r_{n-i-1}$ is a \concave trajectory point and we have strong bounds on $(A_{\tau} \flm)(r_{n-i-1})$.
  \begin{align}
    \label{eq:margulis-estimate-strong-bound}
    (A_{\tau}\flm)(r_{n-i-1}) \leq c^{\prime} \tau \exp(-\tau) \flm(r_{n-i-1})
  \end{align}
  Combining \eqref{eq:recursive-ineq1} and \eqref{eq:flm-bad} for the case of non concave trajectory points, we get an estimate we need to repeat $2 \mathfrak{s}$ times.
  \begin{align}
    \label{eq:to-iterate-bad}
    \sum_{r \in \traj(n-i+1, \tau)} \flm(r_{n-i}) \leq B \exp((\hNP + \veperr^{\prime})\tau) \left( \sum_{r \in \traj(n-i, \tau)} \flm(r_{n-i-1}) \right)
  \end{align}
  Here, $B$ is the maximum value the function $b$ takes over its compact support.

  Combining \eqref{eq:recursive-ineq1} and \eqref{eq:margulis-estimate-strong-bound}, we get the estimate we need to repeat $n - 2\mathfrak{s}$ times.
  \begin{align}
    \label{eq:to-iterate}
    \sum_{r \in \traj(n-i+1, \tau)} \flm(r_{n-i}) \leq C^{\prime} \tau \exp(-\tau) \exp((\hNP + \veperr^{\prime})\tau) \left( \sum_{r \in \traj(n-i, \tau)} \flm(r_{n-i-1}) \right)
  \end{align}
  Here, we upper bound the $\nu_N(B_{\tau}^{\vept}(r_{n-2}))$ with $C^{\prime \prime} \exp((\hNP + \veperr^{\prime})\tau)$, where $\veperr^{\prime}$ is a constant smaller than $\veperr$, and $C^{\prime \prime}$ is some large constant.
  The term $C^{\prime}$ is equal to $C C^{\prime \prime} c^{\prime}$.

  We now iterate \eqref{eq:to-iterate-bad} $2\mathfrak{s}$ times and \eqref{eq:to-iterate} $n-2\mathfrak{s}$ times.
  \begin{align*}
    \sum_{r \in \traj(n, \tau)} \flm(r_{n-1}) &\leq (Be)^{2\mathfrak{s}} \cdot \left( C^{\prime} \tau \right)^{n- 2 \mathfrak{s}} \exp((\hNP - 1 + \veperr^{\prime})n\tau) \flm(r_0) \\
                                              &= \flm(r_0) \\
                                              &\cdot \exp\left( \left( \hNP - 1 + \veperr^{\prime} + \frac{\log\left( C^{\prime} \tau  \right)\left(  \frac{n- 2\mathfrak{s}}{n} \right) + \log\left( Be \right)\left( \frac{2\mathfrak{s}}{n} \right)}{\tau} \right) n\tau \right)
  \end{align*}
  By picking $\tau$ large enough so that $\mathfrak{s} \leq 4$ and  $\veperr^{\prime} + \frac{\log\left( C^{\prime} \tau  \right)\left(  \frac{n- 2\mathfrak{s}}{n} \right) + \log\left( Be \right)\left( \frac{2\mathfrak{s}}{n} \right)}{\tau} < \veperr$, we get the claimed result.
\end{proof}

Proposition \ref{prop:rw-recurrence} tells us that a random walk on $\core(\teich(\no_g))$ is biased away from the thin part of $\core(\teich(\no_g))$.
It does so by proving strong upper bounds on the probability that a random walk trajectory with $n$ steps stays in the thin part is less than $\exp((-1 + \veperr)n\tau)$: in other words, a random walk returns to $\thick(\no_g)$ with high probability.

\subsubsection{Why the random walk approach fails for $\teich(\no_g)$}
\label{sec:why-approach-fails}

If we wanted to make Proposition \ref{prop:rw-recurrence} work on $\teich(\no_g)$, we would need to similarly show the random walk on $\teich(\no_g)$ is recurrent in a similarly strong sense: i.e.\ the probability of a length $n$ trajectory staying in the thin part decays exponentially in $n$.
A consequence of this requirement is that the expected return time to the thick part is finite.

Unlike $\core(\teich(\no_g))$, $\teich(\no_g)$ has two kinds of thin regions.
\begin{itemize}
\item[-] Thin region where only two-sided curves get short.
\item[-] Thin region where some one-sided curve also gets short.
\end{itemize}

It is the second kind of thin region that poses a problem for $\teich(\no_g)$.
Minsky's product region theorem (Theorem \ref{thm:prno}) tells us that up to additive error, the metric on these thin regions looks like a product of metrics on some copies of $\mathbb{R}$ (corresponding to the one-sided short curves), some copies of $\mathbb{H}$ (corresponding to the two-sided short curves), and a Teichmüller space of lower complexity.
Since the random walk is controlled by the metric, the random walk on this product metric space is a product of random walks on each of the components.

In particular, the random walk on the $\mathbb{R}$ component is a symmetric random walk on a net in $\mathbb{R}$: i.e.\ a symmetric random walk on $\mathbb{Z}$.
Symmetric random walks on $\mathbb{Z}$ are known to be recurrent, but only in a weak sense: they recur to compact subsets infinitely often, but the expected return time is unbounded.

This means we cannot hope to prove exponentially decaying upper bounds on the probability that a long random walk trajectory stays in the thin part, since that would lead to finite expected return times.
This is why the random walk approach fails for $\teich(\no_g)$.

\subsubsection{Recurrence for geodesic segments}
\label{sec:recurr-geod-segm}

In this section, we reduce the problem of counting geodesic segments that travel in the thin part to counting trajectories of random walks that do the same.

\begin{proposition}
  \label{prop:counting-geodesics}
  For any $\veperr > 0$,
  there exists a constant $C^{\prime}$, and a large enough $R$, such that the following estimate holds for the counting function $M_{r_0}(R)$.
  \begin{align*}
    M_{r_0}(R) \leq C^{\prime} \exp((\hNP - 1 + \veperr)R)
  \end{align*}
  Here, $M_{r_0}(R)$ is the number of \concave lattice points in a ball of radius $R$ centered at $r_0$, where $r_0$ is a point in $\thickarg{\vept^{\prime}}(\no_g)$ at which we start our random walk, and $\hNP = \hNP(\core(\teich(\no_g)), \vept)$.
\end{proposition}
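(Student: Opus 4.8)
The plan is to encode each $s$-concave lattice point by an $\mathfrak{s}$-concave trajectory of the random walk from Section~\ref{sec:constr-rand-walk}, so that the bound follows from Proposition~\ref{prop:rw-recurrence}. Fix the basepoint $p=r_0\in\thickarg{\vept^{\prime}}(\no_g)$, let $\gamma p$ be an $s$-concave lattice point with $d_{\vept}(p,\gamma p)\le R$, and fix a concavity detecting path $P_\gamma$: a sub-path of length at most $s$ from $p$ to the initial endpoint of a geodesic $\kappa$ lying outside $\thick(\no_g)$, then $\kappa$ itself, then a sub-path of length at most $s$ to $\gamma p$, so $\ell(P_\gamma)\le R+2s$. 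I would then sample points $x_0=p,x_1,\dots,x_N$ along $P_\gamma$ with $d_{\vept}(x_i,x_{i+1})\le\tau-4\vepn$ and $d_{\vept}(x_N,\gamma p)\le 2\vepn$, so that $N\tau\le(1+\delta)R+O(1)$ with $\delta=\delta(\tau)\to 0$ as $\tau\to\infty$. Snapping each $x_i$ to a net point $\hat r_i\in\net$ with $d_{\vept}(x_i,\hat r_i)\le 2\vepn$ (possible since $\net$ is an $(\vepn,2\vepn)$-net, and arranging $\hat r_0=r_0$) gives a sequence with $d_{\vept}(\hat r_i,\hat r_{i+1})\le\tau$, i.e.\ an admissible trajectory $(\hat r_0,\dots,\hat r_N)$ of the random walk started at $r_0$.

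\textbf{Concavity of the trajectory.} Next I would check that every point of $(\hat r_0,\dots,\hat r_N)$ other than the first and last $\mathfrak{s}$ lies at distance at least $\tau$ from $\thick(\no_g)$. The interior geodesic $\kappa$ avoids $\thick(\no_g)$, and the geometric input here, supplied by the product region theorem (Theorem~\ref{thm:prno}), is that such a geodesic can remain inside the $\tau$-neighbourhood of $\thick(\no_g)$ only for a length bounded in terms of $\tau$ and $\diam(\thick(\no_g)/\mcg(\no_g))$: inside any thin product region the geodesic is unimodal in the length coordinates of the short curves, so it cannot travel for long near the boundary of $\thick(\no_g)$ without entering it. Hence the $x_i$ that fail to be $\tau$-far from $\thick(\no_g)$ all lie within a bounded initial and terminal stretch of $P_\gamma$; for $\tau$ large enough (recall $\mathfrak{s}=\lceil\diam(\thick(\no_g))/\tau\rceil+1$) these account for at most the first $\mathfrak{s}$ and last $\mathfrak{s}$ indices, so $(\hat r_0,\dots,\hat r_N)$ is $\mathfrak{s}$-concave.

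\textbf{Counting.} The assignment $\gamma p\mapsto(\hat r_0,\dots,\hat r_N)$ is boundedly-to-one, since $\hat r_N$ lies within $s+2\vepn$ of $\gamma p$ and proper discontinuity of the $\mcg(\no_g)$-action puts only boundedly many lattice points in any ball of that radius. Summing over $N$ and applying Proposition~\ref{prop:rw-recurrence} with error $\veperr^{\prime}<\veperr$ (so $\tau$ large),
\begin{align*}
  M_{r_0}(R)\;\le\;C^{\prime\prime}\sum_{n\,\le\,R/\tau+O(1)}\bigl|\traj(n,\tau)\bigr|\;\le\;C^{\prime\prime}\bigl(R/\tau+O(1)\bigr)\,C\exp\bigl((\hNP-1+\veperr^{\prime})(1+\delta)R+O(1)\bigr),
\end{align*}
and since the prefactor is polynomial in $R$, choosing $\tau$ large enough that $(\hNP-1+\veperr^{\prime})(1+\delta)\le\hNP-1+\veperr$ yields $M_{r_0}(R)\le C^{\prime}\exp((\hNP-1+\veperr)R)$ for all large $R$, as claimed.

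\textbf{Main obstacle.} I expect the crux to be the concavity step: controlling how long a geodesic that avoids $\thick(\no_g)$ can linger within distance $\tau$ of $\thick(\no_g)$, and in particular ruling out interior excursions back toward $\thick(\no_g)$ when the geodesic passes between distinct thin regions as the collection of short curves changes. The product region theorem settles this inside a single thin region; the transitions require verifying that moving from one short-curve regime to the next near $\thick(\no_g)$ still forces the geodesic either into $\thick(\no_g)$ — contradicting concavity — or back out past distance $\tau$, so that only boundedly many near-$\thick(\no_g)$ stretches can occur along $\kappa$. A secondary point to pin down is that the snapping and subdivision scales are chosen compatibly (with $\vepn$ fixed and $\tau$ large) so that the discretized sequence is genuinely a trajectory and the multiplicity of the encoding is uniform in $R$.
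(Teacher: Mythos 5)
Your overall scheme is the paper's: discretize the concavity detecting path at scale roughly $\tau$, snap to the net $\net$ to obtain a random-walk trajectory, apply Proposition \ref{prop:rw-recurrence}, note that the encoding is boundedly-to-one (the paper argues this via fellow-travelling, you via the last trajectory point and proper discontinuity; both are fine), and absorb the polynomial prefactor and the $(1+\delta)$ loss into $\veperr$. That part of your write-up is sound and matches the paper's proof.

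The step where you go beyond the paper is the ``concavity of the trajectory'' verification, and the geometric claim you invoke there is false. A Teichm\"uller geodesic that avoids $\thick(\no_g)$ can remain at distance much less than $\tau$ from $\thick(\no_g)$ for an arbitrarily long time: for example, along a segment lying in the product region of a single two-sided curve $\alpha$ whose $\mathbb{H}_\alpha$ coordinate stays essentially constant at length just below $\vept$ while the $\teich(\no_g\setminus\alpha)$ coordinate travels a long distance (the axis of a reducible class that is pseudo-Anosov on the complement of $\alpha$ is a model), the distance to $\thick(\no_g)$ is $O(1)$, not $\geq\tau$, along the whole segment. Unimodality or connectedness of the active interval for $\alpha$ does not exclude a long plateau of $\ell_\alpha$ just below the threshold, so Theorem \ref{thm:prno} does not yield the bounded-lingering statement your argument needs. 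You are right that there is something to check here --- the definition of an $\mathfrak{s}$-concave trajectory requires the middle points to be $\tau$-far from $\thick(\no_g)$, which is exactly what feeds the strong decay estimate of Proposition \ref{prop:flm-is-flm} --- but the paper's proof does not obtain this from a lingering bound either: it only records that the middle points lie outside $\thick(\no_g)$ (in fact only within $2\vepn$ of points outside it) and passes directly to Proposition \ref{prop:rw-recurrence}, implicitly exploiting the interplay of the two thinness thresholds ($\vept^{\prime}$ versus $\vept$) rather than any geometric constraint on geodesics near the boundary of the thick part. The workable repair is of that kind: run the argument with a concavity threshold sufficiently small compared to $\vept$ (depending on $\tau$), so that having some two-sided curve shorter than that threshold already forces a point to be $\tau$-deep inside the thin part, rather than claiming that geodesics cannot skim along just outside $\thick(\no_g)$.
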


\begin{proof}
  We first check if $\tau$ we picked in the proof of Proposition \ref{prop:rw-recurrence} satisfies $\displaystyle \frac{2\vepn}{\tau} < \frac{\veperr}{2}$: if not, we pick a larger $\tau$.

  Let $\gamma$ be a mapping class such that $\gamma p$ is a concave lattice point.
  We consider now the concavity detecting path for $\gamma p$: recall that this is a path that starts at $p$, and ends at $\gamma p$, and the middle segment obtained by deleting a prefix of length $2 \cdot \diam \left( \thick(\no_g)/\mcg(\no_g) \right)$ and a suffix of length $2 \cdot \diam \left( \thick(\no_g)/\mcg(\no_g) \right)$ stays outside $\thick(\no_g)$.
  We turn this path into an $\mathfrak{s}$-concave random walk trajectory by marking off points at distance $\tau\left( 1 - \frac{2\vepn}{\tau} \right)$ on the segment, and then replacing those points with the nearest net point.
  All but the first $\mathfrak{s}$ and the last $\mathfrak{s}$ points in the trajectory lie outside $\thick(\no_g)$.
  Furthermore, the distance between the adjacent points on the trajectory are at most $\tau$.
  The number of steps in this trajectory is $\displaystyle n \coloneqq \left\lceil \frac{R}{\tau} \right\rceil$.

  Let $\mathcal{P}$ denote the collection of trajectories obtained via this construction.
  We apply Proposition \ref{prop:rw-recurrence} to count the number of such trajectories.
  \begin{align}
    \# \mathcal{P} &\leq C \exp\left(\left(\hNP - 1 + \frac{\veperr}{2}\right) n\tau\right) \\
                   &\leq C \exp\left(\left(\hNP - 1 + \frac{\veperr}{2}\right) (R + \tau)\right) \label{eq:traj-bound-4}
  \end{align}

  We now determine how many different geodesic segments can map to the same random walk trajectory.
  If two geodesic segments $[r_0, \gamma_1 r_0]$ and $[r_0, \gamma_2 r_0]$ map to the same random walk trajectory, we must have that they fellow travel for most of their length, and as a result, $d_{\vept}(r_0, \gamma_2^{-1} \gamma_1 r_0)$ is bounded above by a constant value that only depends on $\tau$.
  Combining the above fact with \eqref{eq:traj-bound-4} gives us a constant $C^{\prime}$ such that the following bound on $M_{r_0}(R)$ holds.
  \begin{align*}
    M_{r_0}(R) &\leq C^{\prime} \exp\left(\left(\hNP - 1 + \frac{\veperr}{2}\right) (R + \tau)\right) \\
    &= C^{\prime} \exp\left(\left(\hNP - 1 + \frac{\veperr}{2}\right) \left(1 + \frac{\tau}{R}\right) (R)\right)
  \end{align*}
  Picking a value of $R$ large enough gives us the result.
\end{proof}

We can now tie all of these calculations together to state our results on statistical convexity of $\core(\teich(\no_g))$.
Proposition \ref{prop:counting-geodesics} gives us an upper bound on $\hLPb(\core(\teich(\no_g)), \vept)$ (by applying the result for smaller and smaller values of $\veperr$).
\begin{align}
  \label{eq:bad-entropy-bound}
  \hLPb(\core(\teich(\no_g)), \vept) \leq \hNP(\core(\teich(\no_g)), \vept) - 1
\end{align}

To prove Theorem \ref{thm:stat-convex}, it will suffice to prove the following equality relating the lattice point entropy and net point entropy.
\begin{align}
  \label{eq:net-and-lattice-condition}
  \hNP(\core(\teich(\no_g)), \vept) - 1 < \hLP(\core(\teich(\no_g)), \vept)
\end{align}

For convenience, we also define the undistorted versions of these entropy terms, using the Teichmüller metric $d$ rather than the induced metric $d_{\vept}$.

\begin{definition}[(Undistorted) lattice point entropy for $\teich(\no_g)$]
  Let $p$ be a point in $\thick(\no_g)$, and let $N_p(R)$ be the lattice point counting function.
  \begin{align*}
    N_p(R) \coloneqq \#\left( \gamma \in \mcg(\no_g) \mid d(p, \gamma p) \leq R \right)
  \end{align*}
  The lattice point entropy $\hLP(\teich(\no_g))$ is the following quantity.
  \begin{align*}
    \hLP(\teich(\no_g)) \coloneqq \lim_{R \to \infty} \frac{\log N_p(R)}{R}
  \end{align*}
\end{definition}

\begin{definition}[(Undistorted) net point entropy]
  Let $K_p(R, \vept)$ be the counting function for net points, where $p \in \core(\teich(\no_g))$.
  \begin{align*}
    K_p(R, \vept) \coloneqq \#\left( y \in \net \mid d(p, y) \leq R \right)
  \end{align*}
  The net point entropy $\hNP(\core(\teich(\no_g)))$ is the following function defined in terms of $K_p$.
  \begin{align*}
    \hNP(\core(\teich(\no_g))) \coloneqq \lim_{R \to \infty} \frac{\log K_p(R, \vept)}{R}
  \end{align*}
\end{definition}

Note that the net point entropy does not depend on the precise value of $\vept$, even though it is counting net-points in $\core(\teich(\no_g))$, since the different values of $\vept$ change the counting function by a multiplicative term that does not depend on $R$.

Recall now Theorem \ref{thm:weak-convexity}, which for any $\vepd > 0$, provides a $\vept > 0$ such that the ratio of $d_{\vept}$ and $d$ is bounded above by $1 + \vepd$.
A consequence of this is that the distorted and the undistorted versions of the entropy terms differ by at most $\hNP(\core(\teich(\no_g))) \cdot \vepd$ and $\hLP(\teich(\no_g)) \cdot \vepd$.

In particular, if we show $\hNP(\core(\teich(\no_g))) = \hLP(\teich(\no_g))$, \eqref{eq:net-and-lattice-condition} will follow (for small enough $\vept$), and so will Theorem \ref{thm:stat-convex}.
We package up this result as a theorem, which we will use in subsequent sections.

\begin{theorem}
  \label{thm:entropy-equality-implies-scc}
  If $\hNP(\core(\teich(\no_g))) = \hLP(\teich(\no_g))$, then $\thick(\no_g)$ is statistically convex, and the action of $\mcg(\no_g)$ on $\core(\teich(\no_g))$ is statistically convex-cocompact.
\end{theorem}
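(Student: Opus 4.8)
The plan is to derive the strict entropy inequality \eqref{eq:net-and-lattice-condition}, that is $\hNP(\core(\teich(\no_g)), \vept) - 1 < \hLP(\core(\teich(\no_g)), \vept)$, from the upper bound \eqref{eq:bad-entropy-bound} on the concave lattice point entropy, the metric comparison of Theorem \ref{thm:weak-convexity}, and the hypothesis $\hNP(\core(\teich(\no_g))) = \hLP(\teich(\no_g))$. Once \eqref{eq:net-and-lattice-condition} is available, $\thick(\no_g)$ is statistically convex by Definition \ref{defn:statistical-convex-subset}; and since $\mcg(\no_g)$ acts cocompactly on $\thick(\no_g)$, the quotient being the compact thick part of moduli space, the action of $\mcg(\no_g)$ on $\core(\teich(\no_g))$ is then statistically convex-cocompact directly from the definition.

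First I would record how the two entropies behave under the metric distortion. Since $d_{\vept}$ is the induced path metric on $\core(\teich(\no_g)) \subseteq \teich(\no_g)$ we have $d \le d_{\vept}$ pointwise, while Theorem \ref{thm:weak-convexity} with the standing choice $\vepd < \left( \frac{1}{6g-12} \right)^2$ gives $d_{\vept} \le (1+\vepd) d$ once the distance exceeds a fixed threshold. Comparing the lattice point and net point counting functions at radii $R$ and $R/(1+\vepd)$ and passing to the limit yields $\hNP(\core(\teich(\no_g)), \vept) \le \hNP(\core(\teich(\no_g)))$ and $\hLP(\core(\teich(\no_g)), \vept) \ge \hLP(\teich(\no_g)) / (1+\vepd)$, the finitely many orbit points at bounded distance being absorbed into constants that do not affect the limit. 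Setting $h \coloneqq \hNP(\core(\teich(\no_g))) = \hLP(\teich(\no_g))$ and combining with \eqref{eq:bad-entropy-bound}, which is the output of the random walk count in Propositions \ref{prop:rw-recurrence} and \ref{prop:counting-geodesics}, gives
\begin{align*}
  \hLPb(\core(\teich(\no_g)), \vept) &\le \hNP(\core(\teich(\no_g)), \vept) - 1 \le h - 1, \\
  \hLP(\core(\teich(\no_g)), \vept) &\ge \frac{h}{1+\vepd}.
\end{align*}
It then suffices to check $h - 1 < h/(1+\vepd)$, equivalently $h\vepd/(1+\vepd) < 1$. Because $\mcg(\no_g)$ embeds in $\mcg(\os_{g-1})$, whose orbit in $\teich(\os_{g-1})$ grows at exponential rate $6g-12$, one has $h = \hLP(\teich(\no_g)) \le 6g-12$, and the choice of $\vepd$ then forces the inequality, establishing both $\hLPb(\core(\teich(\no_g)), \vept) < \hLP(\core(\teich(\no_g)), \vept)$ and \eqref{eq:net-and-lattice-condition}, hence the theorem.

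The step I expect to need the most care is not any individual estimate but the verification that the additive errors — both those in Theorem \ref{thm:weak-convexity} and those incurred in passing from the $\mcg(\no_g)$-orbit to a net — wash out in the limits defining the entropies, and that the uniform volume bounds of Proposition \ref{prop:uniform-volume-bound} genuinely identify the net point count with the $\nu_N$-volume of a ball, so that the bound $h \le 6g-12$ applies to $\hNP$ and not merely to a closed-curve counting exponent. Apart from this bookkeeping there is no real obstacle in this theorem: the substance lies in the hypothesis itself, the equality $\hNP(\core(\teich(\no_g))) = \hLP(\teich(\no_g))$, which is proved separately in Theorem \ref{thm:entropy-equality}.
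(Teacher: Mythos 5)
Your proposal is correct and takes essentially the same route as the paper: combine the bound \eqref{eq:bad-entropy-bound} from the random walk count with the distortion estimate of Theorem \ref{thm:weak-convexity} and the hypothesized equality $\hNP(\core(\teich(\no_g))) = \hLP(\teich(\no_g))$ to obtain \eqref{eq:net-and-lattice-condition}, which gives statistical convexity of $\thick(\no_g)$, and then cocompactness of the $\mcg(\no_g)$-action on $\thick(\no_g)$ yields statistical convex-cocompactness. Your explicit bound $\hLP(\teich(\no_g)) \leq 6g-12$ via the embedding into $\mcg(\os_{g-1})$ merely makes precise what the paper leaves implicit in its standing choice $\vepd < \left(\frac{1}{6g-12}\right)^2$.
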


\section{Equality of Lattice Point Entropy and Net Point Entropy}
\label{sec:equal-latt-point}


In this, and the following section, we will prove that $\hLP = \hNP$, which will let us apply Theorem \ref{thm:entropy-equality-implies-scc} to conclude that the $\mcg(\no_g)$ action on $\core(\teich(S))$ is SCC for surfaces $S$ of finite type.
\begin{theorem}[Entropy equality]
  \label{thm:entropy-equality}
  For any surface $S$ of finite type, the following relationship holds between the $\hNP$ and $\hLP$.
  \begin{align*}
    \hNP(\core(\teich(S)), \vept) = \hLP(\teich(S), \vept)
  \end{align*}
\end{theorem}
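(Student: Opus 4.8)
The plan is to prove the two inequalities separately. The bound $\hLP(\teich(S), \vept) \leq \hNP(\core(\teich(S)), \vept)$ is elementary: since the basepoint $p$ may be chosen in $\thick(S)$, every orbit point $\gamma p$ lies in $\thick(S) \subset \core(\teich(S))$, hence within $d_{\vept}$-distance $2\vepn$ of some net point; two orbit points assigned to the same net point lie within $d_{\vept}$-distance $4\vepn$ of each other, so proper discontinuity of the $\mcg(S)$-action bounds (in terms of $\vepn$ and $p$ only) the number of orbit points attached to any one net point, giving $N_p(R, \vept) \leq C\, K_p(R + 2\vepn, \vept)$ and hence the inequality after taking logarithms and letting $R \to \infty$. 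The reverse bound $\hNP(\core(\teich(S)), \vept) \leq \hLP(\teich(S), \vept)$ is the substance of the theorem, and I would prove it by induction on the complexity of $S$ (say, the number of curves in a pants decomposition, so that every component of a curve complement $S \setminus \gamma$ has strictly smaller complexity).

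For the base case $\chi(S) = -1$ the complement of any curve is a union of pairs of pants, so there is no lower-complexity thin part to recurse into and one computes $\hNP$ and $\hLP$ by hand. The orientable model surfaces $S_{1,1}$ and $S_{0,4}$ are classical ($\mcg(S)$ is a lattice in $\mathrm{SL}_2(\mathbb{R})$ acting on $\teich(S) = \mathbb{H}$), and more generally the orientable instances of the theorem are available from \textcite{10.1215/00127094-1548443}; so it suffices to handle the finitely many non-orientable surfaces with $\chi(S) = -1$ (such as $N_3 = \os_{1,0,1}$) directly, from the explicit Fenchel--Nielsen description of $\core(\teich(S))$ and of the $\mcg(S)$-orbit.

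For the inductive step I would use that $K_p(R, \vept)$ is comparable, up to a multiplicative constant, to the Norbury volume $\nu_N(B_R^{\vept}(p))$ (Proposition \ref{prop:uniform-volume-bound}), and estimate this volume by decomposing $B_R^{\vept}(p)$ into its intersection with $\thick(S)$ and with the thin regions $\teich_{\gamma \leq \vept}(S)$ over two-sided curves $\gamma$ (one-sided curves are never short in $\core(\teich(S))$). The thick piece supports a net that is boundedly-to-one with $\mcg(S) \cdot p$, exactly as above, so it grows at rate at most $\hLP(\teich(S), \vept)$. On the thin piece near $\gamma$, the product region theorem (Theorem \ref{thm:prno}) identifies the region, up to additive error, with $\core(\teich(S \setminus \gamma)) \times \mathbb{H}$, with $\nu_N$ becoming the product of the Norbury form of $S \setminus \gamma$ with the hyperbolic area form on the length--twist factor $\mathbb{H}$; since a radius-$R$ ball sits inside a product of radius-$R$ balls and hyperbolic balls in $\mathbb{H}$ have area growth rate $1$, this piece grows at rate at most $\hNP(\core(\teich(S \setminus \gamma)), \vept) + 1$, which by the inductive hypothesis equals $\hLP(\teich(S \setminus \gamma), \vept) + 1$.

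The main obstacle is then to show that this thin contribution is strictly dominated, i.e.\ that $\hLP(\teich(S \setminus \gamma), \vept) + 1 < \hLP(\teich(S), \vept)$ for every $\gamma$, so that the thin part is genuinely negligible and one obtains $\hNP(\core(\teich(S)), \vept) \leq \hLP(\teich(S), \vept)$. In the orientable setting this is a dimension count, since $\hNP$ is known to equal the real dimension; here there is no a priori formula and the gap must be produced geometrically. This is exactly what the complexity-length estimate of Section \ref{sec:line-gap-compl} is for: following \textcite{dowdall2023lattice}, one replaces raw ball volumes by counts with respect to \emph{complexity length}, and shows that the number of geodesic (equivalently net) configurations of complexity length at most $L$ that spend a definite fraction of their length deep in the thin part near their far endpoint is exponentially smaller than $\exp(\hNP(\core(\teich(S)), \vept)\, L)$. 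Complexity length --- built from Minsky's product region theorem together with the hierarchical hyperbolicity of $\teich(S)$ --- is precisely the bookkeeping needed to control the \emph{joint} contribution of many successive thin excursions into different subsurfaces along a single geodesic, which a one-curve-at-a-time product-region estimate cannot see. Feeding this gap back into the volume decomposition closes the induction, and then the distorted and undistorted entropies agree up to the factor $(1+\vepd)$ supplied by Theorem \ref{thm:weak-convexity}.
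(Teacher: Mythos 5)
Your skeleton (the trivial inequality $\hLP \leq \hNP$, induction on complexity, explicit base case at $\chi = -1$, and complexity length for the hard direction) is the right one, but the reduction you make in the inductive step has a genuine gap. You reduce everything to the quantitative inequality $\hLP(\teich(S\setminus\gamma), \vept) + 1 < \hLP(\teich(S), \vept)$ and assert that the complexity-length machinery of Section \ref{sec:line-gap-compl} supplies it. It does not. Theorem \ref{thm:linear-gap} takes as \emph{hypothesis} only the unquantified gap $\hNP(\core(\teich(V)), \vept) < \hNP(\core(\teich(\os)), \vept)$ for proper subsurfaces, and its output is that net points at distance at least $\vepb R$ from the orbit have complexity length at most $\hNP(\os)(1-c)R$, where $c$ is proportional to $\vepb$ times that (possibly tiny) gap; nowhere does a gap of size $1$ appear or get produced. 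In the orientable case your ``$+1$'' inequality is a dimension count via Theorem \ref{thm:abem}, but for non-orientable surfaces there is no formula for $\hLP$ or $\hNP$ --- that absence is the whole difficulty of the paper --- and no result in the paper gives a gap of any definite size. There are two further problems with the literal thick/thin volume decomposition: the thin part of $B_R(p)$ meets exponentially many product regions (exponentially many curves are short somewhere in a radius-$R$ ball), and a net point reached after a late entry into its product region must be counted through its whole itinerary of excursions, so summing a per-region bound over regions and entry times is exactly the overcounting that Section \ref{sec:an-example-counting} warns against; and your ``thin piece'' also contains net points that are thin but \emph{close} to the orbit (distance $O(1)$, or $o(R)$), about which the linear-gap theorem says nothing, so it cannot dispose of that piece at all.

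The paper's induction is organized differently precisely to avoid needing any quantified gap. The dichotomy is good/bad measured by distance $\vepb R$ to the orbit, not thick/thin: good points are counted by partitioning by nearest lattice point and using Lemma \ref{lem:fd-polynomial-growth} (polynomially many net points per coset), giving $q(R)\exp(\hLP \cdot R(1+2\vepb))$, and bad points are counted by combining Theorem \ref{thm:linear-gap-rescaled} with Theorem \ref{thm:counting-with-complexity-rescaled}; letting $\vepb \to 0$ then yields $\hNP \leq \hLP$ with no gap of size $1$ ever invoked. Crucially, the hypothesis of the linear-gap theorem is itself a nontrivial ingredient that your proposal omits: Lemma \ref{lem:entropy-inequality} proves the strict inequality $\hLP(\teich(V), \vept) < \hLP(\teich(\os), \vept)$ by embedding a free product $\mathbb{Z} * \mcg(V)$ via a pseudo-Anosov from Penner's construction and using that $\mcg(V)$ is of divergence type --- which uses the inductive hypothesis through Theorem \ref{thm:entropy-equality-implies-scc} and Yang's results --- and Lemma \ref{lem:net-point-entropy-inequality} converts this to the needed $\hNP(V) < \hNP(\os)$. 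Your induction invokes the inductive hypothesis only to rewrite $\hNP(S\setminus\gamma)$ as $\hLP(S\setminus\gamma)$ and never produces an entropy gap of any kind, so as written the inductive step does not close.
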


\begin{remark}
  In the case where $S$ is an orientable surface, the theorem is a corollary of \textcite[Theorem 1.2]{10.1215/00127094-1548443}.
  However, the proof of the stronger theorem in the orientable setting uses facts about the dynamics of the geodesic flow on the moduli space, which we don't have in the non-orientable setting.
  The proof of the weaker theorem only uses coarse geometric methods, and works equally well for orientable and non-orientable surfaces.
\end{remark}

\subsection{Base Case}
\label{sec:base-case}

We will prove this theorem by inducting on the Euler characteristic of the surface $S$.
The $4$ base cases we need to check are the $3$ non-orientable surfaces, and one orientable surface with Euler characteristic $-1$.
\begin{itemize}
\item $\os_{1,1,0}$: This is the torus with $1$ boundary component and $0$ crosscaps attached.
\item $\os_{1,0,1}$: This is a torus with $0$ boundary components, and $1$ crosscap attached.
\item $\os_{0,2,1}$: This is a sphere with $2$ boundary components, and $1$ crosscap attached.
\item $\os_{0,1,2}$: This is a sphere with $1$ boundary component, and $2$ crosscaps attached.
\end{itemize}


\begin{lemma}[Entropy equality: base case]
  \label{lem:entropy-equality-base-case}
  For a surface $S$ in $\left\{ \os_{1,1,0}, \os_{1,0,1}, \os_{0,2,1}, \os_{0,1,2} \right\}$, the following relationship holds between the $\hNP$ and $\hLP$.
  \begin{align*}
    \hNP(\core(\teich(S)), \vept) = \hLP(\teich(S), \vept)
  \end{align*}
\end{lemma}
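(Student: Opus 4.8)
The plan is to prove the two inequalities $\hLP\le\hNP$ and $\hNP\le\hLP$ separately. The first is automatic and uses nothing special about $S$: taking the basepoint $p$ to be a net point makes $\mcg(S)\cdot p$ a subset of $\net$, so $N_p(R,\vept)\le K_p(R,\vept)$ for every $R$. For the reverse inequality I would invoke the remark following the definition of $\hNP$ --- that the number of net points in a $d_{\vept}$-ball of radius $R$ is comparable to its $\nu_N$-volume --- so that it suffices to bound $\nu_N\!\left(B_R^{\vept}(p)\right)$, for a thick basepoint $p$, by a bounded translate of the orbit-counting function $N_p(R,\vept)$.

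I would obtain that bound from the thick--thin decomposition of $\core(\teich(S))$, which for $\chi(S)=-1$ is especially simple. The thick part of $B_R^{\vept}(p)$ is covered by the translates $\gamma F$ with $d_{\vept}(p,\gamma p)\le R+\diam(F)$ of a compact fundamental domain $F$ for the cocompact action of $\mcg(S)$ on $\thick(S)$, and so contributes at most $\nu_N(F)\cdot N_p(R+C)$. For the complement, $\chi(S)=-1$ guarantees only finitely many $\mcg(S)$-orbits of two-sided curves, hence finitely many types of cusp region; and for each pinchable two-sided curve $\alpha$, the product region theorem (Theorem \ref{thm:prno}), together with the fact that $S\setminus\alpha$ is essentially rigid --- it is a pair of pants, or acquires moduli only through a single one-sided curve whose length is pinned into a compact interval once the boundary curves are short and that one-sided curve is kept of length at least $\vept$ --- shows that the cusp region is, up to bounded error and the $d_{\vept}$-to-$d_T$ distortion of Theorem \ref{thm:weak-convexity}, a product of a bounded set with a horoball in the length--twist plane $\mathbb H_\alpha$. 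Such a lifted cusp whose mouth lies at distance $d$ from $p$ therefore meets $B_R^{\vept}(p)$ in $\nu_N$-volume at most $O\!\left(e^{(R-d)/2}\right)$ (an elementary estimate for the volume of a horoball inside a metric ball of radius $R-d$ in $\mathbb H$), and the number of mouths within distance $k$ of $p$ is at most $N_p(k+C')$, since the mouths are $\mcg(S)$-translates of finitely many fixed regions a bounded distance from $\thick(S)$. Summing, the total cusp contribution is $\ll e^{R/2}\sum_{k\le R}e^{(\hLP-1/2)k}$, which is $\ll e^{(\hLP+o(1))R}$ because $\hLP\ge\tfrac12$ whenever a pinchable curve exists (its Dehn twist acts as a parabolic on $\core(\teich(S))$ and contributes on the order of $e^{R/2}$ orbit points). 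Combining the two contributions gives $\nu_N\!\left(B_R^{\vept}(p)\right)\ll e^{(\hLP+o(1))R}$, hence $\hNP\le\hLP$.

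Several of the base surfaces degenerate and should be dispatched first. The surface $\os_{1,1,0}$ has no one-sided curves, so $\core(\teich(S))=\teich(S)\cong\mathbb H$ with $d_{\vept}=d_T$, $\mcg(S)$ acts as a lattice, and $\hNP=\hLP=1$ is classical. For $\os_{0,2,1}$ --- and likewise any base surface admitting no pinchable two-sided curve --- the only essential curves that could be short are one-sided curves or curves bounding M\"obius neighbourhoods of them, all of which are bounded below on $\systole(S)$; so $\systole(S)$ has compact $\mcg(S)$-quotient, is quasi-isometric to $\mcg(S)$ by \v{S}varc--Milnor, and the equality $\hNP=\hLP$ is immediate with no cusp terms. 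The substantive case is $\os_{1,0,1}$ (whose mapping class group is $\mathrm{GL}_2(\mathbb Z)$), where the argument above is carried out in full.

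The step I expect to be the genuine obstacle is the rigidity input: identifying, for each non-orientable base surface, exactly which sublocus of $\teich(S\setminus\alpha)$ survives when the one-sided curves are forbidden to be short and the boundary is pinched, and checking it is bounded --- equivalently, checking that no ``hidden'' exponential volume escapes up a cusp of $\systole(S)$. This is precisely the mechanism that fails for surfaces of larger complexity, where $S\setminus\alpha$ itself has positive volume-growth entropy, and it is why the inductive step of Theorem \ref{thm:entropy-equality} must be routed through the complexity-length estimate of the following section rather than this hands-on computation.
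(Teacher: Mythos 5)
Your route is genuinely different from the paper's, and it is viable in outline. The paper argues surface by surface: a direct good/bad-point count for $\os_{1,1,0}$ (using the same horoball-in-ball estimate you invoke, cf.\ \eqref{eq:horocycle-np-lp-is-half}); a reduction of $\os_{1,0,1}$ to $\os_{1,1,0}$ via the Scharlemann--Gendulphe fact that there is a unique one-sided curve with orientable complement, so that $\mcg(\os_{1,0,1})\cong\mcg(\os_{1,1,0})$ and $\core(\teich(\os_{1,0,1}))$ is a bounded neighbourhood of the isometrically embedded copy of $\teich(\os_{1,1,0})$; compactness of the core for $\os_{0,2,1}$; and an explicit horoball computation for $\os_{0,1,2}$. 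Your Margulis-style thick--thin volume comparison (thick part of the ball charged to the orbit count, cusp contributions charged to horoball volumes summed over mouths, together with the observation that $\hLP\ge\tfrac12$ whenever a pinchable two-sided curve exists) is more uniform, and in counting language it is essentially what the paper does for $\os_{1,1,0}$; what the paper's reduction for $\os_{1,0,1}$ buys is precisely the avoidance of the cusp cross-section analysis that you yourself single out as the obstacle.

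Two concrete gaps remain. First, $\os_{0,1,2}$ is one of the four surfaces of the lemma and never appears in your case list: it does have a pinchable two-sided curve ($\gamma_\infty$), its core is a horoball in the length--twist plane, and both entropies equal $\tfrac12$. Your cusp summation does cover it, but only in the borderline regime $\hLP=\tfrac12$, where the cusp term $R\exp(R/2)$ dominates the thick term and your ``mouths are translates of finitely many regions, counted by $N_p$'' bookkeeping degenerates (the cusp region is a single $\mcg$-invariant horoball comprising essentially all of the core); this case must be written out. Second, the rigidity input you defer is genuinely needed, but it is available by elementary means and should be supplied rather than flagged: on these $\chi=-1$ surfaces every essential \emph{separating} two-sided curve bounds a M\"obius band, hence is freely homotopic to $\mu^2$ for a one-sided $\mu$ and has geodesic length $2\ell(\mu)\ge 2\vept$ on $\systole$, so it produces no cusp; the pinchable two-sided curves are the non-separating ones, whose complement is a pair of pants (for $\os_{0,1,2}$) or $\os_{0,2,1}$ (for $\os_{1,0,1}$), and in the latter case the boundedness of the surviving locus is exactly the paper's compactness argument for $\core(\teich(\os_{0,2,1}))$ (lengthening one of the two one-sided curves forces the other to be short), which needs to be checked uniformly as the boundary curve is pinched. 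With those two items added your argument closes; as written, they are the missing steps.
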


\begin{proof}
  For $S = \os_{1,1,0}$, we will directly prove the lemma, and for the remaining three non-orientable surfaces, we will use a description of their Teichmüller spaces and mapping class groups from \textcite{gendulphe2017whats} to reduce to the first case, or show that the result follows trivially.
  \begin{itemize}
  \item $\os_{1,1,0}$: Since $\os_{1,1,0}$ is orientable, we have that $\mathrm{core(\teich(\os_{1,1,0}))} = \teich(\os_{1,1,0})$, so it suffices to look at the full Teichmüller space.
    The Teichmüller space of $\os_{1,1,0}$ is the upper half plane $\HH^2$, and the mapping class group is $\mathrm{SL}(2, \mathbb{Z})$.
    In this case, the number of lattice points in a ball of radius $R$ grows like $\exp(R)$.
    More precisely, we have the following inequality for some constants $c$ and $c^\prime$.
    \begin{align}
      c \leq \frac{\#\left( B_R(p) \cap p \cdot \mathrm{SL}(2, \mathbb{Z}) \right)}{\exp(R)} \leq c^{\prime} \label{eq:lattice-point-count}
    \end{align}
    Here, $p$ is a lattice point, and $B_R(p)$ is the ball of radius $R$ centered at $p$.

    To count the net points in the ball of radius, we parameterize the net points by how far from the orbit of $p$ they lie. Since we're looking for net points in a ball of radius $R$, the furthest away they can be from the orbit is $R$.
    We have the following sum decomposition (for an arbitrary choice of $\vepb > 0$) for the cardinality of the net points.
    \begin{align}
      \label{eq:net-point-decomposition-base-case}
      \#\left( B_R(p) \cap \net \right) = \#\left( B_R(p) \cap \net_{\leq \vepb R} \right) + \#\left( B_R(p) \cap \net_{> \vepb R} \right)
    \end{align}
    Here, $\net_{\leq \vepb R}$ denotes the net points that lie within distance $\vepb R$ of the orbit of $p$, and $\net_{> \vepb R}$ denotes the net points that lie more than distance $\vepb R$ of the orbit of $p$.

    We will show that the first term is at most $p(R) \exp(R(1 + \vepb))$, for some polynomial $p(R)$, and that the second term grows slower than the first term.
    Since the choice of $\vepb$ was arbitrary, this will prove the equality of the two entropy terms.

    Let $\net_{\leq \vepb R}(\gamma)$ denote the subset of $\net_{\leq \vepb R}$ whose closest lattice point is $\gamma p$.
    Observe that $d(p, \gamma p)$ is at most $R + \vepb R$, by the triangle inequality.
    We also have the following inequality for any $\gamma$, and for some polynomial $p$, by Lemma \ref{lem:fd-polynomial-growth}.
    \begin{align*}
      \#\left( B_R(p) \cap \net_{\leq \vepb R}(\gamma) \right) \leq p(R)
    \end{align*}
    Using the two facts we stated, we get the following upper bound for $\#\left( B_R(p) \cap \net_{\leq \vepb R} \right)$.
    \begin{align*}
      \#\left( B_R(p) \cap \net_{\leq \vepb R} \right) \leq p(R) \cdot \left( \exp(R(1+\vepb)) \right)
    \end{align*}
    This is precisely the bound we needed for the first term in \eqref{eq:net-point-decomposition-base-case}.

    Now we show that the second term of \eqref{eq:net-point-decomposition-base-case} grows slower than $\exp(R(1- \frac{ \vepb}{2}))$.
    For any point $x$ in $\net_{> \vepb R}$, we can replace the geodesic $[p, x]$ with two shorter segments, $[p, x_0]$ and $[x_0, x]$, where $x_0$ is the net point closest to the last point on the $[p, x]$ which stays within some bounded distance of a lattice point.
    We also have that $d(p, x_0) \leq R(1-\vepb)$, by our assumption, which lets us count the number of such points $x_0$.
    There are at most $\exp(R(1-\vepb))$ such points.
    Now we fix an $x_0$, and we need to estimate the number of possibilities for $x$, given that $[x_0, x]$ stays entirely within the thin part of $\mathrm{SL}(2, \mathbb{R})/\mathrm{SL}(2, \mathbb{Z})$.
    Note that this reduces to estimating the volume of the intersection of a ball $B_{\vepb R}(x_0)$ with a horoball $H$ which has $x_0$ in its boundary.
    Working in the upper half plane model for $\mathbb{H}$, where $x_0 = i$, and the region $H$ is the set of points whose imaginary component is greater than $1$, we get that the region of integration is contained in a rectangle, bounded by $-C \exp\left( \frac{\vepb R}{2} \right) \leq \mathrm{Re}(z) \leq C \exp\left( \frac{\vepb R}{2} \right)$ and $1 \leq  \mathrm{Im}(z) \leq \exp(\vepb R)$, where $C$ is some fixed constant that we do not explicitly write down.
    The volume of this region is given by the following integral.
    \begin{align}
      \label{eq:horocycle-np-lp-is-half}
      \mathrm{Vol}(B_R(x_0) \cap H) &\leq \int_{1}^{\exp(\vepb R)} \int_{-C \exp\left( \frac{\vepb R}{2} \right)}^{C \exp\left( \frac{\vepb R}{2} \right)} \frac{1}{y^2} dx dy \\
      &\leq C^{\prime} \exp\left( \frac{\vepb R}{2} \right)
    \end{align}

    We thus have the following upper bound on $\#\left( B_R(p) \cap \net_{> \vepb R} \right) $ for large enough values of $R$.
    \begin{align*}
      \#\left( B_R(p) \cap \net_{> \vepb R} \right)  &\leq \exp\left( \frac{\vepb R}{2} \right) \cdot \exp(R(1-\vepb)) \\
                                                        &\leq \exp\left(R\left(1 - \frac{\vepb}{2}\right)\right)
    \end{align*}
    This finishes proving the two claims we made about the terms of \eqref{eq:net-point-decomposition-base-case}, and proves the result for $\os_{1,1,0}$.

  \item $\os_{1,0,1}$: This surface is very similar to the previous case: it's obtained by gluing together the boundary component of $\os_{1,1,0}$ via the antipodal map.
    It's a theorem of \textcite{scharlemann1982complex} and also \textcite{gendulphe2017whats} that there is a unique one-sided curve $\kappa$ in $\os_{1,0,1}$ whose complement is $\os_{1,1,0}$.
    As a consequence, $\mcg(\os_{1,0,1}) \cong \mcg(\os_{1,1,0})$, and $\teich(\os_{1,1,0}) \hookrightarrow \teich(\os_{1,0,1})$, where the inclusion map is given by considering a point in $\teich(\os_{1,1,0})$, where the boundary component has length $\vept$, and gluing it via the antipodal map to get a point in $\teich(\os_{1,0,1})$.
    The inclusion map is also equivariant with respect to the action of $\mcg(\os_{1,1,0})$ and $\mcg(\os_{1,0,1})$.

    We consider now $\core(\teich(\os_{1,0,1}))$: the curve $\kappa$ cannot get shorter than the threshold specified by the core.
    We now show that $\kappa$ cannot be arbitrary long either.
    At any point $z \in \core(\teich(\os_{1,0,1}))$, let $\kappa^{\prime}$ be the shortest curve that intersects $\kappa$ exactly once (see \autoref{fig:base-case-2}).
    We have an upper bound for the length of $\kappa^{\prime}$: namely the length of the orthogeodesic arc on $\os_{1,0,1} \setminus \kappa$ that starts and ends at $\kappa$.
    It follows from hyperbolic trigonometry that if the length of $\kappa$ goes to $\infty$, then the length of the orthogeodesic, hence the length of $\kappa^{\prime}$ will approach $0$, the point in $\teich(\os_{1,0,1})$ will leave $\core(\teich(\os_{1,0,1}))$.

    If we consider the pants decomposition of the surface along $\kappa$, and any two sided curve, we see that the length coordinates of $\kappa$ in $\core(\teich(\os_{1,0,1}))$ are contained in a compact interval $[t_1, t_2]$, where $t_1 > 0$.
    This means that $\core(\teich(\os_{1,0,1}))$ is a bounded neighbourhood of the image of $\teich(\os_{1,1,0})$.
    \begin{figure}[h]
      \centering
    \def\svgscale{1}
\begingroup%
  \makeatletter%
  \providecommand\color[2][]{%
    \errmessage{(Inkscape) Color is used for the text in Inkscape, but the package 'color.sty' is not loaded}%
    \renewcommand\color[2][]{}%
  }%
  \providecommand\transparent[1]{%
    \errmessage{(Inkscape) Transparency is used (non-zero) for the text in Inkscape, but the package 'transparent.sty' is not loaded}%
    \renewcommand\transparent[1]{}%
  }%
  \providecommand\rotatebox[2]{#2}%
  \newcommand*\fsize{\dimexpr\f@size pt\relax}%
  \newcommand*\lineheight[1]{\fontsize{\fsize}{#1\fsize}\selectfont}%
  \ifx\svgwidth\undefined%
    \setlength{\unitlength}{183.69465024bp}%
    \ifx\svgscale\undefined%
      \relax%
    \else%
      \setlength{\unitlength}{\unitlength * \real{\svgscale}}%
    \fi%
  \else%
    \setlength{\unitlength}{\svgwidth}%
  \fi%
  \global\let\svgwidth\undefined%
  \global\let\svgscale\undefined%
  \makeatother%
  \begin{picture}(1,0.450167)%
    \lineheight{1}%
    \setlength\tabcolsep{0pt}%
    \put(0,0){\includegraphics[width=\unitlength,page=1]{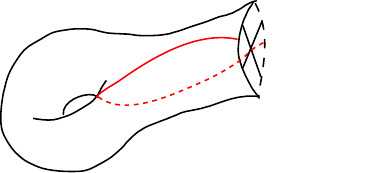}}%
    \put(0.24368216,0.26165627){\makebox(0,0)[lt]{\lineheight{1.25}\smash{\begin{tabular}[t]{l}$\kappa^{\prime}$\end{tabular}}}}%
    \put(0.71142293,0.34510205){\makebox(0,0)[lt]{\lineheight{1.25}\smash{\begin{tabular}[t]{l}$\kappa$\end{tabular}}}}%
  \end{picture}%
\endgroup%

      \caption{The curves $\kappa$ and $\kappa^{\prime}$ on $\os_{1,0,1}$.}
      \label{fig:base-case-2}
    \end{figure}

    From the previous case, we already have $\hNP(\core(\teich(\os_{1,1,0})), \vept) = \hLP(\teich(\os_{1,1,0}), \vept)$, and since their mapping class groups are isomorphic, we also have $\hLP(\teich(\os_{1,1,0}), \vept) = \hLP(\teich(\os_{1,0,1}), \vept)$.
    We now need to prove that $\hNP(\core(\teich(\os_{1,1,0})), \vept) = \hNP(\core(\teich(\os_{1,0,1})), \vept)$ to prove the result for this case.
    We have that the net for $\core(\teich(\os_{1,0,1}))$ lies in a bounded neighbourhood of the net for $\core(\teich(\os_{1,1,0}))$: this implies that the cardinalities of the net points in a ball of radius $r$ differ by at most a multiplicative constant.
    \begin{align*}
      \#\left( B_R(p) \cap \net_{\core(\teich(\os_{1,0,1}))} \right) \leq c \cdot \#\left( B_R(p) \cap \net_{\core(\teich(\os_{1,1,0}))} \right)
    \end{align*}
    Since the two cardinalities differ by at most a multiplicative constant, they have the same exponential growth rate.
  \item $\os_{0,2,1}$: The mapping class group of this surface is finite: in fact, it is isomorphic to $\mathbb{Z}/2\mathbb{Z} \times \mathbb{Z}/2\mathbb{Z}$ (see \textcite{gendulphe2017whats}).
    This means $\hLP(\teich(\os_{0,2,1}), \vept) = 0$.
    This surface has exactly two simple geodesics $\kappa$ and $\kappa^{\prime}$, which intersect each other exactly once, such that deleting either one of them results in a pair of pants (see \autoref{fig:s_021}).
    \begin{figure}[h]
      \centering
    \def\svgscale{0.9}
\begingroup%
  \makeatletter%
  \providecommand\color[2][]{%
    \errmessage{(Inkscape) Color is used for the text in Inkscape, but the package 'color.sty' is not loaded}%
    \renewcommand\color[2][]{}%
  }%
  \providecommand\transparent[1]{%
    \errmessage{(Inkscape) Transparency is used (non-zero) for the text in Inkscape, but the package 'transparent.sty' is not loaded}%
    \renewcommand\transparent[1]{}%
  }%
  \providecommand\rotatebox[2]{#2}%
  \newcommand*\fsize{\dimexpr\f@size pt\relax}%
  \newcommand*\lineheight[1]{\fontsize{\fsize}{#1\fsize}\selectfont}%
  \ifx\svgwidth\undefined%
    \setlength{\unitlength}{144.0011008bp}%
    \ifx\svgscale\undefined%
      \relax%
    \else%
      \setlength{\unitlength}{\unitlength * \real{\svgscale}}%
    \fi%
  \else%
    \setlength{\unitlength}{\svgwidth}%
  \fi%
  \global\let\svgwidth\undefined%
  \global\let\svgscale\undefined%
  \makeatother%
  \begin{picture}(1,1.29014523)%
    \lineheight{1}%
    \setlength\tabcolsep{0pt}%
    \put(0,0){\includegraphics[width=\unitlength,page=1]{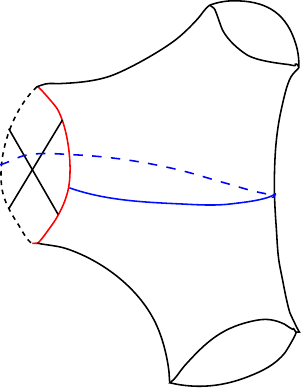}}%
    \put(0.48999986,0.7919788){\makebox(0,0)[lt]{\lineheight{1.25}\smash{\begin{tabular}[t]{l}$\kappa$\end{tabular}}}}%
    \put(0.24858646,0.85376338){\makebox(0,0)[lt]{\lineheight{1.25}\smash{\begin{tabular}[t]{l}$\kappa^{\prime}$\end{tabular}}}}%
  \end{picture}%
\endgroup%

      \caption{The curves $\kappa$ and $\kappa^{\prime}$ on $\os_{0,2,1}$.}
      \label{fig:s_021}
    \end{figure}

    Picking a pants decomposition along either $\kappa$ or $\kappa^{\prime}$, we see that $\teich(\os_{0,2,1})$ is homeomorphic to $\mathbb{R}_{>0}$, where the homeomorphism is given by the length coordinate.

    If we now consider $\core(\teich(\os_{0,2,1}))$, the lengths of $\kappa$ and $\kappa^{\prime}$ are bounded below by the threshold.
    But they are also bounded above, by an argument similar to the previous case, namely is either $\kappa$ or $\kappa^{\prime}$ are very long, the other one sided curve must be very short.
    This proves that $\core(\teich(\os_{0,2,1}))$ is compact, and as a result $\hNP(\core(\os_{0,2,1}), \vept) = 0$.
    This proves the lemma for $\os_{0,2,1}$.
  \item $\os_{0,1,2}$: This surface has a unique two-sided element, which we denote by $\gamma_{\infty}$.
    The one sided curves on this surface are indexed by $\mathbb{Z}$, where $\gamma_n = D_n \gamma_0$, and $D_n$ is the Dehn twist about $\gamma_\infty$ (see \autoref{fig:s_012}).
    \begin{figure}[h]
      \centering
    \def\svgscale{0.9}
\begingroup%
  \makeatletter%
  \providecommand\color[2][]{%
    \errmessage{(Inkscape) Color is used for the text in Inkscape, but the package 'color.sty' is not loaded}%
    \renewcommand\color[2][]{}%
  }%
  \providecommand\transparent[1]{%
    \errmessage{(Inkscape) Transparency is used (non-zero) for the text in Inkscape, but the package 'transparent.sty' is not loaded}%
    \renewcommand\transparent[1]{}%
  }%
  \providecommand\rotatebox[2]{#2}%
  \newcommand*\fsize{\dimexpr\f@size pt\relax}%
  \newcommand*\lineheight[1]{\fontsize{\fsize}{#1\fsize}\selectfont}%
  \ifx\svgwidth\undefined%
    \setlength{\unitlength}{120.44186089bp}%
    \ifx\svgscale\undefined%
      \relax%
    \else%
      \setlength{\unitlength}{\unitlength * \real{\svgscale}}%
    \fi%
  \else%
    \setlength{\unitlength}{\svgwidth}%
  \fi%
  \global\let\svgwidth\undefined%
  \global\let\svgscale\undefined%
  \makeatother%
  \begin{picture}(1,0.80071577)%
    \lineheight{1}%
    \setlength\tabcolsep{0pt}%
    \put(0,0){\includegraphics[width=\unitlength,page=1]{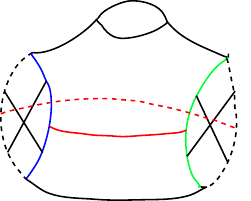}}%
    \put(0.35285514,0.19400406){\makebox(0,0)[lt]{\lineheight{1.25}\smash{\begin{tabular}[t]{l}$\gamma_{\infty}$\end{tabular}}}}%
    \put(0.19110861,0.50847598){\makebox(0,0)[lt]{\lineheight{1.25}\smash{\begin{tabular}[t]{l}$\gamma_0$\end{tabular}}}}%
    \put(0.69696526,0.54798904){\makebox(0,0)[lt]{\lineheight{1.25}\smash{\begin{tabular}[t]{l}$\gamma_1$\end{tabular}}}}%
  \end{picture}%
\endgroup%

      \caption{The curves $\gamma_{\infty}$, $\gamma_0$ and $\gamma_1$ on $\os_{0,1,2}$.}
      \label{fig:s_012}
    \end{figure}

    The mapping class group of this surface is also virtually generated by $D_n$.
    If we consider the pants decomposition along $\gamma_{\infty}$, we get a Fenchel-Nielsen map from $\teich(\os_{0,1,2})$ to the upper half plane $\mathbb{H}^2$, where the $y$-coordinate is $\frac{1}{\ell(\gamma_{\infty})}$, and the $x$-coordinate is the twist around $\gamma_{\infty}$.
    Furthermore, this map is also an isometry, and with respect to these coordinates, $D_n$ is the action of $
    \begin{pmatrix}
      1 & 1 \\
      0 & 1
    \end{pmatrix}
    $ on $\mathbb{H}^2$.

    If we now consider $\core(\teich(\os_{0,1,2}))$, that consists of the points in $\mathbb{H}^2$ whose $y$-coordinate is greater than some threshold value, i.e.\ a horoball in $\mathbb{H}^2$.
    We showed in \eqref{eq:horocycle-np-lp-is-half} that for the action of $D_n$ on a horocycle, the volume growth entropy and the lattice point growth entropy are both equal to $\frac{1}{2}$.
    The former entropy is precisely $\hNP(\core(\teich(\os_{0,1,2})), \vept)$, and the latter entropy is $\hLP(\teich(\os_{0,1,2}), \vept)$.
  \end{itemize}
  This concludes the proof of the lemma for the $4$ surfaces with $\chi(S) = -1$.
\end{proof}

\subsection{Good Points and Bad Points}
\label{sec:good-points-bad}

The proof of Theorem \ref{thm:entropy-equality} will split up into counting two kinds of net points, which we will call \emph{good points} and $\emph{bad points}$.

\begin{definition}[Good points]
  A point in $B_R(p) \cap \net$ is good if it is at most distance $\vepb R$ away from a lattice point $\gamma p$. The set of good points is denoted by $\net_g(p, R, \vepb)$.
\end{definition}

\begin{definition}[Bad points]
  \label{defn:bad-points}
  A point in $B_R(p) \cap \net$ is bad if it is more than distance $\vepb R$ away from the nearest lattice point. The set of bad points is denoted by $\net_b(p, R, \vepb)$.
\end{definition}

Observe that the classification of a point as good or bad depends on the choice of $R$, $p$, and an additional parameter $\vepb > 0$.

We also further subdivide $\net_g(p, R, \vepb)$ based on what the closest lattice point is.

\begin{definition}[Good point in the domain of $\gamma$]
  For $\gamma \in \mcg(S)$, the set $\net_g(\gamma, p, R, \vepb)$ denotes the subset of $\net_g(p, R, \vepb)$ whose closest lattice point is $\gamma p$.
\end{definition}

We will now prove a lemma that provides an upper bound on the number of good points when restricted to a fundamental domain.

\begin{lemma}
  \label{lem:fd-polynomial-growth}
  There exists a polynomial function $q$, whose degree only depends on the topological type of $S$, such that for any $\gamma \in \mcg(S)$, the following inequality holds for the cardinality of points in $\net_g(\gamma, p, R, \vepb)$.
  \begin{align*}
    \#\left( \net_g(\gamma, p, R, \vepb) \cap B_R(\gamma p) \right) \leq q(R)
  \end{align*}
\end{lemma}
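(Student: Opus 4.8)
The plan is to count net points in the ball $B_R(\gamma p)$ whose closest lattice point is $\gamma p$ by translating everything back to a fixed fundamental domain via $\gamma^{-1}$, and then show that the resulting set of net points lies in a region of $\core(\teich(S))$ of \emph{polynomially bounded} $\nu_N$-volume. Since net points are $\vepn$-separated, a polynomial volume bound on the region containing them gives a polynomial bound on their cardinality (using the uniform ball-volume estimate of Proposition~\ref{prop:uniform-volume-bound} to convert separation into a packing bound). So the whole lemma reduces to: the set of points of $\core(\teich(S))$ that are closer to the basepoint $p$ than to any other lattice point (i.e.\ the Dirichlet-type fundamental domain for the $\mcg(S)$-action), intersected with $B_R(p)$, has $\nu_N$-volume growing at most polynomially in $R$.

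First I would fix $\gamma$ and set $\net_g(\gamma,p,R,\vepb) = \gamma \cdot F$ where $F \subset \core(\teich(S))$ consists of net points $y$ with $d(y,p) \le d(y, \gamma' p)$ for all $\gamma' \in \mcg(S)$ — this is exactly the condition ``closest lattice point is $p$'' after applying $\gamma^{-1}$, and $\gamma^{-1}$ is an isometry, so it preserves both $\nu_N$ and the net structure and sends $B_R(\gamma p)$ to $B_R(p)$. Thus it suffices to bound $\#(\net \cap F \cap B_R(p))$. Next I would invoke a description of the Dirichlet domain $F$: away from the thin part, $F$ meets a ball $B_R(p)$ in a set of uniformly bounded diameter (cocompactness of the $\mcg(S)$-action on $\thick(S)$), so the only way $F \cap B_R(p)$ can be large is if it stretches out into the thin part. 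But Minsky's product region theorem (Theorem~\ref{thm:prno}) says the thin part, up to bounded additive error, is a product of copies of $\mathbb{H}$ (two-sided curves), copies of $\mathbb{R}$ (one-sided curves), and a lower-complexity Teichm\"uller space; and in each such thin region the fundamental domain for the corresponding curve-stabilizing subgroup is, in the $\mathbb{H}$-factors, a vertical strip of bounded width (a Dehn-twist fundamental domain), and in the $\mathbb{R}$-factors it is all of $\mathbb{R}$ but with the length coordinate bounded below by $\vept$ — so along each such factor the portion inside $B_R(p)$ is an interval of length $O(R)$. Taking the product over the at most $3g-3$ curves in a pants decomposition, the $\nu_N$-volume of $F \cap B_R(p)$ is at most a product of $O(R)$-many one-dimensional factors and $O(1)$-many two-dimensional factors, hence bounded by a polynomial $q_0(R)$ of degree at most the topological complexity of $S$.

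Finally I would convert the volume bound to a counting bound: if there were more than $C \cdot q_0(R)$ net points in $F \cap B_R(p)$, then since any two net points are at $d_\vept$-distance (hence, by Theorem~\ref{thm:weak-convexity}, $d$-distance up to a bounded factor) at least $\vepn/2$ apart, the disjoint balls of radius $\vepn/4$ around them would all lie in a bounded neighborhood of $F \cap B_R(p)$, which by the product-region description still has volume $O(q_0(R))$; comparing with the uniform lower bound $c_1(\vepn/4) > 0$ for the volume of each such ball (Proposition~\ref{prop:uniform-volume-bound}) gives the contradiction, and hence $\#(\net \cap F \cap B_R(p)) \le q(R)$ for a polynomial $q$ of the claimed degree.

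The main obstacle I anticipate is making the ``Dirichlet domain looks like a bounded-width strip in each $\mathbb{H}$-factor'' step rigorous in the hierarchically-product setting: one must control the fundamental domain of the \emph{full} mapping class group, not just of a single curve stabilizer, inside the product region, and argue that the extra group elements (those not fixing the short multicurve) only cut $F$ down further rather than letting it wander. I would handle this by noting that any lattice point $\gamma' p$ competing to be closer than $p$ either fixes the short multicurve — handled by the explicit Dehn-twist strip picture — or does not, in which case $\gamma' p$ lies a definite distance outside the product region and so, by the additive-error estimate of Theorem~\ref{thm:prno}, can only shrink $F$; this is morally the same bookkeeping as in the base case $\os_{1,1,0}$ treated in Lemma~\ref{lem:entropy-equality-base-case}, just iterated over the factors of the product region.
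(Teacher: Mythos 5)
There is a genuine gap, and it sits exactly at the point you flagged. Your volume bound for the Dirichlet-type domain $F$ inside a thin region treats only the $\mathbb{H}$-factors (twists about short two-sided curves) and the $\mathbb{R}_{>0}$-factors (one-sided lengths), but the product region of Theorem \ref{thm:prno} also has a $\teich(S\setminus\gamma)$-factor, and your ``product of $O(R)$ and $O(1)$ one- and two-dimensional factors'' silently drops it. A ball of radius $R$ projects to that factor with $\nu_N$-volume growing exponentially in $R$, so the only way $F\cap B_R(p)$ can have polynomial volume is if the Dirichlet condition against the \emph{whole} stabilizer of the multicurve (which contains the mapping class group of the complement, not just the Dehn twists) confines the complementary coordinate to boundedly many fundamental domains of $\mcg(S\setminus\gamma)$ -- and your proposed dichotomy does not prove this: the ``explicit Dehn-twist strip picture'' says nothing about the complementary directions. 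Moreover, the comparison you want to run for stabilizer elements cannot invoke the additive-error estimate of Theorem \ref{thm:prno} as written, because $p$ and the competing points $\gamma' p$ are in the thick part, outside the region where that theorem applies; replacing it by the distance formula (Theorem \ref{thm:distance-formula}) reintroduces multiplicative constants, and with multiplicative error a Dirichlet comparison only bounds the relative twisting by a constant multiple of $R$ in the exponent, i.e.\ it gives a strip of width $e^{O(R)}$ rather than bounded width, which destroys the polynomial volume bound. (Ignoring the non-stabilizing $\gamma'$ is fine for an upper bound -- dropping conditions only enlarges $F$ -- so that half of your fix is harmless but also does no work.)

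For contrast, the paper's proof never analyzes the Dirichlet domain at all. It reduces to $\gamma=1$ as you do, but then builds an explicit set $\cZ$ of at most polynomially many points by taking the finitely many short markings at $p$ and varying the lengths of their pants curves over a logarithmic mesh $\{T, Te^{-1},\dots\}$ of roughly $R$ values per curve; it then shows, using the distance formula, that every $y\in B_R(p)$ lies within uniformly bounded distance of $\kappa z$ for some $\kappa\in\mcg(S)$ and $z\in\cZ$ -- here the multiplicative error in the distance formula is harmless because $z$ and $\kappa y$ are arranged to have the \emph{same} short marking, so all subsurface-projection terms vanish identically and only a bounded length/twist term survives. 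A packing argument (Lemma \ref{lem:packing-argument}, playing the role you assign to Proposition \ref{prop:uniform-volume-bound}) then converts this into the polynomial bound on net points whose closest lattice point is $p$. If you want to salvage your approach, you would need an induction on complexity to handle the $\teich(S\setminus\gamma)$-factor together with a distance comparison for stabilizer elements that has only additive error despite the basepoint being thick (e.g.\ by cutting the geodesic $[p,y]$ at its entry point into the thin region); as it stands the key polynomial volume estimate is asserted rather than proved.
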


\begin{remark}
  \textcite[Lemma 3.2]{eskinmirzakhani} prove this lemma for Teichmüller spaces of orientable surfaces, by comparing the extremal lengths of various curves on the underlying surfaces.
  We adapt the same proof for non-orientable surfaces, replacing extremal length for hyperbolic lengths instead.
\end{remark}

Before we prove Lemma \ref{lem:fd-polynomial-growth}, we will need the following lemma on packing an $\vepn$-separated set into a ball of fixed radius in Teichmüller space (where $\vepn$ is the parameter associated to our net $\net$).

\begin{lemma}[Packing bound]
  \label{lem:packing-argument}
  For an constants $C > 0$ and $\vepn > 0$, there exists a constant $D(C, \vepn, S)$ depending on the constants $C$, $\vepn$, and the topological type of the surface $S$ such that any ball $B_C(p)$ (independent from the choice of $p$) in $\teich(S)$ cannot contain more than $D$ points that are pairwise distance at least $\vepn$ apart.
\end{lemma}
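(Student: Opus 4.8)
\section*{Proof proposal for Lemma \ref{lem:packing-argument}}

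The plan is to reduce the packing bound to uniform two-sided bounds on the $\nu_N$-volume of metric balls, and to prove those volume bounds by induction on the complexity $\xi(S)$ of $S$ using the product region theorem (Theorem \ref{thm:prno}) together with Mumford compactness.

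First the reduction. Suppose one knows there are functions $v, V \colon (0,\infty) \to (0,\infty)$, depending only on the topological type of $S$, with $v(r) \le \nu_N(B_r(x)) \le V(r)$ for every $x \in \teich(S)$ and every $r$. If $\{x_1, \dots, x_N\} \subset B_C(p)$ is $\vepn$-separated, the balls $B_{\vepn/2}(x_i)$ are pairwise disjoint and contained in $B_{C+\vepn/2}(p)$, so $N \le V(C+\vepn/2)/v(\vepn/2) =: D(C,\vepn,S)$, which is the assertion. (For orientable $S$ one runs the same argument with the Weil--Petersson volume, or any fixed smooth positive density, in place of $\nu_N$; nothing changes.)

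Now I would prove the volume bounds by induction on $\xi(S)$, establishing both inequalities simultaneously. The base of the induction and the ``thick'' case in general are handled by Mumford compactness: for any $\eta>0$ the locus of $\teich(S)$ where every curve has hyperbolic length at least $\eta$ has compact quotient by $\mcg(S)$, so after applying a mapping class we may assume $x$ lies in a fixed compact set, whence $B_r(x)$ sits in a fixed compact neighbourhood and has $\nu_N$-volume bounded above and below by constants depending only on $r$ and $S$. For the inductive step one must control which curves can become short inside $B_r(x)$. Fix a Bers pants decomposition $P$ at $x$, so $\ell_x$ is bounded on $P$ by a constant depending only on $S$. Since Teichm\"uller distance distorts extremal lengths by at most a factor $e^{2r}$ across $B_r(x)$, and extremal and hyperbolic length are comparable where lengths are bounded, every curve of $P$ stays of bounded hyperbolic length throughout $B_r(x)$; and by the collar lemma any essential curve \emph{not} in $P$ crosses some curve of $P$, hence has hyperbolic length bounded \emph{below} throughout $B_r(x)$ by a constant depending only on $r$ and $S$. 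So the only curves that can degenerate on $B_r(x)$ are those in $P$. Fix once and for all an additive error $c>0$, let $\vept^{\prime}$ be the corresponding threshold of Theorem \ref{thm:prno}, and choose $t_0 < \vept^{\prime}$ small enough that any curve of length $< t_0 e^{-2r}$ at $x$ remains shorter than $t_0$ on all of $B_r(x)$; set $\gamma \subseteq P$ to be the curves with $\ell_x < t_0 e^{-2r}$. If $\gamma = \emptyset$, then by the previous discussion $B_r(x)$ lies in an $\eta$-thick locus for some $\eta = \eta(r,S) > 0$ and the thick case applies. If $\gamma \ne \emptyset$, then $B_r(x) \subseteq \teich_{\gamma \le t_0}(S)$, so Theorem \ref{thm:prno} makes the product region projection $\Pi$ an isometry up to additive $c$ there, and $\Pi(B_r(x))$ is contained in a product of balls of radius $r+c$, one in each factor of $X_\gamma = \teich(S\setminus\gamma) \times \prod_i \mathbb{H}_i \times \prod_j \mathbb{R}_{>0}$, with the sup metric. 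A metric ball of radius $r+c$ in $\mathbb{H}$ has area depending only on $r$, a ball in $\mathbb{R}_{>0}$ with the log-metric is an interval of length $2(r+c)$, and the $\teich(S\setminus\gamma)$-factor has two-sided volume bounds by the inductive hypothesis applied to the (possibly disconnected, strictly lower-complexity) surface $S\setminus\gamma$. Since the volume form on $X_\gamma$ is the product of the volume forms on the factors (this is the content of equation \eqref{eq:measure-of-product-is-product-of-measure} in the proof of Proposition \ref{prop:uniform-volume-bound}), the product ball has two-sided volume bounds depending only on $r$ and $S$, and pulling back through $\Pi$ costs only a bounded multiplicative factor; this closes the induction.

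I expect the crux to be the structural observation that inside a ball of fixed radius only the curves of a Bers pants decomposition at the centre can degenerate, together with the bookkeeping of the thresholds $t_0, c, \eta$, which degrade through the $\xi(S)$ levels of the induction but remain positive and $r$-dependent throughout; the remaining ingredients ($\mathbb{H}$- and $\mathbb{R}$-ball volumes, and the product structure of $X_\gamma$) are routine. In any case, once uniform two-sided volume bounds for balls are in hand (cf.\ Proposition \ref{prop:uniform-volume-bound}), the lemma is the two-line packing estimate of the first step.
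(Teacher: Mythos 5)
Your proposal is correct in outline, but it takes a genuinely different route from the paper, even though both arguments are inductions on complexity driven by Theorem \ref{thm:prno} and cocompactness of the mapping class group action on the thick part. The paper's proof is purely metric and uses no measure at all: it notes that the packing number is finite for compact spaces (upper semicontinuity of $D$ in the center) and for $\mathbb{H}^2$ (homogeneity), that packing numbers are sub-multiplicative under $\sup$-products via covering numbers, and that they are stable under an additive-error change of the metric provided the error is less than $\vepn$; the induction then splits $\teich(S)$ into the thick part and the thin part, where the metric is a $\sup$-product up to additive error. You instead run the classical volume-packing argument, which needs uniform two-sided bounds on $\nu_N$- (resp.\ Weil--Petersson-) volumes of Teichmüller-metric balls centered at \emph{arbitrary} points of $\teich(S)$, and you prove these by a parallel induction using Wolpert/Kerckhoff length distortion, the collar lemma (only curves of a Bers pants decomposition at the center can degenerate on $B_r(x)$), Mumford compactness, and the product structure of the measure on product regions. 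In effect you extend Proposition \ref{prop:uniform-volume-bound} from $(\systole(\no_g), d_{\vept})$ to all of $(\teich(\no_g), d_T)$; the new point, which you should make explicit, is that at centers with short one-sided curves the factor $\coth(\ell)\,d\ell$ is comparable to the log-metric element $d\ell/\ell$, so ball volumes remain uniformly bounded even though the $\mcg(\no_g)$-covolume of $\teich(\no_g)$ is infinite. The paper's route buys economy (no volume form, hence no need to control $\nu_N$ near the one-sided-thin locus); yours is heavier on input but yields a quantitative byproduct, namely uniform ball-volume bounds on all of $\teich(S)$ and an explicit $D$ as a ratio of volumes. Two small repairs: the parenthetical ``any fixed smooth positive density'' is too glib, since uniform two-sided ball-volume bounds are precisely the issue and fail for general densities (Weil--Petersson and $\nu_N$ do work); and in the thin case you only record $\Pi(B_r(x)) \subseteq \prod B_{r+c}$, which gives the upper bound --- for the lower bound you also need $B_r(x) \supseteq \Pi^{-1}\bigl( \prod B_{r-c} \bigr)$, which holds because the $(r-c)$-balls in the $\mathbb{H}_i$ and $(\mathbb{R}_{>0})_j$ factors stay in the locus $\ell \leq t_0$ when the $\gamma$-lengths at the center are below $t_0 e^{-2r}$.
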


\begin{proof}
  First of all, note that the above lemma holds for $S = \os_{1,1,0}$, since $\teich(\os_{1,1,0})$ is $\mathbb{H}^2$, which is homogeneous.

  Next, note that the lemma also holds for compact metric spaces, because we can express $D$ as a upper semi-continuous function of the point $p$, which will achieve a maximum on a compact metric space.

  Next, note that if the lemma holds for metric spaces $X$ and $Y$, it also holds for $X \times Y$, where the metric on $X \times Y$ is the $\sup$ product without an additive error.
  To see this, we consider the minimal number of balls $E(C, \vepn, X \times Y)$ of radius $\frac{\vepn}{2}$ needed to cover $X \times Y$.
  The covering number and the packing number $D$ are related by the following standard inequality.
  \begin{align*}
    D(C, \vepn, X \times Y) \leq E(C, \vepn, X \times Y) \leq D(C, 2 \vepn, X \times Y)
  \end{align*}
  Furthermore, one can easily see that for a $\sup$-product $X \times Y$, we can bound the covering number of $X \times Y$ by a product of the covering number for $X$ and $Y$, by taking a product of coverings for $X$ and $Y$.
  \begin{align*}
    E(C, \vepn, X \times Y) \leq E(C, \vepn, X) \times E(C, \vepn, Y)
  \end{align*}
  Combining the two inequalities, get a bound for $D(C, \vepn, X \times Y)$ in terms of $D(C, 2\vepn, X)$ and $D(C, 2\vepn, Y)$.
  \begin{align*}
D(C, \vepn, X \times Y) \leq D(C, 2\vepn, X) \times D(C, 2\vepn, Y)
  \end{align*}

  We now show that the lemma also holds, with a worse constant, for metric spaces that are $\sup$-products with an additive error $c$, where $c$ is a constant smaller than $\vepn$.
  Let $Z$ be a metric space for which the lemma holds, and $Z^{\prime}$ be a metric space with the same underlying points, but whose metric differs from $Z$ by an additive error $c$.
  \begin{align*}
    \left| d_Z(x,y) - d_{Z^{\prime}}(x,y) \right| \leq c
  \end{align*}
  Let $\mathcal{Z}$ be a set of points in a ball of radius $C$ in $Z^{\prime}$ that are pairwise distance at least $\vepn$ apart.
  Consider $\mathcal{Z}$ as a subset of $Z$ instead, we have that they are contained in a ball of radius at most $C + c$, and are pairwise distance at least $\vepn - c$ apart.
  We thus get the inequality relating the packing numbers for $Z$ and $Z^{\prime}$.
  \begin{align*}
    D(C, \vepn, Z^{\prime}) \leq D(C + c, \vepn - c, Z )
  \end{align*}

  Suppose now that we have the lemma for the Teichmüller spaces of all surfaces with Euler characteristic at least $-n$.
  To show the lemma for a Teichmüller space of a surface $S$ with Euler characteristic $-n-1$, we break up the Teichmüller space into the thick part, where all curves are at least $\varepsilon$ or longer, and the thin part.
  We pick $\varepsilon$ such that the metric thin part on the thin part is equal to the $\sup$-product metric, up to an additive error $c$, where $c < \vepn$, by Minsky's product region theorem \cite[Theorem 6.1]{1077244446} (or Theorem \ref{thm:prno}).
   The mapping class group acts co-compactly on the thick, so the first reduction applies, and on the thin part, the metric is the $\sup$ product up to an additive error of $c$, so the second reduction applies.
\end{proof}

\begin{proof}[Proof of Lemma \ref{lem:fd-polynomial-growth}]
  We begin by making three simplifying reductions.
  First, it will suffice to prove the following stronger claim instead.
  \begin{align}
    \label{eq:reduction-one}
    \#\left( \net(\gamma) \cap B_R(\gamma p) \right) \leq q(R)
  \end{align}
  Here, $\net(\gamma)$ denotes the set of net points whose closest lattice point is $\gamma p$: $\net(\gamma)$ is therefore a superset of $\net_g(\gamma, p, R, \vepb)$.

  Next, note that it suffices to prove \eqref{eq:reduction-one} for $\gamma = 1$, since our choice of basepoint $p$ was arbitrary.

  And finally, it will suffice to prove the following claim.
  \begin{claim*}
  There exists a set $\cZ \subset \teich(S)$ such that $\# \cZ \leq R^{f(S)}$, and for $y \in B_R(p)$, there exists a $z \in \cZ$ and $\kappa \in \mcg(S)$ such that $d(y, \kappa z) \leq C$, for some value $f(S)$ that only depends on the topological type of $S$, and some fixed constant $C$.
  \end{claim*}
  To see why this suffices, suppose we have such a $\cZ$.
  Without loss of generality, we can assume that for all points $z \in \cZ$, the closest lattice point is $p$: otherwise we could replace such a point $z$ by $\kappa z$ for an appropriate choice of $\kappa$.
We then have that for any $n \in \net(1) \cap B_p(R)$, there exists some $z \in \cZ$, such that $d(z, n) \leq 2C$.
Since $\# \cZ \leq R^{f(\chi(S))}$, we have that $\#\left( \net(1) \cap B_p(R) \right) \leq C^{\prime} R^{f(\chi(S))}$, for some other constant $C^{\prime}$, by Lemma \ref{lem:packing-argument}.

\emph{Proof of claim:}
We consider short markings on the point $p \in teich(S)$.
Given a pants decomposition $\left\{ \alpha_1, \ldots, \alpha_{k} \right\}$ of a surface, a set of short transverse curves is a collection of curves $\left\{ \beta_1, \ldots , \beta_k \right\}$, such that $\beta_i$ only intersects $\alpha_i$, and is the shortest such curve amongst all the curves intersecting only $\alpha_i$.
The set $\left\{ \alpha_1, \ldots, \alpha_k, \beta_1, \ldots, \beta_k \right\}$ is called a marking.
A marking is said to be short if the total length of the pants curves $\alpha_i$ is minimized amongst the mapping class group orbit of $\left\{ \alpha_1, \ldots, \alpha_k \right\}$.

Note that there are only many short markings at point $p$.
  We  know that for each of these short markings, the lengths of the pants curves are bounded above by some constant $T$.
  Each of these pants multicurves have $N = -3 \chi(S) - b$ pants curves on them, where $b$ is the number of boundary components of $S$.
  Let $\left\{ M_1, \ldots, M_j \right\}$ denote the set of short markings.

  We construct the points $z \in \cZ$ by just varying the lengths of these pants curves: the set of lengths we will allow are the following.
  \begin{align*}
    \text{Acceptable lengths} = \left\{ T, T \exp(-1), T\exp(-2), \ldots, T \left( \exp(-\lceil R \rceil) - \log (s) \right)  \right\}
  \end{align*}
  Here $s$ is the length of the shortest curve on the point $p$ in $\teich(S)$.
  We define the point $z_{j, i_1, i_2, \ldots , i_N}$ to be the point in $\teich(S)$ obtained by considering the marking $M_j$ at $p$, and setting the length of the $k$\textsuperscript{th} pants curve to be $i_k$, where the $i_k$ is one of the acceptable lengths.
  It's clear that the cardinality of $\cZ$ is at most $J \cdot R^N$, which is a polynomial only depending on the topological type of the surface $S$ and the basepoint $p$.

  Suppose now that $y$ is some other point in $B_R(p)$.
  We pick a $\kappa \in \mcg(S)$ such that the shortest marking on $\kappa y$ is one of the markings $M_j$ for $1 \leq j \leq J$.
  We now need to show that one of the $z \in \cZ$ is close to $\kappa y$.
  Pick the $z$ such that the corresponding lengths of the pants curves are closest to the lengths of the pants curves on $\kappa y$.
  We can now invoke the combinatorial distance formula for Teichmüller metric (proved by \textcite{rafi2007combinatorial} for the orientable setting, and Theorem \ref{thm:distance-formula} for the non-orientable case).
  \begin{align*}
    d(z, \kappa y) \emul \sum_Y \left[ d_Y(z, \kappa y) \right]_k + \sum_{\alpha \not \in \Gamma} \log \left[ d_{\alpha} (z, \kappa) \right]_k + \max_{\alpha \in \Gamma} d_{\mathbb{H}_{\alpha}} (z, \kappa y)
  \end{align*}
  In the above formula, the first term is the distance between the short markings when projected to non-annular subsurfaces, the second term is the distance between the short markings when projected to annular subsurfaces whose core curves are not the pants curves in the marking, and the third term corresponds to the length and twist parameters of the short curves.

  Since both $z$ and $\kappa y$ have the same short markings, the first two terms in the above sum become $0$.
  Also, since we picked $z$ to be the element of $\cZ$ such that the lengths were closest to those on $\kappa y$, the third term is bounded by some constant, which proves the result.
\end{proof}

\subsection{Using Complexity Length to Count Bad Points}
\label{sec:using-compl-length}

In this section we introduce an alternative to the Teichmüller metric, called the \emph{complexity length} (see Definition \ref{defn:complexity-length}).
Complexity length (denoted by $\mathfrak{L}$) was constructed by \textcite{dowdall2023lattice}, in order to get better estimates on net points contained in the thin part of Teichmüller space (for orientable surfaces).
We adapt the construction of complexity to the Teichmüller space of non-orientable surfaces in Section \ref{sec:line-gap-compl}.
In this section, we state the main results about complexity length we need in order to prove Theorem \ref{thm:entropy-equality}.

For this section, we will state the results with a rescaled version of complexity length, in order to compare it with Teichmüller length.

\begin{definition}[Rescaled complexity length]
  Let $S$ be a surface of finite type.
  The rescaled complexity length $\dcomp$ on $\core(\teich(S))$ is given by the following formula.
  \begin{align*}
    \dcomp(x, y) = \frac{\mathfrak{L}(x,y)}{\hNP(\core(\teich(S)), \vept)}
  \end{align*}
  Here, $\mathfrak{L}(x,y)$ is the complexity length between points $x$ and $y$.
\end{definition}

The first result we will need is a count of the net points with respect to the rescaled complexity length.

\begin{theorem}[Theorem 12.1 of \cite{dowdall2023lattice}, Theorem \ref{thm:counting-with-complexity}]
  \label{thm:counting-with-complexity-rescaled}
  There exists a polynomial function $p(R)$ that depends on the net $\net$, and a parameter $\veperr > 0$, such that the following inequality holds for any $\veperr > 0$.
  \begin{align*}
    \#\left( y \in \net \mid \dcomp(p, x) \leq R \right) \leq p(R) \exp((\hNP(\core(\teich(S)), \vept) + \veperr) \cdot R)
  \end{align*}
\end{theorem}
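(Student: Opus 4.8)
The plan is to deduce this estimate from its unrescaled counterpart, Theorem \ref{thm:counting-with-complexity} (the non-orientable analogue of \textcite[Theorem 12.1]{dowdall2023lattice}), which is established in Section \ref{sec:line-gap-compl}. Writing $h \coloneqq \hNP(\core(\teich(S)), \vept)$, that theorem provides, for every $\veperr^{\prime} > 0$, a polynomial $q$ (depending on $\net$) such that $\#\{ y \in \net \mid \mathfrak{L}(p, y) \le L \} \le q(L)\exp((1 + \veperr^{\prime}) L)$. Since $\dcomp(p,y) \le R$ is by definition the condition $\mathfrak{L}(p,y) \le h R$, substituting $L = hR$ and choosing $\veperr^{\prime} \le \veperr/h$ so that $(1+\veperr^{\prime})h \le h + \veperr$ yields the claimed bound with $p(R) \coloneqq q(hR)$. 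So the real content is Theorem \ref{thm:counting-with-complexity}, and the rest of this proposal sketches how I would prove that.

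For Theorem \ref{thm:counting-with-complexity} I would follow the strategy of Dowdall and Masur, running an induction on the complexity of $S$, the base case being the four surfaces with $\chi(S) = -1$ (handled as in Lemma \ref{lem:entropy-equality-base-case}). The structural input is that a path realizing complexity length decomposes into finitely many \emph{active intervals}, each attached to a proper subsurface $W$ along which the path penetrates deep into the associated product region, together with the complementary portions, where the path behaves like a path in the thick part; this decomposition is supplied by Minsky's product region theorem (Theorem \ref{thm:prno}) and the hierarchically hyperbolic structure of $\teich(\no_g)$. A net point $y$ with $\mathfrak{L}(p,y) \le L$ is then determined, up to bounded ambiguity, by three pieces of data: (i) the combinatorial record of which subsurfaces are active and in what order, of which there are only polynomially many in $L$; (ii) for each active subsurface $W$, the net point reached inside $\teich(W)$, counted by the inductive hypothesis (a polynomial times the exponential of its complexity-length contribution); and (iii) the net point reached along the complementary portions, of which, by the definition of $\hNP$, there are at most a polynomial times $\exp(h\ell)$, where $\ell$ is the total length of those portions. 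The weights in the definition of complexity length are calibrated precisely so that all of these exponents sum to at most $(1+\veperr^{\prime})L$; this step is where one needs the \emph{strict} inequality $\hNP(\core(\teich(W)), \vept) < h$ for proper subsurfaces $W$, so that the complexity-length cost of an active interval strictly outpaces the count of net points it contributes.

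The hard part will be establishing that strict entropy gap in the non-orientable setting and checking that the complexity-length weights remain correctly calibrated there — in particular, ruling out pathologies from subsurfaces whose complement consists only of one-sided curves, and transporting the hierarchical-hyperbolicity bookkeeping of Dowdall--Masur across the isometric embedding $\teich(\no_g) \hookrightarrow \teich(\os_{g-1})$. I also expect the two inductions to be intertwined: the bound for $S$ relies on $\hNP = \hLP$ for lower-complexity subsurfaces (Theorem \ref{thm:entropy-equality}), so one should run the entropy-equality induction and the complexity-length count together, closing both at each complexity level before passing to the next.
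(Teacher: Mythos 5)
Your first paragraph is exactly how the paper handles this statement: Theorem \ref{thm:counting-with-complexity-rescaled} is stated as an immediate restatement of Theorem \ref{thm:counting-with-complexity} under the rescaling $\dcomp = \mathfrak{L}/\hNP$, and your substitution $L = hR$, $\veperr^{\prime} \leq \veperr/h$ is the correct (and essentially the only) content of that step. The substantive comparison is therefore with your sketch of Theorem \ref{thm:counting-with-complexity} itself. The paper's argument does not induct on the complexity of $S$: it fixes a WISCAL witness family $\Omega(x,y)$ for each net point $y$, encodes it as a labeled directed graph (vertices labeled by $(\hNP^{\ast}(V), \lfloor d_{\teich(V)}(\widehat{x}_V^{\Omega}, \widehat{y}_V^{\Omega})\rfloor)$, edges recording $\swarrow$, $\searrow$, and $\lessdot$), observes there are only polynomially many graph types since WISCAL families have uniformly bounded cardinality, and then counts net points per graph type by adding one witness at a time; the count of net points inside a witness $U$ at Teichmüller distance at most $r_0$ is bounded by $\exp((\hNP(U) + \vepent)r_0)$ directly from the definition of $\hNP(U)$ (this is precisely where the paper substitutes a weak definitional bound for the ABEM volume asymptotics, which are unavailable for non-orientable surfaces), with no inductive appeal to the counting theorem on subsurfaces. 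Your items (i)--(iii) are recognizably the same bookkeeping, so the approaches are close in spirit, but your inductive framing adds machinery the argument does not need.

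The one point that is actually wrong in your sketch is the claim that the calibration step ``is where one needs the strict inequality $\hNP(\core(\teich(W)), \vept) < h$ for proper subsurfaces $W$.'' No such gap enters Theorem \ref{thm:counting-with-complexity}: the weight assigned to a witness $U$ in the definition of complexity length is exactly $\hNP^{\ast}(U)$, so the exponent contributed by $U$ to the count, namely $(\hNP(U) + \vepent) r_0$, matches its contribution $\hNP^{\ast}(U) r_0$ to $\mathfrak{L}_{\Omega}(x,y)$ up to $\vepent$, and summing over witnesses gives $(1 + \veperr^{\prime})L$ with no comparison to $\hNP(S)$ whatsoever. The strict entropy gap is the hypothesis of Theorem \ref{thm:linear-gap} (equivalently \ref{thm:linear-gap-rescaled}), where it is used to show that bad points satisfy $\dcomp(p,y) \leq (1-c)R$; it is supplied at each stage of the entropy-equality induction by Lemma \ref{lem:net-point-entropy-inequality}, and it is only there that the two results are combined. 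Building the gap into the counting theorem would not make your intertwined induction circular (the gap concerns proper subsurfaces, available from lower complexity levels), but it imports a hypothesis that is not needed and obscures the fact that the counting bound holds for every surface independently of the entropy-equality induction.
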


The next result, which is the main theorem of Section \ref{sec:line-gap-compl}, is that if $y$ is a bad point that is Teichmüller distance $R$ away from $p$, then its rescaled complexity distance to $p$ is smaller than $R$ by a definite amount.
We state this theorem with an additional hypothesis on the net point entropy of subsurfaces.
We will establish that this hypothesis holds inductively in Section \ref{sec:proof-theorem}.

\begin{theorem}[Linear gap in complexity length, Theorem \ref{thm:linear-gap}]
  \label{thm:linear-gap-rescaled}
  Suppose that for all proper subsurfaces $V$ of $\os$, the following inequality holds.
  \begin{align*}
    \hNP(\core(\teich(V)), \vept) < \hNP(\coret{\os})
  \end{align*}
  Then for any $\vepb >0$, there exists $c > 0$, such that for all $R > 0$, and for any bad point $y$, i.e.\ a point in $\net_b(p, R, \vepb)$, the following upper bound on the complexity distance between $p$ and $y$ holds.
  \begin{align*}
    \dcomp(p, y) \leq R(1 - c)
  \end{align*}
\end{theorem}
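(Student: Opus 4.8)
The plan is to combine two ingredients. First, a bad point's Teichm\"uller geodesic to the basepoint must spend a definite proportion of its length --- comparable to $\vepb R$ --- in the thin part of $\teich(\os)$, because the mapping class orbit of a thick basepoint is coarsely dense in the thick part. Second, within the complexity-length framework of Dowdall--Masur, complexity length accumulates at a rate strictly below $1$ (after rescaling by $\hNP(\coret{\os})$) along any such thin excursion, the discount coming from the net-point entropies of the subsurfaces activated by the short curves; this is precisely where the inductive hypothesis enters. Combining the two and dividing by $\hNP(\coret{\os})$ will yield the linear gap.

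For the first ingredient, let $p \in \thick(\os)$ and $y \in \net_b(p,R,\vepb)$, so $d(p,y) \le R$ and $d(y,\mcg(\os)\cdot p) > \vepb R$. Since $\thick(\os)/\mcg(\os)$ is compact and every point of the orbit of $p$ has the same systole as $p$, the orbit is $D_0$-coarsely dense in $\thick(\os)$ with $D_0 = \diam(\thick(\os)/\mcg(\os))$; hence any point of $[p,y]$ lying more than $D_0$ from the orbit lies in the thin part. Taking $q$ to be the last point of $[p,y]$ within $D_0+1$ of the orbit, the triangle inequality (using $p\in\mcg(\os)\cdot p$ and $d(y,\mcg(\os)\cdot p)>\vepb R$) gives $d(p,q) \le (1-\vepb)R + D_0 + 1$, so $[q,y]$ lies entirely in the thin part and, provided $d(p,y) > (1-\tfrac{\vepb}{2})R$, has length at least $\tfrac{\vepb}{2}R - D_0 - 1 \ge \tfrac{\vepb}{4}R$ once $R$ is large. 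If instead $d(p,y) \le (1-\tfrac{\vepb}{2})R$, then $\dcomp(p,y) \le d(p,y) \le (1-\tfrac{\vepb}{2})R$ since complexity length contracts the Teichm\"uller metric, and the theorem holds with $c = \tfrac{\vepb}{2}$; so I may assume the former.

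For the second ingredient, I would feed $[q,y]$ into the complexity-length machinery adapted to $\no_g$ in Section \ref{sec:line-gap-compl}. At every time along $[q,y]$ some curve is short, so each active subsurface $W$ --- a complementary subsurface $\os\setminus\alpha$ or an annulus about $\alpha$ --- is a \emph{proper} subsurface of $\os$. In the definition of $\mathfrak{L}$, the contribution of a thin piece activating $\alpha$ is a term controlled by $\hNP(\core(\teich(\os\setminus\alpha)))$ times the distance traversed in the $\teich(\os\setminus\alpha)$-factor, plus annular (depth and twist) terms around $\alpha$, the depth terms being logarithmic and the twist terms strictly discounted --- these lower-order and half-rate phenomena being exactly what produce the entropy $\tfrac{1}{2}$ in the base case $\os_{0,1,2}$. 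Since $\os$ has only finitely many homeomorphism types of proper subsurface, the inductive hypothesis $\hNP(\core(\teich(V))) < \hNP(\coret{\os})$ upgrades to a uniform gap $\hNP(\core(\teich(V))) \le \hNP(\coret{\os}) - \delta_0$ for all proper $V$, with $\delta_0 > 0$; together with the auxiliary bound $\hNP(\coret{\os}) > \tfrac{1}{2}$ (which likewise follows from the inductive hypothesis, using that a surface beyond the base cases has an essential proper subsurface of net-point entropy at least $\tfrac{1}{2}$), this shows that along any subsegment of $[q,y]$ of Teichm\"uller length $\ell$ the rescaled complexity length grows by at most $(1-c_2)\ell$ up to a lower-order error, for a fixed $c_2 > 0$, and hence $\dcomp(q,y) \le (1-c_2)\,d(q,y) + O(\log R)$.

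Assembling, $\dcomp(p,y) \le \dcomp(p,q) + \dcomp(q,y) \le d(p,q) + (1-c_2)\,d(q,y) + O(\log R) = d(p,y) - c_2\,d(q,y) + O(\log R) \le R - c_2\tfrac{\vepb}{4}R + O(\log R)$, which is at most $R(1-c)$ with $c = c_2\vepb/8$ for all $R$ beyond a fixed threshold; for smaller $R$ only finitely many net points are involved and the bound follows after shrinking $c$. The main obstacle is the second ingredient: turning the heuristic ``thin excursions are cheap in complexity length'' into a rigorous estimate with additive rather than multiplicative error --- this requires carefully tracking the hierarchical recursion in the Dowdall--Masur framework (a thin piece of $\os$ may itself be thin inside $\os\setminus\alpha$), controlling the annular twisting contributions, and verifying the auxiliary inequality $\hNP(\coret{\os}) > \tfrac{1}{2}$; this is the content of Section \ref{sec:line-gap-compl}.
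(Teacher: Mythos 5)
Your overall shape (isolate a terminal portion of $[p,y]$ of length comparable to $\vepb R$ and argue it is cheap in complexity, while bounding the rest crudely by $\hNP(\os)\cdot d$) parallels the paper, but the step that carries the entire content — ``along any subsegment of $[q,y]$ of Teichm\"uller length $\ell$ the rescaled complexity grows by at most $(1-c_2)\ell$ up to lower-order error'' — is asserted, not proved, and deferring it to ``the content of Section \ref{sec:line-gap-compl}'' is circular, since Theorem \ref{thm:linear-gap} \emph{is} the main result of that section. Moreover, as stated the claim is not the right one. Thinness of $[q,y]$ does not prevent $\os$ itself from being a witness for $[q,y]$: a Teichm\"uller geodesic can make arbitrarily large progress in $\cC(\os)$ while some (changing) curve stays short the whole time, and active proper subsurfaces that are \emph{not} witnesses contribute to their $\Omega$-completion, which may be $\os$, so their motion is counted at weight $\hNP(\os)$ with no discount (this is exactly the phenomenon of Example \ref{ex:uninsulated}). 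The infimum in the definition of $\mathfrak{L}$ does not let you freely choose a family realizing the ``all progress is in proper subsurfaces'' discount, because the WISC, admissibility and limitedness constraints restrict which families are allowed; extracting a per-unit-length rate bound would also force you to subdivide and pay an additive $K\mathbf{C}$ per piece, which you have not controlled. Your choice of decomposition point $q$ (last point of $[p,y]$ near the orbit) is therefore not the right one: leaving the thick part is not the same as ceasing to make $\cC(\os)$-progress.

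The paper's proof avoids this by splitting at the resolution point $\widehat{y}_{\os}^{\Omega}$ rather than at $q$: this point is $\vept$-thick by construction, lies coarsely on $[p,y]$ (strong alignment plus Lemma 9.10 of \cite{dowdall2023lattice} and the distance formula), and satisfies $d_{\teich(\os)}(\widehat{y}_{\os}^{\Omega}, y)\gtrsim \vepb R$ precisely because $y$ is a bad point while a thick point is boundedly close to a lattice point. The discount then comes from the fact that, by the very definition of the projection tuple, the $\cC(\os)$-projections of $\widehat{y}_{\os}^{\Omega}$ and $y$ coarsely agree, so $\os$ is not active for $[\widehat{y}_{\os}^{\Omega}, y]$ and one can choose a WISCAL family there with no $\os$-witness, giving $\mathfrak{L}(\widehat{y}_{\os}^{\Omega}, y)\leq (h+\tfrac{K}{\mathbf{C}})\, d_{\teich(\os)}(\widehat{y}_{\os}^{\Omega}, y)+K\mathbf{C}$ for a fixed $h<\hNP(\os)$ dominating all proper-subsurface weights; the first piece is bounded by Proposition \ref{prop:complexity-length-inequality}. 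Two further inaccuracies in your write-up: $\dcomp\leq d$ is not literally true (only up to the multiplicative $\tfrac{K}{\mathbf{C}}$ and additive $K\mathbf{C}$ errors of Proposition \ref{prop:complexity-length-inequality}), and annular witnesses carry weight $1$ or $2$ in the definition of $\mathfrak{L}$, so the auxiliary threshold $\hNP>\tfrac12$ you invoke is not the relevant one for the rate bound you want; the constant $h$ must dominate these annular weights as well.
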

\begin{remark}
  We give a brief outline of why the above result should hold.
  Complexity length can be thought of as a weighted version of Teichmüller length, where a specific segment is assigned a weight based on whether it's traveling in the thick part of Teichmüller space, or the thin part.
  In the latter case, it is assigned a smaller weight that is proportional to the net point entropy of the product region it is traveling in.
  The Teichmüller geodesics associated to bad points spend a significant fraction traveling in the thin part, by the very definition of bad points.
  It stands to reason then that the complexity length assigned to them is smaller by a definite amount, due to the time they spend in the thin part.
\end{remark}

Combining Theorems \ref{thm:counting-with-complexity-rescaled} and \ref{thm:linear-gap-rescaled}, it follows that as $R$ goes to $\infty$, the proportion of bad points goes to $0$, which is what we need for Theorem \ref{thm:entropy-equality}.

\subsection{Entropy Gap}
\label{sec:entr-gap-cons}

In the previous section, we saw that the key hypothesis we need for the complexity length estimate was that for any proper subsurface $V$ of $\os$, the following strict inequality held.
\begin{align}
  \label{eq:net-point-entropy-gap}
  \hNP(\core(\teich(V)), \vept) < \hNP(\core(\teich(\os)), \vept)
\end{align}

In this section, we will prove that \eqref{eq:net-point-entropy-gap} holds by proving a similar inequality for the lattice point entropy, and using the fact that Theorem \ref{thm:entropy-equality} holds for all proper subsurfaces $V$, by the inductive hypothesis.

\begin{lemma}[Lattice point entropy gap]
  \label{lem:entropy-inequality}
  Let $\os$ be a surface, and $\chi(\os) \leq -2$. If $V$ is a proper subsurface and Theorem \ref{thm:entropy-equality} holds for $V$, then we have the following strict inequality between their lattice point entropy.
  \begin{align*}
    \hLP(\teich(V), \vept) < \hLP(\teich(\os), \vept)
  \end{align*}
\end{lemma}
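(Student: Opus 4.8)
The plan is to compare orbit-counting for the subsurface group with orbit-counting for the full surface group via a coarsely Lipschitz "extension" map, and then upgrade the resulting non-strict inequality $\hLP(\teich(V),\vept)\le\hLP(\teich(\os),\vept)$ to a strict one by a doubling argument that exploits a pseudo-Anosov on the complementary subsurface. First I would set up the comparison: pick a basepoint $p\in\thick(\os)$ and a corresponding basepoint $p_V\in\thick(V)$, and recall that the subgroup $\mcg(V)<\mcg(\os)$ (the mapping classes supported on $V$, extending by the identity) acts on $\teich(\os)$ preserving the product region $\teich(V)\times\teich(\os\setminus V)\times(\text{annular factors})$. Using Minsky's product region theorem (Theorem \ref{thm:prno}), for an element $\gamma\in\mcg(V)$ one has $d_{\teich(\os)}(p,\gamma p)\emul d_{\teich(V)}(p_V,\gamma p_V)$ up to uniform additive error, since $\gamma$ acts trivially on the other factors. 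Hence $N^V_{p_V}(R)$ (counting $\mcg(V)$-orbit points in a $d_\vept$-ball of radius $R$ in $\teich(V)$) is comparable, up to a bounded shift in $R$, to the number of $\mcg(V)$-orbit points within distance $R$ of $p$ in $\teich(\os)$, which is at most $N_p(R+O(1))$. This yields $\hLP(\teich(V),\vept)\le\hLP(\teich(\os),\vept)$.

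For strictness I would argue as follows. Since $\chi(\os)\le -2$ and $V$ is a proper subsurface, the complement $\os\setminus V$ contains an essential subsurface supporting an infinite-order mapping class; in particular there is a pseudo-Anosov (or a partial pseudo-Anosov on $\os\setminus V$) element $\psi$ that commutes with all of $\mcg(V)$. Choose such a $\psi$ and a large power so that $d_\vept(p,\psi p)=:L$ is as large as we like. Then the set $\{\, \psi^k \gamma : k\in\{0,1,\dots,m\},\ \gamma\in\mcg(V),\ d_\vept(p_V,\gamma p_V)\le R\,\}$ consists of distinct mapping classes (distinct because $\psi$ and $\mcg(V)$ generate a direct product up to finite index, by the product-region structure), and by the product region theorem each such element moves $p$ a distance $\emul \max(R,\ kL)$. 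Taking $m\approx R/L$, this produces roughly $(R/L)\cdot N^V_{p_V}(R)$ orbit points within distance $\emul R$ of $p$, which only changes the exponential rate by a subexponential factor — not enough on its own. To get a genuine entropy gap I would instead interleave: consider words of the form $\gamma_1 \psi^{a_1}\gamma_2\psi^{a_2}\cdots$, i.e. use the free product / ping-pong structure of $\langle \mcg(V), \psi\rangle$ (or rather the fact that $\mcg(V)\times\langle\psi\rangle$ embeds), to build orbit points at distance $R$ whose count grows like $\exp\big((\hLP(\teich(V),\vept)+\delta)R\big)$ for some $\delta>0$ coming from the extra exponential factor of choices of the $\psi$-blocks. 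The cleanest incarnation: because $\langle\psi\rangle$ contributes at least linear growth $\gtrsim R$ lattice points in its own right (indeed $\psi$ is strongly contracting, so $\langle\psi\rangle$ alone gives $\ge cR$ points at distance $\le R$), and it commutes with $\mcg(V)$ acting on an orthogonal factor, a standard product-of-groups growth estimate gives $N_p(R)\gtrsim \sum_{R_1+R_2\le R} N^V_{p_V}(R_1)\cdot R_2$, and combined with the induction hypothesis (Theorem \ref{thm:entropy-equality} for $V$, so $\hLP(\teich(V),\vept)=\hNP(\core(\teich(V)),\vept)$, giving genuine exponential — not polynomial — growth for $N^V$) this forces $\hLP(\teich(\os),\vept)\ge \hLP(\teich(V),\vept)$ strictly, since a polynomial boost of an exponentially growing summand does shift the growth rate once we also allow the length budget to be split (the point is that $N^V(R_1)$ already grows exponentially, so $\sum_{R_1\le R} N^V(R_1)(R-R_1)$ is dominated by $R_1$ near $R$, but prepending independent $\psi$-blocks of total length up to $R$ while keeping $\gamma$-length up to $R$ — which the direct product allows without the two budgets competing in the $d_\vept$ metric, as the metric is a sup — genuinely doubles the available length).

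The main obstacle I expect is making this last point rigorous: one must verify that in the product-region metric one can \emph{independently} spend length $R$ in the $V$-factor \emph{and} length $R$ in the complementary factor, while the resulting point still lies at $d_\vept$-distance $\lesssim R$ from $p$ — which is false if $d_\vept$ were additive but true because Minsky's theorem makes it a sup-metric (so $\max(R,R)=R$). The subtlety is that this apparent "free doubling" must be reconciled with the fact that $N_p(R)$ has a well-defined finite exponential growth rate; the resolution is that spending length in two orthogonal factors simultaneously does \emph{not} increase the count beyond $N^V(R)\cdot N^{\os\setminus V}(R)$, and the claim reduces to showing $\hLP(\teich(\os),\vept)\ge \hLP(\teich(V),\vept)+\hLP(\teich(\os\setminus V),\vept)$ together with the fact that $\hLP(\teich(\os\setminus V),\vept)>0$ whenever $\os\setminus V$ carries an infinite-order mapping class (which holds since $\chi(\os)\le -2$ makes at least one of the complementary pieces non-exceptional, or else $\os\setminus V$ is a one-holed Möbius/annulus-type piece whose handling I would do separately, as those degenerate cases may need the explicit base-case computations of Lemma \ref{lem:entropy-equality-base-case} or a direct check that they still contribute positive entropy via a Dehn-twist-type element). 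Handling these low-complexity complementary pieces carefully — ensuring $\hLP(\teich(\os\setminus V),\vept)>0$ in every case, including when $\os\setminus V$ contains only one-sided curves — is where the real care is needed.
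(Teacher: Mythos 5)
Your first step (the non-strict inequality via the product region) is fine, but the strictness argument has a genuine gap. First, the ``interleaving'' idea is vacuous as stated: you chose $\psi$ to be supported on $\os\setminus V$ precisely so that it commutes with $\mcg(V)$, so words $\gamma_1\psi^{a_1}\gamma_2\psi^{a_2}\cdots$ collapse to $(\gamma_1\gamma_2\cdots)\psi^{\sum a_i}$; the group $\langle\mcg(V),\psi\rangle$ is (virtually) a direct product, there is no ping-pong, and, as you concede, a $\mathbb{Z}$ direct factor boosts the orbit count only polynomially, which cannot move the exponential growth rate. Second, your fallback reduction --- $\hLP(\teich(\os),\vept)\ge\hLP(\teich(V),\vept)+\hLP(\teich(\os\setminus V),\vept)$ together with $\hLP(\teich(\os\setminus V),\vept)>0$ --- fails in exactly the cases you flag: the complement of a proper subsurface can be a union of pairs of pants, annuli and M\"obius bands, for which the right-hand entropy is $0$. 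The ``Dehn-twist contribution'' you hope to use instead lives in the annular ($\mathbb{H}$) factors of the product region, not in $\teich(\os\setminus V)$, and turning it into a strict gap requires placing the basepoint in the thin part, checking that the sup-metric estimate survives the passage from the Teichm\"uller metric to $d_{\vept}$ (a multiplicative $(1+\vepd)$ loss that must be weighed against the additive gain), and verifying that boundary twists have infinite order and produce genuinely new orbit points (delicate for curves bounding M\"obius bands) --- none of which is carried out. Finally, you never use the hypothesis that Theorem \ref{thm:entropy-equality} holds for $V$ in the way it is actually needed; invoking it only to say $N^V$ grows exponentially does not feed into your argument.

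The paper's proof takes a different route that avoids these issues: it produces a pseudo-Anosov $\kappa$ on the full surface $\os$ via Penner's construction (this is where $\chi(\os)\le-2$ enters), uses Proposition 6.6 of \cite{10.1093/imrn/rny001} to find $n$ so that $H=\langle\kappa^n\rangle\ast\mcg(V)$ is a genuine free product inside $\mcg(\os)$, and obtains strictness from divergence type: the induction hypothesis gives that the $\mcg(V)$-action is SCC, hence (Corollary 5.4 of \cite{10.1093/imrn/rny001}) its Poincar\'e series diverges at the critical exponent, so at exponent $\hLP(\teich(V),\vept)+\varepsilon$ the $\mcg(V)$-series still exceeds $1$; plugging this into the free-product lower bound for the Poincar\'e series of $H$ shows the $H$-series diverges at $\hLP(\teich(V),\vept)+\varepsilon$, whence $\hLP(\teich(\os),\vept)\ge\hLP(H,\vept)>\hLP(\teich(V),\vept)$. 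This divergence mechanism is precisely what supplies the strict inequality that your direct-product/twist heuristic leaves unproven.
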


\begin{remark}
  We do actually need the hypothesis $\chi(\os) \leq -2$ in the statement of the lemma for two reasons.
  The first reason is that the lemma is actually false for $\os_{1,0,1}$.
  Recall that this surface has the torus with one boundary component as a subsurface, but their mapping class groups are isomorphic, and have the same lattice point growth entropy.
  Another reason why we need the hypothesis is that the proof of the lemma proceeds via a construction of pseudo-Anosov elements on $\os$, and $\os_{1,0,1}$ does not admit any pseudo-Anosov mapping classes.
\end{remark}

\begin{proof}[Proof of Lemma \ref{lem:entropy-inequality}]
  Observe that $\mcg(V)$ is a subgroup of $\mcg(\os)$.
  We will first construct an intermediate subgroup $H = \mathbb{Z} * \mcg(V)$, which is the free product of a pseudo-Anosov element in $\mcg(\os)$ with $\mcg(V)$, and show that $\hLP(H, \vept) > \hLP(\mcg(V), \vept)$.
  This is enough to prove the result, since $H$ is a subgroup of $\mcg(\os)$, we have that $\hLP(\mcg(\os), \vept) \geq \hLP(H, \vept)$.

  We now need to show that $\mcg(\os)$ contains a pseudo-Anosov element.
  We can invoke Penner's construction of pseudo-Anosov mapping classes (\cite[Theorem 4.1]{penner1988construction}), as long as we can construct a filling collection of \emph{two-sided} curves in $\os$.
  This may not be always possible for $\os$ where $\chi(\os) = -1$, but for $\os$ with $\chi(\os) \leq -2$, this is always possible (see \cite{Liechti2018MinimalPS} and \cite{khan2023pseudo} for explicit constructions).
  Let $\kappa$ denote the pseudo-Anosov mapping class we construct.

  By \cite[Proposition 6.6]{10.1093/imrn/rny001}, there exists a large enough $n$ such that $\kappa^n$ and $\mcg(V)$ generate their free product in $\mcg(S)$: call this subgroup $H$.


  We now need to show that the lattice point entropy for $H$ is strictly larger than the $\mcg(V)$.
  To see this, we recall an equivalent definition of the lattice point entropy.
  The lattice point entropy is the infimum of the set of exponents $h$ such that the following Poincaré series transitions converges for any $x \in \teich(\os)$.
  \begin{align}
    \label{eq:dirichlet}
    \sum_{\gamma \in H} \exp\left( -h \cdot d_{\vept}(x, \gamma x) \right)
  \end{align}
  Since $H = \mathbb{Z} * \mcg(V)$, we can represent $\gamma \in H$ as $a_1 \cdot b_1 \cdot a_2 \cdots a_k \cdot b_k$, where $a_i$ belong in $\mathbb{Z}$ and $b_i$ belong in $\mcg(V)$.
  We use this along with the triangle inequality to get an upper bound for $d(x, \gamma x)$.
  \begin{align}
    \label{eq:triangle-inequality-1}
    d_{\vept}(x, \gamma x) \leq \sum_{i=1}^k d_{\vept}(x, a_i x) + d_{\vept}(x, b_i x)
  \end{align}
  We plug inequality \eqref{eq:triangle-inequality-1} into \eqref{eq:dirichlet} to get a lower bound.
  \begin{align}
    \label{eq:dirichlet-lower-bound}
    \sum_{\gamma \in H} \exp\left( -h \cdot d_{\vept}(x, \gamma x) \right) &= \sum_{k=1}^\infty \left(  \sum_{a_1}\cdots \sum_{a_k} \sum_{b_1}\cdots \sum_{b_k}  \exp(-h \cdot d_{\vept}(x, a_1 \cdot b_1 \cdots a_k \cdot b_k x)) \right) \\
    &\geq \sum_{k=1}^{\infty} \left( \sum_{a \in \mathbb{Z}} \exp(-h \cdot d_{\vept}(x, ax)) \right)^k \left( \sum_{b \in \mcg(V)} \exp(-h \cdot d_{\vept}(x, bx)) \right)^k
    \label{eq:dirichlet-lower-bound-2}
  \end{align}

  We have that Theorem \ref{thm:entropy-equality} holds for $V$, which means that $\core(\teich(V))$ is SCC.
  Corollary 5.4 of \cite{10.1093/imrn/rny001} states that group actions that are SCC have Poincaré series that diverge at the critical exponent.
  This means there's small enough $\varepsilon > 0$ such that for $h = \hLP(\teich(V), \vept) + \varepsilon$, the series converges to a value greater than $1$.
  But that means the Poincaré series for $H$ diverges at $\hLP(\teich(V), \vept) + \varepsilon$, since we have a lower bound by a geometric series whose ratio is greater than $1$.
  This proves that the critical exponent for $H$ is strictly greater than the critical exponent for $\mcg(V)$.
\end{proof}

We can now prove the entropy gap result for $\hNP$.

\begin{lemma}[Net point entropy gap]
  \label{lem:net-point-entropy-inequality}
  Let $\os$ be a surface, and $\chi(\os) \leq -2$. If $V$ is a proper subsurface and Theorem \ref{thm:entropy-equality} holds for $V$, then we have the following strict inequality between their net point entropy.
  \begin{align*}
    \hNP(\core(\teich(V)), \vept) < \hNP(\core(\teich(\os)), \vept)
  \end{align*}
\end{lemma}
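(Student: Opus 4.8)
The plan is to deduce this from the lattice point entropy gap (Lemma \ref{lem:entropy-inequality}) together with the inductive hypothesis, bridging between lattice point entropy and net point entropy at the two ends of the chain of inequalities.

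First I would use that Theorem \ref{thm:entropy-equality} holds for the proper subsurface $V$ by hypothesis, which supplies the equality $\hNP(\core(\teich(V)), \vept) = \hLP(\teich(V), \vept)$ on the left-hand side. Since $\chi(\os) \leq -2$ and entropy equality holds for $V$, the hypotheses of Lemma \ref{lem:entropy-inequality} are met, and that lemma yields the strict inequality $\hLP(\teich(V), \vept) < \hLP(\teich(\os), \vept)$.

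It then remains to bound $\hLP(\teich(\os), \vept)$ from above by $\hNP(\core(\teich(\os)), \vept)$. For this I would fix a basepoint $p \in \thick(\os)$, so that the entire orbit $\mcg(\os) \cdot p$ sits inside $\thick(\os) \subseteq \core(\teich(\os))$. Since $\net$ is an $(\vepn, 2\vepn)$-net, each orbit point $\gamma p$ lies within $d_{\vept}$-distance $2\vepn$ of some net point; conversely, by proper discontinuity of the action the orbit $\mcg(\os)\cdot p$ is uniformly separated, so by the packing bound of Lemma \ref{lem:packing-argument} (transported to the metric $d_{\vept}$ via its bi-Lipschitz equivalence with the Teichmüller metric) only a bounded number $D$ of orbit points can cluster within $2\vepn$ of any single net point. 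This gives $K_p(R + 2\vepn, \vept) \geq D^{-1} N_p(R, \vept)$ for all $R$, hence $\hNP(\core(\teich(\os)), \vept) \geq \hLP(\teich(\os), \vept)$ after passing to exponential growth rates. Concatenating the three relations,
\begin{align*}
  \hNP(\core(\teich(V)), \vept) = \hLP(\teich(V), \vept) < \hLP(\teich(\os), \vept) \leq \hNP(\core(\teich(\os)), \vept),
\end{align*}
which is exactly the asserted strict inequality. I expect the only point requiring any care is the uniform-separation/packing step that produces $\hNP \geq \hLP$ for $\os$; everything else is simply assembling Lemma \ref{lem:entropy-inequality} with the inductive hypothesis.
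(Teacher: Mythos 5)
Your proposal is correct and follows essentially the same route as the paper: chain the entropy equality for $V$ (the inductive hypothesis), the lattice point entropy gap of Lemma \ref{lem:entropy-inequality}, and the inequality $\hLP(\teich(\os), \vept) \leq \hNP(\core(\teich(\os)), \vept)$. The only difference is that you justify this last inequality with an explicit net-versus-orbit packing argument, whereas the paper simply records it as following from the definitions of $\hLP$ and $\hNP$; your elaboration is sound and harmless.
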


\begin{proof}
  We have the following inequality, which follows trivially from the definition of $\hLP$ and $\hNP$.
  \begin{align}
    \label{eq:ineq1}
    \hLP(\teich(\os), \vept) \leq \hNP(\core(\teich(\os)), \vept)
  \end{align}
  From Lemma \ref{lem:entropy-inequality}, we get the following inequality.
  \begin{align}
    \label{eq:ineq2}
    \hLP(\teich(V), \vept) < \hLP(\teich(\os), \vept)
  \end{align}
  Finally, since we have Theorem \ref{thm:entropy-equality} for $V$, we have the following equality.
  \begin{align}
    \label{eq:eq3}
    \hLP(\teich(V), \vept) = \hNP(\core(\teich(V)), \vept)
  \end{align}
  Chaining together \eqref{eq:ineq1}, \eqref{eq:ineq2}, and \eqref{eq:eq3} gives us the result.
\end{proof}

\subsection{Proof of Theorem \ref{thm:entropy-equality}}
\label{sec:proof-theorem}

We now have all the lemmas we need in order to prove Theorem \ref{thm:entropy-equality}.

\begin{proof}[Proof of Theorem \ref{thm:entropy-equality}]
  We will prove this lemma by inducting on the complexity of the surface $\os$.
  Lemma \ref{lem:entropy-equality-base-case} proves the result for surfaces with Euler characteristic equal to $-1$, which serves as the base case of the theorem.

  We now assume that Theorem \ref{thm:entropy-equality} already holds for all proper subsurfaces $V$ of $\os$: it will suffice to show that the result holds for $\os$.

  We will establish that for any $\vepb > 0$, there exists a polynomial $q(R)$, and $R$ large enough, such that the following bound holds.
  \begin{align*}
    \#\left( B_R(p) \cap \net \right) \leq q(R) \cdot \exp(\hLP(\teich(\os), \vept) \cdot R \cdot (1 + 2 \vepb))
  \end{align*}

  We first count the good points in $B_R(p)$, by partitioning them according to the nearest lattice point.
  \begin{align}
    \label{eq:good-point-partition}
    \net_g(p, R, \vepb) = \bigsqcup_{\gamma \in \mcg(\os)} \net_g(\gamma, p, R, \vepb)
  \end{align}
  Observe that if $y \in \net_g(\gamma, p, R, \vepb)$, then $d(p, \gamma p) \leq R(1 + \vepb)$, since $d(p, y) \leq R$ and $d(y, \gamma p) \leq \vepb R$.
  This observation leads to the following upper bound on $\#\left( \net_g(p, R, \vepb) \right)$.
  \begin{align}
    \#\left( \net_g(p, R, \vepb) \right) \leq \sum_{\substack{\gamma \in \mcg(\os) \\ d(p, \gamma p) \leq R(1 + \vepb)}} \#\left( B_{\vepb R}(\gamma p) \cap \net_g(\gamma, p, R, \vepb)  \right) \label{eq:good-estimate-1}
  \end{align}
  By Lemma \ref{lem:fd-polynomial-growth}, there exists a polynomial $q(R)$ such that each term in the above sum is at most $q(R)$.
  \begin{align}
    \#\left( \net_g(p, R, \vepb) \right) &\leq \sum_{\substack{\gamma \in \mcg(\os) \\ d(p, \gamma p) \leq R(1 + \vepb)}} q(R) \\
     &\leq q(R) \cdot \exp(\hLP(\teich(\os), \vept) \cdot R \cdot (1 + 2 \vepb))
    \label{eq:good-estimate-2}
  \end{align}
  Here, we estimated the cardinality of $\gamma$ such that $d(p, \gamma p) \leq R(1+\vepb)$ as at most $\exp(\hLP(\teich(\os), \vept) \cdot R \cdot (1 + 2 \vepb))$, for large enough $R$.
  We have the desired upper bound on the cardinality for the good points.
  Now we show that the number of bad points is much smaller than the total number of points in the ball, which will then prove the result.

  From the inductive hypothesis, we have that Theorem \ref{thm:entropy-equality} holds for all proper subsurfaces $V$.
  By Lemma \ref{lem:net-point-entropy-inequality}, we have that $\hNP(\core(\teich(V)), \vept) < \hNP(\core(\teich(\os)), \vept)$: this is precisely the hypothesis we need to apply Theorem \ref{thm:linear-gap-rescaled}.
  Applying the theorem, we see that if $y$ is a bad point, $\dcomp(p, y) \leq R(1 - c)$.
  We then apply Theorem \ref{thm:counting-with-complexity-rescaled} to get an upper bound on the number of bad points.
  \begin{align*}
    \#\left( \net_b(p, R, \vepb) \right) \leq kR^k \cdot \exp((\hNP(\core(\teich(\os)), \vept) + \veperr) \cdot R \cdot (1 - c))
  \end{align*}
  We pick $\veperr$ small enough such that the above term satisfies the following inequality for large enough $R$.
  \begin{align*}
    \exp((\hNP(\core(\teich(\os)), \vept) + \veperr) \cdot R \cdot (1 - c)) < \exp\left(\hNP(\core(\teich(\os)), \vept) \cdot R \cdot \left(1 - \frac{2c}{3}\right)\right)
  \end{align*}
  On the other hand, we have that for large enough $R$, the total number of net points is at least $\exp\left(\hNP(\core(\teich(\os)),\vept) \cdot R \cdot \left(1 - \frac{c}{2}\right)\right)$.
  Combining these two facts, we see that the proportion of bad points goes to $0$ as $R$ goes to $\infty$, which proves the result.
\end{proof}
\section{Linear Gap in Complexity Length}
\label{sec:line-gap-compl}

\subsection{An Example of Counting in Product Regions}
\label{sec:an-example-counting}

Before we define complexity length, we will look at an example that illustrates why we need complexity length.
Theorem 1.3 of \textcite{10.1215/00127094-1548443} proves an estimate on the volume of balls in Teichmüller space.
From this volume estimate, we can obtain an estimate on the cardinality of net points of an $(\vepn, 2 \vepn)$-net $\net$.
\begin{theorem}[Theorem 1.3 of \cite{10.1215/00127094-1548443}]
  \label{thm:abem}
  For a point $p$ in $\teich(S)$ (where $S$ is a genus $g$ surface with $b$ boundary components), the number of net points in a ball of radius $R$ centered at the origin satisfies the following asymptotic as $R$ goes to $\infty$.
  \begin{align*}
    \#\left( \net \cap B_R(p) \right) \emul \exp((6g-6 + 2b)R)
  \end{align*}
  Here, the multiplicative and additive constants showing up in $\emul$ only depend on $p$ and $\vepn$.
\end{theorem}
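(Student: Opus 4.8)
This statement is, up to the translation between a volume bound and a net-point count, exactly Theorem 1.3 of \textcite{10.1215/00127094-1548443}, so the plan is to recall the two ingredients of their argument and assemble them in the form stated. The first ingredient is a dictionary between counting net points and measuring volume; the second is the estimate $\mathrm{Vol}(B_R(p)) \emul \exp((6g-6+2b)R)$ for the natural (Masur--Veech, i.e.\ Lebesgue-in-period-coordinates) volume on $\teich(S)$. Note that \textcite{10.1215/00127094-1548443} actually prove a genuine asymptotic $\mathrm{Vol}(B_R(p)) \sim c(p)\exp(hR)$, but we only need the cruder $\emul$ form, which is what we extract.

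\textbf{From net points to volume.} Since $\net$ is an $(\vepn,2\vepn)$-net, the balls $B_{\vepn/2}(y)$ with $y \in \net \cap B_R(p)$ are pairwise disjoint and contained in $B_{R+\vepn}(p)$, while the balls $B_{2\vepn}(y)$ with $y\in\net\cap B_R(p)$ cover $B_R(p)$. Combined with uniform two-sided bounds $c_1(\vepn) \leq \mathrm{Vol}(B_{\vepn}(x)) \leq c_2(\vepn)$ that hold independently of the center $x$ --- which in turn follow from the fact that $\teich(S)$ has only finitely many combinatorial types of thin region together with Minsky's product region theorem (\cite[Theorem 6.1]{1077244446}, or Theorem \ref{thm:prno}) pinning down the local geometry inside each --- this sandwich gives $\#(\net\cap B_R(p)) \emul \mathrm{Vol}(B_R(p))$, with constants depending only on $p$ and $\vepn$. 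This is exactly the orientable analogue of Proposition \ref{prop:uniform-volume-bound} combined with the packing estimate of Lemma \ref{lem:packing-argument}.

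\textbf{Volume of a Teichm\"uller ball.} Parametrize points $q \neq p$ of $\teich(S)$ in polar coordinates around $p$: a point $q$ determines the initial direction $v(q)$ of the Teichm\"uller geodesic $[p,q]$ in the unit cotangent sphere at $p$, together with the radius $\rho(q) = d_T(p,q)$. In these coordinates the volume disintegrates as $d\mathrm{Vol} = J(v,\rho)\,d\rho\,dm_p(v)$ for a finite ``visual'' measure $m_p$ on the unit cotangent sphere, and the heart of \textcite{10.1215/00127094-1548443} is the uniform estimate $J(v,\rho) \emul \exp(h\rho)$ with $h = 6g-6+2b = \dim_{\mathbb{R}}\teich(S)$; integrating first in $\rho$ over $[0,R]$ and then in $v$ yields $\mathrm{Vol}(B_R(p)) \emul \exp(hR)$. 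The exponent $h$ is precisely the exponential expansion rate of the Teichm\"uller geodesic flow, equivalently the volume entropy of $(\teich(S),d_T)$.

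\textbf{Main obstacle.} The delicate point is the uniform Jacobian bound $J(v,\rho)\emul\exp(h\rho)$ in those directions $v$ along which the geodesic $[p,q]$ spends long stretches in the thin part, where the naive product-region picture degenerates and one could a priori lose exponential expansion. \textcite{10.1215/00127094-1548443} control this via a quantitative non-divergence and recurrence estimate for Teichm\"uller geodesics (in the spirit of Minsky's product regions and Masur's criterion for unique ergodicity \cite{masur1982interval}): a segment that enters the thin part must leave it again, and the expansion deficit it incurs there is uniformly bounded, so the total expansion over a length-$\rho$ segment is $\exp(h\rho)$ up to a bounded multiplicative factor. A secondary bookkeeping point --- the dependence of the constants on the basepoint $p$ --- is harmless: moving $p$ a bounded distance changes both sides by at most a bounded multiplicative factor, so one may assume $p$ lies in the thick part, where the local volume estimates are uniform.
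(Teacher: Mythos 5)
First, a point of order: the paper does not prove this statement at all --- it is imported verbatim as Theorem 1.3 of \textcite{10.1215/00127094-1548443}, which is precisely why the paper must later work around the fact that it is only available for orientable surfaces. So your proposal has to be judged as a reconstruction of the ABEM argument, and as such it contains a genuine gap in its first step. You claim a two-sided dictionary $\#(\net \cap B_R(p)) \emul \mathrm{Vol}(B_R(p))$ from uniform bounds $c_1(\vepn) \leq \mathrm{Vol}(B_{\vepn}(x)) \leq c_2(\vepn)$ independent of the center $x$. The uniform lower bound is false for $\teich(S)$ with the volume for which ABEM prove $\mathrm{Vol}(B_R(p)) \emul \exp(hR)$: the thin part of moduli space has finite total volume but infinite Teichm\"uller diameter, so it contains infinitely many disjoint $\vepn$-balls and hence $\inf_x \mathrm{Vol}(B_{\vepn}(x)) = 0$ (concretely, in a Minsky product region where a curve has hyperbolic length $\ell$, a ball of fixed radius centered there has volume comparable to a positive power of $\ell$). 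Consequently the packing half of your sandwich --- the half that would give the \emph{upper} bound on the number of net points --- does not follow; only the covering half (the lower bound on net points) survives. The appeal to ``the orientable analogue of Proposition \ref{prop:uniform-volume-bound}'' does not rescue this: that proposition is special to the Norbury form $\nu_N$ on $\systole(\no_g)$, where $d\ell \wedge d\tau$ is exactly the hyperbolic area form in Minsky's $\mathbb{H}$-coordinates and the one-sided lengths are bounded below on the core; there is no such uniform lower bound for the ABEM volume on all of $\teich(S)$.

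The missing upper bound is in fact the nontrivial content of the theorem: as the paper's own discussion in Section \ref{sec:an-example-counting} shows, net points in a thin product region are not governed by its volume but by an inductive product-region count (yielding $\exp((h-1)R)$ there), and ABEM control the thin-part contribution by arguments of exactly this kind rather than by a volume comparison. Relatedly, your second step misstates their proof: the ``uniform Jacobian'' bound $J(v,\rho) \emul \exp(h\rho)$ uniformly in the direction $v$ is not true (directions heading into, and lingering in, the thin part expand strictly more slowly), and ABEM instead obtain the volume asymptotics via mixing of the Teichm\"uller geodesic flow and the Hubbard--Masur function, with separate estimates for thin configurations. So as written your proposal neither reproduces the cited proof nor supplies an independent one; to repair it you would need to replace the volume sandwich, for the upper bound, by a thin-part counting argument of the product-region/induction type.
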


Suppose now that we want to use the above theorem to count net points in a product region.
More concretely, let $p$ be a point in $\teich(S)$ such that a non-separating curve $\gamma$ is very short: $\ell_{\gamma}(p) \leq \delta \cdot \exp(-R_0)$, for some $\delta > 0$, and some large $R_0$, and we want to estimate the cardinality of $\net \cap B_R(p)$ for $R < R_0$.
Note that since $R < R_0$, the ball $B_R(p)$ is still contained in the product region of Teichmüller space where $\ell_\gamma \leq \delta$.

Since the entire ball $B_R(p)$ is in a product region, we have by Minsky's product region theorem (see Theorem \ref{thm:prno} for a precise statement) that the ball decomposes (up to an additive error) as the
product of a ball in $\teich(S \setminus \gamma)$ and ball in $\mathbb{H}$ (which corresponds to the length and twist around $\gamma$).
This gives us an alternative estimate for $\#\left( \net \cap B_R(p)\right)$.
\begin{align}
  \label{eq:prod-decomp}
  \#\left( \net \cap B_R(p) \right) &\leq C \cdot \left( \net_1 \cap B_R(p, S \setminus \gamma) \right) \cdot \left( \net_2 \cap B_R(p, \mathbb{H})\right)
\end{align}
Here $\net_1$ and $\net_2$ are $(\vepn, 2\vepn)$ nets for $\teich(S \setminus \gamma)$ and $\mathbb{H}$, and $B_R(p, S \setminus \gamma)$ and $B_R(p, \mathbb{H})$ are projections on the ball $B_R(p)$ to the two components.
Applying Theorem \ref{thm:abem} to the right hand side of \eqref{eq:prod-decomp}, we get a better estimate than we would have gotten with a direct application of Theorem \ref{thm:abem}.
\begin{align*}
  \#\left( \net \cap B_R(p) \right) &\leq C \cdot \left( \net_1 \cap B_R(p, S \setminus \gamma) \right) \cdot \left( \net_2 \cap B_R(p, \mathbb{H})\right) \\
                                           &\emul \exp((6(g-1)-6 + 2(b+2))R) \cdot \exp(R) \\
  &= \exp((6g-6+2b-1)R)
\end{align*}

This example illustrates that in order to count net points accurately, it's not sufficient to just estimate the distance the between the base point $p$ and the net point $n$: if the geodesic segment $[p, n]$ travels in a product region, the count will be lower than what Theorem \ref{thm:abem} predicts.
In fact, the count will also depend on the type of the product region.
In the above example, the product region had just one curve $\gamma$ becoming short, but in general, a product region can have multiple curves getting short, in which case, the net point count will be even smaller.

We now consider a geodesic $[x,y]$ (where $x$ and $y$ are net points) that travels through several product regions $\pi_i$, and possibly the thick part, which we will also consider a product region, albeit a trivial one.
Let $h_i$ be the exponent associated to the product region $\pi_i$: this is the exponent that will appear when we invoke Theorem \ref{thm:abem} to count net points in the product region $\pi_i$.
The order in which $[x, y]$ travels through the product region is specified in \autoref{fig:schematic}.
\begin{figure}[h]
  \centering
    \def\svgscale{0.8}
    \import{./images/}{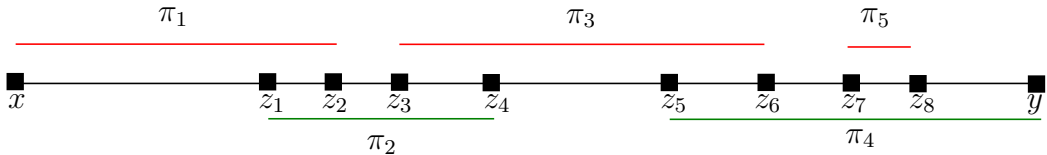}

  \caption{A schematic of the geodesic $[x,y]$ traveling through several product regions.}
  \label{fig:schematic}
\end{figure}

Let $z_i$ denote the points on the geodesic segment that correspond to the times when the geodesic enters or exits a product region: in \autoref{fig:schematic}, we have labeled $z_i$ for $1 \leq i \leq 8$.
Let $\mathcal{J}_i$ denote the interval $[x, z_1]$ for $i=0$, the interval $[z_i, z_{i+1}]$ for $1 \leq i \leq 7$, and $[z_8, y]$ for $i=8$.
Let $\ell_i$ be the length of $\mathcal{J}_i$, and $e_i$ be the sum of the $h_i$ for each of the product regions that the interval $\mathcal{J}_i$ is in.
Also, for each $z_i$, let $z_i^{\prime}$ denote the nearest net point.

Keeping $x$ fixed, we can try to count the number of net points $y$ that satisfy the configuration we have described.
We have about $O(\exp(e_0 \ell_0))$ possibilities for $z_1^{\prime}$, and then keeping a $z_1^{\prime}$ fixed, $O(\exp(e_i \ell_1))$ possibilities for $z_2^{\prime}$ and so on.
Multiplying all these estimates, we have the following upper bound for cardinality of $y$.
\begin{align*}
  \#\left( y \right) \leq \exp\left( \sum_{i=0}^8 e_i \ell_i \right)
\end{align*}

The quantity $\sum e_i \ell_i$ serves as a re-weighted version of length of the geodesic in manner that works well with the counting function.
This is a primitive version of the \emph{complexity length} of $[x,y]$, and motivates the actual definition.

Before we define complexity length, we note two ways in which the above estimate overestimates the actual number of net points: that happens when a point in the geodesic is simultaneously in two or more product regions, which can happen in two ways.
\begin{enumerate}[(i)]
\item The active subsurfaces associated to product regions are disjoint: In this case, we are accounting the length of the geodesic segment multiple times: once for each product region we are in. However, this overcounting is still better than directly invoking Theorem \ref{thm:abem}, since the sums of the exponents $h_i$ associated to each of the disjoint product regions are smaller than the exponent associated to the entire surface.
\item The active subsurfaces associated toproduct regions are nested: In this case as well, we are accounting for the length of a geodesic multiple times, once for each product region we are in.
  Unlike in the previous case, in this case, the exponents associated to each product region can add up to a quantity larger than the exponent associated to the entire surface, which means the presence of nested product regions can give a worse estimate than Theorem \ref{thm:abem}.
  We will get around this problem by looking only at product regions associated to special subsurfaces which are called \emph{witnesses}.
\end{enumerate}




\subsection{An Overview of Complexity Length}
\label{sec:an-overv-compl}

Now that we have motivated the need for complexity length, as well as considering special subsurfaces called witnesses, we formally define them in this section.
This section is a summary for Sections 7 through 12 of \textcite{dowdall2023lattice}, so we refer the reader to those sections for details we elide.
One difference in our presentation is that we care about these constructions for both orientable and non-orientable surfaces, while the original authors only work with orientable surfaces.
However, their constructions and proofs go through for non-orientable surfaces, as long as we provide a proof of the non-orientable versions of some of the foundational results they use.
We list those theorems here, and link to the proof of the non-orientable version that appears in Section \ref{sec:geom-of-teich}.
\begin{enumerate}[(i)]
\item Minsky's product region theorem (see Theorem \ref{thm:prno}).
\item Distance formula for Teichmüller space (see Theorem \ref{thm:distance-formula}).
\item Active intervals for subsurfaces (see Proposition \ref{thm:active-intervals}).
\item Consistency and realization (see Theorem \ref{thm:consistency-realization}).
\end{enumerate}

Let $\os$ be a surface (not necessarily orientable), and $\mathbf{C}$ some large arbitrary constant, and $\vept > 0$ a small constant we pick later.
We also pick constants $N_V$, for each $V \sqsubset \os$, such that $N_V$ only depends on the topological type of $V$.
The precise values of the $N_V$'s is specified via Proposition 10.13 of \cite{dowdall2023lattice}.
We will also abuse notation slightly and use $\hNP(V)$ to refer to $\hNP(\core(\teich(V)))$ whenever $V$ is a non-orientable surface: when $V$ is orientable, $\hNP(V)$ will refer to $\hNP(\teich(V))$.

Let $[x,y]$ be a geodesic segment in $\teich(\os)$: we describe the set $\Upsilon(x, y)$ of subsurfaces along which $[x, y]$ has large projections.

\begin{definition}[Active subsurfaces]
  A subsurface $V \sqsubset \os$ is an active subsurface, i.e.\ in $\Upsilon(x,y)$, if one of the following two conditions hold.
  \begin{enumerate}[(i)]
  \item The projection to $\cC(V)$ has diameter at least $N_V$.
  \item If $V$ is annular with core curve $\gamma$, then
    \begin{align*}
      \min\left( \ell_\gamma(x), \ell_{\gamma}(y) \right) < \vept
    \end{align*}
  \end{enumerate}
\end{definition}

Associated to each active subsurface $V$, there is a non-empty connected sub-interval of $[x,y]$, which we call an active interval, and denote $\mathcal{I}_V^{\vept}$, which we obtain via an application of Proposition \ref{thm:active-intervals}.
The active intervals associated to active subsurfaces enjoy the following properties.
\begin{enumerate}[(i)]
\item $\ell_\alpha(z) < \vept$ for $z \in \mathcal{I}_V^{\vept}$ and $\alpha \in \partial V$.
\item For $z \not \in \mathcal{I}_V^{\vept}$, $\ell_{\alpha}(z) > {\vept}^{\prime}$ for some $z \in \partial V$, and some ${\vept}^{\prime} < {\vept}$ that only depends on ${\vept}$.
\item For $[w,z] \subset [x,y]$ with $[w,z] \cap \mathcal{I}_V^{\vept} = \varnothing$, $d_V(w,z) \leq M_{\vept}$ for some $M_{\vept}$ that only depends on ${\vept}$.
\item For $U \pitchfork V$, $\mathcal{I}_U^{\vept} \cap \mathcal{I}_V^{\vept} = \varnothing$.
\end{enumerate}

For pairs of transverse subsurfaces $U \pitchfork V$, since $\mathcal{I}_U^{\vept} \cap \mathcal{I}_V^{\vept} = \varnothing$ we can also determine which of the subsurfaces are active first.
\begin{definition}[Behrstock partial order]
  If $U$ and $V$ are a pair of transverse subsurfaces in $\Upsilon(x,y)$, we say $U \lessdot V$ if $\mathcal{I}_U^{\vept}$ appears to the left of $\mathcal{I}_V^{\vept}$ in $[x,y]$.
\end{definition}

Observe that when restricted to $\mathcal{I}_V^{\vept}$, the geodesic is traveling in a product region, one of whose components is $\teich(V)$, but trying to apply the technique from the previous subsection leads to the problem of overcounting, namely overcounting arising from subsurfaces either nested in $V$, or subsurfaces $V$ is nested in.

To deal with this issue, we will consider a subset of $\Upsilon(x,y)$, called a \emph{witness family}.
However, to avoid overcounting, some additional properties are required of the witness families.
Rather than defining all of those properties without context, we introduce them one at a time, after motivating the need for the property.

\begin{definition}[Witness family]
  A witness family $\Omega(x,y)$ associated to the geodesic $[x,y]$ is a subset of $\Upsilon(x, y)$ satisfying the following properties.
  \begin{enumerate}[(i)]
  \item For any $Z \in \Upsilon(x,y)$, $Z \sqsubset W$ for some $W \in \Omega(x,y)$.
  \item If $Z \sqsubset W$, and $Z \in \Omega(x,y)$ and $W \in \Upsilon(x,y)$, then $W$ must also either be a witness, or must be transverse to a witness $V \in \Omega(x,y)$ such that $Z \sqsubset V$.
  \end{enumerate}
\end{definition}
The first condition of the definition ensures that when we restrict our attention from all active subsurfaces to witnesses, we do not lose information, i.e.\ every active subsurface contributes to whichever witness it is contained in.
The second condition is a more technical requirement that is required to ensure that the other properties we define later work nicely.

We now make the notion of an active subsurface \emph{contributing to a witness} more precise.
\begin{definition}[Complete witness family]
  For an active subsurface $V$, a witness $W$ is said to be the $\Omega$-completion of $V$, denoted $\colsup{V}{\Omega}$ if $W$ is the minimal (by inclusion) witness containing $V$.
  If $W = \colsup{V}{\Omega}$, we say $V$ \emph{contributes} to $W$.
  Furthermore, a witness family is \emph{complete} if every active subsurface has a unique $\Omega$-completion.
\end{definition}

By partitioning off the collection active subsurfaces into classes, where each class is represented by a witness, and only considering the product regions associated to the witnesses, rather than all the active subsurfaces, we can cut down on the overcount we obtain by considering all active subsurfaces.

We now look at an extreme example of a complete witness families to motivate further properties that we will need from the witness families in order to count well.

\begin{example}[Trivial witness family]
  \label{ex:uninsulated}
  Let $\gamma$ be a pseudo-Anosov mapping class on $\os$, such that $\gamma$ has large translation distance on $\cC(\os)$, and $\delta$ a reducible mapping class, acting on a subsurface $V$ such that the action of $\delta$ on $\cC(V)$ has large translation distance as well.

  Let $x$ be a point in $\teich(\os)$, and $y = \delta \gamma \delta^{-1} x$.
  The active subsurfaces for $[x, y]$ contain the surfaces $\os$, $V$, and $\gamma V$: however, we can pick $\Omega(x,y) = \left\{ \os \right\}$, and check that this is a complete witness family.
\end{example}

  In the above example, since we only have one subsurface in our witness family, we certainly do not overcount via overlapping product regions, but we do end up ignoring the fact that the geodesic $[x,y]$ travels in a smaller product region near the beginning of the segment, as well as the end.
  For the initial and the final segment of the geodesic, the witness $\os$ is too big for the subsurface the geodesic is actually traveling in.
  This suggest that a better choice of a witness family would be to include both $V$ and $\gamma V$ as witnesses too.
  We can take this approach further, and include every subsurface in $\Upsilon(x,y)$ as a witness: this will still form a complete witness family.
  However, this approach also leads to multiple witnesses nested within one another, which is something we want to avoid as much as possible.

  The drawback of Example \ref{ex:uninsulated} motivates the next property we will require from witness families, which is the notion of being \emph{insulated}.
  Informally, a witness family $\Omega(x,y)$ is insulated if all the maximal active subsurfaces that are active near the beginning or end of $[x,y]$ are also witnesses.
  \begin{definition}[Insulated witness family]
    \label{def:insulated}
    A witness family $\Omega(x,y)$ is insulated if for every $E \in \Omega(x,y)$,
    all subsurfaces $V \sqsubset E$ satisfying the following properties are also witnesses.
    \begin{enumerate}[(i)]
    \item $V \in \Upsilon(x,y)$.
    \item $d_E(\cC(V), x) \leq 9\mathbf{C}$, or $d_E(\cC(V), y) \leq 9 \mathbf{C}$, where we consider $\cC(V)$ to be a subset of $\cC(E)$.
    \item $V$ is topologically maximal among the subsurfaces that satisfy (i) and (ii).
    \end{enumerate}
  \end{definition}

  Once we have an insulated witness family, we can order a nested pair of witnesses $W \sqsubset V$ based on whether $W$ is active near the beginning or the end of the geodesic $[x,y]$ projected to $\cC(V)$.

  \begin{definition}[Subordering]
    Let $[x,y]$ be a geodesic in $\teich(\os)$ and $\Omega(x,y)$ a complete insulated witness family associated to $[x,y]$.
    Then for each nested pair of witnesses $W \sqsubset V$, a subordering is an assignment of exactly one of the following two possibilities:
    \begin{enumerate}[(i)]
    \item $W \swarrow V$
    \item $V \searrow W$
    \end{enumerate}
    The orderings $\swarrow$ and $\searrow$ satisfy the following properties.
    \begin{enumerate}[(i)]
    \item If $Z$, $V$, and $W$ are witnesses such that $Z \sqsubset V \sqsubset W$, then $Z \swarrow W$ iff $V \swarrow W$ (equivalently, $W \searrow Z$ iff $W \searrow V$).
    \item If $Z$, $V$ and $W$ are witnesses such that $Z \swarrow V \searrow W$, then $Z \pitchfork_V W$ and $Z \lessdot W$.
    \item If $Z$ and $V$ are witnesses, and $W$ an active subsurface such that $Z \swarrow V \lessdot W$, or $W \lessdot V \searrow Z$, then $Z \pitchfork_V W$.
    \item If $Z$ and $V$ are witnesses, such that $Z \swarrow V$ (or $Z \searrow V$), then there does not exist any active subsurface $W$ such that the $\Omega$-closure of $W$ is $V$ and $W \lessdot Z$ (or $Z \lessdot W$).
    \end{enumerate}
  \end{definition}

  Here $Z \pitchfork_V W$ refers to notion of two subsurfaces cutting each other relative to $V$.
  \begin{definition}[Relative cutting]
    Given a subsurface $V$ of $S$, we say two subsurfaces $Z$ and $W$ of $S$ \emph{cut relative} to $V$ if for any subsurfaces $Z^{\prime} \sqsubset Z$ and $W^{\prime} \sqsubset W$ that intersect $V$, $Z^{\prime} \pitchfork W^{\prime}$.
  \end{definition}

  We now provide some motivation for the various conditions that appear in the above definition.
  First of all, when we see $Z \swarrow W$, we are to read that as \emph{the geodesic $[x,y]$ makes progress in the nested subsurface $Z$, before making progress in the supersurface $W$}.
  Similarly, when we see $W \searrow Z$, we are to read that as \emph{the geodesic $[x,y]$ make progress in the supersurface $W$ before making progress in the nested subsurface $Z$}.
  With this description of the subordering, conditions (i) and (iv) of the definition are easy to understand. The conditions (ii) and (iii) let us upgrade $\swarrow$ and $\searrow$ to transversality and time-ordering.
  A more intuitive reading of condition (ii) for instance would be, if $Z \swarrow V \searrow W$, that means the geodesic makes progress in $Z$ before $V$, and then makes progress in $W$.
  That means if we just look at $V$ and $W$, it makes progress in $V$ and then $W$. And since neither of them are nested in the other, the only way they can be time-ordered is by cutting relative to $V$.

  We can also see how the subordering on a witness family interacts with the witness family being insulated: recall the pair of witnesses $V \sqsubset E$ from Definition \ref{def:insulated}.
  \begin{itemize}
  \item If $d_E(\cC(V), x) \leq 9 \mathbf{C}$, then $V \swarrow E$, since the geodesic makes progress in $V$ before $E$.
  \item If $d_E(\cC(V), Y) \leq 9 \mathbf{C}$, then $E \searrow V$, since the geodesic makes progress in $E$ before $V$.
  \end{itemize}
  However, the above example does not capture all the ways in which we can have $V \swarrow E$ or $E \searrow V$.
  Consider a decomposition of a Teichmüller geodesic by the active intervals corresponding to witnesses illustrated in \autoref{fig:subordering-examples}.
  \begin{figure}[h]
    \centering
    \def\svgscale{1}
    \import{./images/}{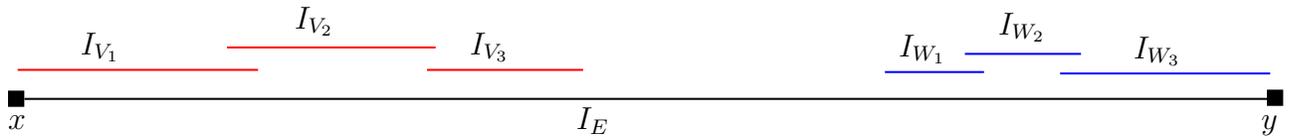}

    \caption{Examples of $V_i \swarrow E$ and $E \searrow W_i$.}
    \label{fig:subordering-examples}
  \end{figure}

  In this example, all of the witnesses $V_i$ satisfy $V_i \swarrow E$, and all of the witnesses $W_i$ satisfy $E \searrow W_i$, but $d_E(\cC(V_i), x)$ need not be less than $9\mathbf{C}$ for $i=2$ or $i=3$, and similarly, $d_E(\cC(W_i), y)$ need not be less than $9 \mathbf{C}$ for $i=2$ or $i=3$.

  In fact, the above example illustrates that for witnesses $V$ that are not within distance $9 \mathbf{C}$ from one of the endpoints, the choice between assigning $V \swarrow E$ and $E \searrow V$ is ambiguous, which is what the property of being \emph{wide} tries to fix.
  The property of being \emph{wide} also tells when an active subsurface $V$ nested in a witness $E$ contributes to $E$: this happens when $V$ appears in the ``middle'' of the segment $[x,y]$.


  \begin{definition}[Wide witness families]
    An insulated complete subordered witness family is wide if for each $V$ in the witness family, both of the following quantities are at most $\frac{N_V}{3}$.
    \begin{itemize}
    \item For $W$ a witness such that $W \swarrow V$, the quantity $\mathrm{diam}_V(x, \cC(W))$.
    \item For $W$ a witness such that $V \searrow W$, the quantity $\mathrm{diam}_V(y, \cC(W))$.
    \end{itemize}
  \end{definition}

  The idea behind a wide witness family is to create a buffer zone of length at least $\frac{N_V}{3}$ in the middle of the projection of the geodesic to $\cC(V)$ for any witness $V$ such that:
  \begin{itemize}
  \item If any subsurface $W$ is active to the left of the buffer zone, it contributes to a subsurface $Z$ such that $Z \swarrow V$.
  \item If any subsurface $W$ is active to the right of the buffer zone, it contributes to a subsurface $Z$ such that $V \searrow Z$.
  \item If a subsurface $W$ is active within the buffer zone, it contributes to $V$.
  \end{itemize}

  The upshot of defining wide, insulated, subordered, and complete witness families (which will be abbreviated to WISC witness families) is that it gives us a better idea the order in which progress is made in various active subsurfaces.
  If we were working with just the collection of active subsurfaces, the only time we can tell if a geodesic makes progress in a subsurface $V$ followed by the subsurface $W$ is when $V \pitchfork W$.
  When working with a WISC witness family, we can do that, but we can also make similar statements about pairs of nested witnesses $V \sqsubset W$, namely we can have either $V \swarrow W$ or $W \searrow V$.

  The following lemma asserts that WISC witness families exist, and their cardinality can be uniformly bounded.
  \begin{lemma}[Lemmas 7.29 and 7.30 from \cite{dowdall2023lattice}]
    Let $\os$ be surface, and $[x,y]$ a geodesic segment in $\teich(\os)$.
    Then there exists a WISC witness family $\Omega(x,y)$ for $[x,y]$.
    Furthermore, the cardinality of $\Omega(x,y)$ depends only on $\os$, and not the points $x$ and $y$.
  \end{lemma}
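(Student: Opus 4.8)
The plan is to follow the recursive construction of Dowdall and Masur \cite{dowdall2023lattice} essentially verbatim, the only new ingredient being that we feed it the non-orientable versions of the four foundational results listed above (Minsky's product region theorem, the Teichm\"uller distance formula, the active interval proposition, and consistency/realization), all of which are established in Section \ref{sec:geom-of-teich}. Concretely, I would build the witness family in stages. Start with $\Omega_0 = \left\{ \os \right\}$ if $\os \in \Upsilon(x,y)$, and otherwise with the set of topologically maximal members of $\Upsilon(x,y)$; in either case $\Omega_0$ is complete, because every active subsurface is contained in one of its elements, and any active subsurface transverse to such a maximal member is accommodated by the second clause in the definition of a witness family.

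Next I would repeatedly \emph{insulate}: given the current complete family $\Omega_k$, for each $E \in \Omega_k$ adjoin every proper subsurface $V \sqsubset E$ that lies in $\Upsilon(x,y)$, is within $9\mathbf{C}$ of $x$ or of $y$ when projected into $\cC(E)$, and is topologically maximal among subsurfaces with those two properties. Using the active intervals $\mathcal{I}_V^{\vept}$ supplied by Proposition \ref{thm:active-intervals}, one checks exactly as in \cite{dowdall2023lattice} that adjoining these surfaces preserves completeness, and that each newly added witness is a proper subsurface of an element of $\Omega_k$. Hence $-\chi(\cdot)$ strictly drops at each stage, the process terminates after at most $-\chi(\os)$ rounds, and the output $\Omega(x,y)$ is complete and insulated.

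I would then equip $\Omega(x,y)$ with a subordering: for a nested pair of witnesses $W \sqsubset V$, declare $W \swarrow V$ or $V \searrow W$ according to whether the active interval of $\colsup{W}{\Omega}$ lies, coarsely, to the left or to the right of the midpoint of $[x,y]$ as seen through $\cC(V)$. Verifying the subordering axioms (i)--(iv) and the \emph{wide} condition is where the bookkeeping lives: axioms (ii) and (iii) are upgrades of $\swarrow$/$\searrow$ to relative cutting $\pitchfork_V$ and to the Behrstock order $\lessdot$, and follow from the Behrstock inequality together with property (iv) of active intervals ($\mathcal{I}_U^{\vept} \cap \mathcal{I}_V^{\vept} = \varnothing$ whenever $U \pitchfork V$); wideness is arranged by choosing the thresholds $N_V$ large relative to $\mathbf{C}$, following Proposition 10.13 of \cite{dowdall2023lattice}, so that the $9\mathbf{C}$-neighbourhoods of the endpoints used in the insulation step sit comfortably inside the $\frac{N_V}{3}$-buffer in $\cC(V)$. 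For the cardinality bound, the recursion has depth at most $-\chi(\os)$, and the branching at each stage is bounded: the topologically maximal subsurfaces adjoined to a fixed $E$ have their boundary curves confined to a $9\mathbf{C}$-bounded subset of $\cC(E)$, so a bounded-geodesic-image/Behrstock argument limits their number in terms of the topological type of $E$ alone, and the total is a function of $\os$ only.

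The step I expect to be the main obstacle is not this combinatorics --- which is a faithful transcription of \cite{dowdall2023lattice} --- but checking that the inputs genuinely transfer to the non-orientable setting: in particular that active intervals behave correctly when $\partial V$ contains one-sided curves, and that the one-dimensional $(\mathbb{R}_{>0})$-factors appearing in Theorem \ref{thm:prno} do not disturb the estimates on which the active-interval and consistency/realization machinery rests. Once Section \ref{sec:geom-of-teich} supplies these, the proof of the lemma proceeds exactly as in the orientable case.
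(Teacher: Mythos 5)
Your proposal takes essentially the same route as the paper: the paper offers no independent argument for this lemma, simply citing Lemmas 7.29 and 7.30 of \textcite{dowdall2023lattice} and remarking that their proofs go through unchanged for non-orientable surfaces once the four foundational inputs (Theorem \ref{thm:prno}, Theorem \ref{thm:distance-formula}, Proposition \ref{thm:active-intervals}, Theorem \ref{thm:consistency-realization}) are supplied by Section \ref{sec:geom-of-teich}. Your sketch merely spells out the Dowdall--Masur recursion in more detail, and correctly identifies that the only genuinely new content in this setting is the transfer of those inputs.
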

  \begin{remark}
    While the statements of Lemmas 7.29 and 7.30 in \textcite{dowdall2023lattice} are for orientable surfaces, they go through without any changes for non-orientable surfaces as well.
  \end{remark}

  We now get to the raison d'\^etre of witness families: turning points on the geodesic segment $[x,y]$ in $\teich(\os)$ into points in $\teich(V)$, where $V$ is a witness in $\Omega(x,y)$.
  We will do so by assigning to each point $w$ in a neighbourhood of $[x,y]$ a point $\wt{w}_Z$ in $\cC(Z)$ for all subsurfaces $Z$ contained in $V$, and then showing this assignment is consistent.
  Then the realization theorem (Theorem \ref{thm:consistency-realization}) will give us a point $\widehat{w}_V^{\Omega}$ in $\teich(V)$ which has the same projections in $\cC(Z)$ as the original point $W$.
  \begin{definition}[Projection tuple]
    \label{def:projection-tuple}
    Let $\Omega(x,y)$ be a WISC witness family for a Teichmüller geodesic $[x,y]$ in $\teich(\os)$.
    Let $w$ be a point in $\teich(\os)$ satisfying the following bound for every subsurface $V$.
    \begin{align*}
      d_{V}(x, w) + d_{V}(w, y) \leq d_{V}(x, y) + 9 \mathbf{C}
    \end{align*}
    Then for any $U \in \Omega(x,y)$, the projection tuple $\wt{w}$ of $w$ is the point in $\prod_{Z \sqsubseteq U} \cC(Z)$ given by the following formula (where $\pi_Z$ is the usual projection map from $\teich(\os)$ to $\cC(Z)$).
    \begin{align*}
      \wt{w}_Z =
      \begin{cases}
        \pi_Z(y), & \text{if $Z \in \Upsilon(x,y)$ and $\colsup{Z}{\Omega} \swarrow U$} \\
        \pi_Z(x), & \text{if $Z \in \Upsilon(x,y)$ and $U \searrow \colsup{Z}{\Omega}$} \\
        \pi_Z(w), & \text{otherwise}
      \end{cases}
    \end{align*}
  \end{definition}

  Observe that this is different from the usual projection map from $\teich(\os)$ to $\cC(Z)$: for subsurfaces $Z$ that contribute to a witness nested in $U$, and consequently, $\colsup{Z}{\Omega} \swarrow U$ or $U \searrow \colsup{Z}{\Omega}$, we change the projection from $\pi_Z(w)$ to $\pi_Z(y)$ or $\pi_Z(x)$ respectively.

  This new projection map, despite being a modification of the usual projection map, is still consistent.
  \begin{proposition}[Proposition 8.4 of \cite{dowdall2023lattice}]
    The projection tuple $\wt{w}_Z$ is $k$-consistent for some $k$ depending only on $\mathbf{C}$.
  \end{proposition}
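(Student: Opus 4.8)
The plan is to verify the two standard consistency inequalities — the Behrstock inequality for transverse domains and the comparison inequality for nested domains — for the tuple $\wt{w} = (\wt{w}_Z)_{Z \sqsubseteq U}$ directly, by a case analysis on which of the three clauses in Definition \ref{def:projection-tuple} determines $\wt{w}_Z$ and $\wt{w}_W$ for each pair $Z, W \sqsubseteq U$. The constant $k$ will be assembled from $\mathbf{C}$, the thresholds $N_V$, the active-interval constants $M_{\vept}$ and ${\vept}'$, and the uniform consistency constant for genuine points of $\teich(\os)$; since each of these depends only on $\mathbf{C}$ and the topological type of $\os$, so does $k$.

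First I would dispose of the easy cases. Each of $x$, $y$, $w$ is an honest point of $\teich(\os)$, so each of the coordinate tuples $(\pi_Z(x))_Z$, $(\pi_Z(y))_Z$, $(\pi_Z(w))_Z$ is $k_0$-consistent for a uniform $k_0$ — the standard consistency of the subsurface coordinates of a point, coming from the Behrstock inequality and the bounded geodesic image theorem. Hence whenever Definition \ref{def:projection-tuple} feeds $\wt{w}_Z$ and $\wt{w}_W$ from the same one of $x, y, w$ (in particular whenever no modification is made), the desired inequality holds with constant $k_0$. The entire content is therefore the ``mixed'' pairs, where one coordinate is reset to an endpoint while the other is reset to the opposite endpoint or kept at $\pi_W(w)$.

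The cleanest mixed case is $\wt{w}_Z = \pi_Z(y)$, $\wt{w}_W = \pi_W(x)$, i.e.\ $\colsup{Z}{\Omega} \swarrow U$ and $U \searrow \colsup{W}{\Omega}$. Subordering property (ii) then gives $\colsup{Z}{\Omega} \pitchfork_U \colsup{W}{\Omega}$ and $\colsup{Z}{\Omega} \lessdot \colsup{W}{\Omega}$; combined with active-interval property (iv) this forces (when $Z \pitchfork W$) $\mathcal{I}_Z^{\vept}$ to lie entirely to the left of $\mathcal{I}_W^{\vept}$ in $[x,y]$. The subsegment of $[x,y]$ from $x$ to the left endpoint of $\mathcal{I}_W^{\vept}$ misses the interior of $\mathcal{I}_W^{\vept}$ but meets $\mathcal{I}_Z^{\vept}$, where $\partial Z$ is short, so active-interval property (iii) yields $d_W(x, \partial Z) \leq M_{\vept} + O(1)$ — exactly the $\cC(W)$-clause of the Behrstock inequality for $(Z,W)$ with $\wt{w}_W = \pi_W(x)$. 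The reflected case is identical with $x$ and $y$ swapped. For the harder mixed case $\wt{w}_Z = \pi_Z(y)$, $\wt{w}_W = \pi_W(w)$ (and the mirror with $\pi_Z(x)$), I would invoke wideness together with the betweenness hypothesis on $w$: wideness produces an immediate witness $E$ with $\colsup{Z}{\Omega} \sqsubseteq E \sqsubsetneq U$, $E \swarrow U$, and $\mathrm{diam}_U(x, \cC(E)) \leq N_U/3$, and the buffer-zone property then puts $\partial Z$ short along $[x,y]$ strictly before the part of the geodesic recording $\pi_W(w)$; active-interval property (iii) again gives the bound in $\cC(W)$ when $Z \pitchfork W$, while when $Z$ and $W$ are nested the comparison inequality reduces — using $\pi_Z(w) \approx \pi_Z(y)$ from property (iii) once $\pi_W(w)$ is past $\mathcal{I}_Z^{\vept}$ — to consistency of the genuine tuple $(\pi_Z(w))_Z$. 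Here the hypothesis $d_V(x,w) + d_V(w,y) \leq d_V(x,y) + 9\mathbf{C}$, with $V = W$, is precisely what keeps $\pi_W(w)$ from straying off the $x$--$y$ geodesic in $\cC(W)$ and so locates it relative to the frozen coordinate $\pi_Z(y)$; the remaining combinations of two frozen coordinates whose witnesses are nested rather than transverse are handled by the same ingredients (subordering properties (ii)--(iv), the active-interval estimates, and betweenness).

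The hard part will be exactly this last family of cases, where one coordinate is the live projection $\pi_W(w)$ of the roving point $w$ and the other is frozen at an endpoint: one cannot appeal to consistency of a single genuine point, and must combine the combinatorial structure of a WISC witness family — especially wideness and its buffer zones — with the quantitative betweenness of $w$ to pin down the position of $\pi_W(w)$. Making the buffer-zone argument precise, in particular tracking how the projection of $\cC(\colsup{Z}{\Omega})$ into $\cC(U)$ interacts with the $N_U/3$ bound and with the active interval $\mathcal{I}_U^{\vept}$ of the ambient witness, is the step needing the most care, and is where the dependence of $k$ on $\mathbf{C}$ gets pinned down.
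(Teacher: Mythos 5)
First, note what the paper actually does with this statement: it does not prove it. The proposition is imported verbatim as Proposition 8.4 of \cite{dowdall2023lattice}, and the paper's only burden (discharged in the appendix) is to verify that the foundational inputs to Dowdall--Masur's proof survive in the non-orientable setting --- the Behrstock inequality via Lemma \ref{lem:lifting-subsurfaces}, active intervals (Proposition \ref{thm:active-intervals}), the distance formula (Theorem \ref{thm:distance-formula}), and consistency/realization (Theorem \ref{thm:consistency-realization}). So the honest comparison is with Dowdall--Masur's argument, and your sketch follows the same strategy as theirs: case analysis on which clause of Definition \ref{def:projection-tuple} produces each coordinate, with same-source pairs consistent because $x$, $y$, $w$ are genuine points of Teichm\"uller space, frozen/frozen transverse pairs handled by time-ordering plus the active-interval estimates, and the frozen-versus-live pairs attacked with wideness and the $9\mathbf{C}$-betweenness hypothesis. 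A point in your favor, and the one the paper actually cares about: nothing in your outline is orientation-specific beyond those appendix inputs, so it would transfer to $\teich(\no_g)$ exactly as needed.

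As a standalone proof, however, the crux is still a plan rather than an argument. Two concrete remarks. (i) In the case $\wt{w}_Z = \pi_Z(y)$, $\wt{w}_W = \pi_W(x)$ you assert that $\colsup{Z}{\Omega} \lessdot \colsup{W}{\Omega}$ together with disjointness of active intervals forces $\mathcal{I}_Z^{\vept}$ to precede $\mathcal{I}_W^{\vept}$; that inference is not immediate, since the active interval of $Z$ is not in general ordered against that of its $\Omega$-completion without further work. Fortunately you do not need it: when $Z \pitchfork W$ their intervals are disjoint, and whichever comes first, the corresponding one of $d_W(x,\partial Z)$ or $d_Z(y,\partial W)$ is bounded by $M_{\vept}$ plus a uniform constant (bounded projection off the active interval, plus the fact that a short $\partial Z$ projects close to the marking), and a single bounded term already verifies the transverse consistency inequality; the degenerate situations (empty active intervals, annuli active only through shortness at an endpoint) need to be swept up but cause no harm. (ii) The genuinely hard family --- one coordinate frozen, the other the live projection $\pi_W(w)$, especially with $Z$ and $W$ nested --- is deferred to ``the same ingredients,'' but this is precisely where Proposition 8.4 of \cite{dowdall2023lattice} does its work and where the dependence of $k$ on $\mathbf{C}$ is actually produced: one must run the wideness/buffer-zone argument in $\cC(U)$, treat $Z \sqsubset W$ and $W \sqsubset Z$ separately, and handle the case where $W$ is not active (there $d_W(x,y) \leq N_W$ and betweenness pins $\pi_W(w)$ within $N_W + 9\mathbf{C}$ of $\pi_W(y)$, reducing to consistency of the genuine point $y$, at the cost of letting the $N_V$'s enter $k$). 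So your proposal has the right skeleton --- the same one as the cited proof --- but it leaves exactly the substantive cases unproven, whereas the paper sidesteps them by citation.
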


  Using the above proposition, and the realization theorem for non-orientable surfaces (Theorem \ref{thm:consistency-realization}), we can turn a projection tuple into a point in $\teich(U)$, which \textcite{dowdall2023lattice} refer to as \emph{resolving a point $w$ in $\teich(U)$}.
  \begin{definition}[Resolution point]
    Let $[x,y]$ be a geodesic segment in $\teich(\os)$, and $\Omega(x,y)$ an associated WISC witness family.
    For $w \in \left\{ x, y \right\}$, and $U \in \Omega(x,y)$, we define $\widehat{w}_U^{\Omega}$ as follows.
    \begin{itemize}
    \item If $U$ is non-annular, then $\widehat{w}_U^{\Omega} \in \teich(U)$ is the thick point whose projections to $\cC(V)$ for $V \sqsubseteq U$ are coarsely equal to the projection tuple $\widehat{w}_U$ (which exists due to the Realization theorem (Theorem \ref{thm:consistency-realization})).
    \item If $U$ is annular, then $\wt{w}_U$ is an element of $\mathbb{Z}$, and we set $\widehat{w}_U^{\Omega}$ to be the point in $\mathbb{H}$ whose twist coordinate is $\wt{w}_U$, and whose length coordinate is $\displaystyle \frac{1}{\min\left( {\vept}, \ell_{\partial U}(w) \right)}$.
    \end{itemize}
  \end{definition}

  We can now define the complexity length associated to a witness family $\Omega$.

  \begin{definition}[Complexity of witness family]
    Let $[x, y]$ be a geodesic segment in $\teich(\os)$, and $\Omega$ an associated WISC witness family.
    The complexity $\mathfrak{L}_{\Omega}(x,y)$ of $\Omega$ is the following quantity.
    \begin{align*}
      \mathfrak{L}_{\Omega}(x,y) \coloneqq \sum_{U \in \Omega} \hNP^{\ast}(U) \cdot d_{\teich(U)}(\widehat{x}_{U}^{\Omega}, \widehat{y}_{U}^{\Omega})
    \end{align*}
    Here, $\hNP^{\ast}(U)$ is the net point growth entropy for $\teich(U)$ when $U$ is non-annular, and when $U$ is annular, $\hNP^{\ast}(U)$ is $1$ when both $\widehat{x}_{U}^{\Omega}$ and $\widehat{y}_{U}^{\Omega}$ are ${\vept}$-thick, and $2$ if not.
  \end{definition}

We now address why we used a modified version of the projection map in Definition \ref{def:projection-tuple} instead of the usual projection map to curve complexes, by revisiting Example \ref{ex:uninsulated}.
\begin{example}
  \label{ex:need-for-modified-projection}
  Let $\gamma$ be a pseudo-Anosov mapping class on $\os$, with large translation distance on $\cC(\os)$ and small projections elsewhere.
  Let $\delta$ be a reducible mapping class, which is psuedo-Anosov on a subsurface $V$, with large translation distance on $\cC(V)$ and small translation distance everywhere.
  Let $x$ be a point in $\teich(\os)$ such that $\partial V$ is a component of the short marking on $x$, and $y = \delta \gamma \delta^{-1} x$.
  We first verify that $\Upsilon(x,y) = \left\{ \os, V, \delta \gamma V \right\}$.
  To see this, we consider the following tuple of points $(x, \delta x, \delta \gamma x, \delta \gamma \delta^{-1}x)$.
  We claim that each of the points in the tuple lies coarsely on the geodesic $[x,y]$.
  To see this, we compute the projections of adjacent pairs of points in tuple to various curve complexes.
  \begin{itemize}
  \item[-] $[x, \delta x]$ has large projections on $\cC(V)$ and small projections on other curve complexes.
  \item[-] $[\delta x, \delta \gamma x]$ has large projections on the $\cC(\delta \os)$, which is the same as $\cC(\os)$, and small projections elsewhere.
  \item[-] $[\delta \gamma x, \delta \gamma \delta^{-1} x]$ has large projections on $\cC(\delta \gamma V)$, and small projections elsewhere.
  \end{itemize}
  Consider the path $\kappa$ obtained by concatenating Teichmüller geodesics between $x$ and $\delta x$, $\delta x$ and $\delta \gamma x$, and $\delta \gamma x$ and $\delta \gamma \delta^{-1} x$.
  The curve complex calculations above show that the projection of this path to any curve complex is a quasi-geodesic.
  That indicates this path is a \emph{hierarchy path}, in the language of hierarchically hyperbolic spaces.
  Furthermore, $\kappa$ does not have large projections on disjoint subsurfaces: this indicates the hierarchy path $\kappa$ fellow travels the geodesic $[x, y]$.

  Let $\Omega(x, y) = \Upsilon(x,y) = \left\{ \os, V, \delta \gamma V \right\}$.
  One can verify that this is a WISC witness family for $[x,y]$.
  Furthermore, we have that $V \swarrow \os$ and $\os \searrow \gamma V$.

  We now compute the resolution of the points $x$ and $y$ in $\teich(V)$, $\teich(\gamma V)$, and $\teich(\os)$.
  Observe that the geodesic $[x,y]$ almost immediately moves into a product region associated to $V$ at the beginning, leaves that product region at some point $w$ along the geodesic, and then enters the product region associated to $\gamma V$ at some point $z$, and then stays in that product region almost all the way up to the end.
  We will abuse notation slightly, and refer to $x$ and $w$ as points in $\teich(V)$, when we mean their projection via the product region map, and $z$ and $y$ will refer to points in $\teich(\gamma V)$
  Resolving points in $\teich(V)$ and $\teich(\gamma V)$ is easy, since there's no other witnesses nested in them, which means the projection tuple for those subsurfaces is the usual projection map.
  \begin{align*}
    \widehat{x}_V^{\Omega} &= x \\
    \widehat{y}_V^{\Omega} &= w \\
    \widehat{x}_{\gamma V}^{\Omega} &= z \\
    \widehat{y}_{\gamma V}^{\Omega} &= y \\
  \end{align*}
  To resolve points in $\teich(\os)$, we have to use our modified projection map, instead of the usual one.
  Doing so, the points $x$ and $y$ resolve in the following manner.
  \begin{align*}
    \widehat{x}_{\os}^{\Omega} = w \\
    \widehat{y}_{\os}^{\Omega} = z
  \end{align*}

  With these resolutions, we get the following estimate for complexity in terms of Teichmüller distance.
  \begin{align*}
    \mathfrak{L}_{\Omega}(x,y) &= \hNP^{\ast}(V) \cdot d_{\teich(V)}(x, w) + \hNP^{\ast}(\os) \cdot d_{\teich(\os)}(w, z) + \hNP^{\ast}(\gamma V) \cdot d_{\teich(\gamma V)}(z, y) \\
    &< \hNP^{\ast}(\os) \cdot d_{\teich(\os)}(x, y)
  \end{align*}
  Compare this to the complexity $\mathfrak{L}_{\Omega}^{\prime}$ we would have gotten if we used the usual projection map instead of the modified projection map.
  \begin{align*}
    \mathfrak{L}_{\Omega}^{\prime}(x,y) &= \hNP^{\ast}(V) \cdot d_{\teich(V)}(x, w) + \hNP^{\ast}(\os) \cdot d_{\teich(\os)}(x, y) + \hNP^{\ast}(\gamma V) \cdot d_{\teich(\gamma V)}(z, y) \\
                                        &> \hNP^{\ast}(\os) \cdot d_{\teich(\os)}(x, y)
  \end{align*}
\end{example}

If we use Rafi's distance formula to estimate the distance between the resolution points $\widehat{x}_V^{\Omega}$ and $\widehat{x}_V^{\Omega}$ we do not get very good bounds for $d_{\teich(V)}(\widehat{x}_V^{\Omega}, \widehat{x}_V^{\Omega})$: at best, we accrue multiplicative and additive errors.
For our applications however, the most we can tolerate is additive error.
To do this, we will need to refine to notion of active interval for a subsurface to something more useful for the estimate: the \emph{contribution set} $\mathcal{A}_V^{\Omega}$ of a witness $V$.
We first define two intermediate collections of subintervals of a geodesic segment $[x,y]$.
\begin{align*}
  M(V) &\coloneqq \bigcup \left\{ \mathcal{I}_W^{{\vept}} \mid \text{$W \in \Omega$ with $W \sqsubset V$ }  \right\} \\
  C(V) &\coloneqq \bigcup \left\{ \mathcal{I}_Z^{{\vept}} \mid \text{$Z$ contributes to $V$} \right\}
\end{align*}

\begin{definition}[Contribution set]
  For a witness $V \in \Omega$, the contribution set $\mathcal{A}_V^{\Omega}$ is a subset of the geodesic segment $[x,y]$ defined in the following manner.
  \begin{align*}
    \mathcal{A}_V^{\Omega} \coloneqq \left( \mathcal{I}_V^{\vept} \setminus M(V) \right) \cup C(V)
  \end{align*}
\end{definition}
\begin{remark}
  The reason we remove $M(V)$ and then later add $C(V)$ again is because it is possible for several different subsurfaces to be active at the same time: orthogonal subsurfaces for instance.
  One can have $V$ and $W$ as witnesses, with $W \sqsubset V$, and $Z \sqsubset V$ an active subsurface but not a witness, such that $W \perp Z$ with $\mathcal{I}_W^{\vept}$ and $\mathcal{I}_Z^{\vept}$ overlapping.
  In that case, removing $M(V)$ would also remove part of $\mathcal{I}_Z^{\vept}$, and adding back $C(V)$ would add back the deleted portion.
\end{remark}

The following theorem estimates $d_{\teich(V)}(\widehat{x}_V^{\Omega}, \widehat{y}_V^{\Omega})$ using $\mathcal{A}_V^{\Omega}$.
\begin{theorem}[Theorem 9.4 of \cite{dowdall2023lattice}]
  \label{thm:contribution-set-resolution-distance}
  There exists a uniform constant $C$ such that the following bound holds for any $x$, $y$, and witness $V$.
  \begin{align*}
d_{\teich(V)}(\widehat{x}_V^{\Omega}, \widehat{y}_V^{\Omega}) \leq \int_x^y \mathbbm{1}_{\mathcal{A}_V^{\Omega}} + C
  \end{align*}
\end{theorem}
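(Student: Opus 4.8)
The plan is to prove the inequality by constructing an efficient path in $\teich(V)$ joining $\widehat{x}_V^{\Omega}$ to $\widehat{y}_V^{\Omega}$, and the whole argument takes place inside the active interval $\mathcal{I}_V^{\vept}$. On that interval every component of $\partial V$ has hyperbolic length less than $\vept$, so Minsky's product region theorem for non-orientable surfaces (Theorem \ref{thm:prno}) identifies a neighbourhood of $[x,y]\cap\mathcal{I}_V^{\vept}$, up to a fixed additive error $c$, with a sup--product one of whose factors is $\teich(V)$. The point of working there is that the product--region projection $\mathrm{pr}_V$ onto this factor is $1$--Lipschitz up to the additive error $c$, \emph{with no multiplicative distortion}; this is exactly why the conclusion has an additive error and no multiplicative one, and it is why we cannot just quote the distance formula (Theorem \ref{thm:distance-formula}) for $\teich(V)$, which would force a multiplicative constant.

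First I would pin down where the resolution points sit: using Definition \ref{def:projection-tuple} together with the realization half of Theorem \ref{thm:consistency-realization}, I would show that $\widehat{x}_V^{\Omega}$ is coarsely $\mathrm{pr}_V(x^{+})$ and $\widehat{y}_V^{\Omega}$ is coarsely $\mathrm{pr}_V(y^{-})$, where $x^{+}$ is the first point of $[x,y]$ past every $\mathcal{I}_W^{\vept}$ with $W$ a witness and $W\swarrow V$, and $y^{-}$ is the last point before every $\mathcal{I}_W^{\vept}$ with $V\searrow W$. This is checked coordinate by coordinate: for $Z\sqsubseteq V$ one compares the $\cC(Z)$--coordinate of the candidate point with the entry $\wt{x}_Z$ of the projection tuple, and Theorem \ref{thm:consistency-realization} then upgrades agreement of all coordinates to coarse agreement of the points. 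If $Z\notin\Upsilon(x,y)$ then $\pi_Z$ is coarsely constant along $[x,y]$ so both sides equal $\pi_Z(x)$; if $Z$ contributes to $V$ then $\wt{x}_Z=\pi_Z(x)$ and the wide property places $\mathcal{I}_Z^{\vept}$ in the buffer zone of $\mathcal{I}_V^{\vept}$, past $x^{+}$, so active interval property (iii) gives $d_Z(x^{+},x)=O(1)$; and if $\colsup{Z}{\Omega}$ is a proper witness $W\sqsubset V$ then the subordering gives $W\swarrow V$ (whence $\wt{x}_Z=\pi_Z(y)$ and $x^{+}$ lies past $\mathcal{I}_Z^{\vept}$, so projects to $\pi_Z(y)$ up to $O(1)$) or $V\searrow W$ (whence $\wt{x}_Z=\pi_Z(x)$ and $x^{+}$ precedes $\mathcal{I}_W^{\vept}\supseteq\mathcal{I}_Z^{\vept}$). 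The statement for $\widehat{y}_V^{\Omega}$ is symmetric, and subordering property (ii) forces $x^{+}\le y^{-}$ by placing the $\swarrow$--witnesses to the left of the $\searrow$--witnesses.

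With the endpoints located, take the path in $\teich(V)$ obtained by pushing $[x^{+},y^{-}]$ forward under $\mathrm{pr}_V$, inserting short geodesic segments across the boundedly many gaps where $[x^{+},y^{-}]$ leaves $\mathcal{A}_V^{\Omega}$ (namely where it runs through the active interval of a middle nested witness). Since $\mathrm{pr}_V$ is $1$--Lipschitz up to additive error $c$ on $\mathcal{I}_V^{\vept}$ and $[x^{+},y^{-}]\subseteq\mathcal{I}_V^{\vept}$, the pushed arcs have total length at most $|[x^{+},y^{-}]|+c\cdot(\#\text{arcs})\le\int_x^y\mathbbm{1}_{\mathcal{A}_V^{\Omega}}+O(1)$, using that the portion of $[x^{+},y^{-}]$ inside $\mathcal{A}_V^{\Omega}$ is what survives in $\int_x^y\mathbbm{1}_{\mathcal{A}_V^{\Omega}}$ and that the number of relevant arcs is bounded in terms of $\#\Omega(x,y)$, hence of the topological type of $\os$. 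Each gap--crossing jump is $O(1)$ because during such a gap, which lies inside $\mathcal{I}_W^{\vept}$ for a nested witness $W$ with no contributing subsurface active there, all motion of $\mathrm{pr}_V$ occurs in the $\teich(W)$--subfactor, i.e.\ in the coordinates $\cC(Z)$, $Z\sqsubseteq W$, that the modified projection of Definition \ref{def:projection-tuple} has frozen; so by Theorem \ref{thm:consistency-realization} the two ends of the gap have the same resolved coordinates up to the consistency constant. Combining the arc lengths with the $O(1)$ jumps gives $d_{\teich(V)}(\widehat{x}_V^{\Omega},\widehat{y}_V^{\Omega})\le\int_x^y\mathbbm{1}_{\mathcal{A}_V^{\Omega}}+C$ with $C$ uniform.

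The main obstacle will be precisely the bookkeeping in the last two steps: verifying that the $\swarrow$--witnesses really form an initial block, the $\searrow$--witnesses a final block, and that no ``unfrozen'' $\teich(V)$--progress is hidden inside a removed gap. This is where the full strength of the wide, insulated, subordered, complete hypotheses on $\Omega(x,y)$ is used, and where one must know that the non-orientable analogues of the four foundational inputs --- the product region theorem, the distance formula, the active interval proposition (Proposition \ref{thm:active-intervals}), and consistency--realization (Theorem \ref{thm:consistency-realization}) --- behave exactly as in the orientable case. Those analogues are established in Section \ref{sec:geom-of-teich}, and granting them the argument of \textcite{dowdall2023lattice} for their Theorem 9.4 goes through verbatim.
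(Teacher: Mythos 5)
Your proposal is correct and lands exactly where the paper does: the paper offers no independent proof of this statement, simply importing it as Theorem 9.4 of \textcite{dowdall2023lattice} and noting that the argument there goes through for non-orientable surfaces once the foundational inputs (product region theorem, distance formula, active intervals, consistency--realization) are established in Section~\ref{sec:geom-of-teich}, which is precisely the reduction your final paragraph makes. Your preceding sketch of how the resolution points are located and the contribution set controls the distance is a reasonable reconstruction of the cited argument, but it is extra detail beyond what the paper itself supplies.
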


Contribution sets help us make precise the notion of ``overcounting'' when multiple product regions are active at the same time.
More precisely, when a segment of $[x,y]$ is a part of two or more contribution sets, that segment shows up multiple times when computing $\mathfrak{L}_{\Omega}(x,y)$, thanks to Theorem \ref{thm:contribution-set-resolution-distance}.
If the overlapping segment is sufficiently long, one could even end up having $\mathfrak{L}_{\Omega}(x,y) > \hNP^\ast(\os) \cdot d_{\teich(\os)}(x, y)$, which as we will see, leads to a worse count for net points than the usual methods.
This phenomenon of contribution sets overlapping is called \emph{badness}, and while we will not be able to eliminate it entirely, we will be able to minimize it.

\begin{definition}[Bad set]
  We say a point $p$ in $\mathcal{A}_V^{\Omega}$ is bad if there exists some other witness $W$ such that $p$ also belongs in $\mathcal{A}_W^{\Omega}$.
  The bad set $\mathcal{B}_V^{\Omega}$ denotes the set of all bad points in $\mathcal{A}_V^{\Omega}$, and $\left| \mathcal{B}_V^{\Omega} \right|$ denotes the total length of this set, when we think of $\mathcal{B}_V^{\Omega}$ as a subset of the geodesic segment $[x,y]$.
\end{definition}

For our applications, we won't need to eliminate badness entirely, or even bound the length of the bad set uniformly: it will suffice to show that the length of the bad set is a very small multiple of $d_{\teich(\os)}(x,y)$.

\begin{definition}[Admissible and limited]
  A witness family $\Omega$ associated to a geodesic segment $[x,y]$ is said to be:
  \begin{itemize}
  \item \emph{admissible} if $\displaystyle \left| \mathcal{B}_V^{\Omega} \right| \leq \frac{d_{\teich(\os)}(x,y)}{K_V \mathbf{C}}$, for all $V \in \Omega$, and some constants $K_V$ that only depend on the topological type of $V$.
  \item \emph{limited} if $\left| \Omega \right|$ is uniformly bounded, independent of $x$ and $y$.
  \end{itemize}
\end{definition}
\begin{remark}
  Our definition of limited is a weaker version of Definition 10.7 from \textcite{dowdall2023lattice}, but since we don't need the stronger version, we present this version instead.
\end{remark}

\textcite{dowdall2023lattice} prove that WISC witness families that are admissible and limited exist. They call these witness families WISCAL witness families.

\begin{proposition}[Section 10.3 of \cite{dowdall2023lattice}]
  \label{prop:existence-of-wiscal}
  For all $[x,y]$, there exists an associated WISC witness family that is also admissible and limited.
\end{proposition}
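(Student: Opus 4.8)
The plan is to import the construction of Section 10.3 of \textcite{dowdall2023lattice}, which produces an admissible and limited WISC witness family, and to verify that none of its ingredients uses orientability in an essential way.

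First I would invoke the existence of a WISC witness family $\Omega_0 = \Omega_0(x,y)$ for $[x,y]$ from Lemmas 7.29 and 7.30 of \textcite{dowdall2023lattice} (whose proofs, as already remarked, go through verbatim for non-orientable $\os$), together with the bound on $\left|\Omega_0\right|$ in terms of the topological type of $\os$. This already yields the \emph{limited} condition, and it will persist through the steps below since each modification only deletes a witness or replaces it by a topologically larger one, so $\left|\Omega\right|$ never increases.

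Next, for \emph{admissibility} I would bound the bad sets $\mathcal{B}_V^{\Omega}$. The structural input is property (iv) of the active intervals (Proposition \ref{thm:active-intervals}): transverse active subsurfaces have disjoint active intervals. Tracing through the definitions of $\mathcal{A}_V^{\Omega}$, $M(V)$, and $C(V)$, this forces any point lying in two distinct contribution sets to arise from a nested pair of witnesses rather than a transverse one. For a nested pair $W \sqsubset V$, the subordering relation ($W \swarrow V$ or $V \searrow W$) together with the wide condition confines the overlap of $\mathcal{I}_W^{\vept}$ with $\mathcal{A}_V^{\Omega}$ to the part of $[x,y]$ projecting to a bounded-diameter piece of $\cC(V)$, of size at most a fixed multiple of $\mathbf{C}$. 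Since $\left|\Omega\right|$ is uniformly bounded, summing over all nested pairs gives $\left|\mathcal{B}_V^{\Omega}\right| \le c_\os\,\mathbf{C}$ for a constant $c_\os$ depending only on $\os$. Whenever this leftover badness is too large compared to $d_{\teich(\os)}(x,y)$ one removes or coarsens the offending nested witnesses — shifting their content to a containing witness — and repeats, which is the pruning step of \textcite{dowdall2023lattice}.

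The main obstacle is that deleting or coarsening a witness can destroy the \emph{insulated}, \emph{complete}, \emph{subordered}, or \emph{wide} properties, so each pruning step must be followed by a repair step, and one must check that this iteration terminates with all four properties restored and the bad sets now satisfying $\left|\mathcal{B}_V^{\Omega}\right| \le d_{\teich(\os)}(x,y)/(K_V\mathbf{C})$. This bookkeeping is exactly Section 10.3 of \textcite{dowdall2023lattice}; its only external inputs are Minsky's product region theorem (Theorem \ref{thm:prno}), the distance formula for Teichmüller space (Theorem \ref{thm:distance-formula}), the active intervals of Proposition \ref{thm:active-intervals}, and the consistency and realization theorem (Theorem \ref{thm:consistency-realization}), each of which is established for non-orientable surfaces in Section \ref{sec:geom-of-teich}, so the argument carries over unchanged.
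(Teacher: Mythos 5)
Your proposal follows essentially the same route as the paper: the paper gives no independent proof of this proposition, but simply cites Section 10.3 of Dowdall--Masur and relies on the non-orientable versions of the four foundational inputs (the product region theorem, the distance formula, active intervals, and consistency/realization) established in the appendix, which is exactly the reduction you make. One small caution: your intermediate heuristic that wideness and the subordering alone force $\left|\mathcal{B}_V^{\Omega}\right| \leq c_{\os}\mathbf{C}$ uniformly is not accurate --- a bounded-diameter portion of $\cC(V)$ can correspond to an arbitrarily long stretch of the Teichm\"uller geodesic when nested witnesses are active, which is precisely why admissibility is only demanded as a bound proportional to $d_{\teich(\os)}(x,y)$ and why the pruning argument is needed --- but since you ultimately defer that bookkeeping to Dowdall--Masur's Section 10.3, the conclusion stands.
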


For WISCAL witness families, the following result relating complexity and Teichmüller distance follows easily from Theorem \ref{thm:contribution-set-resolution-distance} and the definition of admissible.

\begin{proposition}
  \label{prop:complexity-length-inequality}
  If $\Omega$ is a WISCAL witness family associated to $[x,y]$, then the following inequality holds.
  \begin{align*}
    \mathfrak{L}_{\Omega}(x,y) \leq \left( \hNP(\os) + \frac{K}{\mathbf{C}} \right) d_{\teich(\os)}(x, y) + K \mathbf{C}
  \end{align*}
  Here, $K$ is some uniform constant depending only on $\os$.
\end{proposition}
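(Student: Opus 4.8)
The plan is to read the inequality off Theorem~\ref{thm:contribution-set-resolution-distance}, using the two defining features of a WISCAL family — that it is \emph{limited}, so $\left|\Omega\right|$ (and hence $\max_{U\in\Omega}\hNP^{\ast}(U)$) is bounded independently of $x,y$, and that it is \emph{admissible}, so $\left|\mathcal{B}_U^{\Omega}\right|\le d_{\teich(\os)}(x,y)/(K_U\mathbf{C})$ for every $U\in\Omega$. First I would apply Theorem~\ref{thm:contribution-set-resolution-distance} to each summand of $\mathfrak{L}_{\Omega}$ and interchange the (finite) sum with the integral:
\begin{align*}
  \mathfrak{L}_{\Omega}(x,y) = \sum_{U\in\Omega}\hNP^{\ast}(U)\,d_{\teich(U)}(\widehat{x}_U^{\Omega},\widehat{y}_U^{\Omega}) \leq \int_x^y\Bigl(\sum_{U\in\Omega}\hNP^{\ast}(U)\,\mathbbm{1}_{\mathcal{A}_U^{\Omega}}(t)\Bigr)\,dt + C\sum_{U\in\Omega}\hNP^{\ast}(U),
\end{align*}
where $C$ is the uniform constant of that theorem. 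Since $\Omega$ is limited, the last sum is bounded by a constant depending only on $\os$, and — as $\mathbf{C}$ is a fixed large constant — this is at most $K\mathbf{C}$ after enlarging $K$. So everything comes down to estimating the integrand.

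Next I would bound the integrand pointwise using the combinatorics of bad sets. Unwinding the definition of $\mathcal{B}_U^{\Omega}$ shows that $\mathcal{A}_U^{\Omega}\cap\bigl(\bigcup_{W\neq U}\mathcal{A}_W^{\Omega}\bigr)=\mathcal{B}_U^{\Omega}$; equivalently, the ``good parts'' $\mathcal{A}_U^{\Omega}\setminus\mathcal{B}_U^{\Omega}$ are pairwise disjoint and disjoint from $\mathcal{B}:=\bigcup_{U\in\Omega}\mathcal{B}_U^{\Omega}$. Hence for $t$ in some good part, exactly one term $\hNP^{\ast}(U)\mathbbm{1}_{\mathcal{A}_U^{\Omega}}(t)$ is nonzero and it is at most $\hNP(\os)$ — here one uses that every witness satisfies $\hNP^{\ast}(U)\le\hNP(\os)$, which follows from monotonicity of net point entropy under subsurface inclusion (via Lemma~\ref{lem:net-point-entropy-inequality} and the inductive hypothesis) together with the conventions defining $\hNP^{\ast}$ on annuli — while for $t\in\mathcal{B}$ one has $\mathbbm{1}_{\mathcal{A}_W^{\Omega}}(t)=\mathbbm{1}_{\mathcal{B}_W^{\Omega}}(t)$ for all $W$, and for $t$ outside $\bigcup_U\mathcal{A}_U^{\Omega}$ the integrand vanishes. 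Integrating this pointwise bound over $[x,y]$ (which has length $d_{\teich(\os)}(x,y)$) gives
\begin{align*}
  \int_x^y\Bigl(\sum_{U\in\Omega}\hNP^{\ast}(U)\,\mathbbm{1}_{\mathcal{A}_U^{\Omega}}(t)\Bigr)\,dt \leq \hNP(\os)\,d_{\teich(\os)}(x,y) + \sum_{U\in\Omega}\hNP^{\ast}(U)\,\left|\mathcal{B}_U^{\Omega}\right|.
\end{align*}

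Finally I would apply admissibility: $\left|\mathcal{B}_U^{\Omega}\right|\le d_{\teich(\os)}(x,y)/(K_U\mathbf{C})$, so the last sum is at most $\bigl(\sum_{U\in\Omega}\hNP^{\ast}(U)/K_U\bigr)\,d_{\teich(\os)}(x,y)/\mathbf{C}$, and the parenthesized quantity is bounded by a constant $K$ depending only on $\os$ because $\Omega$ is limited. Combining the three estimates and enlarging $K$ once more to absorb all the constants yields $\mathfrak{L}_{\Omega}(x,y)\le\bigl(\hNP(\os)+K/\mathbf{C}\bigr)d_{\teich(\os)}(x,y)+K\mathbf{C}$. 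The proof requires no new geometric input beyond Theorem~\ref{thm:contribution-set-resolution-distance}; the only step needing genuine care is the pointwise bound, where one must verify the bookkeeping identity $\mathcal{A}_U^{\Omega}\cap\bigl(\bigcup_{W\neq U}\mathcal{A}_W^{\Omega}\bigr)=\mathcal{B}_U^{\Omega}$ and the monotonicity inequality $\hNP^{\ast}(U)\le\hNP(\os)$ for witnesses — everything else is direct substitution, so I expect the proposition to be a short bookkeeping consequence of the WISCAL hypotheses.
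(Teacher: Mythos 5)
Your proposal is correct and is exactly the argument the paper intends: the paper offers no separate proof of Proposition \ref{prop:complexity-length-inequality}, asserting only that it follows from Theorem \ref{thm:contribution-set-resolution-distance} together with the admissible and limited properties of a WISCAL family, which is precisely the bookkeeping you carry out (pairwise disjointness of the sets $\mathcal{A}_U^{\Omega}\setminus\mathcal{B}_U^{\Omega}$, the admissibility bound on $\left|\mathcal{B}_U^{\Omega}\right|$, and absorbing the finitely many additive constants into $K\mathbf{C}$ via limitedness). The one non-bookkeeping ingredient you rightly isolate, namely $\hNP^{\ast}(U)\le\hNP(\os)$ for every witness $U$ (including annular ones, where $\hNP^{\ast}(U)$ may equal $2$), is likewise taken for granted by the paper, so your write-up is at least as complete as the source.
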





We now define complexity length, which follows from the definition of the complexity of a witness family.

\begin{definition}[Complexity length]
  \label{defn:complexity-length}
  For a pair of points $x$ and $y$ in $\teich(\os)$, the complexity length $\mathfrak{L}(x,y)$ is defined to be the following.
  \begin{align*}
    \mathfrak{L}(x,y) \coloneqq \inf_{\Omega} \mathfrak{L}_{\Omega}(x,y)
  \end{align*}
  Here, we take the infimum over all WISCAL witness families for $[x,y]$.
\end{definition}

With the machinery of complexity length set up, it is now possible to count net points with respect to complexity length.

\begin{theorem}[Theorem 12.1 of \cite{dowdall2023lattice}]
  \label{thm:counting-with-complexity}
  For any large enough $\mathbf{C} > 0$, and any $\veperr > 0$, there exists an polynomial function $p(r)$, and $r > 0$ large enough such that the following bound holds for net points in $\teich(\os)$.
  \begin{align*}
    \# \left( y \in \net \mid \mathfrak{L}(x,y) \leq r \right) \leq p(r) \cdot \exp\left( (1+\veperr)r \right)
  \end{align*}
\end{theorem}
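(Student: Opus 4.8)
The plan is to follow the proof of Theorem 12.1 in \textcite{dowdall2023lattice}, which is entirely coarse-geometric and, as the authors observe component by component, uses orientability only through a short list of foundational facts: Minsky's product region theorem (here Theorem \ref{thm:prno}), the distance formula for Teichm\"uller space (Theorem \ref{thm:distance-formula}), active intervals for subsurfaces (Proposition \ref{thm:active-intervals}), and consistency--realization (Theorem \ref{thm:consistency-realization}), together with the existence of WISCAL witness families (Proposition \ref{prop:existence-of-wiscal}). All of these are available in the non-orientable setting by the results of Section \ref{sec:geom-of-teich}, so the task reduces to checking that the bookkeeping of \textcite{dowdall2023lattice} survives unchanged and to locating where the extra $\veperr$ enters.

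First I would fix $x$ and, for each net point $y$ with $\mathfrak{L}(x,y)\le r$, choose a WISCAL witness family $\Omega=\Omega(x,y)$ that nearly realizes the infimum defining $\mathfrak{L}$, and record the finite data $\bigl(\Omega,\ (\widehat{x}_U^{\Omega},\widehat{y}_U^{\Omega})_{U\in\Omega}\bigr)$ of resolution points. The central structural claim, which is the content of Sections 11--12 of \textcite{dowdall2023lattice}, is that this data pins $y$ down up to a uniformly bounded set: one reassembles the subsurface coordinates of $y$ from the resolution points and the subordering, and the distance formula (Theorem \ref{thm:distance-formula}) together with the realization theorem (Theorem \ref{thm:consistency-realization}) then recovers $y$ up to bounded ambiguity. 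This is the step that needs genuine care in the non-orientable setting — one must verify that no orientation-specific input is used, in particular that the modified projection tuples of Definition \ref{def:projection-tuple} still behave correctly (they remain consistent, by the purely coarse-geometric Proposition 8.4 of \textcite{dowdall2023lattice}). I expect this reconstruction step to be the main obstacle; everything after it is the same arithmetic as in the orientable case.

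With the reconstruction in hand the count becomes a product over $U\in\Omega$. Since $\widehat{x}_U^{\Omega}$ depends only on $x$ and $\Omega$, only the $\widehat{y}_U^{\Omega}$ vary, and each is a net point of $\teich(U)$ within $\teich(U)$-distance $d_U:=d_{\teich(U)}(\widehat{x}_U^{\Omega},\widehat{y}_U^{\Omega})$ of a fixed point; moreover it is a $\vept$-thick point when $U$ is non-annular and a point of $\mathbb{H}$ at the $\vept$-threshold when $U$ is annular. For annular $U$ the number of such points is $\lesssim\exp(\hNP^{\ast}(U)d_U)$ directly from the geometry of $\mathbb{H}$, with $\hNP^{\ast}(U)\in\{1,2\}$ exactly as in the definition of $\mathfrak{L}_\Omega$. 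For non-annular $U$ — including $U=\os$ itself — the number of net points of $\core(\teich(U))$ within distance $d_U$ is, straight from the definition of $\hNP(U)=\hNP(\core(\teich(U)))$ as a growth exponent, at most $\exp((\hNP(U)+\delta)d_U)$ once $d_U$ is large (for orientable $U$ one may instead quote Theorem \ref{thm:abem} for the cleaner bound $C_U\exp(\hNP(U)d_U)$), and here $\hNP^{\ast}(U)=\hNP(U)$. Multiplying over the at most $K=K(\os)$ witnesses of a limited family gives
\[ \prod_{U\in\Omega}\exp\bigl((\hNP^{\ast}(U)+\delta)d_U\bigr)=\exp\Bigl(\sum_{U\in\Omega}\hNP^{\ast}(U)d_U\Bigr)\cdot\exp\Bigl(\delta\sum_{U\in\Omega}d_U\Bigr)\le \exp\bigl(\mathfrak{L}_{\Omega}(x,y)\bigr)\cdot\exp(K'\delta r)\le\exp\bigl((1+K'\delta)r\bigr), \]
where $K'$ is a uniform constant absorbing $K$ and $1/\min_{U}\hNP^{\ast}(U)$, using $d_U\le\mathfrak{L}_\Omega(x,y)/\hNP^{\ast}(U)$ and $\mathfrak{L}_\Omega(x,y)\le r$. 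Choosing $\delta$ small compared with $\veperr$ controls the exponential rate, and the polynomial $p(r)$ collects the three remaining sources of overcounting: the stars-and-bars count for decompositions $r=\sum_{U\in\Omega}\hNP^{\ast}(U)d_U$ (polynomial of degree $<K$), the bounded-ambiguity factor from the reconstruction step, and the number of combinatorial types of $\Omega$ compatible with the data (polynomially bounded, the witnesses being constrained by the resolution points). Assembling these yields the stated bound.
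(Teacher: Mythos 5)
Your proposal follows essentially the same route as the paper: defer the heavy lifting to Dowdall--Masur's Theorem 12.1, check that the only orientation-sensitive inputs are the product region theorem, distance formula, active intervals, consistency/realization and WISCAL existence (all supplied in the non-orientable setting), count combinatorial types of the witness family polynomially, and replace the ABEM volume asymptotics by the soft bound $\exp((\hNP(U)+\delta)d_U)$ per witness, absorbing $\delta$ into the $(1+\veperr)$ factor --- exactly where the paper's $\veperr$ comes from.

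One caveat: your product computation rests on the assertion that $\widehat{x}_U^{\Omega}$ ``depends only on $x$ and $\Omega$,'' which is not literally true. By Definition \ref{def:projection-tuple}, the projection tuple of $x$ in a witness $U$ replaces the coordinate $\pi_Z(x)$ by $\pi_Z(y)$ for every $Z$ whose $\Omega$-completion satisfies $\colsup{Z}{\Omega} \swarrow U$, so $\widehat{x}_U^{\Omega}$ depends on $y$ through the very coordinates you are trying to count. The paper handles this by encoding $\Omega$ as a labeled directed graph and counting points by a sequential induction over initial subsets (one-vertex enlargements): when a witness $U$ is added, the data determining its ``starting point'' has already been fixed by the earlier choices, and only then does one multiply by $\exp((\hNP(U)+\vepent)r_0)$ choices for the new resolution point, with finitely many choices of the subsurface $U$ itself. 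Since you explicitly defer the reconstruction step to Sections 11--12 of Dowdall--Masur, this is a misstatement rather than a fatal gap, but your flattened product should be replaced by (or justified via) that order-compatible enlargement argument; the resulting arithmetic is the same as what you wrote.
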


\begin{remark}
  The above theorem is a weaker version of the theorem that appears in \textcite{dowdall2023lattice}: their version does not have the $\veperr$.
  The reason we have the weaker version is that in the proof of their theorem, they count the number of net points in $\teich(V)$ in a ball of radius $R$, where $V$ is a witness, using Theorem \ref{thm:abem}, which gives them that the number of net points is equal, up to multiplicative error, to $\exp(\hNP(U) R)$.
  Since Theorem \ref{thm:abem} only holds orientable surfaces, and we want to state our results for non-orientable surfaces as well, we will need to use a weaker counting result to count net points in $\teich(V)$, namely the following bound, which holds for any $\vepent > 0$ and large enough $R$.
  \begin{align*}
    \#\left( y \in \net \mid d_{\teich(V)}(x, y) \leq R \right) \leq \exp\left( (\hNP(V) + \vepent ) R \right)
  \end{align*}
\end{remark}

We sketch out a proof of Theorem \ref{thm:counting-with-complexity} below: the proof proceeds identically to the proof in \textcite{dowdall2023lattice}, except at one point, where we plug in our weaker bound for net points in $\teich(V)$ for witnesses $V$.

\begin{proof}[Sketch of proof for Theorem \ref{thm:counting-with-complexity}]
  For each $y \in \net$ such that $\mathfrak{L}(x,y) \leq r$, we have a WISCAL witness family $\Omega$ such that $\mathfrak{L}_{\Omega}(x,y) \leq r$.
  We can turn that witness family into a graph in the following manner.
  \begin{itemize}
  \item Add a vertex for every witness $V \in \Omega$.
  \item Label the vertex associated with $V$ with the tuple $(\hNP^{\ast}(V), \lfloor d_{\teich(V)}(\widehat{x}_{V}^{\Omega}, \widehat{y}_{V}^{\Omega}) \rfloor)$.
  \item If we have a pair of witnesses $V \swarrow W$, we join the vertices associated to them with a directed edge labeled ``SW'': $V \xrightarrow{SW} W$.
  \item If we have a pair of witnesses $W \searrow V$, we join the vertices associated to them with a directed edge labeled ``SE'': $W \xrightarrow{SE} V$.
  \item If we have a pair of witnesses $W \pitchfork V$, with $W \lessdot V$, we join the vertices associated to them with a directed edge labelled ``P'': $W \xrightarrow{P} V$.
  \end{itemize}

  We first count how many distinct possibilities are there for such labeled graphs that correspond to $y$ for which $\mathfrak{L}(x,y) \leq r$.
  Since the cardinality of a WISCAL family is uniformly bounded, there are at most $k$ many vertices, for some constant $k$.
  As for the labels on the vertices, there are at most $\frac{r}{\hNP^{\ast}(V)}$ possibilities for a label on vertex which corresponds to a subsurface which is homeomorphic to $V$.
  From this, we conclude that there are at most $p(r)$ possibilities for the combinatorial type of the graph, where $p(r)$ is a polynomial in $r$.

  It will suffice to compute how many distinct net points give rise to witness families whose graph is of a given type.
  To do so, we consider \emph{initial subsets} of the graph, i.e.\ a subset $\mathcal{W}$ of the vertices $\mathcal{V}$ of the graph such that there is no directed edge from $\mathcal{V} \setminus \mathcal{W}$ to $\mathcal{W}$.

  Given an initial subset $\mathcal{W}$ of the graph, we construct points $y$ such that the witness family associated to $[x,y]$ has the combinatorial type $\mathcal{W}$.
  We then consider an enlargement of $\mathcal{W}$ by one-additional vertex $v$, such that the enlargement is still an initial subset.
  \begin{claim*}
    The entire graph $\mathcal{V}$ can be built up from such one-step enlargements.
  \end{claim*}

  We then count the number of net points whose associated witness families have the combinatorial type $\mathcal{W} \cup \left\{ v \right\}$, after we fix one witness family associated to $\mathcal{W}$.
  More concretely, let $w$ be a point such that the combinatorial type of the witness family associated to $[x,w]$ is $\mathcal{W}$.
  Suppose now that we add a vertex $(h, r_0)$ to the graph $\mathcal{W}$.
  To extend $\Omega(x,w)$ so that its combinatorial type is $\mathcal{W} \cup \left\{ (h, r_0) \right\}$, we need to add a witness $U$ whose net point entropy is $h$, and a point $y \in \teich(U)$ such that the following holds.
  \begin{align*}
    d_{\teich(U)}(\widehat{x}_U^{\Omega}, y) \leq r_0
  \end{align*}

  There are only finitely many choices for such subsurfaces $U$ (because their boundary curves must get short near $w$), and once we've made a choice of $U$, we have a choice $\exp\left( (\hNP(V) + \vepent) r_0 \right)$ points for $y$.

  Multiplying out the counts for each vertex added, we get the following estimate for the cardinality associated to each combinatorial type.
  \begin{align*}
    \#\left( y \mid \text{$\Omega(x,y)$ has combinatorial type $\mathcal{V}$} \right) &= \sum_{(h, s) \in \mathcal{V}} \exp\left( (h + \vepent)s \right) \\
    &\leq \exp\left( (1 + \veperr)r \right)
  \end{align*}
  We get the second inequality by picking $\vepent$ small enough, and observing that $\sum hs \leq r$.
\end{proof}

\subsection{Linear Gap for Bad Points}
\label{sec:linear-gap-bad}

In this subsection, we will prove our main result involving complexity length: on the complexity length of bad points.

\begin{theorem}
  \label{thm:linear-gap}
  Suppose that for all proper subsurfaces $V$ of $\os$, the following inequality holds.
  \begin{align*}
    \hNP(V) < \hNP(\os)
  \end{align*}
  Then for any $\vepb > 0$, there exists $c > 0$, and $R$ large enough, such that for any bad point $y$, i.e.\ a point in $\net_b(p, R, \vepb)$, the following upper bound holds for the complexity length between $p$ and $y$.
  \begin{align*}
    \mathfrak{L}(p, y) \leq \hNP(\os) (1 - c) R
  \end{align*}
\end{theorem}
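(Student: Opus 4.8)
The plan is to follow Dowdall--Masur's argument that bad points have deficient complexity length (Section~11 of \cite{dowdall2023lattice}), feeding in the hypothesis $\hNP(V)<\hNP(\os)$ for proper subsurfaces $V\sqsubset\os$. The three ingredients are: a bad point sits deep in the thin part; hence $[p,y]$ has a long \emph{terminal} thin segment whose complexity-length weight is governed only by proper subsurfaces and by annuli of weight at most $2$; and a contribution-set bookkeeping then converts ``small weight on a long segment'' into a linear gap.

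\textbf{Step 1: a bad point is deep.} Since $\teich(\os)_{\ge\vept}/\mcg(\os)$ is compact of diameter $D_0$, the orbit $\mcg(\os)\cdot p$ is $D_0$-dense in the $\vept$-thick part, so any $z$ with $d(z,\thick(\os))=\delta$ lies within $\delta+D_0+1$ of a lattice point. As $y\in\net_b(p,R,\vepb)$ is more than $\vepb R$ from every lattice point, $d(y,\thick(\os))\ge\vepb R-D_0-1$. By the product region theorem (Theorem~\ref{thm:prno}), $d(y,\thick(\os))$ equals, up to an additive constant, $\max_\gamma\log(1/\ell_\gamma(y))$, so there is a single curve $\alpha$ with $\log(1/\ell_\alpha(y))\ge\vepb R-O(1)$. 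Let $J=\{z\in[p,y]:\ell_\alpha(z)<\vept\}$; this is a connected subinterval containing $y$, hence a suffix $[z_0,y]$, and applying Theorem~\ref{thm:prno} in the $\mathbb{H}_\alpha$-coordinate gives $\ell(J)=d(z_0,y)\ge\log(1/\ell_\alpha(y))-\log(1/\vept)-O(1)\ge\vepb R-c_5$ for a constant $c_5$.

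\textbf{Step 2: the witness structure along $J$ is proper.} Fix a WISCAL witness family $\Omega=\Omega(p,y)$ (Proposition~\ref{prop:existence-of-wiscal}); since $\os\in\Upsilon(p,y)$ forces $\os\in\Omega$. Throughout $J$ the curve $\alpha$ is short, so $\pi_\os(z)$ stays within bounded $\cC(\os)$-distance of $\alpha$; thus $\os$ makes no progress on $\cC(\os)$ along $J$, so $J$ lies (up to $O(1)$) past the active interval of $\os$, and $d_\os(\cC(\alpha),y)=O(1)$. Any subsurface $Z$ transverse to $\alpha$ cannot be active on $J$ (its projection is frozen while $\alpha$ is short, and a crossing annulus cannot have short core alongside $\alpha$, by the collar lemma), and a non-annular subsurface containing $\alpha$ is likewise not active on $J$; so every active subsurface $Z$ with active interval meeting $J$ is either $A_\alpha$ or nested in $\os\setminus\alpha$, and in either case $d_\os(\cC(Z),y)\le 1+O(1)\le 9\mathbf{C}$. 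The insulation property (Definition~\ref{def:insulated}) applied with $E'=\os$ then puts each such $Z$ inside a topologically maximal proper witness $V\sqsubset\os$ with $d_\os(\cC(V),y)\le 9\mathbf{C}$, so $\colsup{Z}{\Omega}\sqsubseteq V\sqsubset\os$ is proper. Consequently, off the bad set, each point of $J$ lies in a unique $\mathcal{A}_W^\Omega$, which is either that of a proper non-annular witness (weight $\hNP(W)<\hNP(\os)$) or that of $A_\alpha$ itself (weight $\hNP^\ast(A_\alpha)\le 2$); here admissibility is essential, since it forbids the contribution sets of several overlapping annuli/product regions from stacking their entropies up to (or past) $\hNP(\os)$. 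Setting $h^-:=\max\bigl(2,\ \max_{V\sqsubset\os}\hNP(V)\bigr)$, which is $<\hNP(\os)$ by the hypothesis together with the direct verification (as in Lemma~\ref{lem:entropy-equality-base-case}) that $\hNP(\os)>2$ for the non-orientable surfaces to which the theorem is applied, the integrand $\sum_U\hNP^\ast(U)\,\mathbbm{1}_{\mathcal{A}_U^\Omega}$ is $\le h^-$ on the good part of $J$ and $\le\hNP(\os)$ on the good part of $[p,y]\setminus J$.

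\textbf{Step 3: assemble the estimate.} By Theorem~\ref{thm:contribution-set-resolution-distance} and limitedness,
\[
\mathfrak{L}(p,y)\le\mathfrak{L}_\Omega(p,y)\le\sum_{U\in\Omega}\hNP^\ast(U)\bigl(|\mathcal{A}_U^\Omega|+C\bigr)\le\int_p^y\ \sum_{U}\hNP^\ast(U)\,\mathbbm{1}_{\mathcal{A}_U^\Omega}\ +\ O(1).
\]
Admissibility bounds the total bad set $\mathcal{B}=\bigcup_V\mathcal{B}_V^\Omega$ by $|\Omega|\,d(p,y)/(K\mathbf{C})=O(R/\mathbf{C})$, on which the integrand is $O(1)$ pointwise; on the good part of $J$ (length $\ge\vepb R-c_5-O(R/\mathbf{C})$) it is $\le h^-$; and elsewhere $\le\hNP(\os)$. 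Hence
\[
\mathfrak{L}(p,y)\le\hNP(\os)R-\bigl(\hNP(\os)-h^-\bigr)\vepb R+c_1 R/\mathbf{C}+c_2 .
\]
Choosing $\mathbf{C}$ so large that $c_1/\mathbf{C}<\tfrac13(\hNP(\os)-h^-)\vepb$, and then $R$ so large that $c_2<\tfrac13(\hNP(\os)-h^-)\vepb R$, yields $\mathfrak{L}(p,y)\le\hNP(\os)(1-c)R$ with $c:=\tfrac{(\hNP(\os)-h^-)\vepb}{3\hNP(\os)}>0$, proving the theorem.

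\textbf{Main obstacle.} The crux is Step~2: showing that along the terminal thin segment $J$ the WISCAL witness structure is controlled by proper subsurfaces (and low-weight annuli) only. This is exactly where insulation, the Behrstock ordering, and admissibility must be combined to exclude $\os$ itself --- and to prevent several simultaneously active product regions from cancelling the gap --- and where the bound $h^-<\hNP(\os)$, resting partly on the base-case computation $\hNP(\os)>2$, is needed so that even a lone weight-$2$ annulus witness produces a genuine deficit.
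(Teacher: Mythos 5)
You take a genuinely different route from the paper, so let me first record the contrast: the paper never argues pointwise along $[p,y]$. It fixes a WISCAL family $\Omega$, introduces the resolution point $\widehat{y}_{\os}^{\Omega}$, bounds $\mathfrak{L}(p,\widehat{y}_{\os}^{\Omega})$ by Proposition \ref{prop:complexity-length-inequality}, bounds $\mathfrak{L}(\widehat{y}_{\os}^{\Omega},y)$ by $\left(h+\frac{K}{\mathbf{C}}\right)d_{\teich(\os)}(\widehat{y}_{\os}^{\Omega},y)+K\mathbf{C}$ for a single constant $h$ with $\max_{V}\hNP(V)<h<\hNP(\os)$ (possible because $\os$ is not active on $[\widehat{y}_{\os}^{\Omega},y]$, so a WISCAL family without the witness $\os$ exists), and then extracts the deficit from $d_{\teich(\os)}(\widehat{y}_{\os}^{\Omega},y)\ge\vepb R$, which holds because the resolution point is thick, hence within bounded distance of the orbit $\mcg(\os)\cdot p$, while $y$ is bad; strong alignment (Lemma 9.10 of \cite{dowdall2023lattice}) puts $\widehat{y}_{\os}^{\Omega}$ coarsely on $[p,y]$, so the two distances sum to roughly $R$. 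Your Step 1 is a sound variant of that last observation, but Steps 2--3 replace the resolution-point argument by a pointwise weight estimate along the terminal thin segment $J$.

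The genuine gap is in the deficit this produces. Along $J$ the only structure you can guarantee may be the annular witness $A_{\alpha}$ itself, whose weight is $\hNP^{\ast}(A_{\alpha})=2$ because the resolution of $y$ there is thin; hence your bound on the good part of $J$ is $h^{-}=\max\left(2,\max_{V}\hNP(V)\right)$ and your gap constant is proportional to $\hNP(\os)-h^{-}$. This silently adds the hypothesis $\hNP(\os)>2$, which is not part of Theorem \ref{thm:linear-gap}, is not proved anywhere in the paper, and cannot be ``verified as in Lemma \ref{lem:entropy-equality-base-case}'': that lemma yields $\hNP\in\{0,\tfrac{1}{2},1\}$ for the $\chi=-1$ surfaces and says nothing about the $\chi\le-2$ surfaces to which the inductive step actually applies the theorem. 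Indeed the induction invokes Theorem \ref{thm:linear-gap} for orientable subsurfaces such as $\os_{0,4,0}$, where $\hNP=2$ exactly, so $h^{-}=\hNP(\os)$ and your argument returns $c=0$; for the low-complexity non-orientable cases $\hNP(\core(\teich(\os)))$ is not computed in the paper and may also fail to exceed $2$. A secondary, fixable issue: your assertion that good points of $J$ escape $\mathcal{A}_{\os}^{\Omega}$ is not proved --- one needs that the $\Omega$-completion of $A_{\alpha}$ is a proper witness $W$, so that $J\subset C(W)\subset\mathcal{A}_{W}^{\Omega}$ and goodness then rules out membership in $\mathcal{A}_{\os}^{\Omega}$; the insulation and minimality argument for this should be written out, since $\mathcal{I}_{A_{\alpha}}^{\vept}$ need not lie in $M(\os)$ when $A_{\alpha}$ is not itself a witness. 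The paper routes the hypothesis through the single constant $h$ and the bound \eqref{eq:eq3p} on the sub-geodesic $[\widehat{y}_{\os}^{\Omega},y]$ rather than through a pointwise weight on $J$, so it does not make your extra inequality the explicit crux.
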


\begin{proof}

Let $\Omega$ be a WISCAL witness family for $[p, y]$: the proof of Theorem \ref{thm:linear-gap} splits into two cases depending on whether the surface $\os$ is a witness in $\Omega$ or not.

The case where $\os$ is a witness is harder, so we deal with that first.


We consider the triple of points $(p, \widehat{y}_{\os}^{\Omega}, y)$, and first estimate $\mathfrak{L}(p, \widehat{y}_{\os}^{\Omega})$.

By applying Proposition \ref{prop:complexity-length-inequality}, we get a bound for $\mathfrak{L}(p, \widehat{y}_{\os}^{\Omega})$.
\begin{align}
  \label{eq:eq2}
  \mathfrak{L}(p, \widehat{y}_{\os}^{\Omega}) \leq \left( \hNP(\os) + \frac{K}{\mathbf{C}} \right) (d_{\teich(\os)}(p, \widehat{y}_{\os}^{\Omega})) + K \mathbf{C}
\end{align}

We next estimate $\mathfrak{L}(\widehat{y}_{\os}^{\Omega}, y)$: we claim that there exists a WISCAL witness family $\Omega^{\prime}$ for $[\widehat{y}_{\os}^{\Omega}, y]$ that does not have $\os$ as a witness.
The first thing we need in order to get such a witness family is verify that on the Teichmüller geodesic $[\widehat{y}_{\os}^{\Omega}, y]$, $\os$ is not an active subsurface.
This follows from the fact that the $\os$-coordinate in the projection tuple of $y$ is equal to the $\os$-coordinate of the projection of $\widehat{y}_{\os}^{\Omega}$ to $\cC(\os)$, by construction of $\widehat{y}_{\os}^{\Omega}$.
We now use Proposition \ref{prop:existence-of-wiscal} to get a witness family which does not contain $\os$, since $\os$ is not an active subsurface.

Since the witness family $\Omega^{\prime}$ does not have $\os$ as a witness, we can do better than Proposition \ref{prop:complexity-length-inequality} when estimating $\mathfrak{L}(\widehat{y}_{\os}^{\Omega}, y)$.
We have from our hypothesis that $\hNP(\os) > \hNP(V)$, so there exists a constant $h$ such that $h < \hNP(\os)$ but $h > \hNP(V)$.
Using Theorem \ref{thm:contribution-set-resolution-distance}, we get the following estimate for $\mathfrak{L}(\widehat{y}_{\os}^{\Omega}, y)$.
\begin{align}
  \label{eq:eq3p}
\mathfrak{L}(\widehat{y}_{\os}^{\Omega}, y) \leq \left( h + \frac{K}{\mathbf{C}} \right) d_{\teich(\os)}(\widehat{y}_{\os}^{\Omega}, y) + K \mathbf{C}
\end{align}

From the triangle inequality for complexity length, we also have the following
\begin{align}
  \label{eq:eq4}
  \mathfrak{L}(p, y) \leq \mathfrak{L}(p, \widehat{y}_{\os}^{\Omega}) + \mathfrak{L}(\widehat{y}_{\os}^{\Omega}, y)
\end{align}
We plug in \eqref{eq:eq2} and \eqref{eq:eq3p} into \eqref{eq:eq4}.
\begin{align*}
  \mathfrak{L}(p, y) &\leq \left( \hNP(\os) + \frac{K}{\mathbf{C}} \right) (d_{\teich(\os)}(p, \widehat{y}_{\os}^{\Omega})) + \left( h + \frac{K}{\mathbf{C}} \right) d_{\teich(\os)}(\widehat{y}_{\os}^{\Omega}, y) +  2K \mathbf{C} \\
                     &= \left( \hNP(\os) + \frac{K}{\mathbf{C}} \right) (d_{\teich(\os)}(p, \widehat{y}_{\os}^{\Omega}) + d_{\teich(\os)}(\widehat{y}_{\os}^{\Omega}, y)) \\
                     &- \left( \hNP(\os) - h \right) d_{\teich(\os)}(\widehat{y}_{\os}^{\Omega}, y) \\
  &+  2K \mathbf{C}
\end{align*}

We now claim that $\widehat{y}_{\os}^\Omega$ is within a bounded distance of a point $q$ on $[p, y]$.
Consider the triple $(x, \widehat{y}_{\os}^{\Omega}, y)$: this is a strongly aligned tuple, i.e. its projections onto all curve complexes satisfy a coarse reverse triangle inequality (see Definition 3.21 of \cite{dowdall2023lattice}).
Lemma 9.10 of \cite{dowdall2023lattice} asserts the existence of $q$ on $[x,y]$ that satisfies the following properties.
\begin{itemize}
\item[-] For any subsurface $V$ such that $\overline{V}^{\Omega} = \os$, $d_V(\widehat{y}_{\os}^{\Omega}, q) \leq M$ for some fixed constant $M$.
\item[-] For any subsurfaces $V$ such that $\overline{V}^{\Omega} \swarrow \os$ or $\os \searrow \overline{V}^{\Omega}$, $q$ lies outside the active interval for $V$.
\end{itemize}
In the case where $\overline{V}^{\Omega} \swarrow \os$, we have that $d_V(q, y) \leq M$, since $q$ lies outside the active interval for $V$.
But note that by construction of the projection tuple, $\pi_V(\widehat{y}_{\os}^\Omega) = y$, thus $d_V(\widehat{y}_{\os}^\Omega, q) \leq M$.
Similarly, we have $d_V(\widehat{y}_{\os}^\Omega, q) \leq M$ for $\os \searrow \overline{V}^{\Omega}$ as well.
Thus, by Rafi's distance formula, we have that $\widehat{y}_{\os}^\Omega$ is within a bounded distance of $q$ on $[p, y]$: we have $d_{\teich(\os)}(p, \widehat{y}_{\os}^{\Omega}) + d_{\teich(\os)}(\widehat{y}_{\os}^{\Omega}, y) \leq R + J$, for some constant $J$.
Furthermore, $d_{\teich(\os)}(\widehat{y}_{\os}^{\Omega}, y) \geq \vepb R$, by the hypothesis of $y$ being a bad point, since $\widehat{y}_{\os}^{\Omega}$ is in the thick part of Teichmüller space.
This simplifies the expression for $\mathfrak{L}(x,y)$.
\begin{align*}
  \mathfrak{L}(x,y) &\leq \left( \hNP(\os) + \frac{K}{\mathbf{C}} \right) R - (\hNP(\os) - h)(\vepb) R + 2K \mathbf{C} \\
  &= R \left( \hNP(\os) - d + \frac{K}{\mathbf{C}} \right) + 2K\mathbf{C}
\end{align*}
Here, $d = \vepb \left( \hNP(\os) - h \right)$, which is a positive constant, since $\hNP(\os) > h$.
By picking $\mathbf{C}$ and $R$ large enough, we get the statement of the theorem, which proves the result in the first case of $\os$ being in the witness family.

When $\os$ is not in the witness family, we set $\widehat{y}_{\os}^{\Omega} = p$, and the rest of the proof follows identically.
\end{proof}


\appendix

\section{Geometry of $\teich(\no_g)$}
\label{sec:geom-of-teich}

In this section, we prove some standard results about the geometry of Teichmüller spaces of non-orientable surfaces that we use in Section \ref{sec:line-gap-compl}.
We do so by lifting the hyperbolic structures and markings on the non-orientable surfaces to their double covers, which give us points in the Teichmüller space and curve complex of the double cover.

The fact that these lifts are well-defined and respect the metric properties are encapsulated in the following two theorems.

\begin{theorem}[Isometric embedding of Teichmüller spaces (Theorem 2.1 of \cite{limitsetkhan})]
  \label{thm:i-embedding-teich-space}
  The map $i : \teich(\no_g) \to \teich(\os_{g-1})$ given by lifting the hyperbolic structure and marking from $\no_g$ to $\os_{g-1}$ is an isometric embedding.
  Furthermore, the image of $\teich(\no_g)$ in $\teich(\os_{g-1})$ is the subset of $\teich(\os_{g-1})$ is fixed by $\iota^{\ast}$, where $\iota^{\ast}$ is the map induced by the orientation reversing deck transformation $\iota$ on $\os_{g-1}$.
\end{theorem}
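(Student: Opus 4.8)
The plan is to realize $\teich(\no_g)$ as a deformation space of marked hyperbolic structures and then unwind the orientation double cover $p:\os_{g-1}\to\no_g$, whose deck transformation $\iota$ is a free, orientation-reversing involution. Given a marked hyperbolic structure $\sigma$ on $\no_g$, pulling the metric and the marking back along $p$ produces an $\iota$-invariant marked hyperbolic structure on $\os_{g-1}$, and this defines the map $i$. First I would check $i$ is well defined and injective: two structures with the same image have $\iota$-equivariantly isotopic lifts (take the isotopy through $\iota$-averaged maps, or appeal to uniqueness of geodesic representatives), and such an isotopy descends to $\no_g$.

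Next I would identify the image with the fixed locus $\mathrm{Fix}(\iota^{\ast})$. One inclusion is immediate, since $i(\sigma)$ is $\iota$-invariant by construction. For the converse, given $\tau$ with $\iota^{\ast}\tau=\tau$, the mapping class $\iota$ is realized by an isometry $j$ of $(\os_{g-1},\tau)$; because the isometry group of a closed hyperbolic surface injects into the mapping class group we get $j^2=\mathrm{id}$, and since $j$ is isotopic to the free involution $\iota$ the quotient $(\os_{g-1},\tau)/j$ is a closed hyperbolic surface homeomorphic to $\no_g$, which together with the pushed-down marking gives a $\sigma$ with $i(\sigma)=\tau$. (If one instead \emph{defines} $\teich(\no_g)$ as this fixed locus, this step is vacuous and the content shifts entirely to the metric statement.)

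The substantive point is that $i$ is isometric for the Teichmüller metrics. Here $d_{\teich(\no_g)}(\sigma_1,\sigma_2)=\tfrac12\inf\log K(f)$ ranges over quasiconformal homeomorphisms $f$ in the correct marking class; since dilatation is an orientation-independent notion this makes sense on the non-orientable surface $\no_g$. The easy inequality $d_{\teich(\os_{g-1})}(i\sigma_1,i\sigma_2)\le d_{\teich(\no_g)}(\sigma_1,\sigma_2)$ follows by lifting: $p$ is a local isometry, so the canonical lift $\tilde f$ of $f$ has the same dilatation and lies in the correct class. The reverse inequality is the main obstacle, and I would handle it by a symmetrization-plus-uniqueness argument: let $g:(\os_{g-1},i\sigma_1)\to(\os_{g-1},i\sigma_2)$ be the extremal map, which exists and is \emph{unique} because $\os_{g-1}$ is closed of genus $\ge 2$. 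Since $i\sigma_1$ and $i\sigma_2$ are both $\iota$-invariant and $\iota$ is anticonformal, the conjugate $\iota\circ g\circ\iota^{-1}$ is again a quasiconformal map in the same class with the same dilatation, so uniqueness forces $\iota\circ g\circ\iota^{-1}=g$. Thus $g$ is $\iota$-equivariant, descends to a quasiconformal $\bar g$ on $\no_g$ of the same dilatation in the correct marking class, and hence $d_{\teich(\no_g)}(\sigma_1,\sigma_2)\le \tfrac12\log K(g)=d_{\teich(\os_{g-1})}(i\sigma_1,i\sigma_2)$. The one thing to track carefully throughout is the bookkeeping of marking and isotopy classes under lifting and descending; the crux of the argument, however, is the use of uniqueness of the extremal quasiconformal map on the orientable double cover.
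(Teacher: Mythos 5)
Your proposal is correct, and it is essentially the standard argument for this statement, which the present paper does not reprove but imports as Theorem 2.1 of \cite{limitsetkhan}: the easy inequality by lifting quasiconformal maps along the local isometry $p$, and the reverse inequality by conjugating the unique Teichm\"uller extremal map on $\os_{g-1}$ by the anticonformal deck involution $\iota$ and invoking Teichm\"uller uniqueness to force $\iota$-equivariance and descent to $\no_g$. The only step you should make explicit is in the identification of the image with $\mathrm{Fix}(\iota^{\ast})$: knowing that the isometry $j$ realizing the class of $\iota$ is an involution isotopic to $\iota$ does not by itself give that $j$ is fixed-point free (its fixed set could a priori be a union of geodesics), so you need the classical fact that isotopic finite-order homeomorphisms of a closed surface are conjugate by a homeomorphism isotopic to the identity, after which the quotient $(\os_{g-1},\tau)/j$ is indeed homeomorphic to $\no_g$ and the converse inclusion follows as you say.
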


\begin{theorem}[Quasi isometric embedding of curve complexes (Lemma 6.3 from \cite{masur2013geometry})]
  \label{thm:qi-embedding-curve-complex}
  The map $\cC(\no_g) \to \cC(\os_{g-1})$ obtained by lifting curves in $\no_g$ to $\os_{g-1}$ is a quasi-isometric embedding.
\end{theorem}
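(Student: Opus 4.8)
The plan is to realize the lifting map $\cC(\no_g) \to \cC(\os_{g-1})$, which sends a curve $\gamma$ to its preimage $p^{-1}(\gamma)$ under the orientation double cover $p$, as a coarse section of the (coarsely defined) quotient map going the other way, and to exploit the fact that the deck group $\langle \iota \rangle$ is finite. First I would recall the setup: $p^{-1}(\gamma)$ is either a single curve (when $\gamma$ is one-sided) or a pair of disjoint curves swapped by $\iota$ (when $\gamma$ is two-sided), so to get an honest map to $\cC(\os_{g-1})$ one picks, in the two-sided case, one of the two components; any two such choices differ by an element of the finite group generated by $\iota$, hence lie within bounded distance in $\cC(\os_{g-1})$, so the map is coarsely well-defined. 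The Lipschitz direction is the easy one: if $d_{\cC(\no_g)}(\gamma, \gamma') = 1$, i.e.\ $\gamma$ and $\gamma'$ are disjoint on $\no_g$, then their preimages are disjoint on $\os_{g-1}$, so each chosen lift of $\gamma$ is disjoint from, or equal to, a chosen lift of $\gamma'$, giving $d_{\cC(\os_{g-1})}(\tilde\gamma, \tilde\gamma') \le C$ for a uniform $C$ (accounting for the choice ambiguity); concatenating along a geodesic in $\cC(\no_g)$ yields $d_{\cC(\os_{g-1})}(\tilde\gamma,\tilde\gamma') \le C\, d_{\cC(\no_g)}(\gamma,\gamma')$.

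The substantive direction is the lower bound: $d_{\cC(\no_g)}(\gamma, \gamma') \le K\, d_{\cC(\os_{g-1})}(\tilde\gamma, \tilde\gamma') + K$. Here I would use the coarse retraction $\rho : \cC(\os_{g-1}) \to \cC(\no_g)$ defined by sending a curve $\delta \subset \os_{g-1}$ to (a component of) $p(\delta)$; one must first check this is coarsely well-defined and coarsely Lipschitz. The key geometric input is that if $\delta$ and $\delta'$ are disjoint curves on $\os_{g-1}$, then $p(\delta)$ and $p(\delta')$ have bounded intersection number on $\no_g$ — indeed $i(p(\delta), p(\delta')) \le i(\delta \cup \iota\delta, \delta' \cup \iota\delta') \le 4$ since $p(\delta)$ is covered by $\delta \cup \iota\delta$ — and bounded geometric intersection number implies bounded curve-complex distance (this is the standard fact that $d_{\cC}(a,b) \le 2 + 2\log_2 i(a,b)$, due to Hempel / Bowditch, valid for non-orientable surfaces as well since it only uses surgery arguments). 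Thus $\rho$ is coarsely Lipschitz, and since $\rho \circ (\text{lift})$ is the identity on $\cC(\no_g)$ up to the bounded ambiguity of $p(p^{-1}(\gamma))$, composing gives the desired lower bound and hence the quasi-isometric embedding.

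The main obstacle I anticipate is purely bookkeeping around the two-valued nature of the lift on two-sided curves and the corresponding ambiguity in $\rho$: one must be careful that "bounded distance" estimates survive the non-canonical choices, which amounts to observing that all the ambiguity is absorbed by the action of the finite group $\langle\iota\rangle$ and is therefore uniformly bounded. A secondary point requiring a remark rather than real work is confirming that the intersection-number-to-distance bound, the connectivity of $\cC(\no_g)$, and the basic surgery lemmas all hold for non-orientable surfaces of sufficiently high complexity — these are in the literature (e.g.\ Masur–Schleimer, Scharlemann) and I would simply cite them. Since the statement is attributed to Lemma~6.3 of \cite{masur2013geometry}, I would ultimately present this as a sketch and defer to that reference for the details of the choice-of-lift technicalities.
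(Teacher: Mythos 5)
The paper does not prove this statement at all: it is quoted as Lemma~6.3 of \cite{masur2013geometry}, so the only fair comparison is between your sketch and the argument that such a citation stands in for. Your Lipschitz (upper-bound) direction is fine: disjoint curves on $\no_g$ have disjoint full preimages, so the lift map is coarsely $1$-Lipschitz. The problem is the lower bound, which is the entire content of the theorem, and there your key estimate is false. You claim that if $\delta,\delta'$ are disjoint on $\os_{g-1}$ then
\begin{align*}
  i\bigl(p(\delta),p(\delta')\bigr)\;\le\; i\bigl(\delta\cup\iota\delta,\;\delta'\cup\iota\delta'\bigr)\;\le\;4 ,
\end{align*}
but the middle quantity equals $i(\delta,\delta')+i(\iota\delta,\iota\delta')+2\,i(\delta,\iota\delta')=2\,i(\delta,\iota\delta')$, and $i(\delta,\iota\delta')$ is unbounded: a generic curve $\delta$ upstairs is not $\iota$-invariant and can intersect $\iota\delta'$ (even $\iota\delta$) arbitrarily many times. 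This matters precisely because the vertices of a geodesic in $\cC(\os_{g-1})$ joining two lifted (hence $\iota$-invariant) multicurves are themselves arbitrary, non-invariant curves, so your coarse retraction $\rho$ is not coarsely Lipschitz along the very geodesics you need to push down. (It is not even well defined as stated, since $p(\delta)$ is usually a non-simple closed curve, and the natural repairs --- taking a boundary curve of a neighborhood of $p(\delta)$, or a coarse barycenter of $\{\delta,\iota\delta\}$ --- fail exactly when $d(\delta,\iota\delta)$ is large, i.e.\ in the cases that carry the content of the theorem.)

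This is not a bookkeeping issue about two-valued lifts; the absence of any Lipschitz projection $\cC(\os_{g-1})\to\cC(\no_g)$ is why the result is nontrivial. The known proofs (Rafi--Schleimer's ``Covers and the curve complex'' for general covers, and the symmetric-cover argument behind Lemma~6.3 of \cite{masur2013geometry}) do not retract; they bound $d_{\cC(\no_g)}(\gamma,\gamma')$ from above by $d_{\cC(\os_{g-1})}(\wt{\gamma},\wt{\gamma}')$ using Teichm\"uller geodesics and their curve-complex shadows (or hierarchy/subsurface-projection machinery), exploiting that the lifted Teichm\"uller geodesic is $\iota$-invariant even though curve-complex geodesics upstairs are not. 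So as written your proposal has a genuine gap in the essential direction, and you should either import one of those arguments or simply cite the reference, as the paper does.
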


We will use the above two theorems, along with Lemma \ref{lem:lifting-subsurfaces}, to reduce statements about the geometry of $\teich(\no_g)$ to statements about the geometry of $\teich(\os_{g-1})$.
However, we postpone the statement and the proof of Lemma \ref{lem:lifting-subsurfaces} until Section \ref{sec:dist-form-teichm}, since it's not required for Section \ref{sec:minskys-prod-regi}.

We set up some notation for this section.
\begin{itemize}
\item[-] $d(x,y)$ and $d(\wt{x}, \wt{y})$: Given points $x$ and $y$ in $\teich(\no_g)$, $d(x, y)$ is the distance in Teichmüller metric between them, and $d(\wt{x}, \wt{y})$ is the distance in $\teich(\os_{g-1})$ between their images, $\wt{x}$ and $\wt{y}$.
\item[-] $\pi_V(\mu_x)$ and $\pi_V(x)$: If $\mu_x$ is a marking/curve on a surface, the $\pi_V(\mu_x)$ denotes the subsurface projection to the subsurface $V$. If $x$ is a point in the Teichmüller space, the $\pi_V(x) = \pi_V(\mu_x)$, where $\mu_x$ is the Bers marking on $x$.
\item[-] $d_V(\mu_x, \mu_y)$ and $d_V(x, y)$: If $\mu_x$ and $\mu_y$ are markings/curves on a surface, and $V$ is a subsurface, then $d_V(\mu_x, \mu_y)$ refers to the curve complex distance between the subsurface projections of $\mu_x$ and $\mu_y$ in $\cC(V)$.
  When $x$ and $y$ are points in Teichmüller space, $d_V(x,y)$ refers to $d_V(\mu_x, \mu_y)$, where $\mu_x$ and $\mu_y$ are the Bers marking on $x$ and $y$.
\end{itemize}


\subsection{Minsky's Product Region Theorem}
\label{sec:minskys-prod-regi}

In this section, we prove a version of Minsky's product region theorem \cite[Theorem 6.1]{1077244446} for non-orientable surfaces.

We recall the following objects that were defined in Section \ref{sec:weak-conv-syst}.

\begin{enumerate}[(i)]
\item The multicurve $\gamma$ on $\no_g$.
\item The metric space $X_\gamma$, and the projection map $\Pi$.
\item The thin region $\teich_{\gamma \leq \vept}(\no_g)$.
\end{enumerate}

\begin{theorem}[Product region theorem for non-orientable surfaces]
  \label{thm:prno}
  For any $c >0$, there exists a small enough $\vept > 0$, such that the restriction of $\Pi$ to $\teich_{\gamma \leq \vept}(\no_g)$ is an isometry with additive error at most $c$, i.e.\ the following holds for any $x$ and $y$ in $\teich_{\gamma \leq \vept}(\no_g)$.
  \begin{align*}
    \left| d(x, y) - d_{X_{\gamma}}(\Pi(x), \Pi(y)) \right| \leq c
  \end{align*}
\end{theorem}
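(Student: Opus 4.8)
The plan is to deduce the non-orientable product region theorem from the orientable one (Minsky's Theorem 6.1 of \cite{1077244446}) via the isometric embedding $i\colon\teich(\no_g)\hookrightarrow\teich(\os_{g-1})$ of Theorem \ref{thm:i-embedding-teich-space}. The key point is understanding how the multicurve $\gamma$ on $\no_g$ lifts to $\os_{g-1}$: a two-sided curve $\gamma_i$ ($i\le j$) lifts to two disjoint curves $\widetilde{\gamma_i},\iota(\widetilde{\gamma_i})$ of the same length, while a one-sided curve $\gamma_i$ ($i>j$) lifts to a single $\iota$-invariant curve $\widetilde{\gamma_i}$ of twice the length. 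Let $\widetilde{\gamma}$ denote the resulting multicurve upstairs. If $x\in\teich_{\gamma\le\vept}(\no_g)$, then $i(x)$ lies in $\teich_{\widetilde{\gamma}\le 2\vept}(\os_{g-1})$, so Minsky's theorem applies upstairs with the thinness threshold $2\vept$, giving an isometry up to additive error $c'$ between $\teich_{\widetilde{\gamma}\le 2\vept}(\os_{g-1})$ and the sup-product $\teich(\os_{g-1}\setminus\widetilde{\gamma})\times\prod_{\widetilde{\gamma_i}}\mathbb{H}_i$, where each factor records the length and twist around $\widetilde{\gamma_i}$.

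First I would identify the image of $i(\teich_{\gamma\le\vept}(\no_g))$ inside this product. By Theorem \ref{thm:i-embedding-teich-space}, the image of $\teich(\no_g)$ is the fixed locus of $\iota^\ast$; since the product region coordinates upstairs are $\iota^\ast$-equivariant (the deck transformation $\iota$ swaps the two lifts $\widetilde{\gamma_i},\iota(\widetilde{\gamma_i})$ of a two-sided curve and preserves the lift of a one-sided curve), the fixed locus intersected with the product region is exactly a product of fixed sub-loci of the factors. For a two-sided $\gamma_i$, the pair of factors $\mathbb{H}_{\widetilde{\gamma_i}}\times\mathbb{H}_{\iota\widetilde{\gamma_i}}$ has its $\iota^\ast$-fixed locus the diagonal, isometric (in the sup-metric, which is the max of the two) to a single copy of $\mathbb{H}$---this matches the $\mathbb{H}_i$ factor in $X_\gamma$ carrying the length and twist of $\gamma_i$. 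For a one-sided $\gamma_i$, the single factor $\mathbb{H}_{\widetilde{\gamma_i}}$ has $\iota$ acting as an orientation-reversing involution on $\os_{g-1}$; crucially there is no twist parameter around a one-sided curve on $\no_g$ (a neighborhood of $\gamma_i$ is a M\"obius band), so the $\iota^\ast$-fixed locus in $\mathbb{H}_{\widetilde{\gamma_i}}$ is the geodesic line along which the twist vanishes, which is isometric to the $(\mathbb{R}_{>0})_i$ factor of $X_\gamma$ with its metric $|\log(x/y)|$---and the factor of $2$ between $\ell_{\no_g}(\gamma_i)$ and $\ell_{\os_{g-1}}(\widetilde{\gamma_i})$ only contributes a bounded additive shift after taking logarithms, hence is absorbed into the additive error. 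Finally, the $\iota^\ast$-fixed locus of $\teich(\os_{g-1}\setminus\widetilde{\gamma})$ is $\teich(\no_g\setminus\gamma)$, isometrically, again by Theorem \ref{thm:i-embedding-teich-space} applied to the (possibly disconnected) complementary subsurface.

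Assembling these identifications, and using that $i$ is isometric so $d(x,y)=d(\wt x,\wt y)$, I would conclude
\begin{align*}
  \bigl| d(x,y) - d_{X_\gamma}(\Pi(x),\Pi(y)) \bigr| \le c' + c''
\end{align*}
where $c'$ is Minsky's additive error for threshold $2\vept$ and $c''$ absorbs the bounded length-doubling shifts; choosing $\vept$ small enough (equivalently $2\vept$ small enough for Minsky's theorem) makes the right-hand side at most the prescribed $c$. The main obstacle I expect is the bookkeeping in the second paragraph: one must check carefully that the sup-product metric on $X_\gamma$ (with the $\mathbb{R}_{>0}$ factors for one-sided curves) genuinely agrees, up to bounded additive error, with the restriction of the sup-product metric upstairs to the $\iota^\ast$-fixed diagonal/axis sub-loci---in particular that restricting the hyperbolic metric on $\mathbb{H}_{\widetilde{\gamma_i}}$ to the zero-twist vertical line gives precisely the stated metric $|\log(x/y)|$ on the length coordinate, and that the diagonal embedding $\mathbb{H}\hookrightarrow\mathbb{H}\times\mathbb{H}$ is isometric for the sup-metric. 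These are elementary but need to be stated cleanly, and one should also confirm that $\Pi$ as defined on $\teich(\no_g)$ is literally the composite of $i$ with the upstairs product-region projection followed by these factor-wise isometries, so that no hidden coarseness is introduced.
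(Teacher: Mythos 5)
Your proposal is correct and follows essentially the same route as the paper: pass to the orientation double cover via the isometric embedding, apply Minsky's product region theorem upstairs, and identify the $\iota^{\ast}$-fixed locus factor-by-factor (diagonal in $\mathbb{H}\times\mathbb{H}$ for two-sided curves, the zero-twist length line for lifts of one-sided curves, and $\teich(\no_g\setminus\gamma)$ for the complementary piece). The one point you flag as needing care — that no $\iota^{\ast}$-invariant twisting is possible around the lift of a one-sided curve — is exactly what the paper verifies explicitly, using a one-sided curve $\kappa$ intersecting $\gamma_i$ once to detect any twist; note also that the length-doubling under lifting costs nothing, since the $\mathbb{R}_{>0}$ metric $\left|\log(x/y)\right|$ only sees ratios.
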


\begin{proof}
  We will prove this result by reducing the distance calculation in $\teich(\no_g)$ to a distance calculation in $\teich(\os_{g-1})$, where $\os_{g-1}$ is the orientation double cover, and invoking the classical product region theorem in that setting.

  We begin the proof by constructing some points in $\teich(\os_{g-1})$ and a multicurve on $\os_{g-1}$.
  Recall that $\teich(\no_g)$ isometrically embeds inside $\teich(\os_{g-1})$: let $\wt{x}$ and $\wt{y}$ denote the points in $\teich(\os_{g-1})$ that are the images of $x$ and $y$ under the embedding.
  Let $\wt{\gamma}$ denote the lift of the multicurve $\gamma$: if $\gamma_i$ is a two-sided curve, it will have two disjoint lifts in the cover, and if $\gamma_i$ is a one-sided curve, it will have single lift in the double cover.
  We have that the region $\teich_{\wt{\gamma} \leq \vept}(\os_{g-1}) \subset \teich(\os_{g-1})$ intersects the image of $\teich(\no_g)$ at the image of $\teich_{\gamma \leq \vept}(\no_g) \subset \teich(\no_g)$.
  Let $\iota$ denote the orientation reversing deck transformation on $\os_{g-1}$ which corresponds to the covering map.

  \begin{claim*}
    Let $\Pi_k$ denote the projection map from $\teich(\no_g)$ to the $k$\textsuperscript{th} component of $X_\gamma$, and $\wt{\Pi_k}$ denote the projection map from $\teich(\os_{g-1})$ to the lift of the $k$\textsuperscript{th} component of $\gamma$ to $\os_{g-1}$.
    This map is an isometric embedding.
    \begin{align*}
      d(\Pi_k(x), \Pi_k(y)) = d(\wt{\Pi_k}(\wt{x}), \wt{\Pi_k}(\wt{y}))
    \end{align*}
  \end{claim*}

  \begin{proof}[Proof of claim]
  We need to verify the claim on the three kinds of factors of $X_\gamma$.
  \begin{enumerate}[(i)]
  \item $\no_g \setminus \gamma$: The lift of $\no_g \setminus \gamma$ to $\os_{g-1}$ will have two components if $\no_g \setminus \gamma$ is orientable, which we call $S_1$ and $S_2$. Both $S_1$ and $S_2$ are homeomorphic to $\no_g \setminus \gamma$.
    If $\no_g \setminus \gamma$ is non-orientable, then its lift in $\os_{g-1}$ is the orientation double cover.

    In the first case, $\teich(\no_g \setminus \gamma)$ maps to the diagonal subspace in $\teich(S_1) \times \teich(S_2)$, and the metric on $\teich(S_1) \times \teich(S_2)$ is the $\sup$ metric.
    The space $\teich(\no_g)$ maps to the diagonal subspace because its image must be invariant under the map $\iota$, which isometrically swaps $S_1$ and $S_2$.
    This map is an isometric embedding, and thus for any points $x$ and $y$ in $\teich(\no_g \setminus \gamma)$, the distance between their images in $\teich(S_1) \times \teich(S_2)$ is the same as the distance in $\teich(\no_g \setminus \gamma)$.

    In the second case, we have that $\teich(\no_g \setminus \gamma)$ also isometrically embeds inside the Teichmüller space of its double cover, by Theorem \ref{thm:i-embedding-teich-space}, so the claim follows.
  \item $\gamma_i$ (for $\gamma_i$ two-sided): The lift of $\gamma_i$ in this case are two disjoint curves on $\os_{g-1}$, which are swapped by the deck transformation $\iota$.
    This means the $\mathbb{H}$-coordinate given by length and twist of $\gamma_i$ maps to the diagonal in $\mathbb{H} \times \mathbb{H}$, which correspond the length and twist around the two lifts.
    Since $\mathbb{H}$ mapped to the diagonal in $\mathbb{H} \times \mathbb{H}$ is an isometric embedding with $\sup$ metric, the claim follows in this case.
  \item $\gamma_i$ (for $\gamma_i$ one-sided): The lift of $\gamma_i$ in this case is a single curve $\wt{\gamma_i}$ on $\os_{g-1}$ which is left invariant by the deck transformation $\iota$.
    We will show that the twist coordinate around $\wt{\gamma_i}$ cannot be changed without leaving the image of $\teich(\no_g)$ in $\teich(\os_{g-1})$, i.e.\ any $\wt{x}$ and $\wt{y}$ have the same twist coordinate around $\wt{\gamma_i}$.
    Once we have established that, the claim will follow, since only the length coordinate of $\gamma_i$ can be changed, which corresponds to $\mathbb{R}_{>0}$.

    Suppose now that $x$ is a point in $\teich(\no_{g})$ and $\wt{x}$ the corresponding point in $\teich(\os_{g-1})$.
    Consider a pants decomposition on $\no_g$ that contains $\gamma_i$ as one of the curves. There is a unique one-sided curve $\kappa$ that intersects $\gamma_i$ and does not intersect any of the other pants curves.
    Let $\wt{\kappa}$ be the lift of $\kappa$ to $\os_{g-1}$: we will use this curve to measure twisting around $\wt{\gamma_i}$.
    Let $x^{\prime}$ be another point in $\teich(\os_{g-1})$ obtained by taking $\wt{x}$, and twisting by some amount around $\wt{\gamma_i}$, without changing the length of $\wt{\gamma_i}$.
    On $x^{\prime}$, the length of $\wt{\kappa}$ will be different from the length on $\wt{x}$.
    However, this means that $x^{\prime}$ is not contained in the image of $\teich(\no_g)$, since if it were, the length of $\wt{\kappa}$ would have to be the same, since that's the lift of the curve $\kappa$, whose length only depends on the length of $\gamma_i$.
  \end{enumerate}
  \end{proof}

  The following equality follows from the claim.
  \begin{align*}
    d_{X_{\wt{\gamma}}}(\Pi(\wt{x}), \Pi(\wt{y})) = d_{X_{{\gamma}}}(\Pi({x}), \Pi({y}))
  \end{align*}
  We also have that $\teich(\no_g)$ isometrically embeds into $\teich(\os_{g-1})$.
  \begin{align*}
    d(\wt{x}, \wt{y}) = d(x, y)
  \end{align*}
  And finally, have that the region $\teich_{\wt{\gamma} \leq \vept}(\os_{g-1}) \subset \teich(\os_{g-1})$ intersects the image of $\teich(\no_g)$ at the image of $\teich_{\gamma \leq \vept}(\no_g) \subset \teich(\no_g)$.
  Combining these three facts, and applying Minsky's product region theorem for orientable surfaces, the result follows.
\end{proof}


\subsection{Uniform Bounds for the Volume of a Ball}
\label{sec:unif-bounds-volume}


In this section, , we show that the for balls of fixed radius in $\core(\teich(\no_g))$, the $\nu_N$ volume of the ball is bounded above and below by constants that are independent of the center of the ball.

Let $\mathcal{P}$ be a pants decomposition for $\no_g$: recall the formula for $\nu_N$.

\begin{align*}
  \nu_N = \left( \bigwedge_{\text{$\gamma_i$ one-sided}} \coth(\ell(\gamma_i)) d\ell(\gamma_i) \right) \wedge \left( \bigwedge_{\text{$\gamma_i$ two-sided}} d\tau(\gamma_i) \wedge d\ell(\gamma_i) \right)
\end{align*}
Here $\ell(\gamma_i)$ denotes the length of the curve $\gamma_i$, and $\tau(\gamma_i)$ denotes the twist, when $\gamma_i$ is two-sided.

\begin{proposition}
  \label{prop:uniform-volume-bound}
  For any $\kappa > 0$, and $\vept > 0$ small enough, there exist positive constants\footnote{We will also consider $c_1$ and $c_2$ as functions of $\kappa$ elsewhere in the paper.} $c_1$ and $c_2$ (depending only on $\kappa$ and $\vept$) such the $\nu_N$ volume of a ball $B_{\kappa}^{\vept}(x)$ of radius $\kappa$ centered at $x \in \systole(\no_g)$ are bounded below and above by $c_1$ and $c_2$.
  \begin{align*}
    c_1 \leq \nu_N(B_{\kappa}^{\vept}(x)) \leq c_2
  \end{align*}
\end{proposition}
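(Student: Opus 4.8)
The plan is to reduce everything to a product region computation via Theorem \ref{thm:prno} and the explicit form of $\nu_N$. First I would observe that the claim has two regimes depending on where $x$ sits. If $x$ lies in a fixed-size neighbourhood of the compact part $\thick(\no_g)/\mcg(\no_g)$, then the ball $B_{\kappa}^{\vept}(x)$ is (up to bounded distortion coming from Theorem \ref{thm:weak-convexity} and the fact that $\nu_N$ is $\mcg(\no_g)$-invariant) contained in and contains a ball in a fixed compact region where the density of $\nu_N$ with respect to Fenchel--Nielsen coordinates is bounded above and below; upper and lower semicontinuity of the volume function on this compact set then yields the constants $c_1, c_2$. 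So the real content is the case where $x$ lies deep in the thin part, where some collection of two-sided curves $\gamma = \{\gamma_1, \dots, \gamma_j\}$ has length less than $\vept$; here one must check that the $\coth$ factors and the shrinking of the twist/length coordinates conspire to keep the volume bounded.

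The key step is to pick $\vept$ small enough (via Theorem \ref{thm:prno}) that $B_{\kappa}^{\vept}(x)$ is sandwiched between products of balls: $\prod_i B_{\kappa - c}(\mathbb{H}_i) \times B_{\kappa - c}(\core(\teich(\no_g\setminus\gamma)))$ inside $B_\kappa^\vept(x)$ inside the analogous product of balls of radius $\kappa + c$. Then I would show that the measure $\nu_N$ restricted to such a product region is, up to a bounded multiplicative constant, the product of the hyperbolic area measures on the $\mathbb{H}_i$ factors (for the two-sided short curves $\gamma_i$, the coordinate is $(\tau(\gamma_i), 1/\ell(\gamma_i))$ and $d\tau \wedge d\ell = \ell^2 \, d\tau \wedge d(1/\ell)$ recovers exactly the hyperbolic area form on $\mathbb{H}_i$, while the $\coth$ factors for one-sided curves are bounded above and below away from $\ell = 0$ by the systole constraint, since no one-sided curve can be shorter than $\vept$ in $\systole(\no_g)$) times the Teichmüller-metric volume measure on the lower-complexity factor $\core(\teich(\no_g\setminus\gamma))$. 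This product-of-measures identity is the equation I would label and reference later as \eqref{eq:measure-of-product-is-product-of-measure}. The hyperbolic area of a metric ball of radius $\kappa \pm c$ in $\mathbb{H}$ is a universal function of $\kappa$ (independent of center), and by downward induction on complexity the volume of a $d_\vept$-ball of radius $\kappa \pm c$ in $\core(\teich(\no_g\setminus\gamma))$ is bounded above and below independent of center; multiplying these bounds over the finitely many topological types of $\gamma$ gives the uniform $c_1$ and $c_2$.

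The main obstacle I expect is the bookkeeping in the product-of-measures step: verifying that under the Fenchel--Nielsen coordinates adapted to a pants decomposition containing $\gamma$, the form $\nu_N$ genuinely factors (up to bounded multiplicative error) as the product of the $\mathbb{H}_i$-area forms and the volume form on the complementary Teichmüller space, and that the $\coth(\ell(\gamma_i))$ weights on the one-sided factors — which do \emph{not} shrink to zero only because of the $\systole$ constraint — stay within a fixed band. One subtlety is that the volume form $\nu_N$ on $\core(\teich(\no_g \setminus \gamma))$ appearing in the factorization is expressed via the Norbury form of the \emph{cut} surface, which may itself be non-orientable with its own one-sided curves; the induction must therefore be carried out over all surfaces $\os_{g,b,c}$ of a given Euler characteristic simultaneously, with the base case being the finitely many surfaces with $\chi = -1$ where $\core(\teich(\cdot))$ is either compact or a horoball in $\mathbb{H}$, both of which have center-independent ball volumes by direct inspection. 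The remaining care is simply to ensure the additive constant $c = c(\vept)$ from Theorem \ref{thm:prno} is absorbed correctly, which only weakens the constants $c_1, c_2$ by a further bounded factor.
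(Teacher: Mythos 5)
Your proposal follows essentially the same route as the paper's proof: bound the $\coth$ factors via the systole constraint, handle the thick part by cocompactness, and in the thin part use Theorem \ref{thm:prno} to sandwich the ball between products of balls, factor $\nu_N$ as in \eqref{eq:measure-of-product-is-product-of-measure} (with the $d\tau\wedge d\ell$ factor being the hyperbolic area form, hence center-independent ball volume), and induct on complexity for the cut surface. The only cosmetic difference is that you treat all short two-sided curves simultaneously with several $\mathbb{H}_i$ factors, while the paper cuts along one short curve and absorbs the rest into the inductive step; both work.
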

\begin{proof}
  Note that since the points we are considering lie in $\systole(\no_g)$, we have the following upper bound and lower bound for $\coth(\ell(\gamma_i))$, where $\gamma_i$ is a one sided curve.
  \begin{align}
    \label{eq:coth-bounds}
    1 \leq \coth(\ell(\gamma_i)) \leq \coth(\vept)
  \end{align}
  In particular, the $\nu_N$ volume of a ball can be bounded above and below by $\coth(\vept)\nu_N^{\prime}$ and $\nu_N^{\prime}$, where $\nu_N^{\prime} = \left( \bigwedge_{\text{$\gamma_i$ one-sided}} d\ell(\gamma_i) \right) \wedge \left( \bigwedge_{\text{$\gamma_i$ two-sided}} d\kappa(\gamma_i) \wedge d\ell(\gamma_i) \right)$.

  We now split up $\systole(\no_g)$ into two regions: $\thick(\no_g)$, and the complementary region.
  Since $\mcg(\no_g)$ acts cocompactly on $\thick(\no_g)$, and $\nu_N(B_{\kappa}^{\vept}(x))$ is continuous in $x$, the desired bounds hold in this region.
  It will therefore suffice to prove the bounds in the complementary region.

  Note that for any $x$ in the complementary region, there is some two-sided curve $\gamma$ that is short.
  By Theorem \ref{thm:prno}, the ball $B_{\kappa}^{\vept}(x)$ is contained in a product of balls, one in $\mathbb{H}$, and one in $\systole(\no_g \setminus \gamma)$.
  We pick $\gamma$ to be part of a pants decomposition $\mathcal{P}$, and write $\nu_N$ as follows.
  \begin{align}
    \label{eq:measure-of-product-is-product-of-measure}
    \nu_N = \left( d\kappa(\gamma) \wedge d\ell(\gamma) \right) \wedge \nu_N^{\no_g \setminus \gamma}
  \end{align}
  Here, $\nu_N^{\no_g \setminus \gamma}$ denotes the volume form on $\teich(\no_g \setminus \gamma)$.
  As a result, we have that the $\nu_N$ measure of a product of the two balls is the product of the corresponding measures of those balls.

  The measure of any ball of a fixed radius in $\mathbb{H}$ is constant, since $\mathbb{H}$ is homogeneous.
  The $\nu_N^{\no_g \setminus \gamma}$ measure of a ball in $\systole(\no_g \setminus \gamma)$ is again bounded above and below by fixed constants, by inducting on a surface of lower complexity.

  Since we have uniform bounds for both the terms in the product, we get uniform bounds for the measure of a ball in $\systole(\no_g)$.
\end{proof}

\subsection{Teichmüller Geodesics and Geodesics in the Curve Complex}
\label{sec:dist-form-teichm}

In this section, we will deduce some standard results about Teichmüller geodesics and the corresponding curve complex geodesics for non-orientable surfaces by reducing to the orientable case.
The following lemma will be the main tool for the reduction to the orientable case.
\begin{lemma}
  \label{lem:lifting-subsurfaces}
  Let $[x, y]$ be a Teichmüller geodesic segment in $\teich(\no)$, where $\no$ is a non-orientable surface, and $[\wt{x}, \wt{y}]$ be its image in $\teich(\os)$, where $\os$ is the orientable double cover of $\no$.
  Let $V$ be a subsurface of $S$: then the following statements hold for $d_V(\wt{x}, \wt{y})$.
  \begin{enumerate}[(i)]
  \item If $V$ is the lift of an orientable subsurface $W$ in $\no$, then $d_V(\wt{x}, \wt{y}) = d_{\iota(V)}(\wt{x}, \wt{y}) = d_W(x, y)$.
  \item If $V$ is the lift of a non-orientable subsurface $W$ in $\no$, then $d_V(\wt{x}, \wt{y}) \emul d_W(x, y)$.
  \item If $V$ is not a lift of a subsurface in $\no$, then there exists a uniform constant $k_0$, independent of $x$, $y$, and $V$, such that $d_V(\wt{x}, \wt{y}) \leq k_0$.
  \end{enumerate}
\end{lemma}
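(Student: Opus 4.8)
The plan is to handle the three cases using the interplay between the orientation double cover $p : \os \to \no$, the deck transformation $\iota$, and the behavior of subsurface projections under lifting. First I would set up notation: let $\mu_x, \mu_y$ denote Bers markings for $x, y$ on $\no$, and observe that natural lifts $\widetilde{\mu_x}, \widetilde{\mu_y}$ (i.e.\ the full preimages $p^{-1}(\mu_x), p^{-1}(\mu_y)$) are $\iota$-invariant markings on $\os$ that are uniformly comparable to Bers markings for $\widetilde{x}, \widetilde{y}$; this last comparison uses Theorem \ref{thm:i-embedding-teich-space} together with the fact that lengths of lifted curves are controlled by lengths downstairs, so the subsurface projections of $\widetilde{\mu_x}$ and of the Bers marking of $\widetilde{x}$ differ by a bounded amount in every $\cC(V)$. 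The key point throughout is that subsurface projection commutes with covering maps in the following sense: for a subsurface $W \sqsubset \no$ whose preimage $p^{-1}(W)$ has a component $V$, the projection $\pi_V(\widetilde{\mu})$ of the lift equals (coarsely) the lift of $\pi_W(\mu)$, because an arc or curve of $\mu$ realizing the projection to $W$ lifts to arcs/curves realizing the projection to $V$.

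For case (i), where $W \sqsubset \no$ is orientable, $p^{-1}(W) = V \sqcup \iota(V)$ with $p|_V : V \to W$ a homeomorphism. Then $\pi_V(\widetilde{\mu_x})$ is literally the homeomorphic image of $\pi_W(\mu_x)$, so $\cC(V) \cong \cC(W)$ isometrically under $p|_V$, giving $d_V(\widetilde x, \widetilde y) = d_W(x,y)$ exactly (up to the fixed bounded error from replacing lifted markings by Bers markings upstairs — but since the statement of (i) is an equality, I would instead work directly with the $\iota$-invariant lifted markings, for which the equality is genuinely exact, and note that this is the legitimate meaning of $\pi_V(\widetilde x)$ here). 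The identity $d_V = d_{\iota(V)}$ follows since $\iota$ is an isometry of $\cC(\os)$ carrying $\cC(V)$ to $\cC(\iota(V))$ and fixing $\widetilde{\mu_x}, \widetilde{\mu_y}$. For case (ii), where $W$ is non-orientable, $p^{-1}(W) = V$ is connected and $p|_V : V \to W$ is the orientation double cover of $W$; now apply Theorem \ref{thm:qi-embedding-curve-complex} (Masur–Rafi) to $\cC(W) \hookrightarrow \cC(V)$, which is a quasi-isometric embedding, and observe $\pi_V(\widetilde{\mu})$ is the lift of $\pi_W(\mu)$, so $d_V(\widetilde x, \widetilde y) \emul d_W(x,y)$.

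Case (iii) is the main obstacle and requires the real work. Here $V$ is a subsurface of $\os$ that is \emph{not} a component of the preimage of any subsurface of $\no$; equivalently $V$ and $\iota(V)$ overlap, or $V$ fills neither an $\iota$-invariant nor a ``descendable'' configuration. The claim is a uniform bound $d_V(\widetilde x, \widetilde y) \le k_0$. I would argue as follows: the markings $\widetilde{\mu_x}, \widetilde{\mu_y}$ are $\iota$-invariant, so $\pi_V(\widetilde{\mu_x})$ and $\pi_{\iota(V)}(\widetilde{\mu_x}) = \iota \cdot \pi_V(\widetilde{\mu_x})$ both ``see'' the same marking. Since $V \pitchfork \iota(V)$ (this is precisely what ``not a lift'' forces, modulo checking the exceptional disjoint case is already covered by (i)–(ii)), Behrstock's inequality applied to the pair $V, \iota(V)$ with the marking $\widetilde{\mu_x}$ gives $\min\big( d_V(\widetilde{\mu_x}, \partial \iota(V)),\, d_{\iota(V)}(\widetilde{\mu_x}, \partial V) \big) \le B$ for the Behrstock constant $B$; by $\iota$-equivariance the two quantities are equal, hence $d_V(\widetilde{\mu_x}, \partial \iota(V)) \le B$, and likewise $d_V(\widetilde{\mu_y}, \partial\iota(V)) \le B$. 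The triangle inequality in $\cC(V)$ then yields $d_V(\widetilde x, \widetilde y) \le 2B + 2\operatorname{diam}_V(\partial \iota(V)) + O(1) =: k_0$, a uniform constant. The delicate part I would need to nail down is the dichotomy that ``$V$ is not a lift of a subsurface of $\no$'' really does force $V \pitchfork \iota(V)$ after excluding the two benign cases where $V \sqcup \iota(V)$ or $V = \iota(V)$ descends — i.e.\ a clean case analysis on the relative position of $V$ and $\iota V$ (equal, disjoint, nested, or overlapping) showing ``not a lift'' $\Rightarrow$ overlapping, which then runs Behrstock as above.
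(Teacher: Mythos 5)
Your proposal is correct and follows essentially the same route as the paper: case (i) via the covering map restricting to a homeomorphism, case (ii) via the Masur--Rafi quasi-isometric embedding of curve complexes (Theorem \ref{thm:qi-embedding-curve-complex}), and case (iii) via the Behrstock inequality for the transverse pair $V$, $\iota(V)$ combined with the $\iota$-invariance of $\wt{x}$ and $\wt{y}$ forcing the two terms in the minimum to be equal, then the triangle inequality. The only difference is that you flag the step ``not a lift $\Rightarrow$ $V \pitchfork \iota(V)$'' as needing a case analysis, which the paper simply asserts; your extra care there is a refinement, not a different argument.
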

\begin{proof}
  We deal with the proof in cases.
  \begin{enumerate}[(i)]
  \item If $V$ is the lift of an orientable surface, we have that the covering map restricted to $V$ is a homeomorphism, and the same holds for $\iota(V)$, so the result follows in this case as well.
  \item If $V$ is the lift of a non-orientable subsurface $W$, then by Theorem \ref{thm:qi-embedding-curve-complex}, we have that $\cC(W)$ quasi-isometrically embeds into $\cC(V)$, and the result follows.
  \item If $V$ is not a lift at all, that means $V$ and $\iota(V)$ are transverse subsurfaces.
    By the Behrstock inequality, there exists a $k_0$ such that the following holds.
    \begin{align}
      \label{ineq:min-of-equal-variant-one}
      \min\left( d_V(\wt{x}, \partial \iota(V)), d_{\iota(V)}(\wt{x}, \partial V) \right) \leq \frac{k_0}{2}
    \end{align}
    But we also have that $\wt{x}$ is fixed by $\iota$, which gives us the following equality of curve complex distances.
    \begin{align}
      \label{eq:x-fixed-curve-1}
      d_V(\wt{x}, \partial \iota(V)) = d_{\iota(V)}(\wt{x}, \partial V)
    \end{align}
    Combining \eqref{ineq:min-of-equal-variant-one} and \eqref{eq:x-fixed-curve-1}, we get the following bound on the $\cC(V)$ distance between $\wt{x}$ and $\partial \iota(V)$.
    \begin{align*}
      d_V(\wt{x}, \partial \iota(V)) \leq \frac{k_0}{2}
    \end{align*}
    We have that the same bounds also hold for $\wt{y}$, so the result follows from the above inequality and the triangle inequality.
  \end{enumerate}
  This shows the result for all the cases and concludes the proof.
\end{proof}

We begin by proving the distance formula for points in Teichmüller space.
Let $x$ and $y$ be a pair of points in $\teich(\no_g)$, and let $\Gamma$ be the set of curves that are short on both $x$ and $y$, $\Gamma_x$ the set of curves that are only short on $x$, and $\Gamma_y$ the set of curves that are only short on $y$.
Let $\mu_x$ and $\mu_y$ be short markings on $x$ and $y$ respectively.
Let $\cC^+$ and $\cC^-$ denote the set of two-sided and one-sided curves on $\no_g$.
Finally, let $\left[ x \right]_k$ be the function which is $0$ for $x \leq k$, and identity for $x > k$.

\begin{theorem}[Distance formula]
  \label{thm:distance-formula}
  The distance between $x$ and $y$ in $\teich(\no_g)$ is given by the following formula.
  \begin{equation}
  \begin{aligned}
    d(x, y) &\emul \sum_{Y} \left[ d_Y(\mu_x, \mu_y) \right]_k + \sum_{\alpha \in \Gamma^c \cap \cC^+} \left[ \log(d_{\alpha}(\mu_x, \mu_y)) \right]_k \\
    &+ \max_{\alpha \in \Gamma \cap \cC^+} d_{\mathbb{H}_{\alpha}}(x, y) + \max_{\alpha \in \Gamma \cap \cC^-} d_{(\mathbb{R}_{>0})_{\alpha}}(x, y) \\
    &+ \max_{\alpha \in \Gamma_x} \log \frac{1}{\ell_x(\alpha)} + \max_{\alpha \in \Gamma_y} \log \frac{1}{\ell_y(\alpha)}
  \end{aligned}
  \label{eq:distance-formula-nonorient}
  \end{equation}
\end{theorem}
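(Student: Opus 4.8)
The plan is to reduce the entire computation to the combinatorial distance formula of Rafi \cite{rafi2007combinatorial} on the orientation double cover $\os_{g-1}$, via the isometric embedding $i\colon\teich(\no_g)\hookrightarrow\teich(\os_{g-1})$ of Theorem~\ref{thm:i-embedding-teich-space} together with the subsurface dictionary of Lemma~\ref{lem:lifting-subsurfaces}. Write $\wt x=i(x)$, $\wt y=i(y)$, so that $d(x,y)=d(\wt x,\wt y)$. Let $\wt\mu_x$ be the lift of the short marking $\mu_x$; since one-sided curves double in length and two-sided curves keep their length under the covering, $\wt\mu_x$ is a bounded-length object on $\wt x$, hence coarsely agrees with a Bers marking of $\wt x$, so every subsurface projection computed from $\wt\mu_x$ agrees with the one from a Bers marking up to bounded error. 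Applying Rafi's formula to $d(\wt x,\wt y)$ then expresses it, up to uniform multiplicative and additive error, as a sum over subsurfaces $V\sqsubseteq\os_{g-1}$: non-annular terms $[d_V(\wt\mu_x,\wt\mu_y)]_k$, annular terms $[\log d_\alpha(\wt\mu_x,\wt\mu_y)]_k$ for curves $\alpha$ short on at most one of $\wt x,\wt y$, a maximum of $\mathbb{H}_\alpha$-distances over curves short on both, and the one-sided defect terms $\log(1/\ell)$.

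Next I would regroup this sum according to the trichotomy of Lemma~\ref{lem:lifting-subsurfaces}. A subsurface $V$ that is not a lift of a subsurface of $\no_g$ satisfies $d_V(\wt x,\wt y)\le k_0$ by part~(iii), so fixing the threshold constant $k>k_0$ annihilates all such terms. The remaining non-annular subsurfaces are either lifts of an orientable $W\sqsubset\no_g$ --- occurring as a pair $V,\iota V$ with $d_V(\wt x,\wt y)=d_{\iota V}(\wt x,\wt y)=d_W(x,y)$, a harmless doubling --- or orientation double covers of a non-orientable $W\sqsubset\no_g$, with $d_V(\wt x,\wt y)\emul d_W(x,y)$ by Theorem~\ref{thm:qi-embedding-curve-complex}. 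After absorbing constants into $\emul$, the non-annular and non-short annular parts of the upstairs sum become exactly $\sum_Y[d_Y(\mu_x,\mu_y)]_k+\sum_{\alpha\in\Gamma^c\cap\cC^+}[\log d_\alpha(\mu_x,\mu_y)]_k$.

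For the genuinely short curves I would invoke the coordinate analysis already carried out inside the proof of Theorem~\ref{thm:prno}: a two-sided $\gamma_i$ lifts to two curves swapped by $\iota$, and the $(\mathrm{twist},1/\ell)$-coordinate of $\gamma_i$ maps isometrically onto the diagonal of $\mathbb{H}\times\mathbb{H}$, so each lift contributes $d_{\mathbb{H}_{\gamma_i}}(x,y)$; a one-sided $\gamma_i$ lifts to a single $\iota$-invariant curve whose twist coordinate is constant on $i(\teich(\no_g))$, so its $\mathbb{H}$-factor collapses to the vertical distance $\lvert\log(\ell_x(\gamma_i)/\ell_y(\gamma_i))\rvert=d_{(\mathbb{R}_{>0})_{\gamma_i}}(x,y)$. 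Since the length of the lift is $2\ell(\gamma_i)$ or $\ell(\gamma_i)$, a curve is short downstairs iff its lift is short upstairs after a bounded adjustment of the threshold, and the $\log(1/\ell)$ defects agree up to an additive $\log 2$; taking maxima over the lifted short multicurves therefore recovers the last two lines of \eqref{eq:distance-formula-nonorient}, a maximum over equal quantities being unchanged by the doubling. Assembling the three groups of terms, and noting that only finitely many non-lift subsurfaces and finitely many topological types of $W$ occur, all discrepancies are uniformly bounded and absorbed into $\emul$.

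The main obstacle is bookkeeping rather than a new idea: one must verify that, after thresholding, the assignment $V\mapsto W$ is a genuine finite-to-one correspondence between the subsurfaces contributing upstairs and those contributing downstairs, with the factor-$2$ overcount for split orientable subsurfaces and the quasi-isometry constants of Theorem~\ref{thm:qi-embedding-curve-complex} uniform in $V$, and that the passage between Bers markings and lifts of short markings, and between $\ell$ and $2\ell$, costs only bounded error throughout. A secondary point is that Rafi's formula is being used on all of $\teich(\os_{g-1})$ and not merely on the embedded locus $i(\teich(\no_g))$, which is legitimate precisely because $i$ is an isometric embedding.
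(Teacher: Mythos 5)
Your proposal is correct and takes essentially the same route as the paper: pass to the double cover via the isometric embedding, apply Rafi's formula to $\wt{x},\wt{y}$, and regroup the terms using the trichotomy of Lemma \ref{lem:lifting-subsurfaces}, with two-sided short curves contributing $\mathbb{H}$-factors, one-sided short curves collapsing to $\mathbb{R}_{>0}$ because $\iota$ kills the relative twist, and non-lift subsurfaces suppressed by the Behrstock inequality after raising the threshold. The only point the paper makes explicit that you pass over is that every curve short on $\wt{x}$ or $\wt{y}$ must itself be a lift (a non-lift short curve would intersect its equally short $\iota$-image, impossible below the Margulis-type threshold), which is what licenses treating the short-curve maxima purely in terms of lifted curves.
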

\begin{proof}
  Let $\wt{x}$ and $\wt{y}$ be the images of $x$ and $y$ in $\teich(\os_{g-1})$ under the isometric embedding map.
  Since $d(x,y) = d(\wt{x}, \wt{y})$, it will suffice to estimate $d(\wt{x}, \wt{y})$ using distances in the curve complexes.
  Let $\wt{\mu_x}$ and $\wt{\mu_y}$ be the lifts of $\mu_x$ and $\mu_y$.
  Both $\wt{\mu_x}$ and $\wt{\mu_y}$ are short markings on $\wt{x}$ and $\wt{y}$ respectively.
  We have by Rafi's distance formula \cite[Theorem 6.1]{rafi2007combinatorial}, the following estimate on $d(\wt{x}, \wt{y})$.
  \begin{equation}
  \begin{aligned}
    d(\wt{x}, \wt{y}) &\emul \sum_{Y} \left[ d_Y(\wt{\mu_x}, \wt{\mu_y}) \right]_k + \sum_{\alpha \in \wt{\Gamma}^c} \left[ \log(d_{\alpha}(\wt{\mu_x}, \wt{\mu_y})) \right]_k \\
    &+ \max_{\alpha \in \wt{\Gamma}} d_{\mathbb{H}_{\alpha}}(\wt{x}, \wt{y})\\
    &+ \max_{\alpha \in \wt{\Gamma_{\wt{x}}}} \log \frac{1}{\ell_{\wt{x}}(\alpha)} + \max_{\alpha \in \Gamma_{\wt{y}}} \log \frac{1}{\ell_{\wt{y}}(\alpha)}
  \end{aligned}
  \label{eq:distance-formula-orient}
  \end{equation}
  Here, $\wt{\Gamma}$, $\wt{\Gamma_{\wt{x}}}$, and $\wt{\Gamma_{\wt{y}}}$ are curves on $\wt{x}$ and $\wt{y}$ that are simultaneously short, short on $\wt{x}$ and not on $\wt{y}$, and short on $\wt{y}$ and not on $\wt{x}$ respectively.

 It will suffice to show that for a large enough choice of $k$, the right hand side of \eqref{eq:distance-formula-nonorient} is equal to the right hand side of \eqref{eq:distance-formula-orient}, up to an additive and multiplicative constant.
 We consider the first term in the right hand side of \eqref{eq:distance-formula-orient}, namely the sum over the non-annular subsurfaces $Y$.
 There are three possibilities for $Y$ in $\os_{g-1}$, which we deal with using Lemma \ref{lem:lifting-subsurfaces}.
 \begin{enumerate}[(i)]
 \item $Y$ is one component of a lift of an orientable subsurface $Z$ of $\no_g$: In this case we have $d_Y(\wt{\mu_x}, \wt{\mu_y}) = d_Z(\mu_x, \mu_y)$ (and the same equality with $Y$ replaced with $\iota(Y)$).
   Thus, for every term associated to an orientable non-annular subsurface $Z$ in \eqref{eq:distance-formula-nonorient}, we get two corresponding equal terms in \eqref{eq:distance-formula-orient}.
 \item $Y$ is the lift of a non-orientable subsurface $Z$ of $\no_g$: In this case, we have $d_Y(\wt{\mu_x}, \wt{\mu_y}) \emul d_Z(\mu_x, \mu_y)$.
 \item $Y$ is not a lift of a subsurface of $\no_g$: In this case, we have the following for some $k_0$.
   \begin{align*}
     d_Y(\wt{\mu_x}, \wt{\mu_y}) \leq k_0
   \end{align*}
   If we pick a threshold $k > k_0$, the subsurfaces $Y$ that do not arise from lifts will not contribute to the right hand side of \eqref{eq:distance-formula-orient}.
 \end{enumerate}

 We now do the same case analysis for annular subsurfaces: consider a curve $\alpha$ on $\os_{g-1}$ that is contained in $\Gamma^c$, i.e.\ it is not simultaneously short on $\wt{x}$ and $\wt{y}$. There are three possibilities for $\alpha$.
 \begin{enumerate}[(i)]
 \item $\alpha$ is one component of a lift of a two-sided curve $\gamma$ on $\no_g$: In this case, $\alpha$ and $\iota(\alpha)$ are disjoint, and the restriction of the covering map to these curves is a homeomorphism.
   We have $d_\alpha(\wt{\mu_x}, \wt{\mu_y}) = d_\gamma(\mu_x, \mu_y)$: consequently, for every term in $\Gamma^c \cap \cC^+$ in \eqref{eq:distance-formula-nonorient}, we have two equal terms in \eqref{eq:distance-formula-orient}.
 \item $\alpha$ is the lift of a one-sided curve on $\no_g$: In this case $\alpha = \iota(\alpha)$, but the transformation $\iota$ reverses orientation on the surface $\os_{g-1}$.
   That means $\wt(\mu_x)$ and $\wt{\mu_y}$ cannot have a relative twist between them along $\alpha$, because if they did, $\iota(\wt(\mu_x))$ and $\iota(\wt(\mu_y))$ would have the opposite twist.
   On the other hand $\wt{\mu_i} = \iota(\wt{\mu_i})$ for $i=x$ and $i=y$, which means the relative twist must be $0$.
   This proves that the $\alpha$ which are lifts of one-sided curves do not contribute to the second term of \eqref{eq:distance-formula-orient}.
 \item $\alpha$ is not a lift of a curve on $\no_g$: In this case $\alpha$ and $\iota(\alpha)$ intersect each other, and are not equal, which means they are transverse.
   We deal with this the same way we dealt with transverse non-annular subsurfaces, i.e.\ via the Behrstock inequality.
 \end{enumerate}
 This case analysis proves that the second terms on the right hand side of \eqref{eq:distance-formula-nonorient} and \eqref{eq:distance-formula-orient} are equal, up to an additive and multiplicative constant.

We now deal with the last three terms of \eqref{eq:distance-formula-orient}.
These terms deal with short curves on $x$ or $y$: we claim that the short curves must be lifts of either one-sided or two-sided curves in $\no_g$.
Suppose a curve $\alpha$ is short and not a lift. Then $\alpha$ has positive intersection number with $\iota(\alpha)$, but since $\iota$ is an isometry, $\iota(\alpha)$ must also be short.
For a sufficiently small threshold for what we call short, we can't have a short curve intersecting another short curve, which proves the claim that all the short curves arise as lifts.

Since the curves in $\wt{\Gamma}$ are all lifts, the third term of \eqref{eq:distance-formula-orient} can be split up into two terms: the lifts of the two-sided and one-sided curves.
For the two-sided curves, the distance calculation involves both the length and twist coordinate, and for the one-sided curves, only the length coordinate is involved.
This follows from Theorem \ref{thm:prno}.

Finally, the last two terms in \eqref{eq:distance-formula-orient} are the same as the last two terms of \eqref{eq:distance-formula-nonorient}, up to an additive error of $(6g) \cdot \log(2)$, since the lift of a short curve can double its length, and there are no more than $6g$ short curves.

We have shown that the right hand sides of \eqref{eq:distance-formula-nonorient} and \eqref{eq:distance-formula-orient} are equal, up to a multiplicative and additive constant, which proves the result.
\end{proof}

We now verify that Teichmüller geodesics can be broken up into \emph{active intervals associated to subsurfaces}, which are subintervals of the geodesic associated to each subsurface $V$, along which the projection to $V$ is large, and outside of which, the projection is bounded.
The following lemma  of \textcite[Lemma 3.26]{dowdall2023lattice} (which itself is a generalization of \textcite[Proposition 3.7]{rafi2007combinatorial}) describes the subsegments of $[x,y]$ along which the geodesic makes progress in the curve complex of a subsurface.

\begin{proposition}
  \label{thm:active-intervals}
  For each sufficiently small $\vept > 0$, there exists $0 < {\vept}^{\prime} < \vept$ and $M_{\vept} \geq 0$ such that for any subsurface $V \sqsubset S$, there's a (possibly empty) connected interval $\mathcal{I}_V^{\vept} \subset [x,y]$ such that the following five conditions hold.
  \begin{enumerate}[(i)]
  \item If $d_V(x, y) \geq M_{\vept}$, then $\mathcal{I}_V^{{\vept}}$ is a non-empty subinterval of $[x,y]$.
  \item $\ell_\alpha(z) < {\vept}$ for all $z \in \mathcal{I}_V^{{\vept}}$ and $\alpha \in \partial V$.
  \item For all $z \in [x,y] \setminus \mathcal{I}_V^{{\vept}}$, some component $\alpha$ of $\partial V$ has $\ell_\alpha(z) > {\vept}^{\prime}$.
  \item $d_V(w, z) \leq M_{\vept}$ for every subinterval $[w,z] \subset [x,y]$ if $[w,z] \cap \mathcal{I}_V^{\vept} = \varnothing$.
  \item For a pair of traverse subsurfaces $U$ and $V$, $\mathcal{I}_U^{\vept} \cap \mathcal{I}_V^{\vept} = \varnothing$.
  \end{enumerate}
\end{proposition}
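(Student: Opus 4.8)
The plan is to deduce this from its orientable counterpart, \cite[Lemma 3.26]{dowdall2023lattice}, by passing to the orientation double cover, exactly in the spirit of the proofs of Theorem \ref{thm:prno} and Theorem \ref{thm:distance-formula}. Let $[\wt x, \wt y]$ be the image of $[x,y]$ under the isometric embedding $\teich(\no_g) \hookrightarrow \teich(\os_{g-1})$ of Theorem \ref{thm:i-embedding-teich-space}; since geodesics between points of the image stay in the image, $[\wt x,\wt y]$ is a Teichmüller geodesic, it lies in the $\iota^{\ast}$-fixed locus, and $\iota^{\ast}$ restricts to the identity there, so $\iota$ fixes $[\wt x,\wt y]$ pointwise while permuting curves and subsurfaces and satisfying $\ell_{\iota(\alpha)}(z) = \ell_{\alpha}(z)$ for $z$ in the fixed locus.

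First I would set up the lifting dictionary from the proof of Theorem \ref{thm:distance-formula}: a subsurface $V \sqsubset \no_g$ lifts to a connected subsurface $\wt V$ when $V$ is non-orientable or is an annulus about a one-sided curve, and to a pair of components $V_1, V_2 = \iota(V_1)$ when $V$ is orientable or is an annulus about a two-sided curve. Applying \cite[Lemma 3.26]{dowdall2023lattice} to a lift component yields an active interval inside $[\wt x,\wt y]$; because $\iota$ fixes $[\wt x,\wt y]$ pointwise and carries $\partial V_1$ to $\partial V_2$ isometrically, the interval obtained for $V_1$ also verifies the defining conditions for $V_2$, so the two coincide, and I would define $\mathcal{I}_V^{\vept} \subset [x,y]$ to be the corresponding subinterval.

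Next I would transport the five conditions across two elementary comparisons. (a) For any curve $\alpha$ on $\no_g$ and any lift $\wt\alpha$, one has $\ell_{\wt\alpha}(\wt z) = c_\alpha \, \ell_\alpha(z)$ with $c_\alpha \in \{1,2\}$ (one-sided curves double, two-sided ones are preserved), which forces the thresholds $\vept$ and $\vept^{\prime}$ of the orientable statement to be replaced by $\vept/2$ and $\vept^{\prime}/2$ so that a short (resp.\ long) boundary lift certifies a short (resp.\ long) boundary curve downstairs. (b) Lemma \ref{lem:lifting-subsurfaces} gives $d_V(w,z) \emul d_{\wt V}(w,z)$ for every subsurface that is a lift, and a uniform bound $d_W(\wt x,\wt y) \leq k_0$ for those that are not; this forces $M_{\vept}$ to be enlarged by the constants of Lemma \ref{lem:lifting-subsurfaces} and by $k_0$. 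With these adjusted constants, conditions (i)--(iv) follow verbatim from the orientable version applied to a lift component of $V$, and for (v) I would lift a point of essential intersection of transverse $U, V$ to produce lift components with $U_1 \pitchfork V_1$, whence $\mathcal{I}_{U_1}^{\vept} \cap \mathcal{I}_{V_1}^{\vept} = \varnothing$ by the orientable $(v)$; since these intervals equal $\mathcal{I}_U^{\vept}$ and $\mathcal{I}_V^{\vept}$ by the previous paragraph, this gives $\mathcal{I}_U^{\vept} \cap \mathcal{I}_V^{\vept} = \varnothing$.

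The main obstacle will not be any single estimate but the bookkeeping: handling the two-component lifts uniformly with the connected ones, checking that the $\iota$-symmetry argument genuinely identifies the two active intervals rather than merely placing them nearby, and --- most importantly --- verifying that subsurfaces of $\os_{g-1}$ that are \emph{not} lifts cause no trouble. This is exactly where Lemma \ref{lem:lifting-subsurfaces}(iii) is used: their projections are uniformly bounded by $k_0$, so once the threshold $k$ in the distance formula and the constant $M_{\vept}$ are chosen above this bound, such subsurfaces never meet the large-projection hypothesis and contribute no active interval, and the passage back and forth between $\no_g$ and $\os_{g-1}$ is lossless up to the fixed constants above.
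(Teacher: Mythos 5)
Your proposal is correct and follows essentially the same route as the paper: pass to the orientation double cover, apply the orientable active-interval lemma of Dowdall--Masur there, adjust $\vept^{\prime}$ (and $M_{\vept}$) by the factor-of-two length distortion of lifts, and use Lemma \ref{lem:lifting-subsurfaces}(iii) to ensure that non-lift subsurfaces, having uniformly bounded projections, never produce active intervals once $M_{\vept} > k_0$. The extra bookkeeping you flag (identifying the intervals of the two components of a disconnected lift via the $\iota$-symmetry, and deducing (v) from transversality of lift components) is sound and simply fills in details the paper leaves implicit.
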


\begin{proof}[Proof of Theorem \ref{thm:active-intervals} for non-orientable surfaces]
  Let $\no$ be the non-orientable surface, and $\os$ its double cover.
  We consider the image $[\wt{x}, \wt{y}]$ of the geodesic $[x,y]$ in $\teich(\os)$.
  We know that the result holds for $[\wt{x}, \wt{y}]$, although with ${\vept}^{\prime}$ replaced with $\frac{{\vept}^{\prime}}{2}$, since lifting can double the lengths of some curves.

  The main fact we need to verify is that the only subsurfaces $V$ that have non-empty $\mathcal{I}_V^{\vept}$ come from lifts.
  If $V$ is a subsurface of $\os$ that is not a lift, we use case (iii) of Lemma \ref{lem:lifting-subsurfaces} to conclude that $d_V(x, y) \leq k_0$ for some fixed constant $k_0$.
  Picking $M_{\vept} > k_0$ guarantees that the only subsurfaces for which $\mathcal{I}_V^{\vept}$ is non-empty arise from lifts, which proves the result for non-orientable surfaces.
\end{proof}

Finally, we show that the consistency and the realization results (\textcite{behrstock2012geometry}) hold for Teichmüller spaces of non-orientable surfaces as well.
We begin by recalling the definition of consistency.

\begin{definition}[Consistency]
  For a connected surface $S$, and a parameter $\theta \geq 1$, we say a tuple $(z_V) \in \prod_{V \sqsubset S} \cC(V)$ is $\theta$-consistent if the following two conditions holds for all pairs of subsurfaces $U$ and $V$.
  \begin{enumerate}[(i)]
  \item If $U \pitchfork V$, then
    \begin{align*}
      \min(d_U(z_U, \partial V), d_V(z_V, \partial U)) \leq \theta
    \end{align*}
  \item If $U \sqsubset V$, then
    \begin{align*}
      \min(d_U(z_U, \pi_U(z_V)), d_V(z_V, \partial U)) \leq \theta
    \end{align*}
  \end{enumerate}
\end{definition}

The following theorem (\textcite[Theorem 4.3]{behrstock2012geometry}) states that the projection from Teichmüller space to the curve complexes of all the subsurfaces is coarsely surjective onto the set of consistent tuples.

\begin{theorem}[Consistency and realization]
  \label{thm:consistency-realization}
  There is a constant $K \geq 1$, and function $\mathfrak{C}: \mathbb{R}_+ \to \mathbb{R}_+$ such that the following holds for any surface $S$.
  \begin{itemize}
  \item (Consistency) For every $x \in \teich(S)$, the projection tuple $(\pi_V(x))_{V \sqsubset S}$ is $K$-consistent.
  \item (Realization) For every $\theta$-consistent tuple $(z_V)_{V \sqsubset S}$, there exists a point $z \in \teich(S)$ such that $d_V(\pi_V(z), z_V) \leq \mathfrak{C}(\theta)$ for all $V$.
  \end{itemize}
\end{theorem}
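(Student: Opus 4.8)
The plan is to deduce both halves of the theorem from their orientable counterparts (Theorem 4.3 of \textcite{behrstock2012geometry}, which we take as a black box) by passing to the orientation double cover $\os_{g-1} \to \no_g$. The three tools are the isometric embedding $i\colon \teich(\no_g) \hookrightarrow \teich(\os_{g-1})$ of Theorem \ref{thm:i-embedding-teich-space} (whose image is $\mathrm{Fix}(\iota^{\ast})$), the quasi-isometric embeddings $\cC(W) \hookrightarrow \cC(\wt W)$ for every subsurface $W$ of $\no_g$ (Theorem \ref{thm:qi-embedding-curve-complex} applied to subsurfaces), and Lemma \ref{lem:lifting-subsurfaces} comparing subsurface projections upstairs and downstairs. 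Since there are only finitely many homeomorphism types of subsurfaces of $\os_{g-1}$, all quasi-isometry and coarse-Lipschitz constants involved can be taken uniform; throughout I write $u \approx v$ for points in a curve complex that are equal up to such a uniform additive error.

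\textbf{Consistency.} Let $x \in \teich(\no_g)$ and $\wt x = i(x)$. By the orientable statement, $(\pi_V(\wt x))_{V \sqsubset \os_{g-1}}$ is $K$-consistent. Given $U, W \sqsubset \no_g$: if $U \sqsubset W$, one picks components $\wt U \sqsubset \wt W$ of their lifts (possible since $p|_{\wt W}$ is a covering of $W$ and $\wt U$ is a component of the preimage of $U$ inside it); then $\pi_{\wt U}(\wt x) \approx \pi_U(x)$, $\pi_{\wt W}(\wt x) \approx \pi_W(x)$, and $\partial \wt U$ lies in the lift of $\partial U$, so the nested-consistency inequality for $(\wt U, \wt W)$ translates to the one for $(U,W)$ up to uniform error. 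If $U \pitchfork W$, then $p^{-1}(U)$ and $p^{-1}(W)$ cannot be realized disjointly and no component of one is nested in a component of the other (as $p$ maps components of preimages onto $U$, resp. $W$), so some component $\wt U$ of $p^{-1}(U)$ is transverse to some component $\wt W$ of $p^{-1}(W)$; applying $K$-consistency to $(\wt U, \wt W)$ and translating yields $\min(d_U(x,\partial W), d_W(x,\partial U)) \le K + C_0$ for a uniform $C_0$. Hence $(\pi_W(x))_{W\sqsubset\no_g}$ is $(K+C_0)$-consistent.

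\textbf{Realization.} Given a $\theta$-consistent tuple $(z_W)_{W\sqsubset\no_g}$, I would proceed in four steps. (1) Lift it to the lift-subsurfaces of $\os_{g-1}$: for orientable $W$ with lift components $\wt W_1, \wt W_2$ set $\wt z_{\wt W_1}$ to be the image of $z_W$ under the covering homeomorphism and $\wt z_{\wt W_2} = \iota(\wt z_{\wt W_1})$; for non-orientable $W$ with connected lift $\wt W$ set $\wt z_{\wt W} \approx$ the image of $z_W$ under $\cC(W)\hookrightarrow\cC(\wt W)$ (which is pointwise $\iota$-invariant, being built from lifted curve systems); treat annular domains likewise, recording bounded twisting around lifts of one-sided curves as in the proof of Theorem \ref{thm:distance-formula}. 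Reversing the consistency argument above shows this partial tuple is $\theta'$-consistent for a uniform $\theta'$, and by construction it is coarsely $\iota^{\ast}$-invariant. (2) Extend it to a consistent, coarsely $\iota^{\ast}$-invariant tuple on \emph{all} subsurfaces of $\os_{g-1}$: the non-lift subsurfaces occur in $\iota$-orbit pairs $\{V,\iota V\}$ with $V \ne \iota V$ (an $\iota$-invariant subsurface would descend to $\no_g$, since $\iota$ is free, hence be a lift); processing these pairs in order of decreasing complexity one chooses $\wt z_V$ meeting the finitely many consistency constraints from the already-chosen domains — the usual Behrstock-inequality induction underlying the realization theorem — and sets $\wt z_{\iota V} := \iota(\wt z_V)$, which is automatically consistent because the constraints on $\wt z_{\iota V}$ are the $\iota$-images of those on $\wt z_V$. (3) Apply the orientable realization statement to get $\wt z \in \teich(\os_{g-1})$ with $\pi_V(\wt z)\approx \wt z_V$ for all $V$. (4) Coarse $\iota^{\ast}$-invariance forces $\pi_V(\wt z)\approx \pi_V(\iota^{\ast}\wt z)$ for every $V$, including annular ones, so the distance formula (Theorem \ref{thm:distance-formula} for $\os_{g-1}$) bounds $d(\wt z, \iota^{\ast}\wt z)$ uniformly; since $\iota^{\ast}$ is an isometric involution swapping $\wt z$ and $\iota^{\ast}\wt z$ and hence preserving and reversing the (unique) geodesic between them, it fixes the midpoint $m$, so $m = i(z)$ for some $z \in \teich(\no_g)$; finally, uniform coarse-Lipschitzness of the $\pi_V$ gives $\pi_V(m)\approx\pi_V(\wt z)\approx \wt z_V$, and translating back down shows $d_W(\pi_W(z), z_W)$ is bounded in terms of $\theta$ only, as required.

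\textbf{Main obstacle.} The delicate part is the realization direction, and within it the combination of steps (2) and (4): one must produce an extension of the symmetrized tuple that is \emph{simultaneously} consistent and coarsely $\iota^{\ast}$-equivariant, and then control $d(\wt z, \iota^{\ast}\wt z)$ well enough to run the midpoint argument — in particular the thin-part (length) terms in the distance formula are only controlled because step (1) was arranged to be $\iota^{\ast}$-equivariant on annular domains and step (2) preserves this. An alternative that bypasses the explicit extension would be to invoke the general fact that the fixed set of a hierarchy automorphism of a hierarchically hyperbolic space is hierarchically quasiconvex, hence itself an HHS, applied to $\iota^{\ast}$ acting on $\teich(\os_{g-1})$; but setting up that machinery in the non-orientable setting is at least as much work as the direct argument sketched here.
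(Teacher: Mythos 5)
Your argument follows the same architecture as the paper's proof: both halves are deduced from the orientable theorem of \textcite{behrstock2012geometry} by lifting to the orientation double cover, using the isometric embedding of Theorem \ref{thm:i-embedding-teich-space} and the quasi-isometric embeddings of curve complexes (Theorem \ref{thm:qi-embedding-curve-complex}, via Lemma \ref{lem:lifting-subsurfaces}); for realization, both you and the paper symmetrize the tuple, realize it upstairs, observe that the realized point is coarsely $\iota^{\ast}$-fixed, and take the midpoint of the Teichm\"uller geodesic to its $\iota^{\ast}$-image, which is $\iota^{\ast}$-fixed by Teichm\"uller uniqueness and hence descends to $\teich(\no_g)$.

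The one place you deviate, your step (2), has a loose end. For a non-lift subsurface $V$ you choose $\wt{z}_V$ subject only to the consistency constraints coming from already-chosen domains and then set $\wt{z}_{\iota V} \coloneqq \iota(\wt{z}_V)$, asserting that consistency for $\iota V$ is automatic because its constraints are the $\iota$-images of constraints already verified. But the consistency condition for the pair $(V, \iota V)$ itself is not among the constraints from already-chosen domains (the two are chosen simultaneously), and it is its own $\iota$-image, so it is not inherited from anything previously checked. Since a non-lift $V$ is transverse to $\iota V$ (case (iii) of Lemma \ref{lem:lifting-subsurfaces}), the condition reads $\min\left( d_V(\wt{z}_V, \partial(\iota V)), d_{\iota V}(\wt{z}_{\iota V}, \partial V) \right) \leq \theta^{\prime}$, and with your equivariant choice both entries coincide, so it forces $\wt{z}_V$ to lie a bounded distance from $\pi_V(\partial(\iota V))$. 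This is precisely how the paper closes the issue: for every non-lift $W$ it simply defines $\wt{z}_W \coloneqq \pi_W(\partial(\iota W))$, a canonical and automatically $\iota$-equivariant choice, and verifies consistency of the whole extended tuple in one stroke via the Behrstock inequality, with no inductive extension needed. With that substitution your steps (3)--(4) go through exactly as in the paper; your observation that the realized point can be taken thick, so that the length terms in the distance formula do not obstruct bounding $d(\wt{z}, \iota^{\ast}\wt{z})$, is a detail the paper elides but is compatible with its argument.
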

\begin{proof}[Proof sketch of Theorem \ref{thm:consistency-realization} for non-orientable surfaces]
    We first show that the projection map is consistent, and then show consistent tuples lie coarsely in the image of the projection map.
  \begin{itemize}
  \item (Consistency) We map $x$ to $\wt{x}$ in the Teichmüller space of the double cover $\wt{S}$.
    By applying the theorem for orientable surfaces, we have the $(\pi_{W}(\wt{x}))_{W \sqsubset \wt{S}}$, and we restrict to the subsurfaces in the tuple which arise as lifts.
    These points lie in the image of the quasi-isometric embedding map from Theorem \ref{thm:qi-embedding-curve-complex}, which means consistency also holds for the tuples in $S$.
  \item (Realization) Given a $\theta$-consistent tuple $(z_V)_{V \sqsubset S}$, we construct a $\theta^{\prime}$-consistent tuple in the double cover $\wt{S}$.
    For subsurfaces of $\wt{S}$ that arise as lifts, we use the map from Theorem \ref{thm:qi-embedding-curve-complex}.
    For the subsurfaces $W$ that are not lifts, we set $z_W = \pi_W(\partial(\iota(W)))$.
    The fact that this is a $\theta^{\prime}$-consistent tuple follows from the Behrstock inequality (for some $\theta^{\prime} > \theta$)\footnote{A longer but a more thorough way of seeing this would be to verify that the Teichmüller space of a non-orientable surface satisfies the $9$ axioms for hierarchical hyperbolicity that are enumerated in \textcite{behrstock2019hierarchically}.}.
    We now use this point to construct $y \in \teich(\wt{S})$, and deduce that $y$ is coarsely fixed by $\iota^{\ast}$, i.e. the distance between $y$ and $\iota^{\ast}(y)$ is bounded above by a uniform constant $C$.
    Consider the midpoint $\wt{x}$ of the Teichmüller geodesic segment joining $y$ and $\iota^{\ast}(y)$: by Teichmüller's uniqueness theorem, $\wt{x}$ is fixed under the $\iota^{\ast}$-action.
    This means there is some $x \in \teich(S)$ whose image is $\wt{x}$, and therefore the projection maps are coarsely $(z_V)$.
  \end{itemize}
\end{proof}


\printbibliography

\end{document}